\pgfplotsset{compat=1.18}
\newtheorem{thm}{Theorem}[section]
\newtheorem{cor}[thm]{Corollary}
\newtheorem{lem}[thm]{Lemma}
\newtheorem{prop}[thm]{Proposition}
\newtheorem{exmp}[thm]{Example}
\newtheorem{defn}[thm]{Definition}
\newtheorem{rem}[thm]{Remark}
\newtheorem{hypothesis}{Hypothesis}
\theoremstyle{definition}
\numberwithin{equation}{section}
\newcommand{\eps}{\varepsilon}
\newcommand{\cB}{\mathcal{B}}
\newcommand{\cX}{\mathcal{X}}
\newcommand{\C}{\mathbb{C}}
\newcommand{\N}{\mathbb{N}}
\newcommand{\R}{\mathbb{R}}
\newcommand{\Z}{\mathbb{Z}}
\newcommand{\operator}[1]{\Lambda_{g,A,V,\cX}}
\newcommand{\opnum}[1]{\Lambda_{g_{#1}, A_{#1}, V_{#1}, \cX_{#1}}}
\newcommand{\magneticnum}[1]{\mathcal{L}_{g_{#1},A_{#1},V_{#1}}}
\newcommand{\magnetic}{\mathcal{L}_{g,A,V}}
\def\hat{\widehat}
\def\tilde{\widetilde}
\def \bfo {\begin {eqnarray*} }
\def \efo {\end {eqnarray*} }
\def \ba {\begin {eqnarray*} }
\def \ea {\end {eqnarray*} }
\def \beq {\begin {eqnarray}}
\def \eeq {\end {eqnarray}}
\def \det {\hbox{det}}
\def \p {\partial}
\def\hat{\widehat}
\def\tilde{\widetilde}
\def \bfo {\begin {eqnarray*} }
\def \efo {\end {eqnarray*} }
\def \ba {\begin {eqnarray*} }
\def \ea {\end {eqnarray*} }
\def \beq {\begin {eqnarray}}
\def \eeq {\end {eqnarray}}
\def \det {\hbox{det}}
\def \p {\partial}
\newcommand{\teemu}[1]{\footnote{Teemu: #1}}
\newcommand{\andrew}[1]{\footnote{Andrew: #1}}
\author{Teemu Saksala}
\address{Department of Mathematics, North Carolina State University, Raleigh, NC, USA   (\tt{tssaksal@ncsu.edu})}
\author{Andrew Shedlock}
\address{Department of Mathematics, North Carolina State University, Raleigh, NC, USA
(\tt{ashedlo@ncsu.edu})}
\subjclass[2020]{35R30, 35L05, 53C21, 53C24, 53C80, 58J45, 86A22}
\title[A Hyperbolic Inverse Problem on Complete Riemannian Manifolds]{An Inverse Problem for symmetric hyperbolic Partial Differential Operators on Complete Riemannian Manifolds}
\date{\today}
\keywords{Inverse Problems, Riemannian Manifolds, Wave Equation}
\begin{document}

\begin{abstract}
    We show that a complete Riemannian manifold, as well as time independent smooth lower order terms appearing in a first order symmetric perturbation of a Riemannian wave operator can be uniquely recovered, up to the natural obstructions, from a local source-to-solution map of the respective hyperbolic initial value problem. Our proofs are based on an adaptation of the classical Boundary Control method (BC-method) originally developed by Belishev and Kurylev. The BC-method reduces the PDE-based problem to a purely geometric problem involving the so-called travel time data. For each point in the manifold the travel time data contains the distance function from this point to any point in a fixed \textit{a priori} known compact observation set. It is well known that this geometric problem is solvable. The main novelty of this paper lies in our strategy to recover the lower order terms via a further adaptation of the BC-method. 
\end{abstract}

\maketitle

\section{Introduction}\label{sec:introduction}

In this paper we show that a complete Riemannian manifold, as well as time independent smooth lower order terms appearing in a first order symmetric perturbation of a Riemannian wave operator can be uniquely recovered, up to the natural obstructions, from a local source-to-solution map related to the respective hyperbolic initial value problem.
This type of hyperbolic inverse problem is the mathematical underpin for many imaging techniques appearing in physics, engineering, geology, and medical imaging.
For instance, in Seismic Exploration one uses the seismic energy to probe beneath the surface of the Earth, usually as an aid in searching for economic deposits of oil, gas, or minerals, but also for engineering, archeological, and scientific studies. Typically,  this is done by setting up explosive charges or seismic vibrators in some area of the surface and trying to receive the echoes of the reflections of the seismic waves on some measurement area.  
%
% \subsection{Physical relevance}
% Inverse problems with partial boundary data are encountered in mathematical physics and in various applications. For example in medical imaging and in the geophysical imaging of the Earth, measurements can usually be done only for a part of the boundary. Often it is not possible to observe fields on the same area where sources are controlled. For example in oil exploration, explosives are used as sources and hence it is difficult to measure waves near the sources. Also, many inverse scattering problems, such as the transmission problems on a line, are equivalent to disjoint partial data problems.
% \teemu{From \cite{lassas2014inverse}}
%
% This type of hyperbolic inverse problem is the mathematical underpin for many imaging techniques appearing in physics, engineering, geology, and medical imaging. 
% For instance, in geophysics, we may have only partial data about the earth's subsurface structure, and we may need to use techniques such as seismic tomography to estimate the distribution of materials and their properties.
% \teemu{From \cite{lassas2024disjoint}}

% \subsection{Optimality of the method}
Our proofs are based on an adaptation of the classical Boundary Control method (BC-method) originally developed by Belishev and Belishev-Kurylev \cite{belishev_russian, belishev1992reconstruction}. The BC-method reduces the PDE-based problem to a purely geometric problem involving the so-called travel time data. For each point in the manifold this data contains the distance function from this point to any point in a fixed \textit{a priori} known compact observation set. It is well known that this geometric problem is solvable. The main novelty of this paper lies in our strategy to recover the lower order terms via a further adaptation of the BC-method. However, we also present a streamlined version for the proof of the global recovery of  geometry.

As we do not pose any geometric assumptions, including compactness, we cannot solely rely on spectral theoretic arguments. Furthermore, we do not make any \textit{a priori} assumptions for the size or the supports of the lower order terms, in particular the lower order terms are allowed to be unbounded. To recover the lower order terms we provide a higher order approximate controllability result, which is based on Tataru's unique continuation principle for hyperbolic operators with coefficients that depend on time real analytically \cite{tataru1995unique}. This implies that for any point in space-time there is a wave that we can send from our observation set and which does not vanish at this given point. Due to the limitations of the BC-method, the non-symmetric and time-dependent  hyperbolic operators remain outside the scope of the current paper as well as sending and receiving the waves in disjoint sets.

\subsection{Problem Setting and the Main result}

Let $(N,g)$ be a complete and connected, but not necessarily compact, smooth Riemannian manifold. Also, we let $A$ and $V$ to be a smooth real co-vector field and a smooth real-valued function on $N$ respectively. Then, we define the \textit{Magnetic-Schr\"odinger Operator} $\mathcal{L}_{g,A,V}$ which acts on the space $C^\infty(N)$ of smooth complex valued functions according to the formula
\begin{align}\label{def:L}
    \mathcal{L}_{g,A,V}(u) := -\Delta_gu -2i\langle A,du\rangle_g + (id^*A + |A|_g^2 + V)u.
\end{align}
In above $\Delta_g$ is the Laplace-Beltrami operator of $(N,g)$, $i$ is the imaginary unit, $d$ stands for the exterior derivative while $d^\ast$ is its formal $L^2$-adjoint, $\langle \cdot,\cdot\rangle_g$ and $|\cdot|_g$ are the Riemannian inner product and the respective norm acting on co-vectors. 
We let $dV_g$ to be the standard smooth density associated with $(N,g)$. Note that the smooth density $dV_g$ is the absolute value of the Riemannian volume form in local coordinates and is used since we do not assume that $N$ is oriented. Thus, on the space $C_0^\infty(N)$ of compactly supported smooth complex valued functions we have the inner product $(\cdot,\cdot)_g$ defined by
\begin{align}\label{inner_product}
    (u,v)_g :&= \int_N u\overline{v}dV_g, \quad \text{for all $u,v\in C_0^\infty(N)$.}
\end{align}
By \eqref{def:L} the Magnetic-Schr\"odinger operator is symmetric on $C_0^\infty(N)$ with respect to the inner product $(\cdot,\cdot)_g$, that is $(\mathcal{L}_{g,A,V}u,v)_g = (u,\mathcal{L}_{g,A,V}v)_g$ for all $u,v\in C_0^\infty(N)$ since $A$ and $V$ are real. 

In this paper we study the following hyperbolic initial value problem 
\begin{align} \label{eq:cauchy_problem_infinite_time}
    \begin{cases}
        (\partial_t^2 + \mathcal{L}_{g,A,V})u(t,x) = f(t,x), \quad \text{ for $(t,x)$ in } (0,\infty) \times N
        \\
        u(0,x) =\partial_tu(0,x) = 0, \quad \text{ for $x$ in } N.
    \end{cases}
\end{align}
Here the interior source (forcing function) $f$ is assumed to be smooth and compactly supported in a set $(0,\infty) \times \cX$ where the observation set $\cX \subset N$ is open and fixed through out the paper. 

In Section \ref{Sec:Well-Poss} of this paper we show that problem (\ref{eq:cauchy_problem_infinite_time}) always carries a unique smooth solution. Thus, the \textit{local source-to-solution map}
\begin{equation}
    \label{eq:source_to_sol_map}
    \operator{}\colon C^\infty_0((0,\infty)\times \cX)\to C^\infty((0,\infty)\times \cX), \quad \operator{}(f)=u^f|_{(0,\infty)\times \cX}, 
\end{equation}
is well defined. Here, for a given source function $f$ the function $u^f$ is the unique solution to problem \eqref{eq:cauchy_problem_infinite_time}. 

We also show in  Section \ref{Sec:Well-Poss} that if $\kappa$ is a smooth complex valued but unitary function on $N$ that equals 1 in the observation set $\cX$, then $\mathcal{L}_{g,A+i\kappa^{-1}d\kappa,V}$ is a Magnetic-Schr\"odinger operator while the associated source-to-solution map $\Lambda_{{g},{A}+{i\kappa^{-1}d\kappa,{V}, \cX}}$ satisfies the equation 
\[
\operator{}(f) 
    = \Lambda_{{g},{A}+{i\kappa^{-1}d\kappa,{V}, \cX}}(f), \quad \text{for all $f \in C_0^\infty((0,\infty)\times \cX)$}.
\]
Hence, the local source-to-solution map does not uniquely determine the co-vector field $A$.

% Since the initial value problem \eqref{eq:cauchy_problem_infinite_time} is invariant under Riemannian isometries 
% of $(N,g)$, which equal to the identity map on the observation set $\cX$, the best we can hope for is to determine a Riemannian manifold from \eqref{eq:source_to_sol_map} up to a Riemannian isometry. Therefore, 
We set the following notation of equivalence between local source-to-solution operators of two Riemannian manifolds with respective Magnetic-Schr\"odinger operators. 

\begin{hypothesis}\label{hyp:same_data}
    For $i \in \{1,2\}$ let $(N_i,g_i)$ to be a complete and connected smooth Riemannian manifold. Let $\cX_i \subset N_i$ be a nonempty and open set called the observation set.  Let $A_i$ and $V_i$ to be a smooth real co-vector field and a smooth real-valued function on $N_i$ respectively. Suppose that there is a diffeomorphism $\phi:\cX_1\to \cX_2$ such that  
    \begin{align*}
        \tilde{\phi}^*\opnum{2}(f) &= \opnum{1}(\tilde{\phi}^*f), \quad \text{ for all } f\in C_0^\infty((0,\infty)\times \cX _2)
    \end{align*}
    where $\tilde{\phi}(t,x) = (t,\phi(x))$, and  $\tilde \phi ^\ast f:=f\circ \tilde \phi$.
\end{hypothesis}

In particular if Hypothesis (\ref{hyp:same_data}) is satisfied then the equation
\[
u_1^{f\circ \tilde \phi}=u_2^f\circ \tilde \phi \text{ on } (0,\infty) \times \cX, 
\]
holds for all $f\in C_0^\infty((0,\infty)\times \cX _2)$. Here $u_1^{f\circ \tilde \phi}$ and $u_2^f$ are the solutions to the Cauchy problem \eqref{eq:cauchy_problem_infinite_time} with the respective sources and $\magneticnum{}=\magneticnum{i}$. 
% In this case we say that the local source-to-solution operators $\opnum{1}$ and $\opnum{2}$ are equivalent. 
% The pullback $\tilde \phi^\ast$ is given by the equation $\tilde \phi ^\ast f:=f\circ \tilde \phi$. 
We want to emphasize that due to the coordinate invariance of the initial value problem \eqref{eq:cauchy_problem_infinite_time} the Hypothesis \ref{hyp:same_data} holds true if $\cX_i=N_i$ and $\phi$ is a Riemannian isometry with $A_1 = \phi^*A_2$ and $V_1 = \phi^*V_2$. Here $\phi^*A_2$ stands for the pullback of a co-vector field $A_2$.

Our main theorem is as follows.

\begin{thm}
\label{thm:main_thm}
Let $(N_i,g_i)$ for $i \in \{1,2\}$ be a complete and connected Riemannian manifold with an open observation set $\cX_i \subset N_i$. Let $A_i$ and $V_i$ be a smooth real co-vector field and a smooth real-valued function on $N_i$ respectively. If the Hypothesis \ref{hyp:same_data} is true, then there exists a unique diffeomorphism $\Phi:N_1\to N_2$ and a smooth map $\kappa:N_1\to\mathbb{C}$ such that the following are also true:
         \begin{itemize}
            \item $\Phi|_{\cX_1} = \phi|_{\cX_1}$
            \item $\kappa= 1$ on $\cX_1$ and $|\kappa| = 1$ on $N_1$
            \item $g_1 = \Phi^*g_2$  
            \item $A_1 = \Phi^*A_2 + i\kappa^{-1}d\kappa$  
            \item $V_1 = \Phi^*V_2$.
        \end{itemize}
\end{thm}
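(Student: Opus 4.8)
The plan is to run the Boundary Control method of Belishev--Kurylev \cite{belishev1992reconstruction} in its source-to-solution form, recovering first the geometry and then, as the genuinely new step, the lower order terms; throughout we use that the coefficients are $t$-independent (so waves may be translated in time) and in particular real analytic in $t$, which is what makes Tataru's unique continuation theorem \cite{tataru1995unique} applicable. \textbf{Step 1 (inner products and controllability).} A Blagove\v{s}\v{c}enskii-type identity expresses the inner products $(u^f(T,\cdot),u^h(T,\cdot))_g$ of waves at a fixed time $T$ entirely through the source-to-solution map applied to $f$ and $h$ by iterated time integrals; hence, pulling back by $\tilde\phi$, Hypothesis~\ref{hyp:same_data} forces these inner products to agree for the two problems, for all $f,h$ supported in $(0,T)\times\cX$ and all $T>0$. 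Simultaneously, finite speed of propagation confines $u^f(T,\cdot)$ to $\overline{M(K,T)}:=\{x:d_g(x,K)\le T\}$, where $K\Subset\cX$ is the spatial support of $f$ (this set is compact since $(N,g)$ is complete), and a duality argument from Tataru's theorem shows the waves $\{u^f(T,\cdot):\supp f\subset(0,T)\times K\}$ are dense in $L^2(M(K,T),dV_g)$.

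\textbf{Step 2 (recovery of the geometry).} Feeding Step 1 into the standard BC machinery --- computing volumes of finite unions and intersections of the domains of influence $M(K,T)$, and then extracting, point by point, the distance function to $\overline{\cX}$ --- reconstructs the \emph{travel time data} $\{x\mapsto d_g(x,\cdot)|_{\overline{\cX}}:x\in N\}\subset C(\overline{\cX})$ of each $(N_i,g_i)$, matched through $\phi$. Since this geometric data determines a complete connected Riemannian manifold up to an isometry fixing the observation set, we obtain a diffeomorphism $\Phi:N_1\to N_2$ with $\Phi|_{\cX_1}=\phi$ and $g_1=\Phi^*g_2$; it is unique because two such isometries agree, together with their differentials, on the open set $\cX_1$ and hence everywhere by connectedness of $N_1$. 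Identifying $N_1$ with $N_2$ via $\Phi$, we henceforth write $(N,g)$, $\cX$, and $\mathcal{L}_1,\mathcal{L}_2$ for the two symmetric Magnetic--Schr\"odinger operators, with $u_1^f=u_2^f$ on $(0,\infty)\times\cX$ for every admissible $f$.

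\textbf{Step 3 (the intertwining unitary and the gauge).} By Step 1 the map $U:u_1^f(T,\cdot)\mapsto u_2^f(T,\cdot)$ is a well-defined linear isometry between wave subspaces, independent of $T$ by time translation, so by Step 2 it extends to a unitary $U$ on $L^2(N,dV_g)=\overline{\bigcup_{K,T}L^2(M(K,T))}$; since $u_1^f=u_2^f$ on $(0,\infty)\times\cX$ one checks $(U\psi)|_{\cX}=\psi|_{\cX}$ for all $\psi$, so $U$ fixes functions supported in $\cX$ and in particular $Uf(t,\cdot)=f(t,\cdot)$. Differentiating $u_2^f=Uu_1^f$ twice in $t$ and using $\partial_t^2u_j^f=f-\mathcal{L}_ju_j^f$ gives $\mathcal{L}_2U=U\mathcal{L}_1$ on the dense set of waves; finite speed of propagation (same metric) forces $U$ to preserve each $L^2(M(K,T))$, and as $K\Subset\cX$ and $T>0$ vary these domains of influence separate $N$ finely enough that $U$ commutes with every bounded multiplication, so $U$ is multiplication by some $\kappa\in L^\infty(N)$ with $|\kappa|=1$ a.e.\ and $\kappa\equiv1$ on $\cX$. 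Reading $\mathcal{L}_2(\kappa u)=\kappa\mathcal{L}_1 u$ as an elliptic equation for $\kappa$ (principal part $-\Delta_g\kappa$) upgrades $\kappa$ to $C^\infty(N)$; expanding this identity and matching the coefficients of order $2,1,0$ in $u$ via \eqref{def:L} yields, after possibly replacing $\kappa$ by $\overline\kappa$, the relations $A_1=\Phi^*A_2+i\kappa^{-1}d\kappa$ and $V_1=\Phi^*V_2$ (symmetry with respect to $(\cdot,\cdot)_g$ fixing the sign). Uniqueness of $\kappa$ follows since two choices differ by a unitary, locally constant function equal to $1$ on $\cX$, hence equal to $1$ on the connected manifold $N$.

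\textbf{Where the difficulty lies.} Steps 1--2 are essentially a careful adaptation of the BC method to an interior observation set and to a complete, possibly non-compact manifold (so no spectral theory is available); the heart of the matter is Step 3, and more precisely the \emph{higher order} approximate controllability behind it --- via Tataru's unique continuation for operators whose coefficients are real analytic in time one must show that for every point of space-time some admissible source produces a wave that does not vanish there, which is what makes $\kappa$ well defined and smooth --- together with the rigidity argument identifying $U$ as a multiplication operator and the bookkeeping that converts $\mathcal{L}_2U=U\mathcal{L}_1$ into the stated relations among $A_i$ and $V_i$.
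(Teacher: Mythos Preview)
Your Steps 1--2 follow the paper's line closely (Blagovestchenskii identity, Tataru-based approximate controllability, recovery of travel time data and hence of the isometry $\Phi$). Your Step 3 is correct but genuinely different from what the paper does. The paper never introduces the abstract unitary $U$. Instead it sharpens Blagovestchenskii to identities of the form $\langle 1_{M(\mathcal Y,s)\setminus M(\hat{\mathcal Y},\hat s)}\,u_1^f(T),u_1^h(T)\rangle=\langle 1_{M(\mathcal Y,s)\setminus M(\hat{\mathcal Y},\hat s)}\,u_2^f(T),u_2^h(T)\rangle$, builds for every $x_0\in N$ an explicit family $Z_\delta=\overline{M(\mathcal Y_\delta,s+\delta)\setminus M(\mathcal Y_\eps,s)}$ of annuli between two domains of influence shrinking to $\{x_0\}$, and applies Lebesgue differentiation to obtain the \emph{pointwise} identity $u_1^f(T,x_0)\overline{u_1^h(T,x_0)}=u_2^f(T,x_0)\overline{u_2^h(T,x_0)}$; the nonvanishing-wave corollary then gives $\kappa$ directly and smoothly as a local ratio $\overline{u_2^h}/\overline{u_1^h}$, glued by a partition of unity. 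Your commutant argument (``$U$ preserves every $L^2(M(K,T))$, hence commutes with $L^\infty$, hence is multiplication by some $\kappa\in L^\infty$'') followed by elliptic regularity is more conceptual and bypasses the careful geometric construction of $Z_\delta$, but the substance is hidden in the phrase ``separate $N$ finely enough'': making that precise---that closed balls centered in $\cX$ generate the Borel $\sigma$-algebra---is essentially what the paper's $Z_\delta$-construction and the injectivity of the travel time map supply. One small inconsistency: in your final paragraph you say the nonvanishing wave is ``what makes $\kappa$ well defined and smooth,'' but in \emph{your} argument $\kappa$ is already well defined once $U$ is a multiplication; what you actually need higher-order approximate controllability for is the $C^2$-density of waves that lets you pass $\mathcal L_2(\kappa v)=\kappa\mathcal L_1 v$ from waves to arbitrary $v\in C_0^\infty$ before invoking elliptic regularity.
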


\begin{rem}
    If the manifolds $N_1$ and $N_2$, as in Theorem \ref{thm:main_thm}, are closed it is sufficient to know that Hypothesis \ref{hyp:same_data} is true in the finite space-time cylinder $(0,T)\times N_i$ where
    $$ T>2\max\{\text{diam}_{g_1}(N_1), \text{diam}_{g_2}(N_2)\}.$$
\end{rem}

The version of Theorem \ref{thm:main_thm} without lower order terms was originally given in \cite{source-to-solution}. In this paper one of our goals is to carefully argue why Hypothesis \ref{hyp:same_data} allows us to carry over all the geometric properties that can be recovered from the local source-to-solution map of the manifold $(N_1,g_1)$ to the manifold $(N_2,g_2)$. In particular, we do not need to shrink the measurement sets $\cX_i$, nor pose any other assumptions in addition to the openness. 
Furthermore, we show that if Hypothesis \ref{hyp:same_data} is valid then there is a unique Riemannian isometry $\Phi$ between $(N_1,g_1)$ and $(N_2,g_2)$ such that $\Phi|_{\cX_1}=\phi$.
By doing so we hope to streamline and extend the scope of \cite[Section 5]{source-to-solution}.

\subsection{Previous Literature} We survey some known results similar or related to Theorem \ref{thm:main_thm}.
In such coefficient determination problems,
it is typical to use
the Dirichlet-to-Neumann map (DN-map) to model the data.
Apart from immediate applications in seismic explorations and medical ultrasound imaging, this is reasonable since
several other types of data can be reduced to the 
DN-case. For instance, in \cite{Nachman1988a}
an inverse scattering problem is solved via a reduction to 
the inverse conductivity problem in \cite{Sylvester1987}, and the latter uses the DN-map as data. 
In the present paper, however,
we do not perform a reduction to the DN-map but adapt techniques originally developed in that case to the case of local source-to-solution map $\operator{}$.

The approach that we use in this current paper is a modification of the Boundary Control method (BC-method). This method was first developed by Belishev for the acoustic wave equation on $\R^n$ with an isotropic wave speed
\cite{belishev_russian}. A geometric version of the method, suitable when the wave speed is given by a Riemannian metric tensor as in the present paper, was introduced by Belishev and Kurylev \cite{belishev1992reconstruction}.
We refer to \cite{lassas_inverse} for a thorough review of the related literature. 
Local reconstruction of the geometry from the local source-to-solution map $\operator{}$ has been studied as a part of iterative schemes, see for instance \cite{Isozaki2010,kurylev2018inverse}. In the present paper we give a global uniqueness proof that does not rely on an iterative scheme. Our main result is a significant extension of \cite{source-to-solution} where an analogous problem was studied for the Riemannian wave equation in the absence of the lower order terms $A$ and $V$ which have no boundedness or integrability constraints. 

Recently, \cite{lassas2024disjoint} extended \cite{source-to-solution} to the direction in which solutions of the initial value problem \eqref{eq:cauchy_problem_infinite_time} are observed in a different set than where the sources are supported. This result was achieved in the absence of the lower order terms and requires compactness of the manifold as well as an additional spectral bound condition for the Laplace spectrum. 
In the DN-case a partial data result was earlier given in \cite{lassas2014inverse, kian2019unique}. Of these two papers the first one shows that the DN-map, with Dirichlet and Neumann data measured in some disjoint relatively open subsets of the boundary, uniquely recovers a compact Riemannian manifold with boundary. The result was obtained under the Hassell–Tao condition for eigenvalues and eigenfunctions of the respective Dirichlet Laplacian. This condition is valid for instance if the respective hyperbolic equation is exactly controllable. 
The second paper \cite{kian2019unique} considers a disjoint data problem for the DN-map associated with a wave type operator whose spatial part is a first order perturbation of the Laplacian.  The authors show that this data determines uniquely, up the natural gauge invariance, the lower order terms in a neighborhood of the set where the Neumann data is measured. This was obtained by assuming that this set is strictly convex and that the wave equation is exactly controllable from the set where the Dirichlet data are supported. The paper also contains a global result under a convex foliation condition. We leave the problem of recovering the lower order terms, that live in a closed manifold, by observing the solutions of the initial value problem \eqref{eq:cauchy_problem_infinite_time} in a different set than where the sources are supported to a future development.
% Our current paper extends many of the methods presented in \cite{kian2019unique} to the case when interior measurements are made on a non-compact manifold.

In the present work we restrict our attention to the unique solvability of the inverse problem but note that several variants of the Boundary Control method have been studied computationally \cite{belishev1999dynamical,Hoop2016,kabanikhin2004direct,pestov2010numerical, oksanen2024linearized}. Related stability questions have been investigated for instance in \cite{anderson2004boundary,korpela2016regularization,Liu2012, burago2020quantitative, Bosi2022Reconstruction}. 

Due to their dependency on the unique continuation principle \cite{tataru1995unique} our methods are not applicable for problems where the coefficients of the hyperbolic operator also depend on time. The closest adaptations of the BC-method to this line of studies were obtained recently in \cite{Alexakis_Feiz_Oksanen_22,Alexakis_Feiz_Oksanen_23} where the authors show that time dependent zeroth order coefficients can be recovered from the DN-map if the known globally hyperbolic Lorentzian structure satisfies certain curvature bounds. In the ultra-static case, like in the current paper, many papers including but not limited to
\cite{Kian_Oksanen, liu2023partial, liu2025recovery, feizmohammadi2021recovery, bellassoued2011stability} rely on the construct of so called Geometric Optic Solutions which can be used to reduce the unique recovery of the time-dependent or stable recovery of the time-independent lower order terms to an integral geometric problem.

In this paper we use the BC-method to reduce the PDE-based problem to a purely geometric problem in which the aim is to recover the Riemannian manifold with or with out boundary from a collection of distance functions related to a dense set of interior point sources. This data is called the Travel Time Data (TTD) and it was first proven in \cite{kurylev_boundary_distance_map, lassas_inverse} that the TTD determine a smooth compact Riemannian manifold with boundary. This uniqueness result was extended for compact Finsler manfolds in \cite{de2019inverse}, which corresponds to the measuring of travel times of polarized elastic waves,  that cannot be modeled in Riemannian formalism.
In particular, \cite{de2019inverse} shows that any Finsler metric arising from elasticity is determined by the TTD.  
In \cite{pavlechko2022Uniqueness} it was proved that the restriction of the travel time functions on any fixed relatively open subset of a strictly convex boundary determine the Riemannian manifold.
Furthermore, it is known that the travel time data always determines a compact Riemannian manifold with boundary at least H\"older stably \cite{katsuda2007stability}.
However, with better geometries the recovery of the manifold from its TTD can be some times made Lipschitz stable  \cite{ilmavirta2023three, ilmavirta2024lipschitzstabilitytraveltime}.
Finally, \cite{de2021stable} introduced a new physically more relevant geometric inverse problem for TTD with only finitely many source points, constructed a discrete approximation of a Riemanniann manifold and provided quantitative and fully data-driven error estimates for the approximation.
% The proof builds heavily on the simplicity,  which implies that the travel time map, sending each interior point to its distance function to the boundary points, is a metric isometry.  
%In conjunction with the Myers-Steenrod theorem: \textit{Every metric isometry between Riemannian manifolds is a Riemannian isomery i.e.  a smooth map that preserves the metric tensor}, this implies that the Gromov-Hausdorff distance of two simple Riemannian manifolds is bounded from above by the Hausdorff distance of their TTD. 

When the travel time measurements are done in an open measurement set, as in the current paper, the first uniqueness result was presented in \cite{source-to-solution}. 
Recently, in \cite{ilmavirta2024lipschitzstabilitytraveltime},  with collaborators, the authors of the present paper showed that the TTD determine a closed Riemannian manifold Lipschitz stably under certain geometric assumptions. 
% This paper also extended the scope of \cite{ilmavirta2023three} beyond simple manifolds allowing non-trivial topologies and interior conjugate points.  

% The recovery of a Riemannian manifold from its TTD has been used to solve many other geometric inverse problems.  For instance, \textit{Broken Scattering Relation} (BSR) records all the possible entering and exiting directions and the total travel times of broken geodesics, a curve concatenated by two geodesic segments staring at the boundary and meeting at some interior point.
% It was shown in \cite{kurylev2010rigidity} that this relation determines a compact Riemannian manifold via recovering the TTD.
% With my collaborators the we solved the Finslerian version of this problem in \cite{de2020foliated}.  In \cite{ilmavirta2023three} we showed that BSR determines any simple surface.  This result was not included in \cite{kurylev2010rigidity} as the method presented there in which was used to recover the TTD from BSR cannot handle the two dimensional case.

\subsection{Outline of the proof of Theorem \ref{thm:main_thm}}

Before proving Theorem \ref{thm:main_thm} we show that the local source-to-solution operator is well-defined (See Theorem \ref{thm:uniqueness_of_smooth_solutions}) and that the prescribed gauge to uniqueness is valid (See Proposition \ref{prop:existence_of_gauge}). We devote Section \ref{Sec:Well-Poss} for these goals.
% we prove that the source-to-solution operator is well-defined by showing that the Cauchy problem $\ref{eq:cauchy_problem_infinite_time}$ has a unique smooth solution $u$ provided that $f\in C_0^\infty((0,\infty)\times N)$. 
In this section we also build several important tools essential for the proof of the main theorem. Among these tools are a higher order approximate controllability result (See Theorem \ref{thm:approximate_controllability}), which allows us to approximate Sobolev functions of arbitrary degree by waves sent from the observation set, and 
prove the existence of a wave that does not vanish at a given point but is sent from the observation set
(See Corollary \ref{cor:nonvanishing_condition}).
We also recall the Blagovestchenskii identity (see Theorem \ref{thm:blagovestchenskii}), that helps us to obtain information about the state of the waves outside the cylinder $(0,\infty)\times \cX$ while we only send and measure waves in this set. 
% Finally, in Section \ref{Sec:Well-Poss} we prove that the lower order terms have an obstruction to uniqueness. 

The rest of the paper is split into two Sections: Recovery of the Geomety \ref{sec:geometry}, and the Recovery of the Lower Order Terms \ref{sec:lower_order_terms}.  
% The proof of Theorem \ref{thm:main_thm} can be broken down into determining $(N,g)$ from $(\cX,\operator{})$ and then determining $A$ and $V$ up to the gauge from $(N,g)$ and $(\cX,\operator{})$
In Section \ref{sec:geometry} we show that two manifolds that satisfy Hypothesis \ref{hyp:same_data} are indeed Riemannian isometric. This is done by showing the that local source-to-solution operators recover the travel time data (See Theorem \ref{thm:travel_time_data}). 
% proving that condition on the metric balls on $(N,g)$ satisfy a condition if and only wave starting in one region can be approximated by waves in another region. Once we are able to determine certain metric balls centered in $\cX$ we use the metric balls to determine the distance from any point in $N$ to any point in $\cX$. Once we know the distance between points in $N$ to any point in $\cX$ we will know the travel time data. 
We then show that the travel time data determines the topological, smooth and Riemannian structures (See Theorem \ref{thm:isometry_between_manifolds}). 
% In particular we show to use the travel time data and a diffeomorphism $\phi:\cX_1\to\cX_2$ to construct the Riemannian isometry from $(N_1,g_1)$ to $(N_2,g_2)$.

We begin Section \ref{sec:lower_order_terms} by showing that after recovering the Riemannian structure we can study the recovery of the lower order terms on \textit{a priori} known complete and connected Riemannian manifold $(N,g)$ with observation set $\cX$.
% we use the Riemannian isometry from $(N_1,g_1)$ to 
% $(N_2,g_2)$ to pullback 
% \\
% $(N_2,g_2,A_2,V_2)$ to $N_1$ and define a new source-to-solution operator on $(N_1,g_1)$ which is a change of coordinates of $\opnum{2}$ and is equal to $\opnum{1}$ on $C_0^\infty((0,\infty)\times \cX_1)$. Now that we know $(N,g)$, 
To establish the promised relationships between the lower order terms, we build special controllable sets (See Proposition \ref{prop:summary}) such that we can use a Lebesgue Differentiation theorem for the Blagovestchenskii identity and determine  pointwise products of two waves. 
% the Blagovestchenskii identity in order prove that the inner products between solutions for $\operator{1}$ and $\operator{2}$ are equal on a family of controllable sets. These special controllable sets are such that we can use a Lebesgue Differentiation Theorem at any point on $N$ to determine a pointwise product relationship between 
That is, if Hypothesis \ref{hyp:same_data} holds, we prove that for all $T>0$ and $f,h \in C^\infty_0((0,T)\times \cX))$ we have that $u_1^f(T,\cdot)\overline{u^h_1(T,\cdot)} =u_2^f(T,\cdot)\overline{u^h_2(T,\cdot)}$ in $N$ (See 
 Lemma \ref{lem:product_equal}). 
From here, we apply the higher order approximate controllability to build a smooth function $\kappa$ such that $u_1^f(T,\cdot) = \kappa(\cdot) u_2^f(T,\cdot)$ for any function $f\in C_0^\infty((0,T)\times N)$ (See Lemma \ref{lem:big_result}). Finally, we show that this identity implies the gauge for the lower order terms (See Theorem \ref{thm:gauge}).

\subsection*{Acknowledgements}
T.S. and A.S. were supported by the National Science Foundation grant DMS-2204997.
We thank Matti Lassas and Lauri Oksanen for numerous discussions and hosting us on our visit to University of Helsinki in the spring of 2024.

\section{Preliminary Material}\label{Sec:Well-Poss}
Let $(N,g)$ be a complete and connected smooth Riemannian manifold. Also, let $A$ and $V$ to be a smooth real co-vector field and a smooth real-valued function on $N$ respectively. Finally, let $ T \in (0,\infty]$. In this section we study the existence, uniqueness, and some key properties of the solutions to the following hyperbolic Cauchy problem 
\begin{align} \label{eq:cauchy_problem}
    \begin{cases}
        (\partial_t^2 + \mathcal{L}_{g,A,V})u = f, \quad \text{ in } (0,T) \times N
        \\
        u(0,\cdot
        ) = u_0, \,\partial_tu(0,\cdot) = u_1.
    \end{cases}
\end{align}
% We a assume that the Magnetic-Schr\"odinger Operator $\mathcal{L}_{g,A,V}$, as in formula \eqref{def:L},  has smooth coefficients. 

First in Subsection \ref{sub:change_coordinates} we derive a change of coordinates formula for the Magnetic-Schr\"odinger operator. Next in Subsection \ref{sub:definitions} we set  some definitions needed to study the Cauchy problem. These include the definition of distributional solutions. Then in Subsection \ref{sub:closed_manifold} we use Spectral Theory to prove existence and uniqueness of distributional solutions on a closed manifold and for finite time interval. In Subsection \ref{sub:complete_manifold} we use results from Subsection \ref{sub:closed_manifold} to construct smooth solutions on a complete manifold for infinite time interval under the assumption that the data $u_0,u_1$ and $f$ consists of smooth compactly supported functions. Next in Subsection \ref{sub:approx_controllability} we prove several important properties for the solutions of the Cauchy problem which will be needed to prove the inverse problem stated in Theorem \ref{thm:main_thm}. These contain a higher order approximate controllability result and the Blagovestchenskii Identity. Finally, in Subsection \ref{sub:multiplicative_gauge} we prove that the gauge introduced in Theorem \ref{thm:main_thm} is natural for our inverse problem.

\subsection{Change of Coordinates} \label{sub:change_coordinates}
In this subsection we prove a change of coordinates formula for the Magnetic-Schr\"odinger operator which will be used to prove the well-posedness of the Cauchy problem \eqref{eq:cauchy_problem} in a complete Riemannian manifold via connecting the problem to a sequence of similar problems on closed manifolds. See Subsection \ref{sub:complete_manifold}. The results in the current subsection are well known in differential geometry and we include them for the readers convenience. 

\begin{lem} \label{lem:laplace_isometry}
    Let $(N_1,g_1)$ and $(N_2,g_2)$ be Riemannian manifolds with Laplace-Beltrami operators $\Delta_1$ and $\Delta_2$ respectively. If $\phi:N_1\to N_2$ is a diffeomorphism, then $\phi:(N_1,g_1)\to (N_2,g_2)$ is a Riemannian isometry if and only if
    \begin{align*}
        \Delta_1 (w\circ \phi) = [\Delta_2 (w)]\circ \phi \hspace{4mm}\text{for all $w\in C_0^\infty(N_2)$.}
    \end{align*}
\end{lem}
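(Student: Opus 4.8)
The plan is to prove both implications directly from the local coordinate expression of the Laplace--Beltrami operator. Recall that in local coordinates $\Delta_g = \frac{1}{\sqrt{\det g}}\,\partial_i\!\left(\sqrt{\det g}\,g^{ij}\partial_j\right)$, where $(g^{ij})$ is the inverse of the metric matrix $(g_{ij})$. The key observation is that $\Delta_g$ is a second-order operator whose principal symbol at a point $x$ is the quadratic form $\xi \mapsto g^{ij}(x)\xi_i\xi_j$ on the cotangent space, i.e.\ the principal symbol recovers the inverse metric, hence the metric itself.

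First I would prove the forward direction: if $\phi$ is a Riemannian isometry, then $\phi^*\Delta_2 = \Delta_1\phi^*$. This is most cleanly done coordinate-free: an isometry pulls back the metric, hence the volume density $dV_{g_2}$ to $dV_{g_1}$ and the Dirichlet form $w\mapsto \int |dw|_{g_2}^2\,dV_{g_2}$ to $v\mapsto \int |dv|_{g_1}^2\,dV_{g_1}$; since $\Delta_i$ is the operator associated to this quadratic form via integration by parts against compactly supported test functions, the intertwining relation $\Delta_1(w\circ\phi) = (\Delta_2 w)\circ\phi$ follows. Alternatively one simply computes in coordinates adapted by $\phi$, using that the Jacobian of $\phi$ transforms $g_2$ into $g_1$.

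For the converse, I would argue pointwise. Fix $p\in N_1$, set $q = \phi(p)$, and choose local coordinates near $q$; transport them via $\phi$ to coordinates near $p$. The hypothesis $\Delta_1(w\circ\phi) = (\Delta_2 w)\circ\phi$ for all $w\in C_0^\infty(N_2)$, when read in these matched coordinate systems, forces equality of the two differential operators $\Delta_1$ and $(\phi^{-1})^*\Delta_2\phi^*$ as operators on smooth functions near $p$. Comparing their principal parts, the coefficient matrices of the second-order terms must agree, which says exactly that the inverse metric of $g_1$ at $p$ equals the pullback of the inverse metric of $g_2$ at $q$ under $d\phi_p$; equivalently $g_1 = \phi^*g_2$ at $p$. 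Since $p$ was arbitrary, $\phi$ is a Riemannian isometry. (To extract the principal symbol one can, for instance, apply both operators to the coordinate functions $x^k$ and $x^kx^\ell$, localized by a cutoff, which isolates the first- and second-order coefficients.)

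The main obstacle is bookkeeping rather than conceptual: one must be careful that "the same coordinates on both sides via $\phi$" genuinely reduces the identity of operators to an identity of coefficient functions, and that testing against enough functions (the coordinate monomials cut off to have compact support) really does pin down the full symbol and not merely some averaged quantity. Since the paper flags this lemma as well known and included for convenience, I would keep the write-up short, favoring the coordinate-free Dirichlet-form argument for the forward direction and the principal-symbol comparison for the converse.
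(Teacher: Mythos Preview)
Your proposal is correct and follows essentially the same approach as the paper. For the converse, both you and the paper isolate the second-order coefficients of the two Laplacians by testing against suitable compactly supported functions (the paper picks $w$ with prescribed single nonvanishing second partial at a point, which is exactly your ``coordinate monomials localized by a cutoff''), obtaining $g_2^{rs}\circ\phi = g_1^{ij}\,\partial_i\phi^r\,\partial_j\phi^s$ and hence $g_1=\phi^*g_2$. For the forward direction the paper simply reverses the same local-coordinate computation rather than invoking the Dirichlet form, but your coordinate-free argument is equally valid and you also note the coordinate alternative; this is a purely stylistic difference.
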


\begin{proof}
    For the backward direction and will work entirely in local coordinates. Let $V\subset N_1$ be open and be so small that we can find local coordinates $(x^1,\ldots, x^n)$ on $V$ while $(y^1, \ldots, y^n)$ are the respective local coordinates on $\phi(V)\subset N_2$. Now in these local coordinates, $g_1(x) = g_{1,ij}(x)dx^idx^j, g_2 = g_{2,ij}(y)dy^idy^j$ and $\phi = (\phi^1, \ldots,\phi^n)$.
    %where the inverse of the symmetric matrices $G = \{g_{ij}\}$ and $H = \{h_{ij}\}$ are the matrices $ G^{-1} = \{g^{ij}_1\}$ and $H^{-1} = \{h^{ij}\}$. 
    We let $\sqrt{g_1} := \sqrt{\text{det}(g_1)}$ and $\sqrt{g_2} := \sqrt{\text{det}(g_2)}$. Moreover, if $w\in C^\infty_0(\phi(V))$, then $w\circ\phi \in C^\infty_0(V)$. 
    
    By a direct computation in these local coordinates we have that
    \begin{align} 
        \Delta_1(w\circ \phi) 
        % &= \frac{1}{\sqrt{g_1}} \frac{\partial}{\partial x^j}\Big( g^{ij}_1\sqrt{g_1} \frac{\partial}{\partial x^i}(w\circ\phi)\Big) \nonumber\\
        % &= g^{ij}_1 \frac{\partial^2}{\partial x^i\partial x^j}(w\circ\phi) + \frac{\partial g^{ij}_1}{\partial x^j} \frac{\partial}{\partial x^i}(w\circ\phi) + \frac{g^{ij}_1}{\sqrt{g_1}} \frac{\partial \sqrt{g_1}}{\partial x^j}\frac{\partial}{\partial x^i}(w\circ\phi) \nonumber
       % \intertext{By the chain rule, in these local coordinates we have}
        %\frac{\partial}{\partial x^i}(w\circ\phi) &= \frac{\partial \phi^r}{\partial x^i} \Big(\frac{\partial w}{\partial y^r}\circ \phi\Big) \nonumber\\
        %\frac{\partial^2}{\partial x^i\partial x^j}(w\circ\phi) &= \frac{\partial \phi^r}{\partial x^i}\frac{\partial \phi^s}{\partial x^j} \Big(\frac{\partial^2 w}{\partial y^r\partial y^s}\circ\phi\Big) + \frac{\partial^2 \phi^r}{\partial x^i\partial x^j} \Big(\frac{\partial  w}{\partial y^r}\circ\phi\Big) \nonumber
        % \Delta_1(w\circ\phi) 
        &= 
        g^{ij}_1\frac{\partial \phi^r}{\partial x^i}\frac{\partial \phi^s}{\partial x^j} \Big(\frac{\partial^2 w}{\partial y^r\partial y^s}\circ\phi\Big) + \Big(g^{ij}_1 \frac{\partial^2\phi^r}{\partial x^i\partial x^j} + \frac{g^{ij}_1}{\sqrt{g_1}}\frac{\partial\sqrt{g_1}}{\partial x^j}\frac{\partial\phi^r}{\partial x^j} + \frac{\partial g^{ij}_1}{\partial x^j}\frac{\partial\phi^r}{\partial x^i}\Big)\Big(\frac{\partial w}{\partial y^r}\circ \phi\Big) \label{laplace_g}.
    \end{align}
    While 
    \begin{align}
        % \Delta_2(w) &= g^{rs}_2 \frac{\partial^2 w}{\partial y^r\partial y^s} + \frac{\partial g^{rs}_2}{\partial y^s}\frac{\partial w}{\partial y^r} + \frac{g^{rs}_2}{\sqrt{g_2}} \frac{\partial \sqrt{g_2}}{\partial y^s}\frac{\partial w}{\partial y^r}\nonumber
        % \intertext{and}
        [\Delta_2(w)]\circ\phi &= (g^{rs}_2\circ\phi) \Big(\frac{\partial^2 w}{\partial y^r\partial y^s} \circ\phi\Big)+ \Big[\Big(\frac{\partial g^{rs}_2}{\partial y^s} + \frac{g^{rs}_2}{\sqrt{g_2}} \frac{\partial \sqrt{g_2}}{\partial y^s}\Big)\circ\phi\Big]\Big(\frac{\partial w}{\partial y^r}\circ\phi\Big) \label{laplace_h}.    
    \end{align}

    Now for any point $p\in V$ and any indices $r_0, s_0\in \{1,\ldots,n\}$, there is a function $w\in C^\infty(\phi(V))$  so that in our local coordinates all of its first and second order partial derivatives, except $\frac{\partial^2 w}{\partial y^{r_0}\partial y^{s_0}} \circ\phi=1$, vanish at $p$.
    % we have that
    % \begin{align*}
    %     \Big(\frac{\partial w}{\partial y^r} \circ \phi\Big)(p) &= 0 \hspace{4mm}\text{for all $r\in\{1,\ldots,n\}$,}\\
    %     \Big(\frac{\partial^2 w}{\partial y^r\partial y^s} \circ\phi\Big) &= \begin{cases}
    %         1 \hspace{12mm}\text{if}\hspace{1mm} (r,s) = (r_0,s_0)\\
    %         0 \hspace{12mm}\text{if}\hspace{1mm} (r,s) \not=(r_0,s_0).
    %     \end{cases}    
    % \end{align*}
    By assumption we have that $\Delta_1(w\circ\phi) = [\Delta_2(w)]\circ\phi$, so for this specific function $w$ evaluated at $p$ we are able to conclude from $(\ref{laplace_g})$ and $(\ref{laplace_h})$ that 
    \begin{align}
        (g_2^{r_0s_0}\circ\phi)(p) &= g^{ij}_1(p)\frac{\partial \phi^{r_0}(p)}{\partial x^i}\frac{\partial \phi^{s_0}(p)}{\partial x^j} \label{coefficients_equal}.
    \end{align}
\begin{comment}
    This argument holds for all $r,s\in\{1,\ldots,n\}$. 
    We let $H^{-1}\circ\phi, G^{-1}, d\phi_p$ be the matrices with coefficients $\{(g^{rs}_2\circ\phi)(p)\}, \{g^{ij}_1(p)\}, \{\frac{\partial\phi^r(p)}{\partial x^i}\}$ respectively. The equality $(\ref{coefficients_equal})$ can be rewritten as the matrix product $H^{-1}\circ\phi = d\phi_p G^{-1} d\phi_p^T$ where $d\phi_p^T$ is the transpose of $d\phi_p$ and $H^{-1}\circ\phi$ is the pointwise matrix inverse of $H\circ\phi$. It then follows by properties of the inverses that $G = d\phi_p^T (H\circ\phi)d\phi_p$. If we look at the coefficients, then we have that
    $$
        g_{ij}(p) = (h_{rs}\circ\phi)(p) \frac{\partial\phi^r(p)}{\partial x^i}\frac{\partial \phi^s(p)}{\partial x^j}
    $$
    The equality above holds for all $p\in V$ from which we conclude that in these local coordinates we have 
    \begin{align}
        g_{ij} = (h_{rs}\circ\phi) \frac{\partial\phi^r}{\partial x^i}\frac{\partial \phi^s}{\partial x^j} \label{local_isometry}
    \end{align}
\end{comment}
    The equality $(\ref{coefficients_equal})$ is equivalent to $g_1 = \phi^*g_2$ in local coordinates. 
    
    The forward direction uses \eqref{laplace_g}--\eqref{coefficients_equal} to show that $\Delta_1(w\circ\phi) = [\Delta_2(w)]\circ\phi$ if $g_1 = \phi^* g_2$.
\end{proof}
The effect of the change of coordinates for the Magnetic-Schr\"odinger operator is described in the following lemma.

\begin{lem} \label{lem:magnetic_isometry}
    Let $(N_1,g_1)$ and $(N_2,g_2)$ be Riemannian manifolds with Magnetic-Schr\"odinger Operators $\magneticnum{1}$ and $\magneticnum{2}$ where all functions and covectors are smooth on their domains. Let $\phi:N_1\to N_2$ be a diffeomorphism. We have that $(g_1,A_1,V_1) = (\phi^*g_2,\phi^*A_2,\phi^*V_2)$ if and only if 
    \begin{equation}
    \label{eq:change_of_coordinates_for_L}
    \magneticnum{1}(w\circ \phi) = [\magneticnum{2}(w)]\circ \phi \hspace{4mm}\text{for all $w\in C^\infty_0(N_2)$.}
    \end{equation}
    In particular, if \eqref{eq:change_of_coordinates_for_L} holds then $\phi:(N_1,g_1)\to (N_2, g_2)$ is a Riemannian isometry. 
\end{lem}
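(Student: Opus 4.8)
The plan is to reduce everything to the already-established Lemma \ref{lem:laplace_isometry} together with a comparison of the lower-order parts of the operator. First I would handle the backward direction, assuming \eqref{eq:change_of_coordinates_for_L}. The key observation is that the principal symbol part of $\magneticnum{i}$ is $-\Delta_{g_i}$, and $\magneticnum{i} - (-\Delta_{g_i})$ is a first-order operator. Applying \eqref{eq:change_of_coordinates_for_L} to functions $w$ whose second-order jet at a point can be prescribed arbitrarily (exactly as in the proof of Lemma \ref{lem:laplace_isometry}), the second-order terms on both sides must match, forcing $\Delta_{g_1}(w\circ\phi) = [\Delta_{g_2}(w)]\circ\phi$. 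By Lemma \ref{lem:laplace_isometry} this gives $g_1 = \phi^* g_2$, hence also $\Delta_{g_1}(w\circ\phi)=[\Delta_{g_2}w]\circ\phi$ as full operators. Subtracting, \eqref{eq:change_of_coordinates_for_L} reduces to an identity between first-order operators:
\begin{align*}
-2i\langle A_1, d(w\circ\phi)\rangle_{g_1} + (id^*_{g_1}A_1 + |A_1|^2_{g_1} + V_1)(w\circ\phi)
= \big[-2i\langle A_2, dw\rangle_{g_2} + (id^*_{g_2}A_2 + |A_2|^2_{g_2} + V_2)w\big]\circ\phi.
\end{align*}
Since $g_1=\phi^*g_2$, the isometry $\phi$ intertwines $\langle\cdot,\cdot\rangle$, $|\cdot|$, gradients and $d^*$, so the right-hand side equals $-2i\langle \phi^*A_2, d(w\circ\phi)\rangle_{g_1} + (id^*_{g_1}(\phi^*A_2) + |\phi^*A_2|^2_{g_1} + \phi^*V_2)(w\circ\phi)$. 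Comparing the first-order (derivative-of-$w$) terms at each point, using that $d(w\circ\phi)_p$ can be any covector, yields $A_1 = \phi^*A_2$; comparing the remaining zeroth-order terms then yields $V_1 = \phi^*V_2$.

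For the forward direction, assume $(g_1,A_1,V_1)=(\phi^*g_2,\phi^*A_2,\phi^*V_2)$. Then $\phi$ is a Riemannian isometry, so Lemma \ref{lem:laplace_isometry} gives $\Delta_{g_1}(w\circ\phi)=[\Delta_{g_2}w]\circ\phi$. It remains to check that each lower-order term transforms correctly under an isometry: $\langle \phi^*A_2, d(w\circ\phi)\rangle_{g_1} = \langle A_2, dw\rangle_{g_2}\circ\phi$ (naturality of the pairing and $d(w\circ\phi)=\phi^*(dw)$), $d^*_{g_1}(\phi^*A_2) = (d^*_{g_2}A_2)\circ\phi$ (isometry invariance of the codifferential, or equivalently of the divergence and the volume density), $|\phi^*A_2|^2_{g_1} = |A_2|^2_{g_2}\circ\phi$, and $(\phi^*V_2)(w\circ\phi) = (V_2 w)\circ\phi$ trivially. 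Summing these according to \eqref{def:L} gives \eqref{eq:change_of_coordinates_for_L}. The final sentence of the lemma is then immediate: \eqref{eq:change_of_coordinates_for_L} implies $g_1=\phi^*g_2$ by the backward direction (or directly by the reduction to the Laplacian identity and Lemma \ref{lem:laplace_isometry}).

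I expect the main obstacle to be purely bookkeeping: cleanly separating \eqref{eq:change_of_coordinates_for_L} into its second-, first-, and zeroth-order components and justifying that at each point one may choose $w$ realizing an arbitrary $2$-jet, so that the term-by-term matching is legitimate. The isometry-invariance facts for $d^*$, $\langle\cdot,\cdot\rangle_g$ and $|\cdot|_g$ are standard and I would simply cite them (e.g.\ from a standard Riemannian geometry reference) rather than reprove them; the one subtlety worth a remark is the use of the density $dV_g$ rather than a volume form, but since $\phi$ pulls back $dV_{g_2}$ to $dV_{g_1}$ under an isometry, $d^*$ is intertwined regardless of orientability.
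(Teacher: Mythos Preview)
Your proposal is correct and follows essentially the same overall architecture as the paper (reduce to Lemma~\ref{lem:laplace_isometry}, then peel off lower-order terms), but the mechanism you use to separate the orders is different. The paper does \emph{not} do jet matching. Instead, for the backward direction it first plugs in a $w$ that is locally constant to isolate the zeroth-order identity $id_1^*A_1+|A_1|_{g_1}^2+V_1=(id_2^*A_2+|A_2|_{g_2}^2+V_2)\circ\phi$, and then plugs in \emph{real-valued} $w$ and uses the assumption that $A_j,V_j$ are real to split the remaining identity into its real part $\Delta_1(w\circ\phi)=[\Delta_2 w]\circ\phi$ and its imaginary part $\langle A_1,d(w\circ\phi)\rangle_{g_1}=\langle A_2,dw\rangle_{g_2}\circ\phi$, in one stroke. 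Lemma~\ref{lem:laplace_isometry} then gives $g_1=\phi^*g_2$, after which $A_1=\phi^*A_2$ and $V_1=\phi^*V_2$ drop out. Your order-by-order jet argument is more ``symbol-theoretic'' and has the mild advantage of not needing the reality hypothesis on $A,V$ at this stage; the paper's real/imaginary split is a bit slicker in that it sidesteps the bookkeeping of the first-order terms of the Laplacian entirely. Either route is fine.
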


\begin{proof}
    The forward direction follows from the definition of the pullback of tensors and by working in local coordinates as was done in Lemma \ref{lem:laplace_isometry}. Therefore we focus on proving the backward direction and assume that the equation \eqref{eq:change_of_coordinates_for_L} is valid. Due to \eqref{def:L} for any $w\in C^\infty_0(N_2)$ have that
    \begin{align*}
        \magneticnum{1}(w\circ \phi) &= -\Delta_1(w\circ \phi) - 2i\langle A_1,d_1(w\circ\phi)\rangle_1 + (id_1^*A_1 + |A_1|_1^2 + V_1)(w\circ \phi)
    \end{align*}
    and
    \begin{align*}
        [\magneticnum{2}(w)]\circ \phi &= [-\Delta_2(w)]\circ \phi - 2i\langle A_2,d_2w \rangle_2\circ \phi + [(id_2^*A_2 + |A_2|_2^2 + V_2)\circ\phi](w\circ \phi).
    \end{align*}
        First consider the function $w\in C^\infty_0(N_2)$ which is constant on some open bounded set $\phi(U) \subset N_2$. Then on $U$ we have the pointwise equality
    \begin{align}
id_1^*A_1 + |A_1|_1^2 + V_1 = \magneticnum{1}(w\circ \phi) &=  [\magneticnum{2}(w)]\circ \phi = (id_2^*A_2 + |A_2|_2^2 + V_2)\circ\phi. \nonumber
        \intertext{By assumption $A_j$ and $V_j$ are real-valued, so setting the real parts equal to each other and imaginary parts equal to each other, we have that}
        |A_1|_1^2 + V_1 = (|A_2|_2^2 + V_2)\circ \phi &\hspace{4mm}\text{and}\hspace{4mm}
        d_1^*A = (d_2^*A_2)\circ \phi. \label{eq:real_and_complex_part}
        \intertext{Since the equality (\ref{eq:real_and_complex_part}) holds pointwise on $N_1$, so for any $w\in C^\infty_0(N_2)$ we have that}
        -\Delta_1(w\circ \phi) - 2i\langle A_1, d_1(w\circ \phi)\rangle_1 &= [-\Delta_2w]\circ \phi - 2i\langle A_2,d_2w\rangle_2\circ \phi. \nonumber
        \intertext{Next take $w\in C_0^\infty(N_2)$ to be real valued, then since $A_j$ are real-valued covector fields we obtain}
        \Delta_1(w\circ \phi) = [\Delta_2(w)]\circ \phi&\hspace{4mm}\text{and}\hspace{4mm}
        \langle A_1, d_1(w\circ \phi)\rangle_1 = \langle A_2, d_2w\rangle_2\circ \phi. \label{eq:isometry}
    \end{align}
\begin{comment}
    We now briefly summarize what we have shown. If we have the commuting property of the Schr\"odinger operators and $\phi$, then for all $w\in C^\infty(N_2)$ we have the following pointwise equalities
    \begin{align}
        \Delta_1(w\circ \phi) &= [\Delta_2(w)]\circ \phi \label{isometry}\\
        \langle A, d_1(w\circ \phi)\rangle_1 &= \langle A_2, d_2w\rangle_2\circ \phi \label{AB}\\
        d_1^* A &= (d_2^*A_2)\circ \phi \nonumber\\
        |A|_1^2 + V_1 &= (|A_2|_2^2 + V_2)\circ \phi \label{0-order}
    \end{align}
\end{comment}
    By linearity of the Magnetic-Schr\"odinger operators, the equality $(\ref{eq:isometry})$ can be extended to all $w\in C_0^\infty(N_2)$. Then by Lemma \ref{lem:laplace_isometry}, equality (\ref{eq:isometry}) implies that $g_1 = \phi^*g_2$.
    
    We note that the equality $A_1 = \phi^*A_2$ can be established by working in local coordinates, using the second part of the equality $(\ref{eq:isometry})$ as well as $g_1 = \phi^*g_2$. In this proof we need to also incorporate the fact that the pullback and the exterior derivative commute \cite[Proposition 14.26]{lee_smooth}.
    Furthermore, the equality $A_1 = \phi^*A_2$ yields $|A_1|_1 = |A_2|_2 \circ\phi$. Hence, $V_1 = V_2\circ \phi = \phi^*V_2$ follows from the equality $(\ref{eq:real_and_complex_part})$.
 \begin{comment}   
    and that $B^{\#_2}_{\phi(p)} = d\phi_pA_p^{\#_1}$, then if $g_p$ and $h_{\phi(p)}$ are $g$ and $h$ evaluated at $p$ and $\phi(p)$ respectively, then by the isometry property we have
    \begin{align*}
        |A|_1^2(p) &= |A^{\#_1}|_1^2(p) = g_p(A^{\#_1}_p, A_p^{\#_1})\\
        &= h_{\phi(p)} (d\phi_pA_{p}^{\#_1}, d\phi_p A_p^{\#_1})\\
        &= h_{\phi(p)}(B_p^{\#_2}, B_p^{\#_2})\\
        &= |B^{\#_2}|_2^2(\phi(p)) = (|B|_2^2\circ\phi)(p)
    \end{align*}
    This shows that $|A|_1^2 = |B|_2^2\circ \phi$ pointwise, so by (\ref{0-order}) we must have that $V_1 = V_2\circ \phi$ as desired. 
\end{comment}
\end{proof}

\subsection{Definitions for the Cauchy Problem} \label{sub:definitions}

Let $C_0^\infty(N)$ be the set of compactly supported smooth complex valued functions with the topology induced by seminorms so that $C_0^\infty(N)$ is a well-defined topological vector space. We let $\mathcal{D}'(N)$ be the set of distributions on $C_0^\infty(N)$ with the weak-$*$ topology so that $\mathcal{D}'(N)$ is a well-defined topological vector space and the topological dual of $C_0^\infty(N)$.

As $\mathcal{D}'(N)$ is a well-defined topological space then $C([0,T];\mathcal{D}'(N))$ is a well-defined space of continuous functions from $[0,T]$ to $\mathcal{D}'(N)$. Explicitly we mean that if $u\in C([0,T];\mathcal{D}'(N))$ and if $\{t_n\} \subset [0,T]$ which converges to $t\in[0,T]$, then for any $\psi\in C_0^\infty(N)$ we have that $\langle u(t_n),\psi\rangle \to \langle u(t),\psi\rangle$ where $\langle u(t),\cdot\rangle$ is the distributional pairing on $(N,g)$. Moreover, if $u(t)\in L^2(N,g)$ then $\langle u(t),\psi\rangle = \int_N u(t,x)\overline{\psi(x)}dV_g$.

\begin{comment}
Since $\mathcal{D}'(N)$ is a topological vector space, we use the standard definition of differentiable at a point. 

Let $u:[0,T] \to \mathcal{D}'(N)$ be given so that the difference quotient 
$$
    \frac{u(t)-u(s)}{t-s} \in\mathcal{D}'(N)
$$
for all $t,s\in[0,T]$. We say that $u$ is differentiable at $t$ if there exists a distribution $u'(t) \in \mathcal{D}'(N)$ such that
for any $\{t_n\}\subset [0,T]$ which converges to $t$ we have that
$$
    u'(t) = \lim_{n\to\infty} \frac{u(t)-u(t_n)}{t-t_n}
$$
where the quotients $\{\frac{u(t)-u(t_n)}{t-t_n}\}$ converge in the topology on $\mathcal{D}'(N)$. Explicitly, we mean that for any $\psi \in C_0^\infty
(N)$ that
$$
    \langle u'(t),\psi\rangle =\lim_{n\to\infty} \Big\langle\frac{u(t) - u(t_n)}{t-t_n},\psi \Big\rangle.
$$
From this definition it is clear that if $u$ is a distribution which is differentiable at $t$ then for all $\psi\in C_0^\infty(N)$ we have that $t\to \langle u(t),\psi\rangle$ is a differentiable function and
$$
    \langle u'(t),\psi\rangle = \frac{d}{dt}\langle u(t),\psi\rangle.
$$
\end{comment}

If a distribution $u\in C([0,T];\mathcal{D}'(N))$ is differentiable at all $t\in [0,T]$ and 
\\
$u' \in C([0,T];\mathcal{D}'(N))$, we shall say that $u$ is a continuously differentiable distribution and denote this set by $C^1([0,T];\mathcal{D}'(N))$. We define the spaces $C^k([0,T];\mathcal{D}'(N))$ inductively for all $k\in \mathbb{N}$. 
% One can also replace $\mathcal{D}'(N)$ in the definition above with any topological vector space such as the Sobolev space $H^k(N,g)$. 
We shall also let $L^2(0,T;\mathcal{D}'(N))$ be the set of distributions such that $t\to \langle u(t),\psi(t,\cdot)\rangle$ is an element of $L^2([0,T])$ for all $\psi\in C^\infty_0([0,T]\times N)$.
\begin{defn}
    We say that a distribution $u \in C^{1}([0,T];\mathcal{D}'(N))$ is a distributional solution to the Cauchy Problem (\ref{eq:cauchy_problem}) with initial conditions $u_0,u_1\in\mathcal{D}'(N)$ and source $f\in L^2((0,T); \mathcal{D}'(N))$ if
    \begin{align}
        \int_{0}^T\langle u(t), (\partial_t^2+\mathcal{L}_{g,A,V})w(t,\cdot)\rangle dt&= \int_{0}^T\langle f(t),w(t,\cdot)\rangle dt +\langle u_0,\partial_tw(0,\cdot)\rangle-\langle u_1,w(0,\cdot)\rangle
        \label{eq:distributional_solution}
    \end{align}
    for all $w\in C^\infty_0([0,T)\times N)$. 
\end{defn}

\begin{rem}
    Note that we require $u\in C^1([0,T];\mathcal{D}'(N))$ so that $u(0) = u_0$ and $\partial_tu(0) = u_1$ are well-defined. Note that (\ref{eq:distributional_solution}) is satisfied for any smooth solution to (\ref{eq:cauchy_problem}).
\end{rem}

Next we recall the definition of Sobolev spaces on $(N,g)$ for all nonnegative integers. 

\begin{defn} \label{def:sobolev_def}
    Let $(N,g)$ be a Riemannian manifold and for any $k \in\{0,1,2,\ldots\}$.
    \begin{align*}
        \mathcal{H}^{k}(N,g) :&= \Big\{ u\in C^\infty(N)\Big|(\forall j\in \{0,1,\ldots,k\})\,\, \int_{N}   |\nabla^j_g u|^2_g dV_g < \infty \Big\}.
    \end{align*}
    Here $\nabla_g$ is the Levi-Civita connection. We then define an inner product on $\mathcal{H}^{k}(N,g)$ by
    \begin{align*}
        \langle u,v\rangle_k &= \sum_{j=0}^k \int_{N} \langle \nabla^ju,\overline{\nabla^j v}\rangle_gdV_g.
    \end{align*}
    % It follows that $(\mathcal{H}^{k}(N,g),\langle\cdot,\cdot\rangle_k)$ is a complex inner product space. 
    Finally, we define $H^{k}(N,g)$ to be the completion of $\mathcal{H}^{k}(N,g)$ with respect to $\norm{\cdot}_{H^k(N,g)}$. When $k = 0$ we let the space and norm to be denoted by $L^2(N,g)$ and $\norm{\cdot}_{L^2(N,g)}$ respectively.
\end{defn}

Note that the Sobolev space $H^k(N,g)$ depends on the choice of Riemannian metric $g$. In the case that $N$ is a closed manifold, the space of functions $H^k(N,g)$ is the same for all Riemannian metrics and norms induced by the Sobolev inner products are all equivalent. However, if $N$ is not compact, then the Sobolev spaces can be different depending on the choice of geometry.
% Uncomment for Dissertation
%For example, if $N =\R$ then we can consider the standard euclidean metric and the weighted metric $g = \frac{1}{1+x^2}$. In the case of the Euclidean metric the constant functions are not in Sobolev spaces, but in the latter case constant functions will be in the Sobolev spaces. 
For a discussion of Sobolev spaces on Riemannian manifolds see \cite{hebey_sobolev} or \cite{aubin2012nonlinear}.

\subsection{Closed Manifold Case}
\label{sub:closed_manifold}
We first prove that Cauchy problem (\ref{eq:cauchy_problem}) is wellposed in the case when $(N,g)$ is a closed manifold (i.e. a compact manifold with no boundary) using results from Spectral Theory. For this reason we require that $\mathcal{L}_{g,A,V}$ is a symmetric operator with respect to the smooth density $dV_g$.

% Uncomment for dissertation
\begin{comment}
\begin{lem}\label{lem:eigenfunctions}
    The Magnetic-Schr\"odinger operator $\mathcal{L}_{g,A,V}$ has eigenfunctions $\{\varphi_j\}\subset C^\infty(N)$ which form a complete orthonormal system for $L^2(N,g)$ and has real eigenvalues $\{\lambda_j\}$ such that $\mathcal{L}_{g,A,V}\varphi_j = \lambda_j\varphi_j$ for all $j$ and $\lambda_j\to\infty$.    
\end{lem}
\begin{proof}
    See \cite[Theorem 8.3]{shubin_psdo} or \cite[Section 5.1]{taylor_partial}. 
\end{proof}
Since the second order term of $\mathcal{L}_{g,A,V}$ is the elliptic Laplace-Beltrami operator $-\Delta_g$, \cite[Corollary 9.3]{shubin_psdo} implies the following result.
\begin{cor}
\label{cor:spectral_prop_of_L}
    There is a constant $D > 0$ so that for all $u\in C^\infty(N)$ we have that
    \begin{align*}
        (\mathcal{L}_{g,A,V}(u),u)_g \geq -D(u,u)_g
    \end{align*}
    In particular, $\mathcal{L}_{g,A,V}$ has only finitely many negative eigenvalues, and all the eigenvalues $\{\lambda_j\}$ can be sorted in an ascending order so that
    $$
    - D \leq \lambda_1 \leq\ldots \leq \lambda_j\to \infty
    $$
\end{cor}
\begin{proof}
    See \cite[Corollary 9.3]{shubin_psdo}.
\end{proof}
By making $D > 0$ larger if needed, we may assume that $\mathcal{L}_{g,A,V} + D$ is symmetric positive definite operator on $C^\infty
(N)$. From the smoothness of the eigenfunctions, we can now give a spectral definition of Sobolev spaces for all integer values $k\in\Z$.
\end{comment}

\begin{defn}\label{def:sobolev_spectral_def}
    Let $\magnetic$ be a smooth Magnetic-Schr\"odinger operator on the closed manifold $(N,g)$, let $\{(\lambda_j,\varphi_j)\}$ be the $L^2(N,g)$ orthonormal eigenfunctions and eigenvalues associated with $\mathcal{L}_{g,A,V}$ on $N$, let $D > 0$ be such that $\magnetic + D$ is positive definite. For any $k\in\Z$ we let 
    \begin{align}
        H^k(N) :&= \Big\{\sum_jc_j\varphi_j :\sum_j |c_j|^2|\lambda_j + D|^k < \infty\Big\}.
        \intertext{We define a norm on $H^k(N)$ by}
            \norm{\sum_j c_j\varphi_j}_k^2 :&= \sum_j |c_j|^2|\lambda_j + D|^k  \hspace{4mm}\text{for all}\hspace{4mm} \sum_jc_j\varphi_j\in H^k(N)\nonumber
    \end{align}
\end{defn}

The existence of $L^2(N,g)$ orthonormal eigenfunctions and eigenvalues $\{(\lambda_j,\varphi_j)\}$ associated with $\mathcal{L}_{g,A,V}$ on $N$ and such a constant $D > 0$ rely on the compactness of $N$ and the proofs can be found in \cite[Theorem 8.3]{shubin_psdo}, \cite[Section 5.1]{taylor_partial} and \cite[Corollary 9.3]{shubin_psdo}.

At this point $H^k(N)$ is a set of functions with a topology, but we have not established that $H^k(N)$ is a Hilbert space or what the relationship between $H^k(N)$ and the Sobolev space $H^k(N,g)$ as given in Definition \ref{def:sobolev_def} is. One can use the following Proposition to prove that $H^k(N)$ is a Hilbert space for any $k\in\Z$.

\begin{prop}
\label{prop:equiv_of_Sob_spaces}
     Let $\magnetic$ be a smooth Magnetic-Schr\"odinger operator on the closed manifold $(N,g)$, let $\{(\lambda_j,\varphi_j)\}$ be the $L^2(N,g)$ orthonormal eigenfunctions and eigenvalues associated with $\mathcal{L}_{g,A,V}$ on $N$, let $D > 0$ be such that $\magnetic + D$ is positive definite. Let $\{c_j\} \subset \C$ and $k \in \mathbb{Z}$. The following are equivalent
    \begin{enumerate}
        \item The series $\sum_j |c_j|^2|\lambda_j + D|^{k}$
        converges
        \item The series $\sum_j c_j\varphi_j$ converges in the $H^{k}(N)-$topology
    \end{enumerate}
    
The following are also equivalent
    \begin{enumerate}
        \item The series converges $\sum_j |c_j|^2|\lambda_j+D|^{k}$
        for every $k \in \Z$
        \item The series $\sum_j c_j\varphi_j$ converges in the $C^\infty(N)-$topology
        % \item $\sum_{j} c_j\varphi_j \in C^\infty(N)$
    \end{enumerate}
\end{prop}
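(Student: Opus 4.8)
The plan is to prove both equivalences by direct estimates relating the spectral norms $\|\cdot\|_k$ to the intrinsic Sobolev norms $\|\cdot\|_{H^k(N,g)}$ of Definition \ref{def:sobolev_def}, using elliptic regularity for the operator $\magnetic + D$. Throughout I write $\mu_j := \lambda_j + D$, so $\mu_j \geq \max\{|\lambda_j|, 1\} \geq 1$ and $\magnetic + D$ acts on the eigenbasis by $(\magnetic + D)\varphi_j = \mu_j \varphi_j$.

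For the first equivalence, the implication (2)$\Rightarrow$(1) is immediate from the definition of the $H^k(N)$-topology: convergence of $\sum_j c_j \varphi_j$ in that topology means precisely that the partial sums form a Cauchy sequence in the norm $\|\cdot\|_k$, hence $\sum_j |c_j|^2 \mu_j^k < \infty$. For (1)$\Rightarrow$(2), I would first treat the case $k \geq 0$ and the case $k < 0$ separately. When $k \geq 0$, the operator $(\magnetic+D)^{k/2}$ (defined spectrally for $k$ even, or one works with $(\magnetic+D)^{\lceil k/2\rceil}$) is an elliptic differential/pseudodifferential operator of order $k$ on the closed manifold $N$; by elliptic estimates on $N$ there are constants $0 < c \leq C$ with
\begin{align*}
    c\,\|u\|_{H^k(N,g)}^2 \leq \|u\|_k^2 \leq C\,\|u\|_{H^k(N,g)}^2 \quad \text{for all } u \in C^\infty(N),
\end{align*}
and since $H^k(N,g)$ is complete (Definition \ref{def:sobolev_def}), the Cauchy sequence of partial sums converges in $H^k(N,g)$, hence in $\|\cdot\|_k$. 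For $k < 0$ one dualizes: $H^k(N)$ is identified with the dual of $H^{-k}(N)$ via the $L^2$ pairing, and the partial sums converge in the dual norm because their $\|\cdot\|_k$-norms are Cauchy; alternatively, $\mu_j^k \leq \mu_j^0 = 1$ shows the series converges in $L^2$ and then a telescoping estimate upgrades convergence to the $H^k(N)$-topology. The main technical point, and the one I expect to require the most care, is the two-sided elliptic estimate identifying the spectrally-defined norm $\|\cdot\|_k$ with the geometric Sobolev norm $\|\cdot\|_{H^k(N,g)}$; this is where compactness of $N$ is genuinely used, via Gårding's inequality and the fact that $\magnetic + D$ is a positive-definite elliptic operator, and one should cite the standard references already invoked in the text (\cite{shubin_psdo}, \cite{taylor_partial}).

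For the second equivalence, (1)$\Rightarrow$(2) follows by applying the first equivalence for every $k$: $\sum_j c_j \varphi_j$ converges in $H^k(N,g)$ for all $k \in \Z$, hence in $\bigcap_k H^k(N,g) = C^\infty(N)$ by the Sobolev embedding theorem on the closed manifold $N$ (a standard argument shows the intersection of all Sobolev spaces, with its natural Fréchet topology, coincides with $C^\infty(N)$, since $H^k(N,g) \hookrightarrow C^m(N)$ whenever $k > m + n/2$). Conversely, if $\sum_j c_j \varphi_j$ converges in $C^\infty(N)$, then in particular the sum $u := \sum_j c_j \varphi_j$ is a smooth function, and for each $k$ one applies $(\magnetic + D)^{k}$ (spectrally) — which maps $C^\infty(N)$ to $C^\infty(N)$ and hence into $L^2(N,g)$ — to obtain $\sum_j c_j \mu_j^{k} \varphi_j \in L^2(N,g)$, so that $\sum_j |c_j|^2 \mu_j^{2k} < \infty$; as $k$ ranges over $\Z$ this gives (1). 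Here the only subtlety is checking that the spectral operator $(\magnetic+D)^k$ genuinely preserves $C^\infty(N)$ and that termwise application is justified, which again rests on elliptic regularity on the compact manifold and the fact that $c_j = (u, \varphi_j)_g$ decays faster than any polynomial in $\mu_j$ when $u$ is smooth.
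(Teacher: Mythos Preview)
Your proposal is correct. The paper does not actually prove this proposition: it simply cites \cite[Proposition 10.2]{shubin_psdo}. So there is no detailed in-paper argument to compare against, and your sketch is a reasonable outline of the standard proof that Shubin gives.

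One remark: you are working harder than necessary on the first equivalence. Since $H^k(N)$ is \emph{defined} spectrally in Definition~\ref{def:sobolev_spectral_def} (as the set of formal series $\sum_j c_j\varphi_j$ with $\sum_j|c_j|^2\mu_j^k<\infty$, equipped with the norm $\|\cdot\|_k$), the equivalence (1)$\Leftrightarrow$(2) is essentially the completeness of a weighted $\ell^2$ space: if $\sum_j|c_j|^2\mu_j^k<\infty$ then the element $u=\sum_jc_j\varphi_j$ lies in $H^k(N)$ by definition and $\|u-\sum_{j\le J}c_j\varphi_j\|_k^2=\sum_{j>J}|c_j|^2\mu_j^k\to 0$; conversely, convergence of the partial sums in $\|\cdot\|_k$ forces the tails of $\sum_j|c_j|^2\mu_j^k$ to vanish. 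No elliptic regularity is needed here, and there is no reason to split into $k\ge 0$ versus $k<0$. The elliptic estimate you invoke (the equivalence of $\|\cdot\|_k$ with $\|\cdot\|_{H^k(N,g)}$) is the content of the \emph{next} result, Lemma~\ref{lem:space_spectral_def}, not of this one.

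Where elliptic regularity and Sobolev embedding genuinely enter is in the second equivalence, and there your argument is exactly the standard one: $\bigcap_k H^k(N)=C^\infty(N)$ via Kondrakov/Sobolev embedding on the closed manifold, and smoothness of $u$ forces rapid decay of $c_j=(u,\varphi_j)_g$ because $(\magnetic+D)^m u\in L^2$ for every $m$. Your treatment of this part is fine.
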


\begin{proof}
    See \cite[Proposition 10.2]{shubin_psdo}.
    % Uncomment for dissertation
  %  We summarize the proof in Theorem \ref{thm:sobolev_space_categorization}.
\end{proof}

Since the Sobolev spaces in Definition \ref{def:sobolev_def} are only defined for positive integers $k\in\N$, so we can only compare $H^k(N)$ and $H^k(N,g)$ when $k\in\N$. 

\begin{lem} \label{lem:space_spectral_def}
    Let $\magnetic$ be a smooth Magnetic-Schr\"odinger operator on the closed manifold $(N,g)$, let $\{(\lambda_j,\varphi_j)\}$ be the $L^2(N,g)$ orthonormal eigenfunctions and eigenvalues associated with $\mathcal{L}_{g,A,V}$ on $N$, let $D > 0$ be such that $\magnetic + D$ is positive definite. Then for all $k\in\Z$ we have that
    \begin{align*}
        \norm{w}_{k}^2 &= ((\mathcal{L}_{g,A,V} + D\cdot \text{Id})^k w,w)_{L^2(N,g)} \hspace{4mm}\text{ for all $w\in C^\infty(N)$.}
    \end{align*}
    If $k\in\mathbb{N}$, then $H^k(N) = H^k(N,g)$ as Hilbert spaces and the norms $\norm{\cdot}_k$ and $\norm{\cdot}_{H^k(N,g)}$ are equivalent.
\end{lem}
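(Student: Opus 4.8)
The plan is to set $T := \mathcal{L}_{g,A,V} + D\cdot\mathrm{Id}$ and argue throughout via its spectral decomposition. By the choice of $D$ the operator $T$ is symmetric and positive definite on $C^\infty(N)$, its spectrum is $\{\lambda_j + D\}\subset[1,\infty)$, and $T$ is invertible on $L^2(N,g)$ (with $T^{-1}$ acting by $\varphi_j\mapsto(\lambda_j+D)^{-1}\varphi_j$ and mapping $C^\infty(N)$ into itself, by elliptic regularity). For the identity I would fix $w\in C^\infty(N)$, write $w=\sum_j c_j\varphi_j$ with $c_j=(w,\varphi_j)_{L^2(N,g)}$, and use Proposition \ref{prop:equiv_of_Sob_spaces} to see that the series converges in $C^\infty(N)$ and that $\sum_j|c_j|^2(\lambda_j+D)^\ell<\infty$ for every $\ell\in\mathbb{Z}$. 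Applying $T$ (or $T^{-1}$) term by term then gives $T^k w=\sum_j c_j(\lambda_j+D)^k\varphi_j$ in $C^\infty(N)$, hence in $L^2(N,g)$, for all $k\in\mathbb{Z}$; pairing with $w$, using orthonormality of $\{\varphi_j\}$ and $\lambda_j+D>0$, yields $(T^k w,w)_{L^2(N,g)}=\sum_j|c_j|^2(\lambda_j+D)^k=\|w\|_k^2$, which is the first claim.

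For the second claim, fix $k\in\mathbb{N}$. First I would reduce it to the equivalence of $\|\cdot\|_k$ and $\|\cdot\|_{H^k(N,g)}$ on $C^\infty(N)$: both $H^k(N)$ and $H^k(N,g)$ embed into $L^2(N,g)$ (since $\lambda_j+D\ge1$ gives $\|u\|_{L^2}\le\|u\|_k$, and $\|u\|_{L^2}\le\|u\|_{H^k(N,g)}$ trivially), and each is the closure of $C^\infty(N)$ there — for $H^k(N)$ because the partial sums $\sum_{j\le M}c_j\varphi_j$ lie in $C^\infty(N)$ and converge in $\|\cdot\|_k$, and for $H^k(N,g)$ because $\mathcal{H}^k(N,g)=C^\infty(N)$ on the closed manifold $N$. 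Once the two norms are known to be equivalent on $C^\infty(N)$, the completions coincide as subspaces of $L^2(N,g)$ with equivalent Hilbert norms, which is exactly the assertion.

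To prove the equivalence I would write $k=2m+r$ with $r\in\{0,1\}$ and, for $w\in C^\infty(N)$, set $v:=T^m w\in C^\infty(N)$. By the first claim and symmetry of $T$, $\|w\|_k^2=(T^{2m+r}w,w)_{L^2}=(T^r v,v)_{L^2}$, equal to $\|v\|_{L^2}^2$ if $r=0$ and to $(Tv,v)_{L^2}$ if $r=1$. A direct integration by parts in the Laplace term, together with Young's inequality to absorb the first-order magnetic term and the boundedness of $A,V,d^{\ast}A$ on the compact $N$, gives $(Tv,v)_{L^2}\le C\|v\|_{H^1(N,g)}^2$ as well as the Gårding-type bound $(Tv,v)_{L^2}\ge c_0\|v\|_{H^1(N,g)}^2-C_0\|v\|_{L^2}^2$; combining the latter with the spectral lower bound $(Tv,v)_{L^2}\ge\|v\|_{L^2}^2$ removes the $\|v\|_{L^2}^2$ term and produces $(Tv,v)_{L^2}\ge c\|v\|_{H^1(N,g)}^2$. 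So in either case $(T^r v,v)_{L^2}$ is comparable to $\|v\|_{H^r(N,g)}^2$. Finally, $T^m$ is an invertible elliptic differential operator of order $2m$ on the closed manifold $N$, so by standard elliptic theory (e.g.\ \cite{shubin_psdo}) it restricts to a topological isomorphism $H^{2m+r}(N,g)\to H^r(N,g)$; hence $\|v\|_{H^r(N,g)}=\|T^m w\|_{H^r(N,g)}$ is comparable to $\|w\|_{H^{2m+r}(N,g)}=\|w\|_{H^k(N,g)}$. Chaining these comparisons gives $\|w\|_k\asymp\|w\|_{H^k(N,g)}$ on $C^\infty(N)$, completing the proof.

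The one genuinely external ingredient will be the identification of $T^m$ with a Sobolev isomorphism on $N$, which rests on the ellipticity and invertibility of $T$; everything else is bookkeeping with the eigenfunction expansion plus an elementary estimate. The only subtlety I anticipate is the odd case $r=1$: the $H^1$-coercivity $(Tv,v)\ge c\|v\|_{H^1}^2$ cannot be read off directly from positive-definiteness of $T$, and the small bootstrap using $(Tv,v)\ge\|v\|_{L^2}^2$ — precisely where the hypothesis $\lambda_j+D\ge1$ enters — is what removes the lower-order term from the Gårding inequality.
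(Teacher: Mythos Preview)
Your argument is correct. The paper's own proof simply refers the reader to \cite[Proposition~10.2]{shubin_psdo}, and what you have written is essentially a faithful unpacking of that reference: the first identity via the eigenfunction expansion of $w$, and the norm equivalence by combining a G\aa rding-type lower bound for $T=\mathcal{L}_{g,A,V}+D$ with the fact that the invertible elliptic operator $T^m$ is a topological isomorphism $H^{2m+r}(N,g)\to H^r(N,g)$.
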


\begin{proof}
    See the proof of \cite[Proposition 10.2]{shubin_psdo}. 
\end{proof}

\begin{rem} If $(N,g)$ is a closed manifold and $k \in \mathbb{Z}$, then the topological dual to $H^k(N)$ is isomorphic to $H^{-k}(N)$. See \cite[Theorem 7.7]{shubin_psdo} or \cite[Proposition 3.2]{taylor_partial}.
\end{rem}

We will also need to introduce Sobolev Spaces which depend on time.

% Uncomment for dissertation
\begin{comment}
We also give a summary of the results in Subsection \ref{sec:time_dependent_sobolev_spaces} and cover their definitions. In particular see Theorem \ref{thm:time_sobolev_spectral} for a proof that the following space is equivalent to the classical definition for a time dependent Sobolev space given in \cite{evans2010partial}.
\end{comment}

\begin{defn} \label{def:time_sobolev_spectral_space}
    For any $T > 0$, assume that $f(t,\cdot)\in L^2(N)$ for a.e. $t\in [0,T]$ and let $c_j(t) = (f(t,\cdot),\varphi_j)_g$ be the $j^{th}$ Fourier coefficients. For any $k \in \Z$ and $l\in\{0,1,2,\ldots\}$ we say that $ f \in H^l(0,T;H^{k}(N))$ provided that
    $$
        \sum_j \norm{c_j}_{H^l(0,T)}^2(\lambda_j+D)^{k} <\infty
    $$
    Then $H^{l}(0,T;H^{k}(N))$ forms a Hilbert space with norm
    $$
        \norm{f}_{H^l(0,T;H^{k}(N))}^2 := \sum_j\norm{c_j}_{H^l(0,T)}^2(\lambda_j+D)^{k}    $$
\end{defn}
In addition to characterization of Sobolev spaces above, we will also need Sobolev Inequalities which are described by Kondrakov's Theorem. Various versions of the following standard result and their proof can be found in \cite[Theorem 7.6]{shubin_psdo},  \cite[Theorem 2.34]{aubin2012nonlinear}, \cite[Theorem 2.34]{aubin_nonlinear} and \cite[Proposition 3.3]{taylor_partial}.
\begin{lem} \label{sobolev_inequality}
    Let $(N,g)$ be a compact Riemannian manifold with or without smooth boundary of dimension $n$. If $k > \frac{n}{2} + m$, then we have the compact embedding $H^{k}(N)\subset C^m(N)$.    
\end{lem}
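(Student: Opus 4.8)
The plan is to reduce the statement to the classical Sobolev and Rellich--Kondrachov theorems on Euclidean domains by means of a finite atlas and a subordinate partition of unity, exploiting the compactness of $N$. First I would fix a finite cover of $N$ by coordinate charts $\{(U_\alpha,\psi_\alpha)\}_{\alpha=1}^{M}$, where each $\psi_\alpha$ maps $U_\alpha$ diffeomorphically onto an open ball $B\subset\R^n$ when $U_\alpha$ is an interior chart, and onto a half-ball $B^+\subset\R^n_+$ when $U_\alpha$ meets $\partial N$; here the smoothness of $\partial N$ is precisely what makes such boundary charts available. I would then choose a smooth partition of unity $\{\chi_\alpha\}$ subordinate to this cover. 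By Definition \ref{def:sobolev_def} (and, in the closed case, its equivalence with the spectral definition via Lemma \ref{lem:space_spectral_def}), for $u\in H^k(N)$ each localized piece $(\chi_\alpha u)\circ\psi_\alpha^{-1}$ lies in the Euclidean Sobolev space of order $k$ on $B$ (resp.\ $B^+$), with norm controlled by $\norm{u}_{H^k(N)}$: indeed, on the relatively compact set $\psi_\alpha(U_\alpha)$ the metric coefficients $g_{ij}$ and their derivatives up to order $k$ are uniformly bounded above and away from degeneracy, so the Riemannian Sobolev norm built from $\nabla_g$ is equivalent to the flat one.

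Next I would invoke the Euclidean embedding results: when $k>\tfrac n2+m$ one has the continuous embedding $H^k(B)\hookrightarrow C^m(\overline B)$, and in the half-ball case the same conclusion holds after composing with a bounded linear extension operator $E\colon H^k(B^+)\to H^k(B)$, whose existence uses only the flat model and a standard reflection/Stein-type construction. Summing over $\alpha$, using $u=\sum_\alpha \chi_\alpha u$ and transporting back to $N$, I obtain $\norm{u}_{C^m(N)}\lesssim \norm{u}_{H^k(N)}$, which establishes that the embedding $H^k(N)\hookrightarrow C^m(N)$ is at least continuous.

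For compactness I would pick an auxiliary exponent $k'$ with $k>k'>\tfrac n2+m$ and factor the embedding as
\[
H^k(N)\hookrightarrow H^{k'}(N)\hookrightarrow C^m(N).
\]
The second arrow is continuous by the previous paragraph. The first arrow is compact: this is the Rellich--Kondrachov theorem, proved by the same chart-and-partition-of-unity reduction --- a bounded sequence in $H^k(N)$ yields, in each chart, a sequence bounded in $H^k$ of a bounded Euclidean domain, from which the Euclidean Rellich theorem extracts a subsequence converging in $H^{k'}$, and a finite diagonal argument over the charts produces a subsequence converging in $H^{k'}(N)$. Since the composition of a compact operator with a continuous one is compact, the claim follows. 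Alternatively, one could bypass the intermediate space by taking $k>\tfrac n2+m+\gamma$ for some $\gamma\in(0,1)$, using that the Euclidean embedding actually lands in $C^{m,\gamma}$, so that a bounded sequence in $H^k(N)$ has derivatives up to order $m$ uniformly bounded and equicontinuous, and then applying the Arzel\`a--Ascoli theorem directly.

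The argument is entirely standard, so there is no genuine obstacle; the only points that require care are the verification that the Riemannian Sobolev norm of Definition \ref{def:sobolev_def} is locally equivalent to the Euclidean one on each chart, and, in the presence of a boundary, the use of a bounded extension operator near $\partial N$, which is exactly where the smoothness assumption on $\partial N$ is used. This is why we merely record the result and refer to \cite{shubin_psdo, aubin2012nonlinear, aubin_nonlinear, taylor_partial} for the details.
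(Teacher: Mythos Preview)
Your proposal is correct and follows the standard partition-of-unity reduction to the Euclidean case; the paper itself gives no argument at all and simply cites \cite{shubin_psdo}, \cite{aubin2012nonlinear}, \cite{aubin_nonlinear}, and \cite{taylor_partial}. Your sketch is thus more detailed than what the paper records, but it is precisely the approach those references implement.
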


We also have the following embedding Theorem for $H^l(0,T;H^k(N))$.

\begin{lem}[Calculus in an abstract space]\label{lem:calculus_abstract_space}
    Let $(N,g)$ be a closed manifold. Let $u\in H^l(0,T;H^{k}(N))$ for $l \in \{1,2,\ldots\}$, then
    $$
        u \in \bigcap_{r=0}^{l-1} C^r([0,T];H^{k-r}(N))
    $$
    and there exists a constant $C > 0$ which depends on $T, N, g$ such that 
    $$
        \max_{0\leq r\leq l-1}\max_{0\leq t \leq T}\norm{ \partial_t^{(r)}u(t,\cdot)}_{H^{k}(N,g)} \leq C \norm{u}_{H^l(0,T;H^{k}(N,g))}
    $$
    In particular, if $k > \frac{n}{2} + m$ and $l > m$ for $m\in \{0,1,2,\ldots\}$ then there exists a constant $C > 0$ such that
    $$
        \norm{u}_{C^m([0,T]\times N)} \leq C \norm{u}_{H^l(0,T;H^k(N,g))}
    $$
\end{lem}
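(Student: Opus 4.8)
\emph{Proof sketch.} The plan is to pass to the spectral representations of Definitions \ref{def:sobolev_spectral_def} and \ref{def:time_sobolev_spectral_space} and reduce the statement to one-dimensional facts about the Fourier coefficients together with a Weierstrass $M$-test in the Hilbert spaces $H^{k-r}(N)$. Write $u(t,\cdot)=\sum_j c_j(t)\varphi_j$ with $c_j(t)=(u(t,\cdot),\varphi_j)_g$, so that, by definition, $u\in H^l(0,T;H^k(N))$ is equivalent to $\sum_j\norm{c_j}_{H^l(0,T)}^2(\lambda_j+D)^k<\infty$. First I would invoke the one-dimensional Sobolev embedding $H^1(0,T)\hookrightarrow C([0,T])$: since $c_j^{(r)}\in H^{l-r}(0,T)\subseteq H^1(0,T)$ whenever $0\le r\le l-1$, there is a constant $C_T$, depending only on $T$ and $l$, with $|c_j^{(r)}(t)|\le C_T\norm{c_j}_{H^l(0,T)}$ for all $t\in[0,T]$, all $0\le r\le l-1$, and all $j$.

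Next I would set $v_r(t):=\sum_j c_j^{(r)}(t)\varphi_j$ for $0\le r\le l-1$ and use the above bound, together with $\lambda_j+D\ge1$, to get $\norm{v_r(t)}_{k-r}^2\le C_T^2\sum_j\norm{c_j}_{H^l(0,T)}^2(\lambda_j+D)^k=C_T^2\norm{u}_{H^l(0,T;H^k(N))}^2$ for every $t$. By Proposition \ref{prop:equiv_of_Sob_spaces} the partial sums $\sum_{j\le M}c_j^{(r)}(t)\varphi_j$ converge in $H^{k-r}(N)$; since their tails are dominated uniformly in $t$ by $C_T^2\sum_{j>M}\norm{c_j}_{H^l(0,T)}^2(\lambda_j+D)^k$, which tends to $0$, the convergence is uniform on $[0,T]$ in the $H^{k-r}(N)$ norm. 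Each partial sum being continuous from $[0,T]$ into $H^{k-r}(N)$, the uniform limit $v_r$ lies in $C([0,T];H^{k-r}(N))$ and $\max_{0\le t\le T}\norm{v_r(t)}_{k-r}\le C_T\norm{u}_{H^l(0,T;H^k(N))}$; this is the asserted norm estimate (reading the left-hand norm as $\norm{\cdot}_{H^{k-r}(N,g)}$).

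To identify $v_r$ with $\partial_t^r u$, I would note that for $0\le r\le l-2$ each $c_j^{(r)}$ is absolutely continuous with $c_j^{(r)}(t)=c_j^{(r)}(0)+\int_0^t c_j^{(r+1)}(s)\,ds$; summing over $j$ and interchanging sum and integral — which is legitimate by the uniform bound of the previous paragraph applied to $v_{r+1}$ — gives $v_r(t)=v_r(0)+\int_0^t v_{r+1}(s)\,ds$ as an identity in $H^{k-r-1}(N)$, hence $v_r'=v_{r+1}$. Inductively $\partial_t^r u=v_r$ for all $0\le r\le l-1$, and since $C^{r'}([0,T];H^{k-r'}(N))\subseteq C^{r'}([0,T];H^{k-r}(N))$ for $r'\le r$, this yields $u\in\bigcap_{r=0}^{l-1}C^r([0,T];H^{k-r}(N))$. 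For the final assertion, assume $k>\tfrac n2+m$ and $l>m$, so that $m\le l-1$. For $0\le a\le m$ we then have $\partial_t^a u\in C([0,T];H^{k-a}(N))$, and since $k-a>\tfrac n2+(m-a)$ precisely because $k>\tfrac n2+m$, Lemma \ref{sobolev_inequality} gives a continuous embedding $H^{k-a}(N)\hookrightarrow C^{m-a}(N)$; hence $\partial_t^a u\in C([0,T];C^{m-a}(N))$ with $C^{m-a}$-norm bounded by $C\norm{u}_{H^l(0,T;H^k(N,g))}$. Consequently every iterated partial $\partial_x^\beta\partial_t^a u$ with $a+|\beta|\le m$ exists and is jointly continuous on $[0,T]\times N$, so $u\in C^m([0,T]\times N)$, and summing the previous bounds over $0\le a\le m$ gives $\norm{u}_{C^m([0,T]\times N)}\le C\norm{u}_{H^l(0,T;H^k(N,g))}$.

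The step I expect to be the main obstacle is the pair of interchange-of-limits arguments: showing that the spectral series converges \emph{uniformly in $t$} in the Banach space $H^{k-r}(N)$ (which is exactly what delivers continuity in time and is where the $t$-uniform domination above is used), and running the fundamental-theorem-of-calculus and dominated-convergence argument carefully enough to conclude that the term-by-term differentiated series is the genuine time derivative $\partial_t^r u$ rather than merely a competitor for it. The two remaining ingredients — the one-dimensional Sobolev embedding with its $T$-dependent constant and the Kondrakov embedding on the closed manifold $N$ — are standard and can be quoted from the references already cited for Lemma \ref{sobolev_inequality} and Proposition \ref{prop:equiv_of_Sob_spaces}.
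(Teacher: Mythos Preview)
Your proof is correct and considerably more detailed than what the paper does: the paper simply cites \cite[Theorem 2 of Section 5.9]{evans2010partial} and gives no argument. Your approach --- working directly with the spectral expansion $u=\sum_j c_j\varphi_j$ and reducing everything to the scalar embedding $H^1(0,T)\hookrightarrow C([0,T])$ together with a uniform-in-$t$ Weierstrass $M$-test --- is well adapted to the paper's spectral framework (Definitions \ref{def:sobolev_spectral_def} and \ref{def:time_sobolev_spectral_space}) and makes the lemma self-contained within that setup. The Evans reference, by contrast, is the general Bochner-space result, valid for arbitrary Banach targets; it requires no spectral structure but correspondingly gives less explicit control. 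Your version has the added benefit of showing transparently where the constant's dependence on $T$ enters (through the one-dimensional embedding constant $C_T$).

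One small remark: you bound $\norm{v_r(t)}_{k-r}$ and then note that the displayed norm inequality should be read with $H^{k-r}$ on the left. In fact your own computation gives the stronger $H^k$ bound without any change --- simply retain $(\lambda_j+D)^k$ instead of dropping to $(\lambda_j+D)^{k-r}$, since $|c_j^{(r)}(t)|^2\le C_T^2\norm{c_j}_{H^l(0,T)}^2$ already controls the coefficient independently of the spatial weight. So the lemma's second display holds exactly as written, and the $H^{k-r}$ in the first display is just a (deliberately) weaker containment, not an inconsistency.
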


\begin{proof}
    See \cite[Theorem 2 of Section 5.9]{evans2010partial}.
\end{proof}

The results listed above give us sufficient tools to show the existence and uniqueness of solutions to the Cauchy problem $(\ref{eq:cauchy_problem})$.

\begin{thm}[Distributional Solutions on Closed Manifolds]\label{thm:distributional_solution}
    Let $(N,g)$ be a smooth closed Riemannian manifold, let $A$ be a smooth real-valued covector field, let $V$ be a smooth real-valued function and let $T > 0$. Suppose that $k\in\Z$    and $l\in \{0,1,2,\ldots\}$. 
    
    If
    $$ 
    u_0\in H^{k+1}(N),  \hspace{4mm}u_1\in H^{k}(N) \hspace{4mm} \hspace{2mm}
    \text{and} \hspace{4mm}f\in H^l(0,T;H^{k}(N))
    $$
    then there exists a unique 
    $$
         u\in \bigcap_{r=0}^{l+2} H^r(0,T;H^{k+1-r}(N)) \subset C^1([0,T];H^{k-2}(N)) \subset C^1([0,T];\mathcal{D}'(N)).
    $$
    which is a distributional solution to the Cauchy Problem  (\ref{eq:cauchy_problem}). 
    
    Furthermore, for each $r\in \{0,1,\ldots,l+2\}$
    there exists a constant $C_r >0$, that depends on $k$ and $l$, such that
    \begin{align}
    \label{eq:energy_estimate_for_u}
    %     \norm{u}_{L^2(0,T;H^{k + 1}(N))} &\leq C_0\Big(\norm{f}_{L^2(0,T;H^{k}(N))} + \norm{u_0}_{H^{k+1}(N)} + \norm{u_1}_{H^{k}(N)}\Big)
     %    \\
     %    \norm{u}_{H^{1}(0,T;H^{k}(N))} &\leq C_1\Big(\norm{f}_{L^2(0,T;H^{k}(N))} + \norm{u_0}_{H^{k+1}(N)} + \norm{u_1}_{H^{k}(N)}\Big)
      %   \\
        \norm{u}_{H^{r}(0,T;H^{k + 1 -r}(N))} &\leq C_r\Big(\norm{f}_{H^l(0,T;H^{k}(N))} + \norm{u_0}_{H^{k+1}(N)} + \norm{u_1}_{H^{k}(N)}\Big).
    \end{align}
    
    Finally, if $f\in C^\infty_0((0,T)\times N)$ and $u_0,u_1\in C^\infty(N)$, then there exists a unique smooth solution to the Cauchy Problem $(\ref{eq:cauchy_problem})$.
\end{thm}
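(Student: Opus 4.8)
The plan is to use the spectral (Fourier) method on the closed manifold, reducing the PDE \eqref{eq:cauchy_problem} to a decoupled family of scalar ODEs indexed by the eigenvalues of $\mathcal{L}_{g,A,V}$, solving each by Duhamel's formula, and then summing with estimates uniform in the eigenvalue index. Let $\{(\lambda_j,\varphi_j)\}$ and $D>0$ be as in Definition \ref{def:sobolev_spectral_def}, so $\mu_j:=\lambda_j+D\ge\max\{|\lambda_j|,1\}$. Expanding $u_0=\sum_j a_j\varphi_j$, $u_1=\sum_j b_j\varphi_j$, $f(t,\cdot)=\sum_j f_j(t)\varphi_j$, the hypotheses become $\sum_j|a_j|^2\mu_j^{k+1}<\infty$, $\sum_j|b_j|^2\mu_j^k<\infty$, and $\sum_j\|f_j\|_{H^l(0,T)}^2\mu_j^k<\infty$ by Definitions \ref{def:sobolev_spectral_def} and \ref{def:time_sobolev_spectral_space}. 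First I would show, by testing \eqref{eq:distributional_solution} against real test functions of the form $w(t,x)=\psi(t)\varphi_j(x)\in C_0^\infty([0,T)\times N)$ and using that $\mathcal{L}_{g,A,V}$ is symmetric with $\mathcal{L}_{g,A,V}\varphi_j=\lambda_j\varphi_j$, that any distributional solution must have $j$-th coefficient $u_j$ solving $u_j''+\lambda_ju_j=f_j$ on $(0,T)$ with $u_j(0)=a_j$, $u_j'(0)=b_j$; conversely, defining $u_j$ as the Duhamel solution of this ODE and $u:=\sum_j u_j\varphi_j$ will give a distributional solution once convergence is established, since the two sides of \eqref{eq:distributional_solution} are linear and continuous in $w$ and every $w\in C_0^\infty([0,T)\times N)$ expands in the $\varphi_j$ with rapidly decaying coefficients.

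Next I would solve and estimate each scalar ODE. For the infinitely many positive $\lambda_j$ the fundamental solutions are $\cos(\sqrt{\lambda_j}t)$ and $\sin(\sqrt{\lambda_j}t)/\sqrt{\lambda_j}$, so Duhamel gives $u_j(t)=a_j\cos(\sqrt{\lambda_j}t)+b_j\frac{\sin(\sqrt{\lambda_j}t)}{\sqrt{\lambda_j}}+\int_0^t\frac{\sin(\sqrt{\lambda_j}(t-s))}{\sqrt{\lambda_j}}f_j(s)\,ds$; for the finitely many $\lambda_j\le0$ one uses the analogous linear or hyperbolic expressions. Differentiating in $t$ up to order $l+2$, and using the ODE itself to trade two $t$-derivatives for a factor $\lambda_j$ plus a derivative of $f_j$, should yield for each $0\le r\le l+2$ an estimate of the form $\mu_j^{\,k+1-r}\|u_j\|_{H^r(0,T)}^2\lesssim_{T,r}|a_j|^2\mu_j^{k+1}+|b_j|^2\mu_j^k+\|f_j\|_{H^l(0,T)}^2\mu_j^k$, with implicit constant independent of $j$ (the exponential-in-$t$ growth for the finitely many nonpositive eigenvalues is harmless on the fixed finite interval $[0,T]$ and is absorbed into the constant). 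Summing over $j$ and reading off Definition \ref{def:time_sobolev_spectral_space} then gives $u\in\bigcap_{r=0}^{l+2}H^r(0,T;H^{k+1-r}(N))$ with the estimate \eqref{eq:energy_estimate_for_u}; in particular $u\in H^2(0,T;H^{k-1}(N))$, so Lemma \ref{lem:calculus_abstract_space} yields $u\in C^0([0,T];H^{k-1}(N))\cap C^1([0,T];H^{k-2}(N))\subset C^1([0,T];\mathcal{D}'(N))$, whence $u(0)=u_0$, $\partial_tu(0)=u_1$ and $u$ is a genuine distributional solution.

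For uniqueness, if $u$ is a distributional solution with $u_0=u_1=0$ and $f=0$, projecting \eqref{eq:distributional_solution} onto each $\varphi_j$ as above shows $u_j$ solves the homogeneous scalar ODE with zero Cauchy data, hence $u_j\equiv0$ and $u\equiv0$. For the final claim, if $f\in C_0^\infty((0,T)\times N)$ and $u_0,u_1\in C^\infty(N)$, then by Proposition \ref{prop:equiv_of_Sob_spaces} the coefficients $a_j$, $b_j$, and (uniformly in $t$) $f_j(t)$ together with all its $t$-derivatives decay faster than any power of $\mu_j$, so the hypotheses hold for \emph{every} $l\ge0$ and $k\in\Z$; by the preceding paragraphs $u\in H^r(0,T;H^s(N))$ for all $r\ge0$ and $s\in\Z$, and Lemma \ref{lem:calculus_abstract_space} together with Lemma \ref{sobolev_inequality} upgrades this to $u\in C^m([0,T]\times N)$ for every $m$, i.e. $u\in C^\infty([0,T]\times N)$ (smoothness of the $u_j$ up to $t=T$ uses that $f$ is compactly supported in $(0,T)$). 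Uniqueness among smooth solutions is then a special case of the distributional uniqueness.

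I expect the main obstacle to be the uniform-in-$j$ ODE estimate in the second step, organized so that summation lands \emph{exactly} in $\bigcap_{r=0}^{l+2}H^r(0,T;H^{k+1-r}(N))$ with the stated $r$-dependent constants in \eqref{eq:energy_estimate_for_u}. One must track that each additional $t$-derivative of the Duhamel solution costs one factor of $\sqrt{\mu_j}$ (so two $t$-derivatives cost a full $\mu_j$, matching the shift by $r$), handle the $\int_0^t$ term via Cauchy--Schwarz in $s$ and the elementary bound $\|f_j\|_{L^2(0,T)}\le\|f_j\|_{H^l(0,T)}$, and treat the finitely many $\lambda_j\le0$ separately so their exponentially growing fundamental solutions do not spoil the uniformity. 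Everything else is routine bookkeeping with the spectral definitions.
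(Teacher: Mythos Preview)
Your proposal is correct and follows essentially the same approach as the paper: the paper states that the proof is ``a standard energy estimate argument based on Fourier expansion'' and refers to textbooks, and the detailed argument they have in mind (visible in the commented-out portion of the source) is exactly the spectral/Duhamel construction you describe---define the Fourier coefficients $\alpha_j$ via the fundamental solutions $B_j$ of $y''+\lambda_j y=0$ plus the Duhamel integral, establish the $H^r(0,T)$ estimates on $\alpha_j$ uniformly in $j$ (trading two $t$-derivatives for a factor $\lambda_j$), sum, and prove uniqueness by projecting onto each $\varphi_j$. Your identification of the main bookkeeping obstacle (the uniform-in-$j$ ODE estimate landing exactly in $H^r(0,T;H^{k+1-r}(N))$, with the finitely many $\lambda_j\le 0$ handled separately) is on point.
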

\begin{proof}
The proof of this theorem is a standard energy estimate argument based on Fourier expansion. The details of the proof are omitted here, since similar proofs can be found in many textbooks including \cite[Section 2.3.]{lassas_inverse} and \cite[Chapter 6]{taylor_partial}. 
\end{proof}
It is important to note that in the previous theorem, we permit the functions to be in $H^k(N)$ where $k$ is a negative integer, that is we permit solutions which are integrable in time and are distributions in $N$.

\subsection{Complete Manifold Case} \label{sub:complete_manifold}
In this section we prove existence and uniqueness of solutions to the Cauchy problem \eqref{eq:cauchy_problem} on complete Riemannian manifolds with infinite time $T=\infty$, under the assumption that the data $u_0,u_1,f$ consist of smooth compactly supported functions.
% (i.e. manifolds with no boundary and complete as a metric space) which do not need to be compact. 
% To this end, we will construct a closed manifold which has a region which is has a Riemannian isometry to a region in $(N,g)$. We will then use finite speed of wave propagation to show that the solutions will be same in a certain region, this will allow us to construct solutions on a complete manifold for finite time.
The first tool we mention is the classical domain of dependency result for hyperbolic problems.  

For any point $p\in N$, we define the following cone
$$
    C(p,T) = \{(t,q)\in [0,T]\times N: d_g(p,q) \leq T-t\}.
$$

\begin{lem}[Domain of Dependence]\label{lem:domain_of_dependence}
        Suppose that $u\in C^\infty([0,T]\times N)$ is a solution to the Cauchy problem $(\ref{eq:cauchy_problem})$ where $u_0|_{B(p,T)} = u_1|_{B(p,T)} = 0$ and $f|_{C(p,T)} = 0$, then $u|_{C(p,T)} = 0$.
\end{lem}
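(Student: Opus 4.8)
The plan is to prove this finite-speed-of-propagation statement by a standard energy-integral argument on shrinking truncated cones, exploiting that the spatial part $\mathcal{L}_{g,A,V}$ differs from $-\Delta_g$ only by lower-order terms, so that the light cones are still governed by the Riemannian distance $d_g$. First I would reduce to a \emph{local} energy estimate: it suffices to show that if $u$ is smooth, solves $(\partial_t^2 + \mathcal{L}_{g,A,V})u = f$ with $f$ vanishing on $C(p,T)$ and $u_0 = u_1 = 0$ on $B(p,T)$, then $u \equiv 0$ on the open cone. Fix a time $t_0 \in (0,T]$ and for $s \in [0,t_0]$ consider the spatial slice $B_s := B(p, T - s)$ (a precompact set, since $(N,g)$ is complete) and the energy
\[
    E(s) := \int_{B_s} \Big( |\partial_t u(s,x)|^2 + |\nabla_g u(s,x)|_g^2 + |u(s,x)|^2 \Big)\, dV_g(x).
\]
I would differentiate $E$ in $s$; the derivative has two contributions, a boundary term coming from the fact that the domain $B_s$ shrinks as $s$ grows, and an interior term coming from differentiating the integrand. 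The shrinking-domain boundary term is $\leq 0$ because the outward normal speed of $\partial B_s$ equals $1$ and the flux term is controlled by the gradient-squared-plus-time-derivative-squared density (this is exactly the ``good sign'' that makes the cone a domain of dependence, using $d_g(p,q) \le T - s$ on the cone). The interior term I would bound, after integrating by parts in $x$ over $B_s$ and using the equation together with the vanishing of $f$ on $C(p,T)$, by $C\, E(s)$ for a constant $C$ depending only on $T$ and on sup-norms over the compact set $\overline{B(p,T)}$ of $g$, $A$, $V$ and finitely many of their derivatives; the first-order magnetic term $-2i\langle A, du\rangle_g$ and the zeroth-order terms are absorbed here via Cauchy–Schwarz and the $|u|^2$ term I inserted into $E$. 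The boundary terms produced by integration by parts on $\partial B_s$ combine with the shrinking-domain term; a careful bookkeeping (completing the square in $\partial_t u$ and the normal derivative $\partial_\nu u$) shows their total is $\le 0$.

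Having obtained $E'(s) \le C\, E(s)$ for a.e.\ $s \in [0,t_0]$ together with $E(0) = 0$ (from the vanishing initial data on $B(p,T) \supset B(p, T)$), Grönwall's inequality forces $E(s) = 0$ for all $s \in [0, t_0]$, hence $u(s,\cdot) = 0$ on $B(p, T-s)$ for every such $s$. Letting $t_0 \to T$ gives $u = 0$ on $\{(t,q) : d_g(p,q) < T - t\}$, and continuity of $u$ extends this to the closed cone $C(p,T)$, which is the claim. One technical point I would attend to: the boundary spheres $\partial B_s = \{q : d_g(p,q) = T - s\}$ need not be smooth for all $s$ (cut locus issues), so to justify the differentiation of $E(s)$ and the divergence-theorem manipulations cleanly I would either work with the coarea formula and the Lipschitz function $q \mapsto d_g(p,q)$, or regularize by replacing $d_g(p,\cdot)$ with a smooth function that is $1$-Lipschitz and agrees with it up to $\varepsilon$, proving the estimate for the regularization and then letting $\varepsilon \to 0$; either route is routine.

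The main obstacle is precisely this regularity of the distance function and making the shrinking-cone energy argument airtight on a noncompact manifold without boundary: one must be sure the relevant balls are precompact (guaranteed by completeness and the Hopf–Rinow theorem) so that the coefficients of $\mathcal{L}_{g,A,V}$ and their derivatives are genuinely bounded on the region under consideration, and one must handle the non-smoothness of geodesic spheres. Everything else — the integration by parts, the sign of the cone flux, absorbing the lower-order terms, and the Grönwall step — is standard; indeed this is the same computation that appears for the scalar wave equation, with the magnetic and potential terms contributing only lower-order perturbations that Grönwall tolerates. I expect the author's proof either cites this as classical (e.g.\ referencing a PDE text or \cite{lassas_inverse}) or runs exactly this energy argument.
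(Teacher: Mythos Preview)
Your proposal is correct and follows the classical shrinking-cone energy argument; the paper itself does not give a proof but simply cites \cite[Section 7.2, Theorem 8]{evans2010partial} and \cite[Chapter 2, Theorem 6.1]{taylor_partial}, which carry out exactly the computation you outline. Your anticipation in the final sentence was accurate.
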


\begin{proof}
    For a proof of the result for local coordinates see \cite[Section 7.2, Theorem 8]{evans2010partial} and for the case of a manifold see \cite[Chapter 2, Theorem 6.1]{taylor_partial}.
\end{proof}

Next we show that waves travel at a finite speed described by the Riemannian metric $g$. Let $\cX \subset N$ be a set and define the following \textit{domain of influence}
\begin{align}
    M(\cX,T) := \{y\in N : \text{dist}_g(y,\cX)\leq T\}. \label{def:domain_of_influence}
\end{align}
Then
$$
    \{y\in N: \text{dist}_g(y,\cX)< T\} \subset \text{int}(M(\cX,T)) \subset M(\cX,T).
$$
where $\text{int}$ indicates the interior of a set.
\begin{cor}[Finite Speed of Wave Propagation] \label{cor:finite_speed}
    Let $\cX \subset N$ be an open set and suppose that $u\in C^\infty([0,T]\times N)$ is a classical solution to the Cauchy problem where $u_0,u_1\in C_0^\infty(\cX)$ and $f\in C_0^\infty((0,T)\times \cX)$. Then $\text{supp}(u(t,\cdot)) \subset \text{int}(M(\cX, t))$ for all $t\in [0,T]$.
\end{cor}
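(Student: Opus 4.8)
The plan is to deduce the statement from the Domain of Dependence result, Lemma~\ref{lem:domain_of_dependence}, applied at each point of space-time. The first step is to package the supports of the data into a single compact set: since $u_0,u_1\in C_0^\infty(\cX)$ and $f\in C_0^\infty((0,T)\times\cX)$, the set
\[
    K:=\supp u_0\cup\supp u_1\cup\pi\big(\supp f\big),
\]
where $\pi\colon (0,T)\times N\to N$ is the projection, is a finite union of compact subsets of the open set $\cX$, hence itself a compact subset of $\cX$.

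The second step is the intermediate claim that $\supp\big(u(t_0,\cdot)\big)\subset\{y\in N:\text{dist}_g(y,K)\le t_0\}$ for every $t_0\in[0,T]$. For $t_0=0$ this is immediate, since $u(0,\cdot)=u_0$ and $\supp u_0\subset K$. For $t_0\in(0,T]$, I would fix $y_0$ with $\text{dist}_g(y_0,K)>t_0$ and apply Lemma~\ref{lem:domain_of_dependence} to the restriction of $u$ to $[0,t_0]\times N$ (still a smooth solution of the Cauchy problem on that slab) with $p=y_0$ and time horizon $t_0$ in place of $T$: every $q\in B(y_0,t_0)$ satisfies $d_g(y_0,q)\le t_0<\text{dist}_g(y_0,K)$, so $q\notin K$ and thus $u_0(q)=u_1(q)=0$; likewise every $(t,q)\in C(y_0,t_0)$ has $d_g(y_0,q)\le t_0-t\le t_0<\text{dist}_g(y_0,K)$, so $q\notin\pi(\supp f)$ and $f(t,q)=0$. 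The lemma then forces $u$ to vanish on $C(y_0,t_0)$, in particular $u(t_0,y_0)=0$; since $\{y:\text{dist}_g(y,K)\le t_0\}$ is closed, the claim follows.

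The third and, I expect, most delicate step is to upgrade $M(\cX,t_0)$ to its interior, which is exactly where one uses the openness of $\cX$ rather than only the soft support bound. Because $K$ is compact and contained in the open set $\cX$, and because closed bounded subsets of the complete manifold $(N,g)$ are compact by the Hopf--Rinow theorem, a standard nested‑compactness argument yields $\eps>0$ with the closed neighbourhood $K_\eps:=\{z:\text{dist}_g(z,K)\le\eps\}$ still contained in $\cX$. Given $y$ with $\text{dist}_g(y,K)\le t_0$ and $t_0>0$: either $\text{dist}_g(y,K)\le\eps$, so $y\in K_\eps\subset\cX$ and $\text{dist}_g(y,\cX)=0<t_0$; or, picking $k\in K$ realizing the distance (attained since $K$ is compact) and a minimizing geodesic from $k$ to $y$, the point at arc length $\eps$ along it lies in $K_\eps\subset\cX$ and is at distance $\text{dist}_g(y,K)-\eps\le t_0-\eps<t_0$ from $y$, so $\text{dist}_g(y,\cX)<t_0$. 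In either case $y$ lies in the open set $\{y':\text{dist}_g(y',\cX)<t_0\}\subset\text{int}\big(M(\cX,t_0)\big)$; the remaining endpoint $t_0=0$ is handled directly via $\supp u_0\subset\cX\subset\text{int}(\overline{\cX})=\text{int}(M(\cX,0))$. Combining this with the intermediate claim of the previous step gives $\supp\big(u(t_0,\cdot)\big)\subset\text{int}\big(M(\cX,t_0)\big)$ for all $t_0\in[0,T]$, as desired. The main obstacle is thus not the hyperbolic input, which is entirely contained in Lemma~\ref{lem:domain_of_dependence}, but the purely metric $\eps$‑cushion argument that genuinely separates $M(\cX,t)$ from its interior.
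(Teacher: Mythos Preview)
Your proposal is correct and follows essentially the same approach as the paper: both package the supports into a compact $K\subset\cX$, use the Domain of Dependence lemma to get $\supp(u(t,\cdot))\subset M(K,t)$, and then exploit the positive distance between $K$ and $N\setminus\cX$ together with a minimizing-geodesic argument to show $M(K,t)\subset\{y:\text{dist}_g(y,\cX)<t\}\subset\text{int}(M(\cX,t))$. Your write-up is more explicit about the $t_0=0$ endpoint and about the geodesic-cushion step, but the logical structure is the same.
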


\begin{proof}
    By compactness we may find a compact set $K \subset \cX$ such that $\text{supp}(u_i) \subset K$ and $\text{supp}(f)\subset [0,T]\times K$. From Lemma \ref{lem:domain_of_dependence} it can be shown that $\text{supp}(u(t,\cdot)) \subset M(K,t)$. Since $\cX$ is open and $K\subset \cX$ is compact, then $\text{dist}_g(K,\partial\cX) > 0$, it can then be shown that $M(K,t) \subset \{y\in N: \text{dist}_g(y,\cX) < t\} \subset \text{int}(M(\cX,t))$.
\end{proof}

% With our knowledge of finite speed of propagation for smooth solutions, we can easily show uniqueness of smooth solutions on a complete Riemannian manifold.

% We recall that finite speed of wave propagation implies immediately the uniqueness of smooth solutions to the Cauchy problem \eqref{eq:cauchy_problem}.

\begin{comment}   
\begin{cor}[Uniqueness of Smooth Solutions]\label{cor:unique_solution}
    Suppose that $u,v\in C^\infty([0,T]\times N)$ solve the Cauchy problem $(\ref{eq:cauchy_problem})$ with $u_0,u_1\in C^\infty(N)$ and $f\in C_0^\infty((0,T)\times N)$, then $u = v$.
\end{cor}
\end{comment}

% \begin{proof}
%     Let $b = u-v$ so that $b\in C^\infty([0,T]\times N)$ and $b$ is a solution to
%     \begin{align*}
%     \begin{cases}
%         (\partial_t^2 + \mathcal{L}_{g,A,V})b = 0, \quad \text{ in } (0,T) \times N
%         \\
%         b(0,\cdot) = \partial_tb(0,\cdot) = 0
%     \end{cases}
% \end{align*}
% Then $b = 0$ by Lemma \ref{lem:domain_of_dependence}. Hence $u = v$.
% \end{proof}

We say that $M \subset N$ is a \textit{regular domain} of $N$ if it is a bounded open set with smooth boundary. Let $K \subset N$ be compact.
We move towards establishing the existence and uniqueness of smooth solutions to the initial value problem \eqref{eq:cauchy_problem} in the case when $(N,g)$ is a complete manifold, $T=\infty$, the initial conditions $u_0,u_1$ are supported in $K$, and the forcing function $f$ is supported in $(0,T)\times K$. To do this, we introduce an iterative process with a sequence of nested regular domains $\{M_{j}\}_{j=1}^\infty$ of $N$, such that $K$ is contained in $M_1$, and these sets exhaust $N$. We isometrically embed each domain $M_j$ into a closed manifold $\tilde{N}_j$ and push forward $u_0,u_1,f$ and the Magnetic-Schr\"odinger operator $\magnetic$ onto $\tilde{N}_j$. We use Theorem \ref{thm:distributional_solution} to solve the respective version of the Cauchy problem \eqref{eq:cauchy_problem} in $(0,T_j)\times \tilde{N}_j$ where $\{T_j\}_{j=1}^\infty$ is an increasing unbounded positive sequence. Finally, we pullback this family of  solutions to $(0,\infty) \times N$ and use a partition of unity argument to construct the unique smooth solution to the original Cauchy problem. 
% We will construct infinitely many of these closed manifolds and use the uniqueness of smooth solutions to prove that the solutions agree on their common domain.

% The special closed manifold that we will construct will be the double of a smooth manifold with boundary. At a conceptual level, if one has a smooth manifold $M$ with nonempty smooth boundary, then its double $D(M)$ is a smooth manifold without boundary that is formed by taking two copies of $M$ and gluing them along their boundary. In the case when $M$ is a compact manifold with smooth boundary, then $D(M)$ will be a smooth closed manifold. The connection between $M$ and $D(M)$ is summarized by the following Proposition.

Our construction relies on the following well known facts from Riemannian geometry.

\begin{lem}\label{lem:double_riemannian}
Let $(M,g)$ be a smooth Riemannian manifold with boundary and an open set $M_1 \subset M$ with smooth boundary such that $\p M_1 \cap \p M=\emptyset$. There exists a closed Riemannian manifold $(\tilde{N},\tilde{g})$ and a smooth function $\phi:M \to \tilde{N}$ such that
$$
    \phi:(M_1,g) \to (\phi(M_1),\tilde{g}) \subset (\tilde{N},\tilde{g})
$$
is a Riemannian isometry.
\end{lem}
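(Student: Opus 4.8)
The plan is to realise $(\tilde{N},\tilde{g})$ as a \emph{metric double} of $M$ across $\partial M$, performed inside a collar of $\partial M$ that is disjoint from $\overline{M_1}$, so that $\phi$ may simply be taken to be the inclusion. In the situation in which the lemma is applied $M$ is (the closure of) a regular domain, hence a compact smooth manifold with boundary; then $\overline{M_1}$ is automatically compact, and using $\partial M_1\cap\partial M=\emptyset$ we may assume $\overline{M_1}\subset\operatorname{int}M$, so that $\varepsilon_0:=d_g(\overline{M_1},\partial M)>0$.

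First I put $g$ into boundary normal form near $\partial M$: using the normal exponential map of the embedded hypersurface $\partial M$, there exist $\delta\in(0,\varepsilon_0)$ and an identification of a collar of $\partial M$ with $\partial M\times[0,\delta)$ in which, by the Gauss lemma, $g=dt^{2}+h_t$ for a smooth family $\{h_t\}_{t\in[0,\delta)}$ of Riemannian metrics on $\partial M$, with $t$ denoting the distance to $\partial M$. Next, fixing $\beta\in C^\infty([0,\delta);[0,1])$ with $\beta\equiv 0$ on $[0,\delta/3]$ and $\beta\equiv 1$ on $[2\delta/3,\delta)$, I replace $g$ by the metric $g'$ that equals $dt^{2}+\beta(t)h_t+(1-\beta(t))h_0$ on the collar and equals $g$ on the rest of $M$. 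Since a convex combination of Riemannian metrics on $\partial M$ is again Riemannian, $g'$ is a smooth Riemannian metric on $M$; it is the product $dt^{2}+h_0$ on $\partial M\times[0,\delta/3]$, and it coincides with $g$ outside $\partial M\times[0,2\delta/3)$, in particular on a neighbourhood of $\overline{M_1}$ because $\delta<\varepsilon_0$.

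Now I form the double $\tilde{N}:=M\cup_{\partial M}M$, giving it the smooth structure obtained by gluing the collars $\partial M\times[0,\delta)$ and $\partial M\times(-\delta,0]$ of the two copies along $\partial M$, and the metric $\tilde{g}$ obtained by gluing $g'$ with its mirror image. Near the gluing locus $\tilde{g}$ reads $du^{2}+h_0$ in the coordinate $u\in(-\delta/3,\delta/3)$, which is smooth, and away from the locus it agrees with $g'$ on one copy or the other; hence $(\tilde{N},\tilde{g})$ is a closed Riemannian manifold. Taking $\phi\colon M\hookrightarrow\tilde{N}$ to be the inclusion of the first copy, $\phi$ is smooth, and since $\tilde{g}=g'=g$ on a neighbourhood of $\overline{M_1}$ the restriction $\phi|_{M_1}$ is a diffeomorphism onto its open image with $\phi^{*}\tilde{g}=g$ there; thus $\phi\colon(M_1,g)\to(\phi(M_1),\tilde{g})$ is a Riemannian isometry, as required.

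The only genuinely technical point is the metric surgery of the second step: one must check the existence of boundary normal coordinates (this is where smoothness of $\partial M$ enters) and the smoothness of $\tilde{g}$ across the seam — both are classical once $g$ has been turned into a product near $\partial M$, which the cutoff $\beta$ arranges while leaving $g$ untouched on a neighbourhood of $\overline{M_1}$. I expect this smoothing to be the main obstacle in a fully detailed write-up; everything else is either standard differential topology — existence of collar neighbourhoods, the double of a compact manifold with boundary — or immediate. If one does not wish to assume $M$ compact, the same construction applied to a regular domain $M'$ with $\overline{M_1}\subset M'\Subset\operatorname{int}M$ produces a closed $(\tilde{N},\tilde{g})$ into which $M'$ embeds isometrically, and $\phi$ is then extended from $M'$ to all of $M$ by folding a collar of $\partial M'$ in $M$ into the second copy of the double.
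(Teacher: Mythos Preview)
Your construction is correct. The paper's own proof is just a citation to Lee's smooth-doubling theorem (Theorem~9.29 and Example~9.32), which produces the closed manifold $\tilde N=D(M)$ and the smooth embedding $\phi$ but says nothing about the metric; the intended completion is different from yours: push $g$ forward to $\tilde M=\phi(M)$, choose any Riemannian metric $G$ on $D(M)$, and set $\tilde g=\psi\,(\phi^{-1})^*g+(1-\psi)G$ for a cutoff $\psi$ equal to $1$ on $\phi(M_1)$ and supported in $\operatorname{int}\tilde M$. Your route---flatten $g$ to a product $dt^2+h_0$ on a collar disjoint from $\overline{M_1}$ and then double manifold and metric simultaneously---is the classical ``metric double'' and is more self-contained, at the cost of invoking boundary normal coordinates and verifying seam-smoothness by hand. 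The paper's route is quicker once one grants the smooth double and the existence of \emph{some} metric on it, since convex combinations of Riemannian metrics are again Riemannian.

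One small point: the step ``using $\partial M_1\cap\partial M=\emptyset$ we may assume $\overline{M_1}\subset\operatorname{int}M$'' does not follow from the stated hypotheses alone (e.g.\ $M_1=M$ has empty topological boundary). In every application in the paper $M_1$ is a compactly contained regular domain, so this is harmless, and your final paragraph already contains the correct workaround for the general case.
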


\begin{proof}
    See \cite[Theorem 9.29]{lee_smooth} and \cite[Example 9.32]{lee_smooth}. 
    % Then construct $\tilde{g}$ on $\tilde{M}_1$. 
\end{proof}

\begin{lem}\label{lem:compact_exhaustion}
    Let $K \subset N$ be compact and let $N$ be a smooth manifold without boundary. There exists a sequence $\{M_j\}$ of compact regular domains of $N$ such that
    \begin{enumerate}
        \item $K\subset M_1 $
        \item $M_{j}\subset M_{j+1}$
        \item $\bigcup_{j\in\N} M_j = N$.
    \end{enumerate}
\end{lem}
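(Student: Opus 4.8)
The statement to prove is \lemref{lem:compact_exhaustion}: on a smooth manifold $N$ without boundary, given a compact set $K\subset N$, one can build an increasing sequence $\{M_j\}$ of compact regular domains (bounded open sets with smooth boundary) with $K\subset M_1$, $M_j\subset M_{j+1}$, and $\bigcup_j M_j=N$. The plan is to start from a smooth exhaustion function and pick regular sublevel sets, then fatten them slightly so that the nesting becomes strict and $K$ is swallowed at the first step.

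\textbf{Step 1: an exhaustion function.} First I would invoke the standard fact (see e.g.\ \cite[Proposition 2.28]{lee_smooth}) that any smooth manifold $N$ admits a smooth \emph{exhaustion function}, i.e.\ a smooth $\rho\colon N\to[0,\infty)$ that is proper, meaning $\rho^{-1}([0,c])$ is compact for every $c\ge 0$. Since $K$ is compact, $\rho$ is bounded on $K$, so after adding a constant or rescaling we may assume $\rho<1$ on $K$.

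\textbf{Step 2: regular values via Sard.} By Sard's theorem the set of regular values of $\rho$ is dense in $[0,\infty)$. I would choose an increasing, unbounded sequence of regular values $1<c_1<c_2<\cdots\to\infty$ (possible since regular values are dense and $\rho$ is unbounded on the noncompact part; if $N$ is compact the sequence terminates once $c_j>\max_N\rho$, and we simply take $M_j=N$ thereafter). For each regular value $c_j$, the sublevel set $M_j:=\rho^{-1}\big([0,c_j)\big)$ is an open set whose closure $\overline{M_j}=\rho^{-1}\big([0,c_j]\big)$ is compact (properness) with smooth boundary $\rho^{-1}(\{c_j\})$ by the regular value theorem; hence each $M_j$ is a compact regular domain in the sense defined just before the lemma. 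Property (1), $K\subset M_1$, holds because $\rho<1<c_1$ on $K$; property (2), $M_j\subset M_{j+1}$, holds because $c_j<c_{j+1}$ (indeed even $\overline{M_j}\subset M_{j+1}$); and property (3), $\bigcup_j M_j=N$, holds because every point $p\in N$ has $\rho(p)<c_j$ for $j$ large.

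\textbf{Main obstacle and remarks.} There is essentially no hard analytic step here; the only point requiring a little care is the possibility that $N$ is compact, in which case one must allow the sequence to stabilize at $M_j=N$ (which is itself a compact regular domain with empty boundary, consistent with the usage elsewhere in the paper). A second minor subtlety is ensuring the $M_j$ are connected if that is implicitly wanted downstream — but the statement as given does not ask for connectedness, and in the application (constructing the $\tilde N_j$ via \lemref{lem:double_riemannian}) one can always pass to the connected component containing $K$ or simply carry along finitely many components. I would therefore present the argument compactly: cite the existence of a proper smooth $\rho$, normalize so $\rho<1$ on $K$, apply Sard to extract regular values $c_j\uparrow\infty$ with $c_1>1$, and check the three bullet points directly from $\rho(p)<c_j$ for large $j$.
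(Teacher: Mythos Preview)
Your proposal is correct and follows essentially the same route as the paper: take a smooth proper exhaustion function, use Sard's theorem to pick an increasing unbounded sequence of regular values above $\sup_K\rho$, and set $M_j$ to be the corresponding sublevel sets, which are then compact regular domains with the required nesting and exhaustion properties. The paper's proof is just a citation to \cite[Proposition 2.28, Theorem 6.10, Proposition 5.47]{lee_smooth}, which are exactly the ingredients you invoke.
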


\begin{proof}
    See \cite[Proposition 2.28]{lee_smooth}, \cite[Theorem 6.10]{lee_smooth} and \cite[Proposition 5.47]{lee_smooth}.
\begin{comment}    
    All of the Propositions and Theorems mentioned in this proof and their numbers can be found with their proof on \cite{lee_smooth}. Since $N$ is smooth, then there exists a smooth positive function $f: N\to \R$ called an exhaustion function which has the property that $f^{-1}((-\infty,c]) \subset N$ is a compact set for all $c\in \R$. See \cite[Proposition 2.28]{lee_smooth} for a proof of the existence of such a function.\\ \newline 
    Since $K$ is compact, there exists a constant $b$ such that $\max_{x\in K}f (x)\leq b$. Then $K \subset f^{-1}((-\infty,b])$. If we let $C \subset \R$ be the set of critical values of $f$, then by Sard's Theorem (see \cite[Theorem 6.10]{lee_smooth}), we have that $C$ has measure 0. Thus we may find an increasing sequence $\{b_j\} \subset (b,\infty)\cap (\R\backslash C)$ such that $b_j\to \infty$ and each $b_j$ is a regular value of $f$.\\ \newline
    Since $b_j$ is a regular value of $f$, then the set $M_j := f^{-1}((-\infty,b_j])$ is a regular domain (see \cite[Proposition 5.47]{lee_smooth}) and $M_j \subset M_{j+1}$ since $b_j \leq b_{j+1}$. Since $f$ is an exhaustion function, then $M_j$ is also a compact set and $\bigcup_{j\in N}M_j  = f^{-1}(\R) = N$.
\end{comment}
\end{proof}

We now construct a Magnetic-Schr\"odinger operator $\mathcal{L}_{\tilde{g},\tilde{A},\tilde{V}}$ on $\tilde{N}$ using the change of coordinates from Lemma \ref{lem:double_riemannian} and the original operator $\magnetic$ on $N$.
\begin{comment}
For a given co-vector field $A$ and function $V$ in $N$ and a regular domain $M \subset N$ we proceed to construct a Magnetic-Schr\"odinger operator $\mathcal{L}_{\tilde{g},\tilde{A},\tilde{V}}$ on $\tilde{N}$ which commutes with the Magnetic-Schr\"odinger operator $\mathcal{L}_{g,A,V}$, of the original manifold $N$, over the isometry $\phi$, as in Lemma \ref{lem:double_riemannian}, in an appropriate set of smooth functions.
\end{comment}

\begin{prop}\label{prop:magnetic_extension}
    Let $M \subset N$ be a regular domain on a complete Riemannian manifold $(N,g)$. Let $\mathcal{L}_{g,A,V}$ be a Magnetic-Schr\"odinger operator, see \eqref{def:L}, on $N$ with a real valued co-vector field $A$ and function $V$ on $N$. There exists a closed manifold $(\tilde{N},\tilde{g})$ with a regular domain $\tilde{M}$, an isometry $\phi:(M,g)\to (\tilde{M},\tilde{g})$, and a Magnetic-Schr\"odinger operator $\mathcal{L}_{\tilde{g},\tilde{A},\tilde{V}}$ on $\tilde{N}$ such that 
    $$
        \mathcal{L}_{g,A,V}(v\circ \phi) = [\mathcal{L}_{\tilde{g},\tilde{A},\tilde{V}}(v)]\circ\phi,
        \quad \text{for all $v\in C^\infty_0(\tilde{M})$.}
        % \hspace{4mm}\text{on the interior of $\tilde{M}$.}
    $$
\end{prop}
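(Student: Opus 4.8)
The plan is to push forward the data $(g, A, V)$ along the isometry $\phi$ provided by Lemma \ref{lem:double_riemannian} and then extend it from $\tilde M$ to all of $\tilde N$ in such a way that the resulting operator is again a Magnetic-Schr\"odinger operator (i.e. of the form \eqref{def:L} with a \emph{real} co-vector field and a \emph{real} potential). First I would invoke Lemma \ref{lem:double_riemannian} with $M_1$ a slightly shrunk regular subdomain still containing a prescribed compact set; this yields a closed Riemannian manifold $(\tilde N, \tilde g)$ and a smooth map $\phi$ restricting to a Riemannian isometry $\phi : (\tilde M, g) \to (\phi(\tilde M), \tilde g) \subset (\tilde N, \tilde g)$, where we set $\tilde M := \phi(\tilde M)$ (abusing notation). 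On the open set $\phi(\tilde M) \subset \tilde N$ we define $\tilde A_0 := (\phi^{-1})^* A$ and $\tilde V_0 := (\phi^{-1})^* V$; these are a smooth real co-vector field and a smooth real function on $\phi(\tilde M)$, since pullback of tensors by a diffeomorphism preserves smoothness and realness.

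Next I would extend $\tilde A_0$ and $\tilde V_0$ to all of $\tilde N$. Since $\phi(\tilde M_1)$ is relatively compact with $\overline{\phi(\tilde M_1)} \subset \phi(\tilde M)$, choose a cutoff $\chi \in C_0^\infty(\phi(\tilde M))$ with $\chi \equiv 1$ on $\phi(\tilde M_1)$, and set $\tilde A := \chi \tilde A_0$ (extended by zero) and $\tilde V := \chi \tilde V_0$ (extended by zero). These are globally smooth, real, and agree with $\tilde A_0, \tilde V_0$ on $\phi(\tilde M_1)$. Define $\mathcal{L}_{\tilde g, \tilde A, \tilde V}$ on $C^\infty(\tilde N)$ by the formula \eqref{def:L}; because $\tilde g$ is a genuine Riemannian metric on the closed manifold $\tilde N$ and $\tilde A, \tilde V$ are real, this is indeed a symmetric Magnetic-Schr\"odinger operator in the sense used in the paper.

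It remains to verify the intertwining identity $\mathcal{L}_{g,A,V}(v \circ \phi) = [\mathcal{L}_{\tilde g, \tilde A, \tilde V}(v)] \circ \phi$ for all $v \in C_0^\infty(\tilde M_1)$ — note that I would state the proposition with $\tilde M$ replaced by the smaller $\tilde M_1$ on which the cutoff is $1$, which suffices for the application in Subsection \ref{sub:complete_manifold} since the compact set $K$ may be taken inside $M_1$ and the solution at time $t$ has support controlled by finite speed of propagation. For such $v$, both $v$ and $v\circ\phi$ are supported where the relevant coefficients coincide with their pulled-back/unextended versions, so the identity is a purely local computation: on $M_1$ we have $(g, A, V) = (\phi^* \tilde g, \phi^* \tilde A, \phi^* \tilde V)$, hence by the backward direction of Lemma \ref{lem:magnetic_isometry} (applied to the restriction $\phi : M_1 \to \phi(M_1)$, both regarded as open subsets carrying the given structures) the operators intertwine over $\phi$. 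The only mild subtlety — and the step I expect to need the most care — is making sure the domains of definition match up correctly: Lemma \ref{lem:magnetic_isometry} is stated for diffeomorphisms between manifolds, so I would either apply it to $\phi$ viewed as a diffeomorphism onto its image between the open manifolds $M_1$ and $\phi(M_1)$, or simply redo the short local-coordinate computation of Lemma \ref{lem:laplace_isometry} and its magnetic analogue directly, using that $\mathcal{L}_{g,A,V}$ is a local operator so only the germ of the coefficients near $\supp(v\circ\phi)$ matters. No genuine obstacle arises; the content is entirely in correctly organizing the cutoff extension and the locality argument.
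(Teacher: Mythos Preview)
Your approach is correct and matches the paper's proof, which simply cites Lemmas \ref{lem:double_riemannian} and \ref{lem:magnetic_isometry}; you have correctly expanded the implicit details (pushforward of $A,V$ along $\phi$, cutoff extension to all of $\tilde N$, then the local intertwining via Lemma \ref{lem:magnetic_isometry} applied to $\phi$ viewed as a diffeomorphism between open manifolds).

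The only organizational wrinkle is the direction of the nesting. To prove the proposition exactly as stated (isometry and intertwining on all of $M$, not on a shrunk $M_1$), you should apply Lemma \ref{lem:double_riemannian} with the given regular domain $M$ playing the role of the \emph{inner} set $M_1$ and a slight enlargement $M'\supset\overline{M}$ (available since $M$ is a bounded regular domain in the boundaryless $N$) playing the role of the outer manifold with boundary. Then $\phi$ is a Riemannian isometry on all of $M$, you push forward $A,V$ on the open set $\phi(M')$, take the cutoff $\chi\equiv 1$ on $\overline{\phi(M)}$, and the intertwining follows for every $v\in C_0^\infty(\tilde M)$ as required---no need to weaken the conclusion or appeal to how the proposition is used downstream.
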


\begin{proof}
The claim follows immediately from Lemma \ref{lem:double_riemannian} and Lemma \ref{lem:magnetic_isometry}.
%
% First we choose a regular domain $M_2$ such that $\bar M \subset M_1$ while $\p M \cap \p M_1 =\emptyset$. Let $\tilde{N} = D(M_1)$ and give $\tilde{g}$ a Riemannian metric in $D(M_1)$ such that $(M,g)$ and $(\tilde{M},\tilde{g})$ are isometric with isometry $\phi: M \to \tilde{M}$ as in Lemma \ref{lem:double_riemannian}. Since $M$ is compact, then $\tilde{N}$ is a closed manifold. Let $\psi:\tilde{N}\to[0,1]$ be a bump function which is equal to 1 on $\tilde{M}$ and equal to $0$ outside of $M_2$. The isometry $\phi$ can be chosen so that $\phi: M_2\to \tilde{M_2}$ is a diffeomorphism. Now define $\tilde{A} = \psi (\phi^{-1})^*A$ and $\tilde{V} = \psi(\phi^{-1})^*V$. Then on $M$ we have that $g = \phi^*\tilde{g}, A = \phi^*\tilde{A}$ and $V = \phi^*\tilde{V}$. By Lemma \ref{lem:magnetic_isometry}, we get the desired relationship. 
\end{proof}

% Proposition \ref{prop:magnetic_extension} will be used to lift the solutions from Theorem \ref{thm:closed_existence} from a closed manifold to a complete manifold with compactly supported data.

With these tools we are ready to prove the main result of this subsection. 

\begin{thm}[Existence and Uniqueness of Smooth Solutions]
\label{thm:uniqueness_of_smooth_solutions}    
Let $(N,g)$ be a complete, connected Riemannian manifold, let $A$ be a smooth real covector field let $V$ be a smooth real-valued function on $N$. If $u_0,u_1\in C_0^\infty(N)$ and $f\in C_0^\infty((0,\infty)\times N)$ then the Cauchy problem \eqref{eq:cauchy_problem}, with $T=\infty$, has a unique smooth solution.
%
%     ((0,\infty)\times N)$. There exists a unique classical solution $u\in C^\infty([0,\infty)\times N)$ to the Cauchy Problem
%     \begin{align*} 
%     \begin{cases}
%         (\partial_t^2 + \mathcal{L}_{g,A,V})u = f, \quad \text{ in } (0,\infty) \times N
%         \\
%         u(0,\cdot
%         ) = u_0, \,\partial_tu(0,\cdot) = u_1
%     \end{cases}
% \end{align*}
\end{thm}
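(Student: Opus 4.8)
The plan is to combine finite speed of propagation (Corollary~\ref{cor:finite_speed} and Lemma~\ref{lem:domain_of_dependence}) with the closed-manifold existence result (Theorem~\ref{thm:distributional_solution}) applied on a nested exhaustion. First I would fix a compact set $K \subset N$ containing $\supp u_0 \cup \supp u_1$ such that $\supp f \subset [0,\infty) \times K$, and invoke Lemma~\ref{lem:compact_exhaustion} to obtain compact regular domains $\{M_j\}$ with $K \subset M_1$, $M_j \subset M_{j+1}$, and $\bigcup_j M_j = N$. For each $j$, apply Proposition~\ref{prop:magnetic_extension} to produce a closed manifold $(\tilde N_j, \tilde g_j)$, an isometric embedding $\phi_j : (M_j, g) \to (\tilde M_j, \tilde g_j)$, and a Magnetic-Schr\"odinger operator $\mathcal{L}_{\tilde g_j, \tilde A_j, \tilde V_j}$ intertwined with $\magnetic$ via $\phi_j$ on $C_0^\infty(\tilde M_j)$. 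Pushing forward the data --- extended by zero from $\phi_j(K) \subset \tilde M_j$ to all of $\tilde N_j$, which is smooth since $K$ is compactly contained in the interior of $M_j$ --- I would use the final statement of Theorem~\ref{thm:distributional_solution} to obtain a unique smooth solution $\tilde u_j$ on $(0, T_j) \times \tilde N_j$ to the pushed-forward Cauchy problem, where $T_j := \dist_g(K, N \setminus M_j) > 0$ (or any increasing unbounded sequence majorized by this; note $T_j \to \infty$ as the $M_j$ exhaust $N$).

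Next I would pull back: set $u_j := \tilde u_j \circ \tilde\phi_j$ on $(0, T_j) \times M_j$, where $\tilde\phi_j(t,x) = (t, \phi_j(x))$. The intertwining identity of Proposition~\ref{prop:magnetic_extension}, together with the chain rule in $t$, shows $u_j$ solves $(\partial_t^2 + \magnetic)u_j = f$ with the correct initial data on $(0,T_j) \times M_j$ --- at least wherever the pushed-forward source agrees with $\phi_{j,*}f$, which holds on $\tilde M_j \supset \phi_j(K)$. The key point is a \emph{consistency/localization} claim: for $i < j$, on the common time-space region $u_i = u_j$, and moreover $\supp u_j(t,\cdot) \subset \{y : \dist_g(y,K) \le t\}$, which for $t < T_j$ lies in the interior of $M_j$ (indeed in $M_i$ once $t < T_i$). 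This follows from Corollary~\ref{cor:finite_speed} applied on $\tilde N_j$ --- the solution stays supported in a shrinking-from-$N\setminus M_j$ neighborhood of $\phi_j(K)$, so it never "sees" the artificial gluing region $\tilde N_j \setminus \tilde M_j$, and by the domain of dependence argument (Lemma~\ref{lem:domain_of_dependence}) two such solutions restricted to the overlap agree, since they solve the same equation with the same data in a full light cone.

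With consistency in hand I would define $u$ on $(0,\infty) \times N$ by $u(t,x) := u_j(t,x)$ for any $j$ with $t < T_j$ and $x \in M_j$; this is well-defined and smooth by the consistency claim, and it solves the Cauchy problem \eqref{eq:cauchy_problem} with $T = \infty$ since the equation is local and each $u_j$ solves it on its domain. (A partition-of-unity patching, as the text foreshadows, is an alternative bookkeeping device for the same construction.) For uniqueness: if $u, v$ are two smooth solutions with the same data, then $w = u - v$ solves the homogeneous problem with zero data, and for any $p \in N$ and any $T > 0$ the domain-of-dependence Lemma~\ref{lem:domain_of_dependence} applied on the cone $C(p,T)$ --- whose spatial base $B(p,T)$ is where the zero data lives --- gives $w \equiv 0$ on $C(p,T)$, in particular $w(t,p) = 0$ for all $t < T$; letting $T \to \infty$ and $p$ range over $N$ yields $w \equiv 0$.

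The main obstacle I expect is the consistency step: carefully checking that the finite-speed estimate on $\tilde N_j$ genuinely confines $\tilde u_j$ to $\phi_j(\mathrm{int}\, M_j)$ for all $t < T_j$ (so that the pulled-back object is defined and smooth on the claimed region and independent of $j$), and that the pushed-forward source and the intertwined operator agree on exactly that region. Everything else --- extension by zero of the data, the chain-rule computation that $\magnetic$ commutes with composition by the time-extended isometry, and the final gluing --- is routine once the supports are controlled.
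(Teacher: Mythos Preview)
Your proposal is correct and follows essentially the same approach as the paper's proof: exhaust $N$ by regular domains, embed each into a closed manifold via Proposition~\ref{prop:magnetic_extension}, solve there using Theorem~\ref{thm:distributional_solution}, pull back, and use finite speed of propagation plus domain of dependence to establish consistency and glue. Your choice $T_j = \dist_g(K, N\setminus M_j)$ differs cosmetically from the paper's (which arranges $M_{j-1} \subsetneq M(K,T_j) \subsetneq M_j$), and you spell out the uniqueness argument via cones $C(p,T)$ slightly more explicitly than the paper does, but the structure is the same.
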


\begin{rem}
We believe that Theorem \ref{thm:uniqueness_of_smooth_solutions} and the presented proof are well known. However, for the convenience of the reader we provide the full proof in here.    
\end{rem}

\begin{proof}[Proof of Theorem \ref{thm:uniqueness_of_smooth_solutions}]
Since the data $u_0,u_1,$ and $f$ are compactly supported there is $T \in (0,\infty)$ and a compact set $K \subset N$ such that $\text{supp}(f) \subset (0,T)\times K$ and $\text{supp}(u_0), \: \text{supp}(u_1) \subset K$. Since $K$ is compact the domain of influence $M(K,T)$, see \eqref{def:domain_of_influence}, is also compact. By Lemma \ref{lem:compact_exhaustion} there exists a compact exhaustion of $N$ by regular domains $\{M_j\}_{j=1}^\infty$ such that $M(K,T)\subset M_1$ and $M_j\subset M_{j+1}$. Then we choose an increasing sequence $T_j > T$, $\lim_{j \to \infty} T_j=\infty$, such that $M_{j-1} \subsetneq  M(K,T_j) \subsetneq M_j$. Moreover, due to the connectedness of $N$ the sequence of domains of influences $\{M(K,T_j)\}_{j=1}^\infty$ exhausts $N$.

Due to Lemma \ref{lem:double_riemannian} and Proposition \ref{prop:magnetic_extension} there exists a sequence of closed Riemannian manifolds $\{(\tilde{N}_j,\tilde{g}_j)\}_{j=1}^\infty$, regular domains $\tilde{M}_j \subset \tilde N_j$, Riemannian isometries $\phi_j:(M_j,g)\to (\tilde{M}_j,\tilde{g}_j) \subset (\tilde{N}_j,\tilde{g}_j)$ and Magnetic-Schr\"odinger operators $\mathcal{L}_j$ on $\tilde{N}_j$ such that for all smooth functions $\tilde{w}$ compactly supported in $\tilde{M}_j$ we have that 
    \begin{equation}
    \label{eq:change_of_coord_for_mag_schro}
        \mathcal{L}_{g,A,V}(\tilde{w}\circ\phi_j) = [\mathcal{L}_j(\tilde{w})]\circ\phi_j.
        % \hspace{4mm}\text{on the interior of $\tilde{M}_j$.}
    \end{equation}

    We now seek to construct solutions $\tilde{u}_j \in C_0^\infty([0,T_j]\times \tilde{N_j})$ that solve an appropriate Cauchy problem and pullback the solutions to $(0,\infty)\times N$. For each $j\in \mathbb{N}$ we have the Riemannian isometry $\phi_j:M_j\to \tilde{M}_j \subset \tilde{N}_j$, let $\tilde{\phi}_j(t,x) = (t,\phi_j(x))$ for all $x\in M_j$. We define the smooth maps
    \begin{align*}
        \tilde{f}_j(t, \tilde{x}) &= \begin{cases}  (f\circ\tilde{\phi}^{-1}_j)(t,\tilde{x}) \hspace{4mm}\tilde{x}\in\phi_j(K)\\
            0 \hspace{27mm}\text{otherwise},
        \end{cases}
%        \tilde{w}_{0,j}(\tilde{x}) &= \begin{cases}
 %           (u_0\circ\phi^{-1}_j)(\tilde{x}) \hspace{5.5mm}\tilde{x}\in\phi_j(K)\\ 
  %          0 \hspace{27.5mm}\text{otherwise},
   %     \end{cases}\\
    %    \tilde{w}_{1,j}(\tilde{x}) &= \begin{cases}
     %       (u_1\circ\phi^{-1}_j)(\tilde{x})\hspace{5.5mm}\tilde{x}\in\phi_j(K)\\
      %      0  \hspace{27.5mm}\text{otherwise}.      
       % \end{cases} 
    \end{align*}
    and define the functions $\tilde{w}_{0,j}$ and $\tilde{w}_{1,j}$ analogously.
    % By the support conditions of $u_0,u_1$ and $f$, it follows that $\tilde{w}_{0,j},\tilde{w}_{1,j}\in C_0^\infty(\tilde{N}_j)$ and $\tilde{f}_j\in C_0^\infty((0,T_j)\times \tilde{N}_j)$. 
    By Theorem \ref{thm:distributional_solution} there exists a unique solution $\tilde{u}_j\in C^\infty([0,T_j]\times\tilde{N}_j)$ to the Cauchy problem
    \begin{align}
    \label{eq:wave_eq_on_N_j}
        \begin{cases}
            (\partial_t^2 + \mathcal{L}_j) \tilde{u}_j = \tilde{f}_j\hspace{6mm}\text{ in }(0,T_j)\times\tilde{N}_j\\
            \tilde{u}(0,\cdot) = \tilde{w}_{0,j}, \,\,\,\partial_t\tilde{u}(0,\cdot) = \tilde{w}_{1,j}.
        \end{cases}
    \end{align}
    Next we define a function $u_j$ in $(0,\infty)\times N$ according to the rule
    \begin{align*}
        u_j(t,x) &= \begin{cases}
            (\tilde{u}_j\circ\tilde{\phi}_j)(t,x) \hspace{10mm} (t,x)\in[0,T_j]\times M_j\\
            0 \hspace{33mm}\text{otherwise}.
        \end{cases}
    \end{align*} 
    Since $\phi_j:(M_j,g)\to (\tilde{M}_j,\tilde{g}_j)$ is a Riemannian isometry then $\phi_j(M(K,T_j)) = \tilde{M}_j(\phi_j(K),T_j)$ where $\tilde{M}_j(\phi_j(K),T_j) = \{y\in \tilde{N}_j: d_{\tilde{g}_j}(y,\phi_j(K))\leq T_j\}$. By Finite Speed of Wave Propagation (i.e. Corollary \ref{cor:finite_speed}) we know that $\text{supp}(\tilde{u}_j) \subset [0,T_j]\times \tilde{M}_j(\phi_j(K),T_j)$.
    Thus $u_j\in C^\infty([0,T_j]\times N)$. 
    Further, 
    % by the construction of the Magnetic-Schr\"odinger operator $\mathcal{L}_j$, we have by Proposition \ref{prop:magnetic_extension} 
    we have by \eqref{eq:change_of_coord_for_mag_schro}, \eqref{eq:wave_eq_on_N_j}, and the support conditions of the data $u_0,u_1,f$ that 
    % $$
    %     (\partial_t^2 + \mathcal{L}_{g,A,V})u_j
    %     % = (\partial_t^2 + \mathcal{L}_{g,A,V})(\tilde{u}_j\circ\tilde{\phi}_j) = [(\partial_t^2 +\mathcal{L}_j)\tilde{u}_j]\circ \tilde{\phi}_j = \tilde{f}_j \circ \tilde{\phi}_j 
    %     = f, \quad \text{in } (0,T_j)\times N.
    % $$
    %
    % % Since $\text{supp}(f)\subset (0,T)\times K \subsetneq [0,T_j]\times M_j$ it follows that $(\partial_t^2+\mathcal{L}_{g,A,V})u_j = f$ on $(0,T_j)\times N$. 
    % We can similarly argue by the supports of the functions and the initial conditions $u_0$ and $u_1$ that $u_j(0,\cdot) = u_0$ and $\partial_tu_j(0,\cdot) = u_1$. Then for all $j\in \mathbb{N}$ we have that 
    $u_j\in C^\infty([0,T_j]\times N)$ is the unique solution to the Cauchy problem
    \begin{align*}
    \begin{cases}
        (\partial_t^2 + \mathcal{L}_{g,A,V})v = f, \quad \text{ in } (0,T_j) \times N
        \\
        v(0,\cdot) = u_0, \,\partial_t v(0,\cdot) = u_1.
    \end{cases}
\end{align*}

Therefore for each $k>j$ we use standard uniqueness arguments for linear equations with Lemma \ref{lem:domain_of_dependence} that $u_k|_{[0,T_j]\times N} = u_j$. 
Hence, we can conclude the proof by noting that the function
\[
u(t,x) := \inf_{j \in \N}\{u_j(t,x): j\in\mathbb{N}\text{ such that }t < T_j\}, \quad \text{ for all } (t,x)\in [0,\infty) \times N,
\]
is a well defined smooth solution to the Cauchy problem \eqref{eq:cauchy_problem}. 
% By uniqueness we have that $\{u_j(t,x):j\in\mathbb{N}\text{ such that }t < T_j\}$ is a set of a single number so $u(t,x)$ is a well-defined number. 
% For any $t_0\in (0,\infty)$ there exists a neighborhood $(t_0-\eps,t_0+\eps)$ for some $\eps > 0$ and an index $j$ such that $u(t,x) = u_j(t,x)$. It follows that $u\in C^\infty([0,\infty)\times N)$ and that $u$ satisfies the Cauchy problem on $(0,\infty)\times N$ as desired.
\end{proof}
\begin{rem}
    By choosing $u_0 = u_1 = 0$ in Theorem \ref{thm:uniqueness_of_smooth_solutions} we see that the Cauchy Problem \eqref{eq:cauchy_problem_infinite_time} has a unique smooth solution for each source $f\in C_0^\infty((0,\infty)\times \cX)$. Hence, the local source-to-solution operator $\operator{}$ is well-defined.
\end{rem}

\subsection{Necessary Tools for the Inverse Problem} \label{sub:approx_controllability}
The goal of this subsection is to highlight some important properties for solutions to the Cauchy Problem \eqref{eq:cauchy_problem_infinite_time}. These properties will be used in the following sections to solve the inverse problem.
% including density in $H_0^k(M(\cX,T),g)$ and to connect $\operator{}$ with the solutions outside of $\cX$ using a Blagovestchenskii identity.
% To verify these results we use the distributional solutions developed in Subsection \ref{sub:closed_manifold}.

The symmetry of the Magnetic-Schr\"odinger operator $\magnetic$ is needed to prove the following Blagovestchenskii identity which was originally derived in \cite{blagovestchenskii1969one, blagoveshchenskii1971inverse}.
\begin{thm}[Blagovestchenskii Identity] \label{thm:blagovestchenskii}
    Let $(N,g)$ be a complete Riemannian manifold. Let $T > 0, \cX\subset N $ be open and bounded. Let $f,h\in C_0^\infty((0,2T)\times\cX)$, then
    $$ (u^f(T,\cdot), u^h(T,\cdot))_g = (f, J\Lambda_{g,A,V}(h))_{L^2((0,T)\times\cX)} - (\Lambda_{g,A,V}(f), J(h))_{L^2((0,T)\times\cX)}$$
    where the operator $J:L^2(0,2T)\to L^2(0,T)$ is defined as
    $$
        J\phi(t) := \int_t^{2T-t}\phi(s)ds
    $$
\end{thm}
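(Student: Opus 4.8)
The plan is to derive the identity by a standard Blagovestchenskii-type argument: introduce the auxiliary function of two time variables $w(t,s) := (u^f(t,\cdot), u^h(s,\cdot))_g$ and show that it solves a one-dimensional wave equation in $(t,s)$ with a source given in terms of the source-to-solution map, then solve that equation explicitly and evaluate at $t=s=T$. First I would verify that $w$ is well defined and sufficiently regular: by Theorem~\ref{thm:uniqueness_of_smooth_solutions} the solutions $u^f,u^h$ are smooth, and by Corollary~\ref{cor:finite_speed} they have, for each fixed time, compact spatial support, so the $L^2(N,g)$ inner product and all the integrations by parts below are legitimate. Then I would compute $(\partial_t^2-\partial_s^2)w$. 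Using the equation $\partial_t^2 u^f = f - \magnetic u^f$ and likewise for $u^h$, one gets
\[
(\partial_t^2 - \partial_s^2) w(t,s) = (f(t,\cdot), u^h(s,\cdot))_g - (u^f(t,\cdot), h(s,\cdot))_g - \big(\magnetic u^f(t,\cdot), u^h(s,\cdot)\big)_g + \big(u^f(t,\cdot), \magnetic u^h(s,\cdot)\big)_g,
\]
and the last two terms cancel because $\magnetic$ is symmetric on compactly supported smooth functions (this is exactly the place where symmetry of the Magnetic-Schr\"odinger operator, noted after \eqref{inner_product}, is essential). Writing $F(t,s) := (f(t,\cdot), u^h(s,\cdot))_g - (u^f(t,\cdot), h(s,\cdot))_g$, and observing that $F$ is expressible through the source-to-solution data — $u^h$ restricted to $\supp f \subset (0,\infty)\times\cX$ is $\Lambda_{g,A,V}h$, and symmetrically for the second term — we obtain $(\partial_t^2-\partial_s^2)w = F$.

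Next I would pin down the initial/boundary data for $w$. From $u^f(0,\cdot) = \partial_t u^f(0,\cdot) = 0$ (and likewise for $u^h$) we get $w(0,s) = \partial_t w(0,s) = 0$ for all $s$, and $w(t,0) = \partial_s w(t,0) = 0$ for all $t$. So $w$ solves the 1D inhomogeneous wave equation $(\partial_t^2-\partial_s^2)w = F$ on the quadrant $\{t,s>0\}$ with vanishing Cauchy data on $\{t=0\}$ and vanishing data on $\{s=0\}$; by d'Alembert's formula (or equivalently, integrating $F$ over the characteristic triangle) the solution is uniquely determined, and evaluating the representation formula at $(t,s)=(T,T)$ produces an integral of $F$ over the triangle $\{(\tau,\sigma): 0<\tau, \, |\sigma-T|<\tau < ??\}$ — concretely, after the change of variables the characteristic triangle with vertex at $(T,T)$ and base on the diagonal, which unwinds to the operator $J$ as defined in the statement. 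Carrying out this integration and substituting $u^h|_{(0,\infty)\times\cX} = \Lambda_{g,A,V}h$ and $u^f|_{(0,\infty)\times\cX} = \Lambda_{g,A,V}f$ gives
\[
(u^f(T,\cdot), u^h(T,\cdot))_g = \big(f, (J\Lambda_{g,A,V} - \Lambda_{g,A,V}^* J)h\big)_{L^2((0,T)\times\cX)},
\]
where the adjoint $\Lambda_{g,A,V}^*$ and the appearance of $\bar{}$ / conjugation are tracked by using the Hermitian inner product $(\cdot,\cdot)_g$ from \eqref{inner_product} consistently and moving one factor across via its definition as an $L^2$-adjoint on $(0,T)\times\cX$.

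The main obstacle I anticipate is purely bookkeeping rather than conceptual: getting the limits of integration in the characteristic-triangle computation exactly right so that they reproduce $J\phi(t)=\int_t^{2T-t}\phi(s)\,ds$, and keeping the complex conjugates in the right places when passing from $(\cdot,\cdot)_g$ to the real pairing on $(0,T)\times\cX$ and when transposing $\Lambda_{g,A,V}$ to $\Lambda_{g,A,V}^*$. (One must be a little careful that $f,h$ are supported in $(0,2T)\times\cX$ while the operator $J$ maps $L^2(0,2T)\to L^2(0,T)$, so the data is known on a long enough time interval for all the terms to make sense; this is why the hypothesis uses $2T$.) A secondary technical point is justifying differentiation under the integral sign in $t$ and $s$ for $w(t,s)$, which follows from the smoothness of $u^f,u^h$ together with the locally uniform compact spatial support from Corollary~\ref{cor:finite_speed}. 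Once these are handled, the identity drops out of d'Alembert's formula.
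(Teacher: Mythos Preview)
Your proposal is correct and follows essentially the same approach as the paper's (commented-out) proof, which in turn cites \cite{source-to-solution}: define $W(t,s)=(u^f(t,\cdot),u^h(s,\cdot))_g$, use symmetry of $\magnetic$ to show $(\partial_t^2-\partial_s^2)W=F$ with $F$ expressible through $\Lambda_{g,A,V}$, note the vanishing Cauchy data at $t=0$, and apply Duhamel/d'Alembert to evaluate $W(T,T)$. The only unnecessary ingredient in your sketch is the data at $s=0$; the Cauchy data at $t=0$ alone suffices for the representation formula $W(T,T)=\tfrac12\int_0^T\int_\tau^{2T-\tau}F(\tau,\sigma)\,d\sigma\,d\tau$, which unwinds directly to the operator $J$.
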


\begin{proof}
    See \cite[Theorem 12]{source-to-solution} for the  proof.
\end{proof}

We now focus on proving that solutions to the Cauchy Problem (\ref{eq:cauchy_problem_infinite_time}) are dense in an appropriate sense in $H_0^k(M(\cX,T),g)$ for all $k\in \{0,1,\ldots\}$. As part of this we shall need to regularize distributional solutions. 

\begin{prop}[Mollified Solution] \label{prop:mollified_solution}
    Let $(N,g)$ be a closed  Riemannian manifold. Let $u\in C^1([0,T];\mathcal{D}'(N))$ be the unique distributional solution to the Cauchy Problem
    \begin{align}
    \label{eq:wave_eq_without_source}
        \begin{cases}
            (\partial_t^2+\mathcal{L}_{g,A,V})u = 0 \hspace{4mm}\text{on $(0,T)\times N$}\\
            u(0) =u_0,\hspace{4mm} \partial_tu(0) = u_1,
        \end{cases}
    \end{align}
    where $u_0,u_1 \in\mathcal{D}'(N)$. For any $\eps\in (0,T)$ there exists a function $U_\eps\in C^\infty((\eps,T-\eps)\times N)$ such that 
    $$
        (\partial_t^2 + \mathcal{L}_{g,A,V})U_\eps = 0 \hspace{4mm} \text{ on $(\eps,T-\eps)\times N$}
    $$
    and for any $t\in (0,T)$ we have that
    $
        \lim_{\eps\to 0} U_\eps(t) = u(t)
    $
    in $\mathcal{D}'(N)$.
\end{prop}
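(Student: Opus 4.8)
The plan is to construct $U_\eps$ as the mollification of $u$ in the time variable and to analyse it via the spectral expansion of $u$. Fix a mollifier $\rho\in C_0^\infty(\R)$ with $\rho\ge 0$, $\int_\R\rho\,dt=1$ and $\supp\rho\subset(-1,1)$, and set $\rho_\eps(t)=\eps^{-1}\rho(t/\eps)$, so $\supp\rho_\eps\subset(-\eps,\eps)$. Since $(N,g)$ is closed, $u_0\in H^{k+1}(N)$ and $u_1\in H^{k}(N)$ for some $k\in\Z$; expanding $u$ in the $L^2(N,g)$-eigenbasis $\{\varphi_j\}$ of $\mathcal{L}_{g,A,V}$ and testing \eqref{eq:distributional_solution} against $\theta(t)\varphi_j(x)$ gives $u(t)=\sum_j\alpha_j(t)\varphi_j$, where $\alpha_j$ is the smooth solution of $\alpha_j''+\lambda_j\alpha_j=0$ with $\alpha_j(0)=u_{0,j}$, $\alpha_j'(0)=u_{1,j}$, and $u_{0,j},u_{1,j}$ are the Fourier coefficients of $u_0,u_1$. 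For $t\in(\eps,T-\eps)$ I define
\[
U_\eps(t):=\sum_j(\rho_\eps*\alpha_j)(t)\,\varphi_j,\qquad (\rho_\eps*\alpha_j)(t):=\int_\R\rho_\eps(t-s)\,\alpha_j(s)\,ds,
\]
which is just the convolution of $u$ with $\rho_\eps$ in time; here the $s$-integral runs over $(t-\eps,t+\eps)\subset(0,T)$, where $\alpha_j$ is defined.

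The main work is to show $U_\eps\in C^\infty((\eps,T-\eps)\times N)$. Only finitely many eigenvalues are $\le 0$, and those terms are smooth, so consider $\lambda_j>0$: there $\alpha_j$ is a combination of $e^{\pm i\sqrt{\lambda_j}t}$ with amplitudes $O(|u_{0,j}|+|u_{1,j}|)$, and convolving with $\rho_\eps$ turns $e^{\pm i\sqrt{\lambda_j}t}$ into $e^{\pm i\sqrt{\lambda_j}t}\,\widehat{\rho_\eps}(\mp\sqrt{\lambda_j})$. Since $\rho_\eps\in C_0^\infty(\R)$ its Fourier transform is Schwartz, so for all $m,r\in\N$ there is $C_{m,r,\eps}$ with
\[
\bigl|\partial_t^m(\rho_\eps*\alpha_j)(t)\bigr|\le C_{m,r,\eps}\,(\lambda_j+D)^{-r}\bigl(|u_{0,j}|+|u_{1,j}|\bigr),\qquad t\in(\eps,T-\eps).
\]
As $u_{0,j},u_{1,j}$ are the Fourier coefficients of distributions on the closed manifold $N$, they grow at most polynomially in $\lambda_j$, so the right-hand side decays faster than any power of $\lambda_j$, uniformly for $t$ in compact subsets of $(\eps,T-\eps)$. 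By Proposition~\ref{prop:equiv_of_Sob_spaces} the series for $U_\eps(t)$, and the series obtained by applying any $\partial_t^m$, therefore converge in $C^\infty(N)$, uniformly on $t$-compacts; hence $U_\eps\in C^\infty((\eps,T-\eps)\times N)$. This gain of spatial smoothness out of a purely temporal mollification is the crux of the proof, and the displayed estimate is its heart.

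The wave equation is then immediate: $\partial_t^2(\rho_\eps*\alpha_j)=\rho_\eps*\alpha_j''=-\lambda_j(\rho_\eps*\alpha_j)$ and $\mathcal{L}_{g,A,V}\varphi_j=\lambda_j\varphi_j$, so $(\partial_t^2+\mathcal{L}_{g,A,V})\bigl[(\rho_\eps*\alpha_j)\varphi_j\bigr]=0$ for every $j$, and summing in the $C^\infty$-topology gives $(\partial_t^2+\mathcal{L}_{g,A,V})U_\eps=0$ on $(\eps,T-\eps)\times N$. For the convergence, fix $t\in(0,T)$ and $\psi\in C^\infty(N)$ and take $\eps<\min\{t,T-t\}$; interchanging the sum defining $U_\eps(t)$ with the $s$-integral (legitimate because $\langle\varphi_j,\psi\rangle$ decays rapidly in $\lambda_j$) gives $\langle U_\eps(t),\psi\rangle=\int_\R\rho_\eps(t-s)\,\langle u(s),\psi\rangle\,ds$. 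Since $s\mapsto\langle u(s),\psi\rangle$ is continuous on $[0,T]$ (as $u\in C([0,T];\mathcal{D}'(N))$) and $(\rho_\eps)_{\eps>0}$ is an approximate identity, this tends to $\langle u(t),\psi\rangle$ as $\eps\to 0$; since $\psi$ was arbitrary, $U_\eps(t)\to u(t)$ in $\mathcal{D}'(N)$.
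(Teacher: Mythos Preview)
Your proof is correct and follows the same overall scheme as the paper: Fourier expansion $u(t)=\sum_j\alpha_j(t)\varphi_j$, time mollification $\alpha_{j,\eps}=\rho_\eps*\alpha_j$, and verification that the resulting series is jointly smooth and still solves the equation.

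The difference lies in how spatial smoothness is extracted. The paper never writes $\alpha_j$ explicitly; instead it iterates the relation $\alpha_{j,\eps}''=-\lambda_j\alpha_{j,\eps}$ to trade powers of $(\lambda_j+D)$ for extra $t$-derivatives, and then controls those via Young's inequality for convolutions, landing in $H^l(\eps,T-\eps;H^r(N))$ for all $l,r$. You instead use the explicit form $\alpha_j(t)=c_j^+e^{i\sqrt{\lambda_j}t}+c_j^-e^{-i\sqrt{\lambda_j}t}$ (for $\lambda_j>0$) and observe that mollification multiplies each exponential by $\widehat{\rho_\eps}(\pm\sqrt{\lambda_j})=\widehat{\rho}(\pm\eps\sqrt{\lambda_j})$, which is Schwartz in $\sqrt{\lambda_j}$. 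This gives the needed rapid decay in $j$ directly and pointwise in $t$, and makes transparent why time mollification produces spatial smoothness. The paper's route is slightly more robust (it only uses the ODE, not its explicit solution), while yours is shorter and conceptually clearer. For the convergence $U_\eps(t)\to u(t)$, your rewriting $\langle U_\eps(t),\psi\rangle=\int\rho_\eps(t-s)\langle u(s),\psi\rangle\,ds$ and appeal to approximate identities on the continuous function $s\mapsto\langle u(s),\psi\rangle$ is somewhat cleaner than the paper's dominated convergence on the Fourier series.
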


\begin{proof}
    We begin the proof by recalling that there exists $k\in\mathbb{Z}$ such that $u_0\in H^{k+1}(N)$ and $u_1\in H^k(N)$ by \cite[Proposition 10.2]{shubin_psdo}. Since $u$ solves $\eqref{eq:wave_eq_without_source}$ we get from Theorem \ref{thm:distributional_solution} that $u\in \bigcap_{r=0}^\infty H^r(0,T;H^{k+1-l}(N))$,
    % since $f = 0$ is an element of $H^l(0,T;H^k(N))$ 
    for all $l\in\mathbb{N}$, 
    and this distribution can be written as the Fourier series
    $$
        u(t) = \sum_j \alpha_j(t)\varphi_j, \quad \text{ for } t \in (0,T),
    $$
    where $\varphi_j$ are the $L^2$-normalized eigenfunctions of the Magnetic-Schr\"odinger operator $\mathcal{L}_{g,A,V}$, and $\alpha_j$ are the smooth solutions of the initial value problem
    \begin{align*}
        \begin{cases}
            \alpha_j''(t) + \lambda_j \alpha_j(t) = 0 \hspace{4mm}t\in (0,T)\\
            \alpha_j(0) = \langle u_0,\varphi_j\rangle, \,\alpha_j'(0) = \langle u_1,\varphi_j\rangle.
        \end{cases} 
    \end{align*}   
    Furthermore, for $D>0$ sufficiently large we get from Proposition \ref{prop:equiv_of_Sob_spaces} and Theorem \ref{thm:distributional_solution} that 
    \begin{equation}
        \label{eq:series_for_alpha_j}
        \sum_j \norm{\alpha_j}_{H^l(0,T)}(\lambda_j + D)^{k+1-l} <\infty\,\hspace{4mm}\text{for all $l\in \mathbb{N}$}.
    \end{equation}

    For each $s \in \R$ let $\eta(s) = C\text{exp}(\frac{1}{s^2-1})$ when $s\in [-1,1]$ and $\eta(s)=0$ for $s$ elsewhere, further choose $C > 0$ so that $\norm{\eta}_{L^1(\R)} = 1$.
    % \begin{align*}
    %     \eta(s) = 
    %     \begin{cases}
    %     C \text{exp}\Big(\frac{1}{s^2-1}\Big) \hspace{12mm}\text{$|s|<1$}\\
    %     0\hspace{32mm} \text{$|s|\geq 1$}
    %     \end{cases}
    % \end{align*}
    % and $C$ is the constant such that $\int_{\R}\eta(s)ds = 1$ so that $\eta$ is a smooth compactly supported function. 
    Then for $\eps > 0$ and $j \in \N$ we define $\eta_\eps(s): = \frac{1}{\eps}\eta(\frac{s}{\eps})$  and
    % in order to mollify functions, for further discussion see \cite[Appendix C.5]{evans2010partial}. In particular, for each $j\in\mathbb{N}$ we can define a function 
    $\alpha_{j,\eps}:(\eps,T-\eps)\to \R$ as the convolution 
    % $$
    %     \alpha_{j,\eps}(t) = \int_\eps^{T-\eps} \eta_\eps(t-s)\alpha_j(s)ds = \int_{-\eps}^\eps \eta_\eps\Big(\frac{s}{\eps}\Big)\alpha_j(t-s)ds
    % $$
    % so that 
    $\alpha_{j,\eps} := \eta_\eps*\alpha_j$. 
    % is a convolution of the functions. Additionally, by properties of the convolution we have that 
    It is straightforward to show that the functions $\alpha_{j,\eps}$ solve the ODE
    $\alpha_{j,\eps}''(t) + \lambda_j \alpha_{j,\eps}(t) = 0$ for all $t\in (\eps,T-\eps)$. 
    
    For each $\eps>0$ we define $U_\eps :(\eps,T-\eps)\to \mathcal{D}'(N)$ by
    \begin{equation}
        \label{eq:series_for_U_eps}
        U_\eps(t) := \sum_{j} \alpha_{j,\eps}(t)\varphi_j,
    \end{equation}
    and seek to show that $U_\eps\in H^l(\eps,T-\eps;H^r(N))$ for all $l\in \mathbb{N}$ and $r\in\mathbb{Z}$. This will imply the smoothness of $U_\eps$. 
    
    First, by Young's inequality for convolutions we know that for any $\beta\in L^2(0,T)$ that $$\norm{\beta*\eta_\varepsilon}_{L^2(\eps,T-\eps)} \leq \norm{\eta_\eps}_{L^1(\R)} \norm{w}_{L^2(0,T)} = \norm{w}_{L^2(0,T)}$$
    since $\eta_\varepsilon$ is normalized to integrate to $1$. Thus $\norm{\alpha_{j,\varepsilon}}_{H^l(\varepsilon,T-\varepsilon)} \leq \norm{ \alpha_j}_{H^l(0,T)}$ and due to 
    % Theorem \ref{thm:distributional_solution} 
    \eqref{eq:series_for_alpha_j} we conclude that $U_\eps \in H^l(\eps,T-\eps;H^{k+1-l}(N))$. Then we take $d\in \mathbb{N}$ and show that $U_\eps \in H^l(\eps,T-\eps;H^{k+1-l+2d}(N))$. 
    % Note that there exists a constant $C_d > 0$ that depends on $d$ and $D> 0$ such that $(a+D)^{2d}\leq C_d \sum_{i=0}^d a^{2i}$ for all $a\in \mathbb{R}$. To show that $U_\eps\in H^l(0,T;H^{k+1-l+2d}(N))$ it suffices to prove that
    % $$
    %     \sum_j \norm{\alpha_{j,\eps}}_{H^l(\eps,T-\eps)}^2 (\lambda_j+D)^{k+1-l+2d} < \infty
    % $$
    We recall that there is a constant $C_d > 0$ that depends on $d$ and $D> 0$  such that $(a+D)^{2d}\leq C_d \sum_{i=0}^d a^{2i}$ for all $a\in \mathbb{R}$. By this inequality and the equation $\alpha_{j,\eps}''= -\lambda_j\alpha_{j,\eps}$ we find that
    \begin{align*}
        \norm{\alpha_{j,\eps}}_{H^l(\eps,T-\eps)}^2(\lambda_j+D)^{k+1-l+2d} &\leq 
        % C_d(\lambda_j+D)^{k+1-l}\sum_{i=0}^d \lambda_j^{2i}\norm{\alpha_{j,\eps}}_{H^l(\eps,T-\eps)}^2\\
        % &= C_d(\lambda_j+D)^{k+1-l}\sum_{i=0}^d\norm{\lambda_j^i\alpha_{j,\eps}}_{H^l(\eps,T-\eps)}^2\\
        % &= 
        C_d(\lambda_j+D)^{k+1-l}\sum_{i=0}^d\norm{\alpha_{j,\eps}^{(2i)}}_{H^l(\eps,T-\eps)}^2.
    \end{align*}
    % Here we also used the equation $\alpha_{j,\eps}''(t) = -\lambda_j\alpha_j(t)$ for all $t\in (\eps,T-\eps)$.  
    Moreover, since
    \begin{align*}
        \norm{\alpha_{j,\eps}^{(2i)}}_{H^l(\eps,T-\eps)}^2 &= \sum_{s=0}^l \int_\eps^{T-\eps}|\alpha_{j}^{(2i+s)}(t)|^2dt,
    \end{align*}
    and
    \begin{align*}
        \alpha_{j,\eps}^{(2i+s)}(t) &= \frac{1}{\eps^{2i+1}}\int_\eps^{T-\eps} \eta^{(2i)}\Big(\frac{t-\xi}{\eps}\Big)\alpha_j^{(s)}(\xi)d\xi,
    \end{align*}
    we get from Young's inequality for convolutions that $\norm{\alpha_{j,\eps}^{(2i+s)}}_{L^2(\R)} \leq C_{i,\eps}\norm{\alpha_j}_{L^2(\R)}$. Thus
    % \begin{align*}
    %     \norm{\alpha_{j,\eps}^{(2i)}}_{H^l(\eps,T-\eps)}^2 &\leq \frac{\norm{\eta^{(2i)}}_{L^1(-1,1)}^2}{\eps^{4i}}\sum_{s=0}^l \norm{\alpha_j^{(s)}}_{L^2(0,T)}^2 = \frac{\norm{\eta^{(2i)}}^2}{\eps^{4i}} \norm{\alpha_{j}}^2_{H^l(0,T)}.
    % \end{align*}
    % Combining with our work above, we have that
    % \begin{align*}
    %     \norm{\alpha_{j,\eps}}_{H^l(\eps,T-\eps)}^2(\lambda_j+D)^{k+1-l+2d} 
    %     &
    %     \leq C_d(\lambda_j+D)^{k+1-l}\sum_{i=0}^d\norm{\alpha_{j,\eps}^{(2i)}}_{H^l(\eps,T-\eps)}^2\\
    %     &\leq C_d\Bigg(\sum_{i=0}^d \frac{\norm{\eta^{(2i)}}_{L^1(-1,1)}}{\eps^{4i}}\Bigg)\norm{\alpha_j}_{H^l(0,T)}^2(\lambda_j+D)^{k+1-l}\\
    %     &\leq \tilde{C}_{d,\eps} \norm{\alpha_j}_{H^{l}(0,T)}^2(\lambda_j+D)^{k+1-l}
    % \end{align*}
        % Where this constant $\tilde{C}_{d,\eps}$ does not depend on $j$ or $\alpha_j$. With this constant $\tilde{C}_{d,\eps}$ it becomes clear that
    \begin{align*}
        \sum_j \norm{\alpha_{j,\eps}}_{H^l(\eps,T-\eps)} (\lambda_j+D)^{k+1-l+2d} &\leq \tilde{C}_{d,\eps}\sum_{j}\norm{\alpha_j}_{H^l(0,T)}^2(\lambda_j+D)^{k+1-l} < \infty,
    \end{align*}
    where the constant $\tilde{C}_{d,\eps}$ does not depend on $j$ or $\alpha_j$.
    % where the last inequality follows since $u\in H^l(0,T;H^{k+1-l}(N))$ for all $l\in\mathbb{N}$. 
    % Hence $U_\eps \in H^l(0,T;H^r(N))$ for all $l\in \mathbb{N}$ and $r\in\mathbb{Z}$. 
    % This is sufficient to prove that $U_\eps \in C^\infty([\eps,T-\eps]\times N)$ by Sobolev embedding Theorems, for more details see the proof of Theorem \ref{thm:closed_existence}. 
    Hence, the Fourier series \eqref{eq:series_for_U_eps} converges in $H^l(0,T;H^r(N))$ for any $l\in \mathbb{N}$ and $r\in\mathbb{Z}$, and we have in $(\eps,T-\eps)\times N$ that
    
    \begin{align*}
        (\partial_t^2+\mathcal{L}_{g,A,V}) U_\eps(t,x) 
        % &= (\partial_t^2+\mathcal{L}_{g,A,V})\sum_j\alpha_{j,\eps}(t)\varphi_j(x) = \sum_j\Big(\alpha_{j,\eps}''(t)\varphi_j(x)+\alpha_{j,\eps}(t)\mathcal{L}_{g,A,V}\varphi_j(x)\Big)\\
        &= \sum_j\Big(\alpha_{j,\eps}''(t)+\lambda_j\alpha_{j,\eps}(t)\Big)\varphi_j(x) = 0.
    \end{align*}
    
    To finish the proof we must show that for any $w\in C^\infty(N)$ and any $t\in (0,T)$ that $\lim_{\eps\to 0}\langle U_\eps(t),w\rangle = \langle u(t),w\rangle$. Let $\eps>0$ and let $w_j$ be the $j^{th}$-Fourier coefficient of $w$. Since $w$ is smooth we have by Lemma \ref{lem:space_spectral_def} that let $w = \sum_j w_j \varphi_j$ and $\sum_{j}|w_j|^2(\lambda_j+D)^r < \infty$ for all $r\in \mathbb{Z}$.
    Let $k \in \Z$ be such that $u\in H^1(0,T;H^k(N))$ and $\sum_j\norm{\alpha_j}_{H^1(0,T)}^2(\lambda_j+D)^k < \infty$. Then
    by the Sobolev embedding $H^1(0,T)\subset C([0,T])$ and Young's inequality for convolutions we have that 
    % that there is a constant $C$ such that $\norm{v}_{L^\infty(0,T)} \leq C \norm{v}_{H^{1}(0,T)}$ for all $v\in H^1((0,T))$. 
    \begin{align*}
        \norm{\eta_\eps*\alpha_j}_{L^\infty(\eps,T-\eps)} &\leq \norm{\alpha_j}_{L^\infty(0,T)} \norm{\eta_\eps}_{L^1(-\eps,\eps)} \leq C\norm{\alpha_j}_{H^1(0,T)}.
    \end{align*}    
    Thus
    \begin{align*}
        |\langle U_\eps(t),w\rangle| 
        % &\leq \sum_j |\alpha_{j,\eps}(t)w_j| \leq \sum_j \norm{\eta}_{L^1(-1,1)} \norm{\alpha_j}_{L^\infty(0,T)}|w_j|\\
        % &\leq C\sum_j \norm{\alpha_j}_{H^1(0,T)} (\lambda_j+D)^{k/2}|w_j|(\lambda_j+D)^{-k/2}\\
        &\leq C \Big(\sum_j \norm{\alpha_j}_{H^1(0,T)}^2(\lambda_j+D)^k\Big)^{1/2}\Big(\sum_j|w_j|^2(\lambda_j+D)^{-k}\Big)^{1/2}<\infty.
    \end{align*}
    % Now $\sum_j\norm{\alpha_j}_{H^1(0,T)}^2(\lambda_j+D)^k < \infty$ since $u\in H^1(0,T;H^k(N))$ and $\sum_j|w_j|^2(\lambda_j + D)^{-k}<\infty$ since $w \in C^\infty(N) = \bigcap_{k\in\mathbb{Z}} H^k(N)$. 
    Since the second term above is independent of $\eps$, 
    we can apply the Dominated Convergence Theorem and obtain
    \begin{align*}
         \lim_{\eps\to 0} \langle U_\eps(t),w\rangle = \lim_{\eps\to 0}\sum_j \alpha_{j,\eps}(t)\overline{w_j} = \sum_j\lim_{\eps\to0} \alpha_{j,\eps}(t)\overline{w_j} =\sum_j\alpha_j(t)\overline{w_j} = \langle u(t),w\rangle.  \end{align*}\qedhere      % The last line is trivial due the construction of $U_\eps$ with the given Fourier series representation.
\end{proof}
Next we show that if the distributional solution $u$ of the previous proposition vanishes on the space-time cylinder $(0,T)\times\cX$, then for each $\eps\in (0,T)$ the respective mollification $U_\eps$ will also vanish in the shorted cylinder $(\eps,T-\eps)\times\cX$.

\begin{lem} \label{lem:vanishing_convolution}
    Let $(N,g)$ be a closed Riemannian manifold. Let $u\in C^1([0,T];\mathcal{D}'(N))$ be the unique distributional solution to the Cauchy Problem \eqref{eq:wave_eq_without_source} with $u|_{(0,T)\times \cX} = 0$. If $\eps>0$ and $U_\eps$ is as in Proposition \ref{prop:mollified_solution} then $U_\eps|_{(\eps,T-\eps)\times \cX} = 0$.
\end{lem}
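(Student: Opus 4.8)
The plan is to test $U_\eps(t)$ against an arbitrary $w\in C_0^\infty(\cX)$ and show that the pairing vanishes; since $U_\eps(t,\cdot)$ is a smooth function by Proposition \ref{prop:mollified_solution}, this forces $U_\eps(t,\cdot)|_{\cX}\equiv 0$, which is the claim. Write $w=\sum_j w_j\varphi_j$ with $w_j=(w,\varphi_j)_g$; since $w$ is smooth, Lemma \ref{lem:space_spectral_def} gives $\sum_j|w_j|^2(\lambda_j+D)^r<\infty$ for every $r\in\Z$. Using the Fourier representations $u(s)=\sum_j\alpha_j(s)\varphi_j$ and $U_\eps(t)=\sum_j(\eta_\eps*\alpha_j)(t)\varphi_j$ from \eqref{eq:series_for_U_eps}, we get for $t\in(\eps,T-\eps)$ that
\[
\langle U_\eps(t),w\rangle=\sum_j(\eta_\eps*\alpha_j)(t)\,\overline{w_j}=\sum_j\Big(\int_{-\eps}^{\eps}\eta_\eps(\sigma)\alpha_j(t-\sigma)\,d\sigma\Big)\overline{w_j}.
\]

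Next I would justify interchanging the summation and the integral. Fix $k\in\Z$ with $u\in H^1(0,T;H^k(N))$, so that $\sum_j\norm{\alpha_j}_{H^1(0,T)}^2(\lambda_j+D)^k<\infty$ by \eqref{eq:series_for_alpha_j} (as used in the proof of Proposition \ref{prop:mollified_solution}). Since $\norm{\eta_\eps}_{L^1(\R)}=1$ and $H^1(0,T)\subset C([0,T])$,
\[
\sum_j\int_{-\eps}^{\eps}|\eta_\eps(\sigma)|\,|\alpha_j(t-\sigma)|\,|w_j|\,d\sigma\le\sum_j\norm{\alpha_j}_{L^\infty(0,T)}|w_j|\le C\sum_j\norm{\alpha_j}_{H^1(0,T)}|w_j|,
\]
and splitting by Cauchy--Schwarz with the weights $(\lambda_j+D)^{k/2}$ and $(\lambda_j+D)^{-k/2}$ bounds the last sum by $C\big(\sum_j\norm{\alpha_j}_{H^1(0,T)}^2(\lambda_j+D)^k\big)^{1/2}\big(\sum_j|w_j|^2(\lambda_j+D)^{-k}\big)^{1/2}<\infty$. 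Hence Fubini's theorem applies and
\[
\langle U_\eps(t),w\rangle=\int_{-\eps}^{\eps}\eta_\eps(\sigma)\Big(\sum_j\alpha_j(t-\sigma)\overline{w_j}\Big)d\sigma=\int_{-\eps}^{\eps}\eta_\eps(\sigma)\,\langle u(t-\sigma),w\rangle\,d\sigma.
\]

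Finally I would invoke the hypothesis. For $t\in(\eps,T-\eps)$ and $|\sigma|\le\eps$ we have $t-\sigma\in(0,T)$, and since $u|_{(0,T)\times\cX}=0$ and $\supp w\subset\cX$, the pairing $\langle u(t-\sigma),w\rangle$ vanishes. Therefore $\langle U_\eps(t),w\rangle=0$ for every $w\in C_0^\infty(\cX)$ and every $t\in(\eps,T-\eps)$; by smoothness of $U_\eps(t,\cdot)$ this yields $U_\eps(t,\cdot)|_{\cX}=0$, i.e.\ $U_\eps|_{(\eps,T-\eps)\times\cX}=0$. The only genuine subtlety is the interchange of summation and integration, which is carried out exactly as in the last paragraph of the proof of Proposition \ref{prop:mollified_solution}; the rest is bookkeeping about the support of $\eta_\eps$.
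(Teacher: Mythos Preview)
Your argument is correct and is in fact more streamlined than the paper's. The paper proves the lemma by a duality maneuver: rather than fixing $w\in C_0^\infty(\cX)$ and a time $t$, it takes a full space--time test function $w\in C_0^\infty((\eps,T-\eps)\times\cX)$, transfers the time--mollification from $U_\eps$ onto $w$ using the evenness of $\eta$ (so that $\int\alpha_{j,\eps}\overline{\beta_j}=\int\alpha_j\overline{\beta_{j,\eps}}$), checks that the mollified test function $w_\eps$ still lies in $C_0^\infty((0,T)\times\cX)$, and finally invokes $u|_{(0,T)\times\cX}=0$ in the space--time distributional sense. Your approach bypasses all of that: you fix $t$, expand $(\eta_\eps*\alpha_j)(t)$, and swap sum with integral to rewrite $\langle U_\eps(t),w\rangle$ directly as $\int\eta_\eps(\sigma)\langle u(t-\sigma),w\rangle\,d\sigma$, which vanishes pointwise in $\sigma$. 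The absolute--summability justification you give is exactly the Cauchy--Schwarz estimate the paper also uses. What you gain is brevity; what the paper's route buys is that it never needs the pointwise-in-time interpretation of $u|_{(0,T)\times\cX}=0$, only the space--time distributional one---but since $u\in C^1([0,T];\mathcal D'(N))$ these are equivalent, so this is not a genuine advantage.
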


\begin{proof}
    % Let $u\in H^1(0,T;H^k(N))$ be the distributional solution in $(\ref{eq:wave_eq_without_source})$ where $k\in\mathbb{Z}$, possibly negative. 
    % Then we have that
    % $$
    % u(t) = \sum_j \alpha_j(t)\varphi_j
    % $$
    % where $\alpha_j$ is a smooth function and since $u\in H^1(0,T;H^k(N))$ we have that the Fourier coefficients satisfy
    % $$
    %     \sum_j \norm{\alpha_j}_{H^1(0,T)}^2(\lambda_j + D)^k < \infty
    % $$
    % If we let $\eta(s) = C\exp(\frac{1}{x^2-1})$ on $(-1,1)$, let $\eta(s) = 0$ elsewhere and let $C$ be chosen so that $\int_{\R}\eta(s)ds = 1$ then we can consider the functions $\eta_\eps(s) = \frac{1}{\eps} \eta(\frac{s}{\eps})$. Next define $\alpha_{j,\eps}:(\varepsilon,T-\varepsilon)\to\C$ by $\alpha_{j,\eps} = \eta_\eps* \alpha_j$. 
    Let $\eps>0$. As in the proof of Proposition \ref{prop:mollified_solution} we define 
    % for  $t\in (\eps,T-\eps)$
    % $$
    %     U_\eps(t) := \sum_j\alpha_{j,\eps}(t)\varphi_j .
    % $$
    % Hence, we know that 
    $U_\eps\in C^\infty((\eps,T-\eps)\times\cX)$ by \eqref{eq:series_for_U_eps}. Thus, for all $r\in\mathbb{Z}$ we will have that
    $$
        \sum_j\norm{\alpha_{j,\eps}}_{H^1(\eps,T-\eps)}^2(\lambda_j + D)^r < \infty.
    $$

    Our goal is to show that $U_\eps|_{(\eps,T-\eps)\times \cX} = 0$. To that end let $w\in C_0^\infty((\eps,T-\eps)\times \cX)$ and use the same notation for the respective zero extension. Since $w$ is smooth and compactly supported, then if $w(t,x) = \sum_j\beta_j(t)\varphi_j(x)$ is the respective Fourier series representation, we have that
    $$                  \sum_j\norm{\beta_{j}}_{H^l(\eps,T-\eps)}^2(\lambda_j + D)^r < \infty,
    $$
    for all $r\in\Z$ and all $l\in\mathbb{N}$.  
    
    For each $t \in (\eps, T-\eps)$ we define $w_\eps(t) :=\sum_j (\eta_\eps*\beta_j)(t)\varphi_j$. 
    % where $\eta_\eps(s) = \frac{1}{\eps}\eta(\frac{s}{\eps})$ for a function $\eta$ that is even, smooth, positive, and $L^1$-normalized. 
    Our aim is to verify that this series converges in $C^\infty$-topology to the function $(t,x)\mapsto \int_{-\eps}^\eps\eta_\eps(s)w(t-s,x)ds$. Therefore, $w_\eps$ is the convolution of $w$ only in the time coordinate, and if $w(t,x)=0$ for some $x \in N$ and all $t \in \R$ then $w_\eps(t,x) = 0$ for all $t\in \R$. 
    
    % To do this we recall that $w\in C_0^\infty((\eps,T-\eps)\times \cX)$ so $w\in C^\infty([-\eps,T+\eps]\times N)$ where $N$ is a compact manifold. 
    Our first step in this process will be to prove that for large enough $k \in \N$ there exists a constant $C > 0$ such that
    \begin{align}
         \max_{(s,y)\in [-\eps,T+\eps]\times N}\sum_j|\beta_j(s)\varphi_j(y)| \leq C \norm{w}_{H^1(-\eps,T+\eps;H^k(N))}. \label{eq:absolute_convergence}
    \end{align}
    To verify this estimate we define for each $y\in N$ and $j\in\N$ a constant
    \begin{align*}
        c_j(y) :&= \begin{cases}
            \frac{\overline{\varphi_j(y)}}{|\varphi_j(y)|} \hspace{12mm}\text{when $\varphi_j(y)\not=0$}\\
           \hspace{2mm} 1 \hspace{18mm}\text{when $\varphi_j(y) = 0$}
        \end{cases}
        \intertext{so that $|c_j(y)| = 1$. Also, we define formally the following  distribution on $N$}
        w_y(x) :&= \sum_j c_j(y)\norm{\beta_j}_{H^1(\eps,T-\eps)} \varphi_j(x),
    \end{align*}
    and show that this is actually a smooth function such that 
    \begin{equation}
        \label{eq:w_y_at_y}
    w_y(y) = \sum_j \norm{\beta_j}_{H^1(\eps,T-\eps)}|\varphi_j(y)|.
    \end{equation}
    
    Using definitions \ref{def:sobolev_spectral_def} and \ref{def:time_sobolev_spectral_space} we can compute  for all $k\in \Z$ that
    \begin{equation}
        \label{eq:norm_of_w_y}
        \norm{w_y}_{H^k(N)}^2 
        % = \sum_j |c_j(y)|^2\norm{\beta_j}_{H^1(\eps,T-\eps)}^2 (\lambda_j+D)^k 
        % = \sum_j \norm{\beta_j}_{H^1(\eps,T-\eps)}^2 (\lambda_j+D)^k 
        = \norm{w}_{H^1(\eps,T-\eps;H^k(N))}^2.
    \end{equation}
    Hence $w_y\in H^k(N)$ for all $k\in \mathbb{Z}$ and by Sobolev embedding $w_y$ is smooth, and the equation \eqref{eq:w_y_at_y} follows.  
    % since $w\in C_0^\infty((\eps,T-\eps)\times\cX)$. 
    Moreover, for $k$ large enough the equations \eqref{eq:w_y_at_y} and \eqref{eq:norm_of_w_y} in conjunction with Sobolev embedding imply
    % (i.e. Lemma \ref{sobolev_inequality}) we may find a constants $D_1, D_2 > 0$ such that $\norm{\beta}_{C([\eps,T-\eps])} \leq D_1\norm{\beta}_{H^1(\eps,T-\eps)}$ and $\norm{v}_{C(N)} \leq D_2 \norm{v}_{H^k(N)}$ for sufficiently large $k\in\N$. With this in mind, for any $(s,y)\in [\eps,T-\eps]\times N$ we have the estimate
    \begin{align*}
        \sum_j|\beta_j(s)\varphi_j(y)| 
        % &\leq  \sum_j\norm{\beta_j}_{C([\eps,T-\eps])}|\varphi_j(y)|
        % \\
        % &\leq D_1\sum_j\norm{\beta_j}_{H^1(\eps,T-\eps)}|\varphi_j(y)|\\
        % &= D_1 w_y(y)\\
        % &\leq D_1 \norm{w_y}_{C(N)}\\
        &\leq 
        % D_1 D_2 \norm{w_y}_{H^k(N)}= D_1 D_2 
        C\norm{w}_{H^1(\eps,T-\eps;H^k(N))}, 
        \quad \text{ for all } (s,y)\in [\eps,T-\eps]\times N,
    \end{align*}
    where $C$ is independent of $(s,y)\in [\eps,T-\eps]\times N$. Therefore, we arrive at \eqref{eq:absolute_convergence}. 
    
    Furthermore, since $\|\eta_\eps\|_\infty\leq D_\eps$ for some postive constant $D_\eps$, which depends on $\eps$,  the previous argument yields for all $s\in (-\eps,\eps)$ that
    % it is clear that if we fix $(t,x)\in [0,T]\times N$ and let $s\in (-\eps,\eps)$ be arbitrary, we have the estimate
    $$
        \sum_j|\eta_\eps(s)\beta_j(t-s)\varphi_j(x)| \leq CD_\eps \norm{w}_{H^1(\eps,T-\eps;H^k(N))} < \infty, 
        \quad \text{ whenever } (t,x)\in [0,T]\times N.
    $$
    Hence, $\sum_j \eta_\eps(s)\beta_j(t-s)\varphi_j(x)$ is absolutely integrable with respect to $s$, and by Dominated Convergence Theorem we have that
    \begin{align*}
        \int_{-\eps}^\eps \eta_\eps(s)w(t-s,x)ds
        % &= \int_{-\eps}^\eps \Big(\eta_\eps(s)\sum_{j}\beta_j(t-s)\varphi_j(x)\Big)ds\\
        % &= \sum_j\Big(\int_{-\eps}^\eps \eta_\eps(s)\beta_j(t-s)ds\Big)\varphi_j(x)\\
        &= \sum_j \beta_{j,\eps}(t)\varphi_j(x) = w_\eps(t,x).
    \end{align*}
    
    Next, we fix $j \in \N$. 
    Since, $w\in C_0^\infty((\eps,T-\eps)\times \cX)$ 
    % By standard properties of the convolution and from the assumptions for the support of $w$ 
    we have that $w_\eps \in C_0^\infty((0,T)\times \cX)$ and compute that
    \begin{align*}
        \int_\eps^{T-\eps} \alpha_{j,\eps}(t)\overline{\beta_j(t)}dt 
        % &= \int_\eps^{T-\eps} \Bigg(\int_{-\eps}^\eps \frac{1}{\eps}\eta\Big(\frac{s}{\eps}\Big)\alpha_j(t-s)ds\Bigg)\overline{\beta_j(t)}dt\\
        % &= \int_{-\eps}^\eps\int_\eps^{T-\eps} \frac{1}{\eps}\eta\Big(\frac{s}{\eps}\Big)\alpha_j(t-s)\overline{\beta_j(t)}dtds\\
        % &= \int_{-\eps}^\eps\int_{\eps-s}^{T-\eps -s} \frac{1}{\eps}\eta\Big(\frac{s}{\eps}\Big)\alpha_j(r)\overline{\beta_j(r+s)}dr ds\\
        % &= \int_{-\eps}^\eps\int_{\eps+\xi}^{T-\eps +\xi} \frac{1}{\eps}\eta\Big(\frac{\xi}{\eps}\Big)\alpha_j(r)\overline{\beta_j(r-\xi)}drd\xi\\
        % &= \int_{-\eps}^\eps\int_{0}^{T} \frac{1}{\eps}\eta\Big(\frac{\xi}{\eps}\Big)\alpha_j(r)\overline{\beta_j(r-\xi)}drd\xi\\
        &= \int_0^T \alpha_j(t)\overline{
        \beta_{j,\eps}(t)}dt.  
    \end{align*}
    Here $\beta_{j,\eps}:=\eta_\eps*\beta_j$, and in the derivation of this equality we used the evenness of the mollifier $\eta$ as well as $w\in C_0^\infty((\eps,T-\eps)\times\cX)$ to conclude that $\beta_j$ vanishes outside of $(\eps,T-\eps)$. 
    % Hence, we have
    % \[
    % \int_\eps^{T-\eps} \alpha_{j,\eps}(t)\overline{\beta_j(t)}dt = \int_0^T \alpha_j(t)\overline{\beta_{j,\eps}(t)}dt
    % \]
    
    To finish the proof we will need to verify equalities
    \begin{align}
         \int_{\eps}^{T-\eps} \sum_j \alpha_{j,\eps}(t)\overline{\beta_j(t)}dt &= \sum_j \int_\eps^{T-\eps}\alpha_{j,\eps}(t)\overline{\beta_j(t)}dt \label{eq:a_eps}
         \intertext{and}
          \int_{\eps}^{T-\eps} \sum_j \alpha_{j}(t)\overline{\beta_{j,\eps}(t)}dt &= \sum_j \int_\eps^{T-\eps}\alpha_{j}(t)\overline{\beta_{j,\eps}(t)}dt. \label{eq:b_eps}
    \end{align}
    We will prove the latter one and note the former equality follows by a similar argument. Recall that $u \in H^1(0,T;H^k(N))$ and $w_\eps \in C_0^\infty((0,T)\times \cX) \subset H^1(0,T;H^{-k}(N))$. By the Cauchy-Schwarz inequality and the Sobolev embedding theorem, we have for any $t\in[0,T]$ that
    \begin{align*}
        \sum_j |\alpha_{j}(t)\overline{\beta_{j,\eps}(t)}| &\leq \Big(\sum_j|\alpha_j(t)|^2(\lambda_j+D)^k\Big)^{1/2}\Big(\sum_j |\beta_{j,\eps}(t)|^2(\lambda_j + D)^{-k}\Big)^{1/2}\\
        % &\leq C^2\Big(\sum_j\norm{\alpha_j}_{H^1(0,T)}^2(\lambda_j+D)^k\Big)^{1/2}\Big(\sum_j \norm{\beta_{j,\eps}}^2_{H^1(0,T)}(\lambda_j + D)^{-k}\Big)^{1/2}\\
        &= C\norm{u}_{H^1(0,T;H^k(N))}\norm{w_\eps}_{H^1(0,T;H^{-k}(N))} < \infty.
    \end{align*}
    Hence, $\sum_j |\alpha_{j}(t)\overline{\beta_{j,\eps}(t)}|$ is integrable on $[0,T]$, and the equation $(\ref{eq:b_eps})$ follows from the Dominated Convergence Theorem. 
    % A similar argument shows that $(\ref{eq:a_eps})$ also holds. 
    
    As a consequence, all series and integrals needed to verify the following equation commute. Therefore,
    \begin{align*}
        \int_\eps^{T-\eps}\int_N U_\eps(t,x)\overline{w(t,x)}dV_gdt 
%        &= \int_\eps^{T-\eps}\langle U_\eps(t),w(t)\rangle_gdt\\
%        &= \int_\eps^{T-\eps}\sum_j \alpha_{j,\eps}(t)\overline{\beta_j(t)}dt\\
%        &= \sum_j\int_\eps^{T-\eps}\alpha_{j,\eps}(t)\overline{\beta_j(t)}dt\\
%        &= \sum_j\int_0^{T}\alpha_{j}(t)\overline{\beta_{j,\eps}(t)}dt\\
%        &=\int_0^{T}\sum_j\alpha_{j}(t)\overline{\beta_{j,\eps}(t)}dt\\
        &=\int_0^{T}\langle u(t),w_\eps(t)\rangle dt = 0.
    \end{align*}
    Here the last equality holds because $w_\eps\in C_0^\infty((0,T)\times\cX)$ and $u|_{(0,T)\times \cX}=0$ in the sense of distributions. Recall that $w\in C_0^\infty((\eps,T-\eps)\times \cX)$ was abitrary, hence $U_\eps|_{(\eps,T-\eps)\times \cX} = 0$ in the sense of distribution. But since $U_\eps\in C^\infty((\eps,T-\eps)\times N)$, then $U_\eps|_{(\eps,T-\eps)\times \cX} = 0$ pointwise.
\end{proof}

For an open set $\cX \subset N$ we define the following open double light cone 
\begin{equation}
    \label{eq:double_cone}
    C(T,\cX) = \{(t,x)\in (0,2T)\times N: \text{dist}_g(x,\cX) < \min\{t,2T-t\}\}.
\end{equation}

\begin{thm}[Tataru's Unique Continuation principle]\label{thm:unique_continuation}
    Let $(N,g)$ be a complete Riemannian manifold, and $\cX\subset N$ be open and bounded. Let $A$ and $V$ be a smooth real co-vector field and a smooth real-valued function on $N$. Let $T>0$ and suppose that $u \in C^\infty([0,2T]\times N)$ solves $(\partial_t^2 + \mathcal{L}_{g,A,V})u = 0 $ in $(0,2T)\times M(\cX,T)$ and $u|_{(0,2T)\times\cX} = 0$. Then $u|_{C(T,\cX)} = 0$.
\end{thm}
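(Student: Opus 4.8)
\emph{Proof proposal.} The plan is to obtain this from Tataru's unique continuation theorem \cite{tataru1995unique}: the assertion is the usual ``double light cone'' packaging of that theorem in the interior setting (with $u$ vanishing on $\cX$), and the work reduces to checking the hypotheses and, if one wants a self-contained argument, running the geometric covering that upgrades the local (across a single hypersurface) form of unique continuation to the global statement on $C(T,\cX)$.

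First I would record why Tataru's theorem applies at all. The coefficients of $\partial_t^2+\mathcal{L}_{g,A,V}$ are smooth and \emph{independent of $t$}, hence trivially real-analytic in the time variable; this is precisely the situation in which Tataru's theorem grants unique continuation across hypersurfaces without any pseudoconvexity assumption on them. Moreover $\cX$ is open and $u$ vanishes on $(0,2T)\times\cX$, so all space-time derivatives of $u$ vanish there, and in particular $u$ has vanishing Cauchy data along any smooth hypersurface contained in $(0,2T)\times\cX$. Finally, if $(t,x)\in C(T,\cX)$ then $d_g(x,\cX)<\min\{t,2T-t\}\le T$, whence $C(T,\cX)\subset(0,2T)\times\{x\in N:d_g(x,\cX)<T\}\subset(0,2T)\times M(\cX,T)$; thus the hypothesis ``$(\partial_t^2+\mathcal{L}_{g,A,V})u=0$ in $(0,2T)\times M(\cX,T)$'' already supplies the equation on a full neighborhood, inside the domain, of every point of $C(T,\cX)$, which is all the local step requires. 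Note also that non-compactness of $N$ is irrelevant, since everything takes place inside the compact set $M(\cX,T)$ and unique continuation is a local matter.

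For the reduction I would argue pointwise: fix $(t,x)\in C(T,\cX)$ and let $y\in\cX$ satisfy $d_g(x,y)<\min\{t,2T-t\}$; choose $\rho>0$ with $B_g(y,\rho)\subset\cX$ and $d_g(x,B_g(y,\rho))<\min\{t,2T-t\}$. One then propagates the vanishing of $u$ from $(0,2T)\times B_g(y,\rho)$ through a one-parameter family of hypersurfaces interpolating between a small surface inside that cylinder and a surface passing through $(t,x)$ --- concretely, (smoothed) level sets of $(s,z)\mapsto d_g(z,B_g(y,\rho))+|s-T|$ --- applying Tataru's local unique continuation at each step and closing the induction by a connectedness (open--closed--nonempty) argument in the parameter. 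Because $d_g(x,B_g(y,\rho))+|t-T|<\min\{t,2T-t\}+|t-T|=T$ strictly, the point $(t,x)$ is reached before the family exits $(0,2T)\times M(\cX,T)$, so $u(t,x)=0$; as $(t,x)\in C(T,\cX)$ was arbitrary, $u|_{C(T,\cX)}=0$.

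\textbf{Main obstacle.} The only genuinely delicate point is this covering/foliation step: arranging the interpolating hypersurfaces to be smooth (the distance function is not), so that Tataru's local theorem applies across each of them in the direction along which $u$ is already known to vanish, while keeping the whole family compactly inside $(0,2T)\times M(\cX,T)$ for every parameter value below $T$. This is classical in the BC-method literature, and if one does not wish to reproduce it here, the interior ``double cone'' form of Tataru's theorem that we use may instead be quoted directly, e.g. from \cite{lassas_inverse} (see also \cite{source-to-solution}).
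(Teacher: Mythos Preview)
Your proposal is correct and aligns with the paper's treatment: the paper simply cites Tataru \cite{tataru1995unique} for the Euclidean case and refers to Theorems 2.66 and 3.16 of \cite{lassas_inverse} for the Riemannian ``double cone'' version, which is exactly the quotation option you offer at the end. Your additional sketch (verifying the time-analyticity hypothesis, the containment $C(T,\cX)\subset(0,2T)\times M(\cX,T)$, and the foliation/covering mechanism) is sound and goes beyond what the paper records, but is not required since the result is treated as a black-box citation.
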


\begin{proof}
    This result was originally shown in the Euclidean case in Theorem 1 and Theorem 2 of \cite{tataru1995unique}. For the Riemannian version we refer to Theorem 2.66 and Theorem 3.16 of \cite{lassas_inverse}.
\end{proof}

By $H_0^k(M(\cX,T),g)$ we shall mean the completion of $C_0^\infty(\text{int}(M(\cX,T)))$ with respect to $\norm{\cdot}_{H^k(N,g)}$ as defined in Definition \ref{def:sobolev_def}. We develop an Approximate Controllability result for higher order Sobolev spaces in order to prove the existence of nonvanishing waves at the end of this subsection.

\begin{thm}[Higher Order Approximate Controllability]
\label{thm:approximate_controllability}
    Let $(N,g)$ be a complete Riemannian manifold, let $A$ and $V$ be a smooth real co-vector field and a smooth real-valued function on $N$ respectively. Let $\cX\subset N$ be open and bounded. For any $T > 0$, the set
    \begin{align*}
        \mathcal{W}_T := 
        & \{w^f(T) \in C^\infty_0(N): \: 
        w^f \text{ is a solution to \eqref{eq:cauchy_problem_infinite_time} with source }
       f\in C_0^\infty((0,T)\times\cX)
        \}
    \end{align*}
    is dense in $H_0^k(M(\cX,T),g)$ with respect to the $H^k(N,g)$ topology for all $k \in \{0,1,2,\ldots\}$. 
\end{thm}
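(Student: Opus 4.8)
The plan is to argue by duality and funnel everything into Tataru's unique continuation theorem (Theorem~\ref{thm:unique_continuation}). By finite speed of propagation (Corollary~\ref{cor:finite_speed}) and smoothness of solutions (Theorem~\ref{thm:uniqueness_of_smooth_solutions}) we have $\mathcal{W}_T\subset C_0^\infty(\text{int}(M(\cX,T)))\subset H_0^k(M(\cX,T),g)$, so it suffices to show that no nonzero $\chi\in H_0^k(M(\cX,T),g)$ is $H^k(N,g)$-orthogonal to every $w^f(T)$. Suppose such a $\chi$ exists. Since $M(\cX,T)$ is compact, $\chi$ is supported in $M(\cX,T)$, and integrating by parts we may rewrite the orthogonality $\langle w^f(T),\chi\rangle_{H^k(N,g)}=0$ as $(w^f(T),\psi)_{L^2(N,g)}=0$ for all $f\in C_0^\infty((0,T)\times\cX)$, where $\psi:=P_k\chi$ for the non-negative elliptic differential operator $P_k$ of order $2k$ determined by $(u,P_kv)_{L^2(N,g)}=\langle u,v\rangle_{H^k(N,g)}$ on $C_0^\infty$. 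Then $\psi\in H^{-k}(N)$ is compactly supported in $M(\cX,T)$ and, since $(\chi,\psi)_{L^2(N,g)}=\norm{\chi}_{H^k(N,g)}^2>0$, is nonzero as a distribution on $\text{int}(M(\cX,T))$.

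Because $\psi$ is a compactly supported distribution I would first pass to a closed manifold: pick a regular domain $M$ with $M(\cX,T+1)\subset M$ and, via Lemma~\ref{lem:double_riemannian} and Proposition~\ref{prop:magnetic_extension}, isometrically embed $M$ into a closed $(\tilde N,\tilde g)$ carrying a Magnetic-Schr\"odinger operator agreeing with $\magnetic$ on $M$. By Corollary~\ref{cor:finite_speed} the waves $w^f$ (for $t\le T$), the set $\cX$, and $\psi$ all sit inside $M$, and the change-of-coordinates formula shows that the relevant waves and the annihilation relation $(w^f(T),\psi)=0$ are preserved by the embedding. Working on $\tilde N$, let $\phi\in C^1([0,T];\mathcal{D}'(\tilde N))$ be the distributional solution furnished by Theorem~\ref{thm:distributional_solution} of the homogeneous problem $(\partial_t^2+\magnetic)\phi=0$ with $\phi(T)=0$ and $\partial_t\phi(T)=\psi$. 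Integrating by parts in $t$ and using the symmetry of $\magnetic$ (valid because $A,V$ are real), one gets $\int_0^T(f(t),\phi(t))_{L^2}\,dt=-(w^f(T),\psi)_{L^2}=0$ for every $f\in C_0^\infty((0,T)\times\cX)$, hence $\phi|_{(0,T)\times\cX}=0$ in the sense of distributions.

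To invoke Tataru I need a genuine smooth solution on a time-symmetric interval. I would reverse time by $\tilde\phi(t):=\phi(T-t)$, so $\tilde\phi$ solves the homogeneous equation on $(0,T)\times\tilde N$ with $\tilde\phi(0)=0$, $\partial_t\tilde\phi(0)=-\psi$, and $\tilde\phi|_{(0,T)\times\cX}=0$; since $\tilde\phi(0)=0$ and $\partial_t^2+\magnetic$ is of second order in $t$ with no first-order term, the odd reflection of $\tilde\phi$ across $t=0$ is again a distributional solution, and translating it by $T$ yields a distributional solution $\hat\Psi$ on $(0,2T)\times\tilde N$ with $\hat\Psi|_{(0,2T)\times\cX}=0$, $\hat\Psi(T)=0$, $\partial_t\hat\Psi(T)=-\psi$. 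Proposition~\ref{prop:mollified_solution} and Lemma~\ref{lem:vanishing_convolution} then produce, for each small $\eps>0$, a function $\hat\Psi_\eps\in C^\infty((\eps,2T-\eps)\times\tilde N)$ solving the homogeneous equation, vanishing on $(\eps,2T-\eps)\times\cX$, with $\hat\Psi_\eps(t)\to\hat\Psi(t)$ in $\mathcal{D}'(\tilde N)$. Applying Theorem~\ref{thm:unique_continuation} (translated in time, with parameter $T-\eps$ on the interval $(\eps,2T-\eps)$) forces $\hat\Psi_\eps$ to vanish on the open double cone $\{(t,x):\eps<t<2T-\eps,\ \text{dist}_{\tilde g}(x,\cX)<\min\{t-\eps,\,2T-\eps-t\}\}$, which contains a full spacetime neighbourhood of $(T,x)$ whenever $\text{dist}_{\tilde g}(x,\cX)<T-\eps$; hence $\partial_t\hat\Psi_\eps(T)=0$ on $\{\text{dist}_{\tilde g}(\cdot,\cX)<T-\eps\}$. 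Letting $\eps\to0$ and using $\partial_t\hat\Psi_\eps(T)\to\partial_t\hat\Psi(T)=-\psi$ in $\mathcal{D}'(\tilde N)$, we conclude $\psi=0$ on $\{\text{dist}_{\tilde g}(\cdot,\cX)<T\}=\text{int}(M(\cX,T))$, contradicting the nonvanishing of $\psi$ there and proving the density.

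The conceptual core is Tataru's theorem; the genuine work is the functional-analytic bookkeeping around low regularity. Since $\psi$ is only a distribution, the adjoint solution $\phi$ is merely a distributional solution and cannot be fed into Tataru directly, so the mollification results (Proposition~\ref{prop:mollified_solution}, Lemma~\ref{lem:vanishing_convolution}), the odd reflection, and the reduction to a closed manifold must be orchestrated to yield a genuine smooth solution vanishing on the entire cylinder over $\cX$, and the limit $\eps\to0$ must be controlled well enough to transfer the vanishing back to $\psi=\partial_t\hat\Psi(T)$. The integration-by-parts identity relating $\int_0^T(f,\phi)_{L^2}\,dt$ to $(w^f(T),\psi)_{L^2}$ also needs care with the boundary terms at $t=0,T$ and with complex conjugation, though this is routine. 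I expect the main obstacle to be making the closed-manifold reduction and the distributional-to-smooth mollification mesh cleanly.
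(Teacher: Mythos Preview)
Your proof is correct and takes essentially the same approach as the paper: duality, reduction to a closed manifold, solving the adjoint homogeneous problem with final data $\psi$ (the paper's $\tilde\rho$), odd reflection to $(0,2T)$, mollification via Proposition~\ref{prop:mollified_solution} and Lemma~\ref{lem:vanishing_convolution}, and Tataru's theorem. The only minor difference is in the $\eps\to0$ step, where you invoke $\partial_t\hat\Psi_\eps(T)\to\partial_t\hat\Psi(T)$ directly (a slight extension of Proposition~\ref{prop:mollified_solution}), whereas the paper first shows $\hat\Psi(t)$ vanishes on an open set for all $t$ in a time interval around $T$ and then differentiates in $t$, staying within the stated proposition.
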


\begin{rem}
The $L^2$-version of the approximate controllability was established in \cite{source-to-solution} for the solutions of a standard wave equation. 
\end{rem}

\begin{proof}
    % If $f\in C_0^\infty((0,2)\times\cX)$ so that the compact set $\text{supp}(f)$ is properly contained in the open set $(0,T)\times \cX$. 
    Let $k\in \{0,1,2,\ldots\}$. Then by Finite speed of propagation (i.e. Corollary \ref{cor:finite_speed}) we have that 
    % $$\text{supp}(u^f(T))\subset \{y\in N: \text{dist}_g(y,\cX) < T\} \subset \text{int}(M(\cX,T)).
    % $$ It follows that    
    \begin{equation}
       \label{eq:inclusions_of_spaces}
    \mathcal{W}_T \subset C_0^\infty(\text{int}(M(\cX,T))) \subset H_0^k(M(\cX,T),g) \subset H^k(N,g).
    \end{equation} 
    % for all $k\in \{0,1,2,\ldots\}$. 
    % % Since $\cX$ is bounded, then $M(\cX,T)$ is compact.
    % Fix $k\in \{0,1,2,\ldots\}$ so that $\mathcal{W}_T \subset H_0^k(M(\cX,T),g) \subset H^k(N,g)$ and give these spaces the standard Sobolev norm induced by $g$ so that $H^k(N,g)$ is a well-defined Banach space and the spaces $\mathcal{W}_T$ and $H_0^k(M(\cX,T),g)$ are vector subspaces of $H^k(N,g)$. 
    We let $(H^k(N,g))^*\subset \mathcal{D}'(N)$ denote the space of continuous linear functionals on $H^k(N,g)$ with respect to the standard $H^k(N,g)$ topology.\\ \newline
    For a vector subspace $V \subset H^k(N,g)$ we let 
    $$
        V^\perp := \{\ell \in (H^k(N,g))^*: \ell(v) = 0\,\,\text{for all $v\in V$}\}.
    $$
    % be the orthogonal complement of $V$. 
    % We note that the set $\mathcal{W}_T \subset H_0^k(M(\cX,T),g)$ is dense with respect to $H^k(N,g)$-topology if $\overline{\mathcal{W}_T} = H_0^k(M(\cX,T),g)$ 
    Since $H_0^k(M(\cX,T),g)$ is a closed vector subspace of $H^k(N,g)$ then $\mathcal{W}_T \subset H_0^k(M(\cX,T),g)$ is dense with respect to $H^k(N,g)$-topology if $\mathcal{W}_T^\perp = (H_0^k(M(\cX,T),g))^\perp$ (c.f.\cite[Proposition 1.9]{brezis2011functional}). 
    Due to \eqref{eq:inclusions_of_spaces} we always have that $(H_0^k(M(\cX,T),g))^\perp \subset \mathcal{W}_T^\perp$. 
    Thus our goal is to verify the reverse inclusion. 
    % that if $\rho \in \mathcal{W}_T^\perp$, then $\rho\in (H_0^k(M(\cX),T),g)^\perp$. 
    Due to continuity we have that
    % since
    \begin{align*}
        (H_0^k(M(\cX,T),g))^\perp 
        % &= \{ \rho \in (H^k(N,g))^*: \rho(w) = 0 \,\text{for all }w \in H_0^k(M(\cX,T))\}\\
        % &= \{ \rho \in (H^{k}(N,g))^*: \rho(w) = 0 \,\text{ for all }w \in C_0^\infty(\text{int}(M(\cX,T)))\}\\
        &= \{ \rho \in (H^{k}(N,g))^*: \rho|_{\text{int}(M(\cX,T))} = 0\}.
    \end{align*}
    % \teemu{What do we mean by $\rho|_{\text{int}(M(\cX,T))} = 0$? This needs some more details.}
    Thus, in order to finish the proof it suffices to show that if $\rho \in \mathcal{W}_T^\perp$ then $\rho|_{\text{int($M(\cX,T)$)}} = 0$. 
    
    Let $\rho \in \mathcal{W}_T^\perp$.
    % and $\chi\in C_0^\infty(N)$ be such that $\chi|_{M(\cX,T)} = 1$. Then $\chi\rho \in (H_0^k(M(\cX,T),g))^*$ 
    % \teemu{Should we have here  $(H_0^k(M(\cX,T),g))^\perp$}
    % if and only if $\rho\in (H_0^k(M(\cX,T)))^*$. Thus 
    % Since $\cX$ is bounded we may assume without loss of generality that $\rho$ has a compact support. 
    % We plan to use the gluing argument as was done in the proof of Theorem \ref{thm:distributional_solution} in order to study the compact region $M(\cX,T)$ on closed manifold rather than a noncompact complete manifold.
    Since $\cX$ is bounded, we may assume without loss of generality that $\rho$ is supported in a compact set $K \subset N$ that also contains the domain of influence $M(\cX,T)$. By Lemma \ref{lem:compact_exhaustion} there exists a regular domain $M$ such that $M(K,T) \subset M \subset N$. Then by Proposition \ref{prop:magnetic_extension} there exists a closed Riemannian manifold $(\tilde{N},\tilde{g})$ and a Riemannian isometry $\phi:(M,g)\to (\tilde{M},\tilde{g})$ for a regular region $\tilde{M} \subset \tilde{N}$.
    Furthermore, there exists a Magnetic-Schr\"odinger operator $\tilde{\mathcal{L}}$ on $\tilde{N}$ such that for all smooth functions $\tilde{w}$ which are compactly supported on the interior of $\tilde{M}$ we have that  
    $$
        \mathcal{L}_{g,A,V}(\tilde{w}\circ\phi) = [\tilde{\mathcal{L}}(\tilde{w})]\circ\phi \hspace{4mm}\text{on the interior of $\tilde{M}$.}
    $$
    Since $\phi$ is a Riemannian isometry then $\phi(M(\cX,T)) = M(\phi(\cX),T)$. Let $\chi\in C^\infty(\tilde{N})$ satisfy $\chi = 1$ on $M(\phi(\cX),T)$ and $\text{supp}(\chi) \subset\tilde{M}$. Now define a linear map $\tilde{\rho}:C^\infty(\tilde{N})\to \C$ by
    $$
        \tilde{\rho}(\tilde{w}) := \rho\Big((\chi\tilde{w})\circ\phi\Big)
    $$
    where $(\chi\tilde{w})\circ\phi$ has been extended to be equal to $0$ outside $M$ so that $(\chi\tilde{w})\circ\phi\in C_0^\infty(N)$. If $\{\tilde{w}_n\} \subset C^\infty(\tilde{N})$ converges then $(\chi\tilde{w}_n)\circ\phi$ will also converge in $C^\infty_0(N)$, then since $\rho\in \mathcal{D}'(N)$ it follows that $\tilde{\rho}\in\mathcal{D}'(\tilde{N})$. 
    
    Since $\tilde N$ is compact we have that $\mathcal{D}'(\tilde{N}) = \bigcup_{r\in\mathbb{Z}} H^r(\tilde{N})$ by \cite[Proposition 10.2]{shubin_psdo} 
    and there exists $r\in\mathbb{Z}$ such that $\tilde{\rho}\in H^{r+1}(\tilde{N})$. Due to Theorem \ref{thm:distributional_solution} for any $l\in \mathbb{N}$ there exists a unique distributional solution $\tilde{u}\in H^l(0,T;H^{r+1-l}(\tilde{N}))$  to the backward in time Cauchy Problem
    \begin{align}
        \begin{cases} \label{eq:approximate_control_pde_1}
        (\partial_t^2 + \tilde{\mathcal{L}})\tilde{u} = 0, \quad\quad \text{ in } (0,T) \times \tilde{N}
        \\
        \tilde{u}(T,\cdot) = 0,\,\, \partial_t\tilde{u}(T,\cdot) = \tilde{\rho}.
        \end{cases}
    \end{align}
    Here we mean that $\tilde{u}$ is a distributional solution to $(\ref{eq:approximate_control_pde_1})$ if
    \begin{align}
        \int_0^T \langle \tilde{u}(t),(\partial_t^2+\tilde{\mathcal{L}})\tilde{w}(t,\cdot)\rangle dt &= \langle \tilde{\rho}, \tilde{w}(T)\rangle, 
        \quad \text{ for all } \tilde{w}\in C_0^\infty((0,T]\times\tilde{N}).
        \label{eq:approximate_control_pde_2}
    \end{align}
    % this is the analog to the definition of the distributional solution in $(\ref{eq:distributional_solution})$ when initial conditions are given instead of terminal conditions. 
    
    Let $\tilde{f}\in C_0^\infty((0,T)\times\phi(\cX))$ and let $\tilde{w} \in C^\infty([0,T]\times\tilde{N})$ be the unique classical solution of 
    % $(\ref{eq:cauchy_problem})$ with vanishing initial conditions and source term $\tilde{f}$. 
    \begin{align*}
        \begin{cases}
        (\partial_t^2+\tilde{\mathcal{L}})\tilde{w} = \tilde f, \quad\quad \text{ in } (0,T) \times \tilde N
        \\
        \tilde w(0) = \partial_t \tilde w(0) = 0.
        \end{cases}
    \end{align*}
     If we define $w(t,x) = \tilde{w}(t,\phi(x))$ and $f(t,x) = \tilde{f}(t,\phi(x))$
    for all $x\in M$ and $0$ outside of $M$, then by the finite speed of wave propagation the function $w$ is the unique solution to the problem \eqref{eq:cauchy_problem_infinite_time} with the interior source $f\in C_0^\infty((0,T)\times\cX)$.
    % and $w\in C^\infty([0,T]\times N)$ by finite speed of propagation since $\text{supp}(\tilde{w}(t)) \subset M(\phi(\cX),T) \subset \tilde{M}$ for all $t\in [0,T]$. Further, $w$ is the unique solution to the problem \eqref{eq:cauchy_problem_infinite_time}.
    %  \begin{align*}
    %     \begin{cases}
    %     (\partial_t^2 + \mathcal{L}_{g,A,V})w = f, \quad\quad \text{ in } (0,T) \times N
    %     \\
    %     w(0) = \partial_t w(0) = 0.
    %     \end{cases}
    % \end{align*}
    In particular, we have that $w(T)\in \mathcal{W}_T$. 
    % Thus 
    % $$
    % \langle \tilde{\rho},\tilde{w}(T)\rangle = \tilde{\rho}(\tilde{w}(T)) = \tilde{\rho}(w(T,\cdot)\circ\phi) = \rho(w(T)) = 0
    % $$
    Since $\rho\in \mathcal{W}_T^\perp$ we deduce from \eqref{eq:approximate_control_pde_2} that
    $$
        \int_0^T \langle \tilde{u}(t),\tilde{f}(t)\rangle dt = 0.
    $$
    As $f\in C_0^\infty((0,T)\times\cX)$ was arbitrarily chosen, the distribution $\tilde{u}(t)$ vanishes in the set $C_0^\infty(\phi(\cX))$ for all $t\in (0,T]$. 
    % By the continuity terminal conditions in $(\ref{eq:approximate_control_pde_1})$ we have that $\tilde{u}(T) = 0$ as well. 
    
    Now let $\tilde{U}$ be the unique distributional solution to
    \begin{align*}
        \begin{cases}
        (\partial_t^2 + \tilde{\mathcal{L}})\tilde{U} = 0, \quad\quad \text{ in } (0,2T) \times \tilde{N}
        \\
        \tilde{U}(0,\cdot) = \tilde{u}(0),\,\, \partial_t\tilde{U}(0,\cdot) = \partial_t\tilde{u}(0).
        \end{cases}
    \end{align*}
    By uniqueness of distributional solutions on closed manifolds we have that $\tilde{U}|_{[0,T]\times \tilde{N}} = \tilde{u}$ as distributions. 
    Then for $t\in [T,2T]$ we consider the distribution $\Tilde{v}(t,x) =-  \tilde{u} (2T-t,x)$ which is the unique distributional solution to 
    \begin{align*}
        \begin{cases}
        (\partial_t^2 + \tilde{\mathcal{L}})\tilde{v} = 0, \quad\quad \text{ in } (T,2T) \times \tilde{N}
        \\
        \tilde{v}(T,\cdot) = 0,\,\, \partial_t\tilde{v}(T,\cdot) = \tilde{\rho}.
        \end{cases}
    \end{align*}
    Since $\tilde{U}(T,\cdot) = \tilde{u}(T,\cdot) = 0$ and $\partial_t\tilde{U}(T,\cdot) = \partial_t \tilde{u}(T,\cdot) = \tilde{\rho}$, then uniqueness implies that $U|_{[T,2T)\times \tilde{N}} = \tilde{v}$ and in particular $\tilde{U}|_{(0,2T)\times \phi(\cX)} = 0$. 

    To finish the proof we verify the second equality in 
    \begin{equation}
        \label{eq:rho_vanishes}
    \tilde{\rho}|_{\text{int}(M(\phi(\cX),T))} =\p_t \tilde{U}(T,\cdot)|_{\text{int}(M(\phi(\cX),T))} = 0.
    \end{equation}
    By Proposition \ref{prop:mollified_solution} we know that for every $\eps\in (0,2T)$ there exists a smooth function $\tilde{U}_\eps \in C^\infty((\eps,2T-\eps)\times\tilde{N})$ which satisfies $(\partial_t^2+\tilde{\mathcal{L}})\tilde{U}_\eps = 0$ on $(\eps,2T-\eps)\times \tilde{N}$ in the classical sense and $\tilde{U}_\eps(t)\to \tilde{U}(t)$ for all $t\in (0,2T)$ in the sense of distributions as $\eps\to 0$. 
    We also know that $\tilde{U}_\eps|_{(\eps,2T-\eps)\times \phi(\cX)} = 0$ since $\tilde{U}|_{(0,2T)\times\phi(\cX)} = 0$ by Lemma \ref{lem:vanishing_convolution}.
    
    By Tataru's Unique Continuation (Theorem \ref{thm:unique_continuation}) we have that $\tilde{U}_\eps|_{C_\eps(T,\phi(\cX))} = 0$ where $C_\eps(T,\phi(\cX))$ is the double light cone defined by
    $$
        C_\eps(T,\phi(\cX)) = \{(t,x) \in (\eps,2T-\eps)\times \tilde{N}: \text{dist}_{\tilde{g}}(x,\phi(\cX)) < \min\{ t-\eps,2T-t-\eps\}\}.
    $$    
    In particular, if $Z \subset \tilde N$ is an open set such that $\overline{Z} \subset \text{int}(M(\phi(\cX),T))$ then there is a $\delta>0$ such that 
    $
    (-\delta,\delta) \times Z \subset C_\eps(T,\phi(\cX))
    $
    for every $\eps>0$ small enough. Hence $\tilde U_\eps$ vanishes in the set $(-\delta,\delta) \times Z$, whenever $\eps>0$ is small enough.
    % Now for any $\hat{\eps} > 0$ and any 
    % $\eps \in (0,\hat{\eps})$ we have that 
    % \[
    % C_{\hat{\eps}}(T,\phi(\cX)) \subset C_\eps(T,\phi(\cX)) \subset C(T,\phi(\cX)),
    % \]
    % and 
    % $$
    % \{T\}\times \text{int}(M(\phi(\cX),T-\hat{\eps})) \subset C_{\hat{\eps}}(T,\phi(\cX)).
    % $$ 
    % Therefore, $\tilde{U}_\eps(T,\cdot)|_{\text{int}(M(\phi(\cX),T-\hat{\eps})} = 0$ for all 
    % $\eps \in (0,\hat{\eps})$. 
    % Recall that 
    % \[
    % \lim_{\eps\to 0^+} \tilde{U}_\eps(T) = \tilde{U}(T) = \tilde{\rho}
    % \]
    % where the limit is taken in the sense of distributions. 
    Therefore, we have that
    $$
     \tilde{U}(t)|_{Z} = \lim_{\eps \to 0^+} \tilde{U}_\eps(t)|_{Z} = 0, \quad \text{ for all } t\in (-\delta,\delta),
    $$
    and the equation \eqref{eq:rho_vanishes} follows.
    % Since $\tilde{\rho}|_{\text{int}(M(\phi(\cX),T-\hat{\eps}))} = 0$ for all $\hat{\eps} > 0$, then $\tilde{\rho}|_{\text{int}(M(\phi(\cX),T)} = 0$. Because $\text{int}(M(\phi(\cX),T)) \subset \tilde{M}$ and $\phi:(M,g)\to (\tilde{M},\tilde{g})$ is a Riemannian isometry then $\rho|_{\text{int}(M(\cX,T))} = 0$ and we can conclude that $\rho \in (H_0^k(M(\cX,T),g))^\perp$ which proves that $\mathcal{W}_T^\perp = (H_0^k(M(\cX,T)))^\perp$ which in turn proves that $\overline{\mathcal{W}_T} = H_0^k(M(\cX,T),g)$.
\end{proof}

% We will also need a nonvanishing condition when we study lower order terms in Section \ref{sec:lower_order_terms}. The following corollary holds even if $\cX$ is unbounded.
\begin{cor}[Nonvanishing Wave]
\label{cor:nonvanishing_condition}
    Let $(N,g)$ be a complete Riemannian manifold, $T>0$, $\cX \subset N$ open, and let $x_0 \in N$ be such that $d_g(x_0,\cX) < T$. There exists $f\in C_0^\infty((0,T)\times \cX)$ such the respective smooth solution $u^f$ of the Cauchy Problem \eqref{eq:cauchy_problem_infinite_time} does not vanish at $(T,x_0)$.
\end{cor}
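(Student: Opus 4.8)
The plan is to combine the higher order approximate controllability result, Theorem~\ref{thm:approximate_controllability}, with the Sobolev embedding theorem. First I would reduce to the case where $\cX$ is bounded. Since $d_g(x_0,\cX)<T$, there is a point $y\in\cX$ with $d_g(x_0,y)<T$, and as $\cX$ is open I may pick a relatively compact geodesic ball $\cX':=B(y,r)\subset\cX$; then $\cX'$ is open and bounded with $d_g(x_0,\cX')\le d_g(x_0,y)<T$, and any source $f\in C_0^\infty((0,T)\times\cX')$ is automatically supported in $(0,T)\times\cX$. Hence it suffices to produce the desired source with $\cX$ replaced by $\cX'$, and from now on I assume that $\cX$ itself is open and bounded.

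Next, since $d_g(x_0,\cX)<T$, the point $x_0$ lies in the set $\{y\in N:\text{dist}_g(y,\cX)<T\}\subset\text{int}(M(\cX,T))$. Because $\cX$ is bounded and $(N,g)$ is complete, the domain of influence $M(\cX,T)$ is a closed and bounded, hence compact, subset of $N$. I may therefore choose a bump function $\psi\in C_0^\infty(\text{int}(M(\cX,T)))$ with $\psi(x_0)=1$, and I fix an integer $k>\tfrac{n}{2}$ with $n=\dim N$, so that $\psi\in H_0^k(M(\cX,T),g)$. By Theorem~\ref{thm:approximate_controllability} the set $\mathcal W_T$ is dense in $H_0^k(M(\cX,T),g)$ with respect to the $H^k(N,g)$ topology, so there is a sequence $f_j\in C_0^\infty((0,T)\times\cX)$ with $u^{f_j}(T,\cdot)\to\psi$ in $H^k(N,g)$, where $u^{f_j}=w^{f_j}$ in the notation of that theorem. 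All the functions $u^{f_j}(T,\cdot)$ and $\psi$ are supported in the compact set $M(\cX,T)$; embedding a regular domain containing $M(\cX,T)$ into a closed manifold as in Proposition~\ref{prop:magnetic_extension} and invoking the compact Sobolev embedding $H^k\subset C^0$ of Lemma~\ref{sobolev_inequality} (valid since $k>\tfrac n2$), convergence in $H^k(N,g)$ of this family implies uniform convergence. In particular $u^{f_j}(T,x_0)\to\psi(x_0)=1$, so $u^{f_j}(T,x_0)\neq 0$ for all $j$ large enough, and taking $f=f_j$ for such a $j$ finishes the proof.

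The step I expect to require the most care is not a substantive difficulty but a bookkeeping one: ensuring that the passage from $H^k$-convergence to $C^0$-convergence is legitimate on the possibly non-compact manifold $N$. This is handled exactly as in the proof of Theorem~\ref{thm:approximate_controllability} itself, namely by the reduction to bounded $\cX$ together with the localization of everything onto a compact model—a closed manifold containing a regular domain that in turn contains $M(\cX,T)$. Once this is in place, the statement is an immediate consequence of the density assertion.
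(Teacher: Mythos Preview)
Your proposal is correct and follows essentially the same route as the paper: reduce to bounded $\cX$, pick a bump function $\psi$ with $\psi(x_0)\neq 0$, approximate it in $H^k$ with $k>n/2$ using Theorem~\ref{thm:approximate_controllability}, and then use Sobolev embedding on a compact model to pass to pointwise convergence. The only cosmetic difference is that the paper invokes Lemma~\ref{sobolev_inequality} directly on a regular domain $M\supset M(\cX,T)$ rather than first embedding into a closed manifold.
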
 

\begin{proof}
    Since $\text{dist}_g(x_0,\cX) < T$, 
    % then if necessary we may find an open bounded set $\tilde{\cX} \subset \cX$ such that $d_g(x_0,\tilde{\cX}) < T$ so 
    we may assume without loss of generality that $\cX$ is bounded. Thus, $M(\cX,T)$ is compact and $x_0 \in \text{int}(M(\cX,T))$. We choose
    $\psi \in C_0^\infty(\text{int}(M(\cX,T)))$ such that $\psi(x_0)\not=0$, and fix $k > \frac{n}{2}$. By Theorem \ref{thm:approximate_controllability} there is a sequence of functions $\{f_m\} \subset C_0^\infty((0,T)\times\cX)$ 
    such that $\norm{u^{f_m}(T)-\psi}_{H^k(N,g)} \to 0$ and $u^{f_m}$ solves \eqref{eq:cauchy_problem_infinite_time} with the interior source $f_m$. We choose a regular domain $M \subset N$ such that $M(\cX,T) \subset M$, then $\norm{u^{f_m}(T)-\psi}_{H^k(M,g)} \to 0$, and by Sobolev embedding it follows that $u^{f_m}(T)\to \psi$ in $C(M)$. In particular this means that $u^{f_m}(T,x_0) \to \psi(x_0)$. Since $\psi(x_0)\not=0$, we have $u^{f_m}(T,x_0)\not=0$ for all $m$ large enough.
\end{proof}

 \subsection{Multiplicative Gauge} \label{sub:multiplicative_gauge}
This section is focused on showing the existence of a multiplicative gauge on the lower order terms as described in Section \ref{sec:introduction}. That is, we prove in Proposition \ref{prop:existence_of_gauge} that if $\kappa:N\to\C$ is a smooth map such that $\kappa|_{\cX} = 1$ and $|\kappa| = 1$, then 
\[
\Lambda_{g,A,V} = \Lambda_{g,A+i\kappa^{-1}d\kappa,V}.
\]

\begin{comment}
To that end, suppose $\magnetic$ is a Magnetic-Schr\"odinger operator on $(N,g)$ and $\kappa\in C^\infty(N)$ which is non-vanishing. We start with two questions. First, when is the operator $\mathcal{L}_\kappa$ defined by
\begin{align*}
    \mathcal{L}_\kappa(w) := \kappa \magnetic(\kappa^{-1}w) \hspace{4mm}\text{for all $w\in C^\infty(N)$}
\end{align*}
a Magnetic-Schr\"odinger operator of the form $\mathcal{L}_{\tilde{g},\tilde{A},\tilde{V}}$. Second, when $\mathcal{L}_\kappa$ is a Magnetic-Schr\"odinger operator, what is the connection between $(g,A,V)$ and $(\tilde{g},\tilde{A},\tilde{V})$? Both of these questions are answered in the following Proposition.
\end{comment}
\begin{prop} \label{prop:gauge_operator}
    Let $(N,g)$, $A$ and $V$ be as in Theorem \ref{thm:main_thm}, and let $\magnetic$ be the respective Magnetic-Schr\"odinger operator. Let $\kappa\in C^\infty(N)$ be non-vanishing. The linear partial differential operator
    \[
    \mathcal{L}_{\kappa}(w) := \kappa \magnetic(\kappa^{-1}w), \quad w\in C^\infty(N),
    \]
    is a Magnetic-Schr\"odinger operator if and only if $\kappa$ has constant magnitude, and in this case we have that
\begin{equation}
    \label{eq:conjugated_L}
    \mathcal{L}_\kappa = \mathcal{L}_{g,A + i\kappa^{-1}d\kappa, V}.
    \end{equation}
\end{prop}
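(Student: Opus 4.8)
The plan is to separate two issues: (i) a direct computation showing that the conjugated operator $\mathcal{L}_\kappa$ always has the shape of \eqref{def:L} with $A$ replaced by the (a priori complex) co-vector field $A+i\kappa^{-1}d\kappa$, and (ii) a symbol-level argument identifying exactly when this co-vector field is real — equivalently, when $\mathcal{L}_\kappa$ is symmetric.

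For (i) I would expand $\mathcal{L}_\kappa w=\kappa\,\magnetic(\kappa^{-1}w)$ term by term, using the Leibniz rule for the Laplace--Beltrami operator, the chain rule $\Delta_g(\kappa^{-1})=-\kappa^{-2}\Delta_g\kappa+2\kappa^{-3}\langle d\kappa,d\kappa\rangle_g$, and the codifferential identity
\[
d^*(\kappa^{-1}d\kappa)=-\kappa^{-1}\Delta_g\kappa+\kappa^{-2}\langle d\kappa,d\kappa\rangle_g ,
\]
throughout interpreting $\langle\cdot,\cdot\rangle_g$ as the $\mathbb{C}$-bilinear extension of the metric to complex co-vectors and recalling that $-\Delta_g=d^*d$ on functions. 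Collecting by differential order, the principal part of $\mathcal{L}_\kappa$ is again $-\Delta_g$; the first-order part is
\[
2\kappa^{-1}\langle d\kappa,dw\rangle_g-2i\langle A,dw\rangle_g=-2i\langle A+i\kappa^{-1}d\kappa,\,dw\rangle_g ;
\]
and the surviving zeroth-order coefficient is
\[
\kappa^{-1}\Delta_g\kappa-2\kappa^{-2}\langle d\kappa,d\kappa\rangle_g+2i\kappa^{-1}\langle A,d\kappa\rangle_g+id^*A+|A|_g^2+V ,
\]
which, using the displayed $d^*$-identity, is exactly $id^*(A+i\kappa^{-1}d\kappa)+\langle A+i\kappa^{-1}d\kappa,A+i\kappa^{-1}d\kappa\rangle_g+V$. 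Thus $\mathcal{L}_\kappa$ has the form \eqref{def:L} with co-vector field $A+i\kappa^{-1}d\kappa$; in particular \eqref{eq:conjugated_L} holds whenever that co-vector field is real.

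For (ii), since $A$ is real, $A+i\kappa^{-1}d\kappa$ is a real co-vector field precisely when $\kappa^{-1}d\kappa$ is purely imaginary. From $d|\kappa|^2=\bar\kappa\,d\kappa+\kappa\,d\bar\kappa=2|\kappa|^2\operatorname{Re}(\kappa^{-1}d\kappa)$ this happens exactly when $d|\kappa|^2=0$, hence — using that $N$ is connected — exactly when $|\kappa|$ is constant. If $|\kappa|$ is constant, then by (i) $\mathcal{L}_\kappa=\mathcal{L}_{g,A+i\kappa^{-1}d\kappa,V}$ with $A+i\kappa^{-1}d\kappa$ real, so $\mathcal{L}_\kappa$ is a symmetric Magnetic--Schr\"odinger operator as in \eqref{def:L}. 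For the converse I would argue that if $|\kappa|$ is not constant then $\mathcal{L}_\kappa$ fails to be symmetric: computing the formal adjoint gives $\mathcal{L}_\kappa^*w=\bar\kappa^{-1}\magnetic(\bar\kappa w)$, so $\mathcal{L}_\kappa=\mathcal{L}_\kappa^*$ forces $\magnetic$ to commute with multiplication by $|\kappa|^2$; but $[\magnetic,\,|\kappa|^2]w=-2\langle d|\kappa|^2,dw\rangle_g-(\Delta_g|\kappa|^2+2i\langle A,d|\kappa|^2\rangle_g)w$ is a genuinely first-order operator, so it vanishes only if $d|\kappa|^2=0$. (Equivalently, from (i) the difference $\mathcal{L}_\kappa-\mathcal{L}_\kappa^*$ has first-order part $4\langle\operatorname{Im}(A+i\kappa^{-1}d\kappa),\,\cdot\rangle_g$, nonzero unless $|\kappa|$ is constant.)

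The entire argument is routine; the only points needing attention are keeping the sign conventions for $\Delta_g$ versus $d^*d$ and for $d^*$ on $1$-forms consistent, systematically using the $\mathbb{C}$-bilinear (not sesquilinear) extension of $\langle\cdot,\cdot\rangle_g$ to complex co-vectors, and invoking the elementary fact that a differential operator of genuine order $1$ cannot vanish identically — this is what legitimises reading off the coefficient identities from the operator identities above. I do not foresee any substantive obstacle.
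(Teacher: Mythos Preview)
Your proposal is correct and follows essentially the same route as the paper: expand $\mathcal{L}_\kappa$ to identify the first-order coefficient as $A+i\kappa^{-1}d\kappa$, relate $\operatorname{Re}(\kappa^{-1}d\kappa)$ to $d|\kappa|^2$, and then match the zeroth-order term to the Magnetic--Schr\"odinger form. The only minor difference is organizational: you simplify the zeroth-order term in general (via the identity for $d^*(\kappa^{-1}d\kappa)$) and give an explicit commutator argument for the converse, whereas the paper postpones the zeroth-order simplification to the constant-magnitude case (via an identity for $\kappa\Delta_g(\kappa^{-1})$) and handles the converse by a terser comparison of first-order terms.
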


\begin{proof}
With the help of the equation (\ref{def:L}) we compute that
\begin{align}
\label{eq:eq:conjugated_L_expanded}
    \mathcal{L}_{\kappa}(w) 
    &= -\Delta_g(w) - 2i\langle A + i\kappa^{-1}d\kappa, dw\rangle_g 
    + (id^*A + |A|_g^2 + V - \kappa\Delta_g(\kappa^{-1}) + 2\langle A,i\kappa^{-1}d\kappa\rangle_g) w.
\end{align}
Then we use the linearity of the exterior derivative and the identity $\kappa^{-1} = \frac{\overline{\kappa}}{|\kappa|^2}$ to find that
    \begin{equation}
        \label{eq:Real_part_of_kappa^-1dkappa}
    \text{Re}(\kappa^{-1}d\kappa) =\frac{d(|\kappa|^2)}{|\kappa|^2}.
    \end{equation}

Suppose first that $\mathcal{L}_{\kappa}$ is a Magnetic-Schr\"odinger operator. Since $A$ is a real co-vector field, we see by comparing the first order term in \eqref{eq:eq:conjugated_L_expanded} to the first order term in \eqref{def:L} that $A + i\kappa^{-1}d\kappa$ must be a real covector field. Hence, the real part of $\kappa^{-1}d\kappa$ vanishes and the equation \eqref{eq:Real_part_of_kappa^-1dkappa} implies that $|\kappa|$ is a constant.

If $|\kappa|$ is a constant, then the equality \eqref{eq:Real_part_of_kappa^-1dkappa} implies that $A + i\kappa^{-1}d\kappa$ is a real covector. To finish the proof  it suffices to verify the equality \eqref{eq:conjugated_L}. This can be achieved by simplifying the zeroth order term in \eqref{eq:eq:conjugated_L_expanded}. Since,  $|\kappa| = C$, for some $C>0$ we have that $\kappa^{-1} = \frac{\overline{\kappa}}{C^2}$ and by the chain rule
\begin{align}
    -\langle d\kappa,d(\kappa^{-1})\rangle_g -\kappa^{-1}\Delta_g(\kappa) = \kappa\Delta_g(\kappa^{-1}) + |\kappa^{-1}d\kappa|_g^2&\label{commute}.
\end{align}
Then we use $(\ref{commute})$ to observe that 
\begin{align}
    id^*(A + i\kappa^{-1}d\kappa) 
    &= id^*A -\kappa\Delta_g(\kappa^{-1}) - |\kappa^{-1}d\kappa|_g^2\label{divergence}.
\end{align}
We are now in a position to rewrite the zeroth order term in \eqref{eq:eq:conjugated_L_expanded} by using $(\ref{divergence})$ and properties of the inner product. Hence,
\begin{align*}
    id^*A + |A|_g^2 + V - \kappa\Delta_g(\kappa^{-1}) + 2\langle A,i\kappa^{-1}d\kappa\rangle_g 
    &= id^*(A + i\kappa^{-1}d\kappa) + |A + i\kappa^{-1}d\kappa|_g^2 + V.
\end{align*}
Finally, we substitute this expression in \eqref{eq:eq:conjugated_L_expanded} and arrive in \eqref{eq:conjugated_L}.
\end{proof}

We can now describe the effect of the multiplicative gauge.  
\begin{lem} \label{lem:gauge_equivalent}
Let $(N,g)$, be a complete Riemannian manifold and $\cX \subset N$ to be open. Let $A_1$ and $A_2$ be some smooth real-valued co-vector fields on $N$ and let $V_1$ and $V_2$ be smooth real-valued functions in $N$. Let $\mathcal{L}_{g,A_i,V_i}$ be the respective Magnetic-Schr\"odinger operators. 
For each $f\in C_0^\infty((0,\infty)\times \cX)$ we denote by $u_i^f$ the solution of the Cauchy problem \eqref{eq:cauchy_problem_infinite_time} when $\mathcal{L}_{g,A,V}$ is replaced by $\mathcal{L}_{g,A_i,V_i}$.

If $\kappa:N\to \C$ is a smooth unitary function for which $\kappa|_{\cX} = 1$ then the following are equivalent:
    \begin{enumerate}
        \item $u_1^f = \kappa u_2^f$ for all $f\in C_0^\infty((0,\infty)\times \cX)$
        \item $\mathcal{L}_{g,A_1,V_1}(v) = \kappa \mathcal{L}_{g,A_2,V_2}(\kappa^{-1}v) = \mathcal{L}_{g,A_2+i\kappa^{-1}d\kappa,V_2}(v)$ for all $v\in C_0^\infty(N)$
        \item $A_1 = A_2 + i\kappa^{-1}d\kappa$ and $V_1 = V_2$.
    \end{enumerate}
\end{lem}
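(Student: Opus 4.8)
The plan is to prove the cycle $(3)\Rightarrow(2)\Rightarrow(1)$ together with $(1)\Rightarrow(2)\Rightarrow(3)$, using throughout that $|\kappa|=1$ (so $\kappa$ is nowhere zero and $\kappa^{-1}$ is smooth) and that $\kappa|_\cX=1$, the latter forcing $d\kappa|_\cX=0$. First I would dispose of $(2)\Leftrightarrow(3)$, which is essentially bookkeeping. By Proposition \ref{prop:gauge_operator} applied to the unitary $\kappa$, the map $v\mapsto\kappa\mathcal{L}_{g,A_2,V_2}(\kappa^{-1}v)$ equals $\mathcal{L}_{g,A_2+i\kappa^{-1}d\kappa,V_2}$, and $A_2+i\kappa^{-1}d\kappa$ is a real co-vector field because its real part $d(|\kappa|^2)/|\kappa|^2$ vanishes. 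Hence $(2)$ says exactly that the two Magnetic-Schr\"odinger operators $\mathcal{L}_{g,A_1,V_1}$ and $\mathcal{L}_{g,A_2+i\kappa^{-1}d\kappa,V_2}$, which share the metric $g$, agree on $C_0^\infty(N)$; comparing their first-order terms forces $A_1=A_2+i\kappa^{-1}d\kappa$, and then comparing their zeroth-order terms forces $V_1=V_2$, which is $(3)$. This order-by-order comparison is the one already carried out in the proof of Lemma \ref{lem:magnetic_isometry}, and the converse $(3)\Rightarrow(2)$ is immediate from the same identity.

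For $(2)\Rightarrow(1)$ I would argue via uniqueness of smooth solutions. Given $f\in C_0^\infty((0,\infty)\times\cX)$ and the associated solution $u_2^f$, set $w:=\kappa u_2^f$. Since $\kappa$ is time-independent, $\partial_t^2 w=\kappa\,\partial_t^2 u_2^f$, and $(2)$ with $v=\kappa u_2^f$ gives $\mathcal{L}_{g,A_1,V_1}(w)=\kappa\,\mathcal{L}_{g,A_2,V_2}(u_2^f)$; therefore $(\partial_t^2+\mathcal{L}_{g,A_1,V_1})w=\kappa(\partial_t^2+\mathcal{L}_{g,A_2,V_2})u_2^f=\kappa f=f$, using $\kappa\equiv1$ on $\supp f$. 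As $w$ has vanishing Cauchy data and, by Corollary \ref{cor:finite_speed}, $w(t,\cdot)$ is smooth and compactly supported, Theorem \ref{thm:uniqueness_of_smooth_solutions} yields $u_1^f=w=\kappa u_2^f$.

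The substance of the lemma is $(1)\Rightarrow(2)$. Substituting $u_1^f=\kappa u_2^f$ into the two Cauchy problems, multiplying the $\mathcal{L}_{g,A_2,V_2}$-equation by $\kappa$, subtracting, and using $\partial_t^2(\kappa u_2^f)=\kappa\partial_t^2 u_2^f$ and $\kappa f=f$, I expect to get the pointwise identity $P(u_2^f)=0$ on $(0,\infty)\times N$ for every admissible $f$, where $P(v):=\mathcal{L}_{g,A_1,V_1}(\kappa v)-\kappa\mathcal{L}_{g,A_2,V_2}(v)$. The structural observation is that $P$ is \emph{first order}: the leading terms $-\kappa\Delta_g v$ of $\mathcal{L}_{g,A_1,V_1}(\kappa v)$ and of $\kappa\mathcal{L}_{g,A_2,V_2}(v)$ cancel, so $P(v)=\langle\omega,dv\rangle_g+\gamma v$ for smooth $\omega,\gamma$ built from $\kappa$, $A_1-A_2$, $V_1-V_2$ and their derivatives, and it suffices to show $\omega\equiv0$ and $\gamma\equiv0$. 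After replacing $\cX$ by a nonempty bounded open subset if necessary, I would fix $T>0$, take $\psi\in C_0^\infty(\text{int}(M(\cX,T)))$, and use Theorem \ref{thm:approximate_controllability} for the operator $\mathcal{L}_{g,A_2,V_2}$ to pick sources $f_m$ with $u_2^{f_m}(T,\cdot)\to\psi$ in $H^k(N,g)$ for $k$ as large as needed; the Sobolev embedding (Lemma \ref{sobolev_inequality}) on a regular domain containing $M(\cX,T)$ upgrades this to $C^1$-convergence, so $0=P(u_2^{f_m}(T,\cdot))\to P(\psi)$ uniformly and $P(\psi)=0$ on $\text{int}(M(\cX,T))$. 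Since the value and differential of $\psi$ at any interior point are arbitrary, $\omega$ and $\gamma$ vanish there; letting $T\to\infty$ and using that $\{M(\cX,T)\}_T$ exhausts the connected manifold $N$ gives $P\equiv0$ on $C_0^\infty(N)$. Replacing $v$ by $\kappa^{-1}v$ in $P(v)=0$ then gives the first equality in $(2)$, the second being Proposition \ref{prop:gauge_operator}.

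I expect the only genuine obstacle to be this last step: converting the information ``$P$ annihilates every wave $u_2^f(T,\cdot)$'' into pointwise vanishing of the coefficients of $P$ really does need the density of such waves in $H_0^k(M(\cX,T),g)$ (Theorem \ref{thm:approximate_controllability}), not merely the existence of one nonvanishing wave, and one must check that the reduction to bounded $\cX$ combined with the exhaustion $T\to\infty$ recovers all of $N$.
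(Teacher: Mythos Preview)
Your proof is correct and follows essentially the same approach as the paper's: $(2)\Leftrightarrow(3)$ via Proposition \ref{prop:gauge_operator} and coefficient comparison, $(2)\Rightarrow(1)$ via uniqueness of smooth solutions, and $(1)\Rightarrow(2)$ via higher-order approximate controllability together with Sobolev embedding. The only differences are cosmetic: you observe that the difference operator $P$ is first order (so $C^1$ convergence suffices, whereas the paper simply invokes $C^2$), and you are more explicit about shrinking $\cX$ to a bounded subset before applying Theorem \ref{thm:approximate_controllability}.
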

\begin{proof}
First we note that by Proposition \ref{prop:gauge_operator} 
% that if $\kappa$ is a smooth unitary function, then $\kappa \mathcal{L}_{g,A_2,V_2}(\kappa^{-1}v) = \mathcal{L}_{g,A_2+i\kappa^{-1}d\kappa,V_2}$ for all $v\in C_0^\infty(N)$. 
that equation \eqref{eq:conjugated_L} holds.

Condition $(3)$ clearly implies $(2)$. If $(2)$ is true then by working in local coordinates and using $(\ref{def:L})$ one can observe that $(3)$ must also hold pointwise by choosing appropriate bump functions $v\in C_0^\infty(N)$, just as was done in the proof of Lemma \ref{lem:laplace_isometry}. 
    
Suppose that $(1)$ is true and let $f\in C_0^\infty((0,\infty)\times\cX)$ be arbitrary. Then we have that
    \begin{align*}
        &(\partial_t^2 + \mathcal{L}_{g,A_2,V_2})(\kappa^{-1} u_1^f) =  (\partial_t^2 + \mathcal{L}_{g,A_2,V_2})(u_2^f) = f.
    \end{align*}
    Since $\kappa|_\cX = 1$ and $\text{supp}(f(t,\cdot)) \subset \cX$ then $\kappa f = f$. In particular it follows that $u_1^f$ is a solution to the Cauchy problem
    \begin{align*}
    &\begin{cases}
        (\partial_t^2 + \kappa\mathcal{L}_{g,A_2,V_2}(\kappa^{-1}\,\cdot))u_1^f = f, \quad \text{ in } (0,\infty) \times N
        \\
        u_1^f(0,\cdot
        ) =\partial_tu_1^f(0,\cdot) = 0
    \end{cases}  
    \end{align*}
    % where $\kappa\mathcal{L}_{g,A_2,V_2}(\kappa^{-1}\,\cdot)(v) := \kappa\mathcal{L}_{g,A_2,V_2}(\kappa^{-1}v)$.
    At the same time $(\partial_t^2+\mathcal{L}_{g,A_1,V_1})u_1^f = f$, and we conclude that
    \begin{align}
        \mathcal{L}_{g,A_1,V_1}(u_1^f) &= \kappa \mathcal{L}_{g,A_2,V_2}(\kappa^{-1}u_1^f)   \label{eq:magnetic_operators_gauge_proof}
    \end{align}
    on $(0,\infty)\times N$.
    % for all $f\in C_0^\infty((0,\infty)\times N)$. 
    
    Let $k \in \N$. If $v\in C_0^\infty(N)$ then there exists $T > 0$ such that $\text{supp}(v)\subset \text{int} M(\cX,T)$,
    and by Theorem \ref{thm:approximate_controllability}, for any $k\in\mathbb{N}$ we can find a sequence of source functions $\{f_j\}\subset C_0^\infty((0,T)\subset\cX)$ such that $u_1^{f_j}(T,\cdot) \to v$ in $H^k(N,g)$. Thus, Sobolev embedding theorem implies that for sufficiently large $k$ we can find source functions such that $\{u_1^{f_j}(T)\}$ converges to $v$ in $C^2(M(\cX,T))$. Since $\mathcal{L}_{g,A_j,V_j}$ are second order operators, and $v$ is compactly supported it then follows from the equation $(\ref{eq:magnetic_operators_gauge_proof})$ that $\mathcal{L}_{g,A_1,V_1}(v) = \kappa\mathcal{L}_{g,A_2,V_2}(\kappa^{-1}v)$. Hence we have proved that $(1)$ implies $(2)$.

    Finally, suppose that $(2)$ holds, and let $f\in C_0^\infty((0,\infty)\times \cX)$. In particular, due to Finite Speed of Wave Propagation for each $j \in \{1,2\}$ the functions $u_j^f(t,\cdot)\in C_0^\infty(N)$ are compactly supported for all $t>0$. Thus $(2)$ yields
    $$
        \mathcal{L}_{g,A_1,V_1}(u_1^f(t,\cdot)) = \kappa \mathcal{L}_{g,A_2,V_2}(\kappa^{-1}u_1^f(t,\cdot)), \quad \text{ for all } t>0.
    $$
    Since $\kappa$ is time independent, the previous equation implies that 
    % then $\partial_t^2u_1^f = \kappa\partial_t^2(\kappa^{-1}u_1^f)$. In particular we have that 
    $\kappa^{-1}u_1^f$ is a solution to the Cauchy problem
    \begin{align}
         &\begin{cases}
        \kappa(\partial_t^2 + \mathcal{L}_{g,A_2,V_2})(\kappa^{-1}u_1^f) = f, \quad \text{ in } (0,\infty) \times N
        \\
        \kappa^{-1}u_1^f(0,\cdot
        ) =\partial_t(\kappa^{-1}u_1^f(0,\cdot)) = 0.
        \end{cases} \nonumber
        \intertext{Since $\kappa|_{\cX} = 1$ and $\text{supp}(f(t,\cdot)) \subset \cX$, then $\kappa^{-1}f = f$. In particular it follows that $\kappa^{-1}u_1^f$ is a solution to the following Cauchy problem}
         &\begin{cases}
        (\partial_t^2 + \mathcal{L}_{g,A_2,V_2})(\kappa^{-1}u_1^f) = f, \quad \text{ in } (0,\infty) \times N
        \\
        \kappa^{-1}u_1^f(0,\cdot
        ) =\partial_t(\kappa^{-1}u_1^f(0,\cdot)) = 0.
        \end{cases} \label{eq:cauchy_equation_2}
        \end{align}
        But $u_2^f$ is also solves $(\ref{eq:cauchy_equation_2})$. So, by uniqueness of smooth solutions to the Cauchy problem (i.e. Lemma \ref{lem:domain_of_dependence}) we must have that $u_1^f(t,x) = \kappa(x)u_2^f(t,x)$.
\end{proof}

We can now show the existence of a multiplicative gauge for our local source-to-solution operator.

\begin{prop} \label{prop:existence_of_gauge}
    Let $\cX \subset N$ be an open subset of a complete Riemannian manifold $(N,g)$. Let $A$ be a smooth real covector field and let $V$ be a real-valued smooth function on $N$. If $\kappa \in C^\infty(N)$ is a unitary function with $\kappa|_{\cX} = 1$, then 
    $$
        \Lambda_{g,A,V} = \Lambda_{g,A+i\kappa^{-1}d\kappa,V}.
    $$
\end{prop}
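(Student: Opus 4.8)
The plan is to read off the statement from Proposition~\ref{prop:gauge_operator} together with Lemma~\ref{lem:gauge_equivalent}, so almost no new work is needed. First I would invoke Proposition~\ref{prop:gauge_operator}: since $\kappa$ is unitary, $|\kappa|\equiv 1$ is constant, so the conjugated operator $\mathcal{L}_\kappa(w):=\kappa\,\mathcal{L}_{g,A,V}(\kappa^{-1}w)$ is symmetric and, by \eqref{eq:conjugated_L}, equals $\mathcal{L}_{g,A+i\kappa^{-1}d\kappa,V}$. In particular the real part of $\kappa^{-1}d\kappa$ vanishes (cf.\ \eqref{eq:Real_part_of_kappa^-1dkappa}), so $A+i\kappa^{-1}d\kappa$ is a genuine smooth real co-vector field and $\mathcal{L}_{g,A+i\kappa^{-1}d\kappa,V}$ is a bona fide Magnetic-Schr\"odinger operator. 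Hence all the well-posedness results of Section~\ref{Sec:Well-Poss}, in particular Theorem~\ref{thm:uniqueness_of_smooth_solutions}, apply to it, and the source-to-solution map $\Lambda_{g,A+i\kappa^{-1}d\kappa,V}$ is well defined.

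Next I would apply Lemma~\ref{lem:gauge_equivalent} with $A_2=A$, $V_2=V$ and $A_1=A+i\kappa^{-1}d\kappa$, $V_1=V$. With these choices condition~(3) of that lemma is satisfied by construction, so condition~(1) holds: the corresponding solutions of \eqref{eq:cauchy_problem_infinite_time} satisfy $u_1^f=\kappa\, u_2^f$ on $(0,\infty)\times N$ for every $f\in C_0^\infty((0,\infty)\times\cX)$. Finally, since $\kappa|_{\cX}=1$, restricting this identity to the observation cylinder $(0,\infty)\times\cX$ yields $u_1^f|_{(0,\infty)\times\cX}=u_2^f|_{(0,\infty)\times\cX}$, i.e.\ $\Lambda_{g,A+i\kappa^{-1}d\kappa,V}(f)=\Lambda_{g,A,V}(f)$; as $f$ was arbitrary, this is the claimed equality.

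Alternatively, one can give a direct two-line argument bypassing Lemma~\ref{lem:gauge_equivalent}: for $f\in C_0^\infty((0,\infty)\times\cX)$ let $u^f$ solve \eqref{eq:cauchy_problem_infinite_time} for $\mathcal{L}_{g,A,V}$; then using $\mathcal{L}_\kappa=\mathcal{L}_{g,A+i\kappa^{-1}d\kappa,V}$ and $\kappa f=f$ (valid since $\kappa|_{\cX}=1$ and $\supp f(t,\cdot)\subset\cX$) one checks that $\kappa u^f$ has vanishing Cauchy data and solves $(\partial_t^2+\mathcal{L}_{g,A+i\kappa^{-1}d\kappa,V})(\kappa u^f)=\kappa(\partial_t^2+\mathcal{L}_{g,A,V})u^f=\kappa f=f$, so by uniqueness (Theorem~\ref{thm:uniqueness_of_smooth_solutions}) it is the solution for the gauged operator; restricting to $\cX$ finishes the proof. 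Either way, there is no genuine obstacle: the only point meriting care is that $\mathcal{L}_{g,A+i\kappa^{-1}d\kappa,V}$ be symmetric so that its source-to-solution map is even defined, and that is exactly what the constant-magnitude hypothesis on $\kappa$ guarantees through Proposition~\ref{prop:gauge_operator}.
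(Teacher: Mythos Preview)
Your proof is correct and follows essentially the same approach as the paper: the paper's proof simply says ``the claim follows from Lemma~\ref{lem:gauge_equivalent} by choosing $A_2=A$, $A_1 = A_2 + i\kappa^{-1}d\kappa$ and $V_1=V_2=V$,'' and you have filled in the details (invoking Proposition~\ref{prop:gauge_operator} to justify that the gauged operator is well defined, and spelling out how condition~(1) of Lemma~\ref{lem:gauge_equivalent} yields equality of the source-to-solution maps after restricting to $\cX$).
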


\begin{proof}
The claim follows from Lemma \ref{lem:gauge_equivalent} by choosing $A_2=A$, $A_1 = A_2 + i\kappa^{-1}d\kappa$ and $V_1=V_2=V$.
    % Let $\mathcal{L}_{g,A_2,V_2}$ be the Magnetic-Schr\"odinger operator and let $\kappa\in C^\infty(N)$ be a unitary function with $\kappa|_{\cX} = 1$. If $\mathcal{L}_{1} = \mathcal{L}_{g,A_2+i\kappa^{-1}d\kappa,V_2}$, then by Lemma \ref{lem:gauge_equivalent} we have that $$u_1^f(t,x) = \kappa(x)u_2^f(t,x)$$ for all $(t,x)\in (0,\infty)\times N$. From the definition of the local source-to-solution operators and $\kappa|_{\cX} =1$, it follows that for all $f\in C_0^\infty((0,\infty)\times\cX)$ that
    % $$
    %     \Lambda_{g,A_2+i\kappa^{-1}d\kappa, V_2}(f) = u_1^f|_{(0,\infty)\times\cX} = \kappa u_2^f|_{(0,\infty)\times\cX} = u_2^f|_{(0,\infty)\times\cX} = \Lambda_{g,A_2,V_2}(f).
    % $$
    % Hence $\Lambda_{g,A_2,V_2} = \Lambda_{g,A_2+i\kappa^{-1}d\kappa,V_2}$.
\end{proof}

\section{Recovery of the Geometry} \label{sec:geometry}
In this section we show that if the local source-to-solution operators $\opnum{1}$ and $\opnum{2}$ on $(N_1,g_1)$ and $(N_2,g_2)$ are equivalent, in other words Hypothesis \ref{hyp:same_data} is true, then there exists a unique Riemannian isometry $\Phi:(N_1,g_1)\to (N_2,g_2)$ such that $\Phi|_{\cX_1} = \phi$. 

This result will be proven over four subsections. In Subsection \ref{sub:travel_time_data} we recover the travel time data from our PDE data. In Subsection \ref{sub:topology} we will use the travel time data to construct a homeomorphism $\Phi:N_1\to N_2$. In Subsection \ref{sub:smooth} we show that $\Phi$ is a diffeomorphism. Finally, in Subsection \ref{sub:geometry} we show that $\Phi$ is the only Riemannian isometry having the property $\Phi|_{\cX_1} = \phi$. Whenever we work on one manifold only we will omit the subindices.

Many results in this section were originally developed in \cite[Section 5]{source-to-solution}, and we will omit the analogous proofs, but will restate the respective claims. 

\subsection{Recovery of Travel Time Data} \label{sub:travel_time_data}
% The Boundary Control Method was originally developed in \cite{belishev_russian} for the Euclidean setting. These ideas were extended to an equivalent Spectal Problem in \cite{belishev_english} and further extended in \cite{lassas_inverse} and  \cite{kurylev_boundary_distance_map} for an inverse problem on compact Riemannian manifolds with boundary. The authors of \cite{source-to-solution} extended the ideas of the Boundary Control Method to complete manifolds without boundary whose data is given on an open set. 
The primary result of this subsection is the following Theorem.
 
\begin{thm}[Travel Time Data] 
\label{thm:travel_time_data}
    For $i \in \{1,2\}$ let $(N_i,g_i)$  be two complete Riemannian manifolds, $\cX_i \subset N_i$ be open, $A_i$ a smooth real valued co-vector field and $V_i$ a smooth real valued function on $N_i$. If Hypothesis $(\ref{hyp:same_data})$ holds, then
%    . Suppose there exists a diffeomorphism $\phi:\cX_1\to\cX_2$ such
%    \begin{align*}
%        \tilde{\phi}^*\opnum{2}(f) &= \opnum{1}(\tilde{\phi}^*f), \quad \text{ for all } f\in C_0^\infty((0,\infty)\times \cX _2)
%    \end{align*}
%    where $\tilde{\phi}(t,x) = (t,\phi(x))$. 
    \[
    \{d_{g_1}(p,\cdot)|_{\cX_1} :\: p \in N_1\}=\{d_{g_2}(\tilde{p},\phi(\cdot))|_{\cX_1}: \: \tilde{p} \in N_2\}.
    \]
\end{thm}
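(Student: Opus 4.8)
The plan is to run the Boundary Control strategy adapted to the local source-to-solution map: reduce the PDE data to the collection of domains of influence, recover the travel time data from them as in \cite{source-to-solution}, and then let the observation radius tend to infinity. The first thing I would record is that the observation sets are isometric. Feeding product sources $f=\chi\otimes\psi$ with $\chi\in C_0^\infty((0,\infty))$, $\chi(0)=\chi'(0)=0$ and $\chi(t_0)\ne 0$ for some $t_0$, and $\psi\in C_0^\infty(\cX_2)$, into Hypothesis \ref{hyp:same_data} and using the uniqueness of smooth solutions (Theorem \ref{thm:uniqueness_of_smooth_solutions}) gives $\mathcal{L}_{g_1,A_1,V_1}(\psi\circ\phi)=[\mathcal{L}_{g_2,A_2,V_2}(\psi)]\circ\phi$ for all $\psi\in C_0^\infty(\cX_2)$; Lemma \ref{lem:magnetic_isometry}, applied on the open sets $\cX_i$, then yields $g_1|_{\cX_1}=\phi^{*}(g_2|_{\cX_2})$, so that $\phi\colon(\cX_1,g_1)\to(\cX_2,g_2)$ is a Riemannian isometry (in particular volume preserving and carrying small metric balls to small metric balls). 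This is what makes the two families in the statement directly comparable.

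Next I would reduce the PDE data to domains of influence. Fix $T>0$. For open $\Gamma$ with $\overline{\Gamma}\subset\cX$ compact and $h\in(0,T)$, a time translation together with the higher order approximate controllability (Theorem \ref{thm:approximate_controllability}, which for $\mathcal{L}_{g,A,V}$ rests on Tataru's unique continuation, Theorem \ref{thm:unique_continuation}) and the finite speed of propagation (Corollary \ref{cor:finite_speed}) show that the $L^2(N,g)$-closure of $\{u^f(T,\cdot):f\in C_0^\infty((T-h,T)\times\Gamma)\}$ equals $L^2(M(\Gamma,h),g)$, the space of $L^2$-functions supported in the domain of influence. The Blagovestchenskii identity (Theorem \ref{thm:blagovestchenskii}) writes $(u^f(T,\cdot),u^h(T,\cdot))_g$ solely in terms of $\Lambda_{g,A,V,\cX}$, its adjoint and the operator $J$; hence $\Lambda_{g,A,V,\cX}$ determines the Hilbert space $\mathcal{H}_T:=L^2(M(\cX,T),g)$ together with the family of closed subspaces $\{L^2(M(\Gamma,h),g)\}_{\Gamma,h}$. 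Moreover, under Hypothesis \ref{hyp:same_data} the same identity (and the volume preservation above) shows that $u_1^{f\circ\tilde\phi}(T,\cdot)\mapsto u_2^f(T,\cdot)$ extends to a unitary $U\colon\mathcal{H}_T^{(1)}\to\mathcal{H}_T^{(2)}$ with $U\bigl(L^2(M_{g_1}(\Gamma,h),g_1)\bigr)=L^2(M_{g_2}(\phi(\Gamma),h),g_2)$ for every such $\Gamma\subset\cX_1$ and $h\in(0,T)$, since $\tilde\phi$ carries sources supported in $(T-h,T)\times\phi(\Gamma)$ precisely onto sources supported in $(T-h,T)\times\Gamma$.

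The geometric core is that this family of subspaces determines the travel time data, and here I would invoke \cite{source-to-solution}, sketching the mechanism as follows. The subspaces $L^2(M(\Gamma,h),g)$ are closed under intersection, closed linear span and relative orthocomplement, so they generate a $\sigma$-complete Boolean algebra of subspaces that is preserved by $U$; realized inside $\mathcal{H}_T=L^2(M(\cX,T),g)$ it is $\{L^2(E,g):E\in\mathcal{A}\}$, where $\mathcal{A}$ is the $\sigma$-algebra on $M(\cX,T)$ generated by the sets $\{x:d_g(x,\Gamma)\le h\}$. Since $\mathcal{A}$ contains every $\{x:d_g(x,y)\in B\}$ for $y\in\cX$ and $B$ Borel, and these functions separate points of $M(\cX,T)$ — if $d_g(x,\cdot)=d_g(x',\cdot)$ on the open set $\cX$, then comparing the unit gradients of $d_g(\cdot,x)$ and $d_g(\cdot,x')$ at a point of $\cX$ outside both cut loci forces $x=x'$ — the $\sigma$-algebra $\mathcal{A}$ is the full Borel $\sigma$-algebra of the compact space $M(\cX,T)$. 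Reading off, for a countable dense set of $y\in\cX$, the $dV_g$-class of $d_g(\cdot,y)$ recovers $d_g(x,\cdot)|_{\cX}$ for all $x$ outside a null set, and taking the closure in $C(\cX)$ (legitimate because $x\mapsto d_g(x,\cdot)|_{\cX}$ is continuous and $M(\cX,T)$ is compact) yields the set $\{d_g(x,\cdot)|_{\cX}:x\in M(\cX,T)\}$. Carrying this out on $N_1$ and on $N_2$ and transporting by $U$ and $\phi$ gives $\{d_{g_1}(x,\cdot)|_{\cX_1}:x\in M_{g_1}(\cX_1,T)\}=\{d_{g_2}(\tilde p,\phi(\cdot))|_{\cX_1}:\tilde p\in M_{g_2}(\cX_2,T)\}$.

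Finally I would globalize: exhausting $\cX_1$ by bounded open subsets (on which Hypothesis \ref{hyp:same_data} persists by restricting sources and outputs) and letting $T\to\infty$, using completeness and connectedness so that $\bigcup_T M_{g_i}(\cX_i,T)=N_i$ and the compactness of closed metric balls to extract limits of the matching points, upgrades the previous identity to the statement of the theorem. The main obstacle is concentrated in the middle two steps: one must know that the Blagovestchenskii identity and the approximate controllability remain available for the (possibly unbounded) lower order terms $A,V$ — which is exactly what Section \ref{Sec:Well-Poss} supplies — and that the domain-of-influence subspaces, known only up to the measure algebra, still pin down the travel time functions; the latter is the point at which we lean on \cite{source-to-solution}, while everything else is bookkeeping with the hypothesis.
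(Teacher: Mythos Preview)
Your proof is correct but follows a genuinely different route from the paper. The paper, tracking \cite{source-to-solution}, extracts pointwise geometric information from the data: it verifies individual ball–containment relations $B(x,l_x)\subset\overline{B(y,l_y)\cup B(z,l_z)}$ via the Blagovestchenskii identity (Lemma~\ref{lem:main_connection}), uses these to compute each cut time $\tau(y,\xi)$ for $y\in\cX$ (Lemma~\ref{lem:cut_time_lemma}), and then reads off $d_g(\gamma_{y,\xi}(r),x)$ for $r<\tau(y,\xi)$ by a further infimum of ball–containment tests; the transfer between the two manifolds is done test–by–test (Lemmas~\ref{lem:same_sets} and~\ref{lem:same_cut}). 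Your argument instead packages everything into a single unitary $U$ between the wave spaces that intertwines the entire lattice $\{L^2(M(\Gamma,h))\}$; you then argue that this lattice generates the Borel measure algebra of $M(\cX,T)$, recover the distance functions a.e., and close up. The paper's approach is more constructive and never leaves the realm of finitely many explicit tests; yours is cleaner conceptually but relies on somewhat delicate measure–theoretic bookkeeping that you have compressed. Two places where you should tighten: your Step~1 only yields the \emph{local} Riemannian isometry $g_1|_{\cX_1}=\phi^*(g_2|_{\cX_2})$, not the equality of ambient distances on $\cX_1\times\cX_1$ that Lemma~\ref{lem:distance_function_data} gives, so you may only match small metric balls $B_i(y,\varepsilon)$ (this suffices for your $\sigma$-algebra argument, but say so); and the compactness you need in the closure step forces you to work with a bounded $\Gamma_0\Subset\cX_1$ from the outset rather than all of $\cX$, as you only note in the final paragraph.
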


We begin by showing the local source-to-solution data $(\cX,\operator{})$ will determine $d_g|_{\cX\times\cX}$. It is well known property in Riemannian geometry that on any open set $\cX$ that $d_{g}:\cX\times\cX\to [0,\infty)$ will determine $g|_\cX$. Hence the local source-to-solution data $(\cX,\operator{})$ will determine the \textit{augmented data} 
\begin{align}
    (\cX,g|_\cX,d_g|_{\cX\times\cX},\operator{}). \label{def:augmented_data}
    \end{align}
\begin{lem} \label{lem:distance_function_data}
    % The data $(\cX,\operator{})$ determines the distance function $d_g$ on $\cX\times\cX$. 
    If Hypothesis \ref{hyp:same_data} is satisfied, then for all $x,y\in \cX_1$ we have that 
    \begin{equation}
    \label{eq:metric_isometry_on_X}
    d_{g_1}(x,y) = d_{g_2}(\phi(x),\phi(y))\hspace{6mm}\text{and} \hspace{6mm} g_1|_{\cX_1} = \phi^*(g_2|_{\cX_2}).
    \end{equation}
    In particular, the map $\phi\colon (\cX_1,g_1) \to (\cX_2,g_2)$ is a Riemannian isometry.
\end{lem}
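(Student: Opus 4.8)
The heart of the statement is that the pair $(\cX,\operator{})$ determines the restricted distance function $d_g|_{\cX\times\cX}$. Granting this, the second identity in \eqref{eq:metric_isometry_on_X} and the isometry claim are immediate: by the classical fact recalled just before the lemma, the distance function on an open subset of a Riemannian manifold determines the metric there, so $g_1|_{\cX_1}$ and $\phi^\ast(g_2|_{\cX_2})$ must coincide once we know (by the first identity) that they induce the same distance on $\cX_1$. Thus the real work is to (i) express $d_g(x,y)$ for $x,y\in\cX$ purely through $\operator{}$, and (ii) transport that expression across $\phi$ using Hypothesis \ref{hyp:same_data}.

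For (i), I would first fix two \emph{open} sets $S,S'\subset\cX$ and prove the wave-propagation formula
\[
d_g(S,S') \;=\; \inf\bigl\{\, t>0 \;:\; \exists\, f\in C_0^\infty((0,t)\times S)\ \text{with}\ \operator{}(f)\not\equiv 0\ \text{on}\ (0,t)\times S'\,\bigr\}.
\]
The inequality ``$\ge$'' follows from finite speed of propagation (Corollary \ref{cor:finite_speed}): if $t\le d_g(S,S')$ and $f\in C_0^\infty((0,t)\times S)$, then $\operatorname{supp}\bigl(u^f(s,\cdot)\bigr)\subset\{y:d_g(y,S)<s\}$ for each $s\in(0,t)$, and this set misses $S'$ since $d_g(y,S)\ge d_g(S,S')\ge t>s$ for $y\in S'$; hence $\operator{}(f)=u^f$ vanishes on $(0,t)\times S'$. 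The inequality ``$\le$'' uses the nonvanishing wave result (Corollary \ref{cor:nonvanishing_condition}): if $t>d_g(S,S')$, pick $p\in S$, $p'\in S'$ with $d_g(p,p')<t$ and an auxiliary time $t'$ with $d_g(p',S)\le d_g(p,p')<t'<t$; Corollary \ref{cor:nonvanishing_condition}, applied with observation set $S$, point $p'$ and final time $t'$, yields $f\in C_0^\infty((0,t')\times S)\subset C_0^\infty((0,t)\times S)$ whose solution does not vanish at $(t',p')\in(0,t)\times S'$. The right-hand side above depends only on the open sets $S,S'\subset\cX$ and on $\operator{}$.

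For (ii), let $x,y\in\cX_1$ and choose nested open neighbourhoods $S^n_x\downarrow\{x\}$, $S^n_y\downarrow\{y\}$ contained in $\cX_1$. Since $\tilde\phi(t,z)=(t,\phi(z))$ carries $C_0^\infty((0,t)\times S^n_x)$ bijectively onto $C_0^\infty((0,t)\times\phi(S^n_x))$ and, by Hypothesis \ref{hyp:same_data}, intertwines $\opnum{1}$ with $\opnum{2}$, the two infima in (i)---computed for $(N_1,g_1)$ with the pair $(S^n_x,S^n_y)$ and for $(N_2,g_2)$ with the pair $(\phi(S^n_x),\phi(S^n_y))$---are defined by the same subset of $(0,\infty)$, hence agree: $d_{g_1}(S^n_x,S^n_y)=d_{g_2}(\phi(S^n_x),\phi(S^n_y))$ for all $n$. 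Letting $n\to\infty$, the left side converges to $d_{g_1}(x,y)$, and because $\phi$ is a homeomorphism $\cX_1\to\cX_2$ the sets $\phi(S^n_x),\phi(S^n_y)$ shrink to $\phi(x),\phi(y)$, so the right side converges to $d_{g_2}(\phi(x),\phi(y))$. This gives the first identity in \eqref{eq:metric_isometry_on_X}, and the second identity (hence that $\phi$ is a Riemannian isometry) follows as explained above.

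The main obstacle is conceptual rather than computational: the distances in \eqref{eq:metric_isometry_on_X} are the \emph{ambient} Riemannian distances on $N_1$ and $N_2$, which are \emph{not} determined by the locally recoverable tensor $g_i|_{\cX_i}$ alone, since a minimizing geodesic between two points of $\cX_i$ may leave $\cX_i$; one is therefore forced to read the distance off the wave dynamics, which is exactly what step (i) does. The only genuine technical nuisance there is the bookkeeping with the half-open time interval, which the auxiliary time $t'$ disposes of.
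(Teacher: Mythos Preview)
Your proof is correct and follows essentially the same strategy as the paper: both characterize the ambient distance $d_g|_{\cX\times\cX}$ purely through the source-to-solution operator via finite speed of propagation plus a reachability argument, transport this across $\phi$ using Hypothesis~\ref{hyp:same_data}, and then recover the metric tensor from the distance function. The only minor variation is that the paper defers the key identity $d_g(x,y)=\lim_{\eps\to 0}t_\eps$ to \cite[Lemma~17]{source-to-solution}, whereas you supply a self-contained proof of the analogous formula $d_g(S,S')=\inf\{t>0:\dots\}$ using Corollary~\ref{cor:nonvanishing_condition} for the nontrivial inequality.
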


\begin{proof}
%See \cite[Lemma 17]{source-to-solution} for the proof that the local source-to-solution map $\opnum{}$ determines the distance $d_g(x,y)$ for all $x,y \in \cX$.
%\teemu{The rest of the proof needs to be included and some parts of the previous proofs may need to be incorporated in here.}
% First we consider the case with just one manifold. Let $x,y\in\cX$ be arbitrary. As $\cX$ is a smooth manifold, it is metrizable so there exists a metric $d_0$ which might be different from $d_g$ but induces the same topology on $\cX$. Let $\eps > 0$ be arbitrary and define $\cB_\eps(x) := (0,\infty)\times B_{d_0}(x,\eps) \subset (0,\infty)\times \cX$. We now define
%     \begin{align}
%         t_\eps := \inf\{t > 0 : \exists\,\text{$f\in C_0^\infty(\cB_\eps(x))$ such that supp$(\operator{}f(t,\cdot))\cap B_{d_0}(y,\eps)\not=\emptyset$}\}.\label{eq:t_eps}
%     \end{align}
%     It is shown in \cite[Lemma 17]{source-to-solution} that $\lim_{\eps\to 0} t_\eps = d_g(x,y)$.

Due to the Hypothesis \ref{hyp:same_data} there is a diffeomorphism $\phi\colon \cX_1 \to \cX_2$ such that 
\\
$\tilde{\phi}^*(\opnum{2}f) = \opnum{1}(\tilde{\phi}^*f)$ holds for all $f\in C_0^\infty((0,\infty)\times\cX_2)$. Here $\tilde{\phi}(t,x)=(t,\phi(x))$, Since $\phi:\cX_1\to\cX_2$ is a diffeomorphism then $\tilde{\phi}^*:C_0^\infty((0,\infty)\times\cX_2)\to C_0^\infty((0,\infty)\times\cX_1)$ is an isomorphism where $\tilde{\phi}^*(f) = f\circ\tilde{\phi}$. 

We proceed by choosing  a metric $d_2$ on $\cX_2$ that induces the same topology as $d_{g_2}$. Then we define $d_1:\cX_1\times\cX_1\to [0,\infty)$ by $d_1(x,y) = d_2(\phi(x),\phi(y))$. Since $\phi:\cX_1\to \cX_2$ is a homeomorphism, then $d_1$ defines a metric on $\cX_1$ which induces the same topology as $d_{g_1}$ and $\phi:(\cX_1,d_1)\to (\cX_2,d_2)$ is a metric isometry. 

Now fix $\eps>0$, $x_1,y_1\in \cX_1$ and denote $x_2 = \phi(x_1)$ and $y_2 = \phi(y_1)$. We also set $\mathcal{B}_1(\eps) = (0,\infty)\times B_{d_1}(x_1,\eps)$ and $\mathcal{B}_2(\eps) = (0,\infty)\times B_{d_2}(x_2,\eps)$, and note that $\tilde{\phi}^*:C_0^\infty(\mathcal{B}_2(\eps))\to C_0^\infty(\mathcal{B}_1(\eps))$ is a vector space isomorphism. 
% We define the numbers $t_\eps^1, t_\eps^2$ as $t_\eps$ in \eqref{eq:t_eps} but setting $\operator{}=\operator{i}$ and $d_0=d_i$. 
%Recall that one has the limit $\lim_{\eps\to\infty} t_\eps = d_{g_1}(x,y)$. Let $s_\eps$ be the similarly defined number such that $\lim_{\eps\to0}s_\eps = d_{g_2}(\phi(x),\phi(y))$. 
% From the (\ref{eq:t_eps}), we have that
Also, for both $i \in \{1,2\}$ we proceed to define 
\begin{align*}
    t_{i,\varepsilon}: = \inf\{s > 0: \exists\, f\in C_0^\infty(\cB_i(\eps)) \text{ such that }\text{supp}(\opnum{i}f (s,\cdot))\cap B_{d_i}(y_i,\eps)\not=\emptyset\}.
\end{align*}

Then we use the fact that $\tilde{\phi}^*\opnum{2}(f) = \opnum{1}(\tilde{\phi}^*f)$ for all $f\in C_0^\infty((0,\infty)\times \cX _2)$, and note due to  the definition of $t_{i,\eps}$ it follows that $t_{1,\eps} = t_{2,\eps}$. Finally, from the proof of \cite[Lemma 17]{source-to-solution} we have that 
    $$
    d_{g_1}(x,y) = \lim_{\eps\to 0} t_{1,\eps} = \lim_{\eps\to 0} t_{2,\eps} = d_{g_2}(\phi(x),\phi(y)).
    $$
     Hence, the equation \eqref{eq:metric_isometry_on_X} follows.
\end{proof}

Through out this section we will use the following facts from Riemannian geometry that follow from the Hopf-Rinow Theorem.

\begin{lem} \label{lem:boundary_properties}
    Let $(N,g)$ be a connected and complete Riemannian manifold. Let $p\in N$ and $r > 0$, then the following hold.
    \begin{enumerate}
        \item $\overline{B(p,r)} = \{x\in N:d_g(x,p)\leq r\}$    
        \item $\partial B(p,r) = \{x\in N: d_g(x,p)=r\}$
        \item $\text{dist}_g(p,A) = \text{dist}_g(p,\overline{A})$ for any $A\subset N$
        \item $M(A,r) = M(\overline{A},r)$
        \item $M(B(p,s),r) = \overline{B(p,s+r)}$.
    \end{enumerate}
\end{lem}

For each $0 < \eps < r < T$, and $x \in N$ we define the following space-time cylinder
$$
    S_\eps(x,r) := (T-(r-\eps),T)\times B(x,\eps).
$$
Note that if $f \in C_0^\infty(S_\eps(x,r))$, then by Finite Speed of Wave Propagation and Lemma \ref{lem:boundary_properties} we have that $\text{supp}(u^f(T)) \subset B(x,r)$. We recall that in here $u^f$ stand for the unique solution of the Cauchy problem \eqref{eq:cauchy_problem_infinite_time} for the source $f$. By adjusting the space-time cylinder $S_\eps(x,r)$ we will have control of the support of the solutions. This can be used to extract metric information.

\begin{lem}\label{lem:main_connection}
    Let $p,y,z \in N$ and let $0 < \eps < l_p,l_y,l_z < T$. Then the following conditions are equivalent:
    \begin{enumerate}
        \item $B(p,l_p) \subset \overline{B(y,l_y) \cup B(z,l_z)}$
        \item  For all $f \in C_0^\infty(S_\eps(p,l_p))$ there exists a sequence of functions $\{f_j\} \subset C_0^\infty(S_\eps(y,l_y)\cup S_\eps(z,l_z))$ such that
        $$ 
        \lim_{j\to \infty} \norm{u^f(T) - u^{f_j}(T)}_{L^2(N,g)} = 0. 
        $$
    \end{enumerate}
    If in addition $B(x,\eps),B(y,\eps),B(z,\eps) \subset \cX$ then the following is also equivalent to the former two statements
    \begin{enumerate}[resume]
        \item For all $f \in C_0^\infty(S_\eps(p,l_p))$
        $$
            \inf\Big\{ \langle f-h,  J\operator{}(f-h)\rangle - \langle \operator{}(f-h),  J(f-h)\rangle     \Big|\,h\in C_0^\infty(S_\eps(y,l_y)\cup S_\eps(z,l_z))\Big\} = 0
        $$
        where $\langle u,v\rangle = \int_0^T\int_{\cX} u(t,x)\overline{v(t,x)}dV_gdt$.
    \end{enumerate}
\end{lem}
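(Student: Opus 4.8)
The plan is to recast conditions $(1)$–$(3)$ as statements about the $L^2(N,g)$-closures of suitable reachable sets of waves, and then verify the three equivalences. For $x\in N$ and $\eps<r<T$ I would first describe the reachable set
\[
\mathcal R(x,r):=\{u^f(T,\cdot):f\in C_0^\infty(S_\eps(x,r))\}.
\]
Since $\magnetic$ has time-independent coefficients, the substitution $t\mapsto t-(T-(r-\eps))$ identifies $\mathcal R(x,r)$ with the set of states $w^{\tilde f}(r-\eps,\cdot)$ produced by sources $\tilde f\in C_0^\infty((0,r-\eps)\times B(x,\eps))$, using that $u^f$ vanishes identically for $t\le T-(r-\eps)$ by uniqueness of solutions with zero data. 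Finite speed of propagation (Corollary~\ref{cor:finite_speed} and the remark following the definition of $S_\eps$) gives $\operatorname{supp}u^f(T)\subset B(x,r)$, while the Higher Order Approximate Controllability theorem (Theorem~\ref{thm:approximate_controllability}, with $k=0$, observation set $B(x,\eps)$, and using $B(x,r)\subset\operatorname{int}M(B(x,\eps),r-\eps)$) shows these states are $L^2$-dense among functions supported in $B(x,r)$. Hence
\[
\overline{\mathcal R(x,r)}^{\,L^2}=\mathcal H(x,r):=\{v\in L^2(N,g):v=0\text{ a.e.\ off }B(x,r)\}=\overline{C_0^\infty(B(x,r))}^{\,L^2}.
\]
Using linearity and a partition of unity to split a source on $S_\eps(y,l_y)\cup S_\eps(z,l_z)$ into pieces $h_1+h_2$, one gets $\{u^h(T):h\in C_0^\infty(S_\eps(y,l_y)\cup S_\eps(z,l_z))\}=\mathcal R(y,l_y)+\mathcal R(z,l_z)$, whose $L^2$-closure is the closed subspace $\mathcal H(y,l_y)+\mathcal H(z,l_z)=\{v\in L^2(N,g):v=0\text{ a.e.\ off }B(y,l_y)\cup B(z,l_z)\}=:\mathcal H_{y,z}$.

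Granting this, condition $(2)$ says exactly that $u^f(T)\in\mathcal H_{y,z}$ for every $f\in C_0^\infty(S_\eps(p,l_p))$. For $(1)\Rightarrow(2)$ I would note that if $B(p,l_p)\subset\overline{B(y,l_y)\cup B(z,l_z)}$, then the open set $\{u^f(T)\neq0\}\subset B(p,l_p)$ lies inside the interior of this closed set, and that interior differs from $B(y,l_y)\cup B(z,l_z)$ only by a subset of the metric spheres $\partial B(y,l_y)\cup\partial B(z,l_z)$, a null set; hence $u^f(T)$ vanishes a.e.\ off $B(y,l_y)\cup B(z,l_z)$, so $u^f(T)\in\mathcal H_{y,z}$ and the approximating sequence $\{f_j\}$ exists. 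For the contrapositive of $(2)\Rightarrow(1)$: if $(1)$ fails, choose $x_0\in B(p,l_p)\setminus\overline{B(y,l_y)\cup B(z,l_z)}$ and a ball $B(x_0,\rho)\subset B(p,l_p)$ disjoint from $\overline{B(y,l_y)\cup B(z,l_z)}$; since $d_g(x_0,B(p,\eps))<l_p-\eps$, Corollary~\ref{cor:nonvanishing_condition} (after the same time shift) yields $f\in C_0^\infty(S_\eps(p,l_p))$ with $u^f(T,x_0)\neq0$, hence $u^f(T)\neq0$ on a set of positive measure disjoint from $B(y,l_y)\cup B(z,l_z)$, so $u^f(T)\notin\mathcal H_{y,z}$. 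As $\mathcal H_{y,z}$ is closed and contains all $u^h(T)$ with $h\in C_0^\infty(S_\eps(y,l_y)\cup S_\eps(z,l_z))$, condition $(2)$ fails for this $f$.

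Finally, $(2)\Leftrightarrow(3)$ follows directly from the Blagovestchenskii identity (Theorem~\ref{thm:blagovestchenskii}). Substituting $\psi:=f-h$ into both arguments there — legitimate after replacing $\cX$ by a bounded open set containing $B(p,\eps)\cup B(y,\eps)\cup B(z,\eps)$, on which the relevant source-to-solution maps agree — gives
\[
\langle\psi,\,J\,\operator{}\psi\rangle-\langle\operator{}\psi,\,J\psi\rangle=(u^\psi(T,\cdot),u^\psi(T,\cdot))_g=\norm{u^f(T,\cdot)-u^h(T,\cdot)}_{L^2(N,g)}^2 ,
\]
so the infimum in $(3)$ is zero precisely when $u^f(T)$ is an $L^2$-limit of waves $u^h(T)$ with source in $S_\eps(y,l_y)\cup S_\eps(z,l_z)$, which is exactly $(2)$. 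The step bearing the real weight is the tight description of $\overline{\mathcal R(x,r)}^{\,L^2}$: the inclusion into $\mathcal H(x,r)$ is elementary finite speed, but the reverse density is precisely Theorem~\ref{thm:approximate_controllability}, which itself rests on Tataru's unique continuation principle; the time-translation reduction, the partition-of-unity splitting, and the null-set bookkeeping for metric spheres are routine.
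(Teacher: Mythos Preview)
Your proposal is correct and follows essentially the same route as the paper (which defers to \cite[Lemma 10]{source-to-solution}, but whose argument is visible in the commented block): finite speed plus approximate controllability identify the $L^2$-closures of the reachable sets, and the Blagovestchenskii identity converts $(2)$ into $(3)$. The only cosmetic differences are that for $(1)\Rightarrow(2)$ the paper splits $u^f(T)$ via the characteristic function $1_{\overline{B(y,l_y)}}$ rather than invoking the null-set property of metric spheres directly, and for $\neg(1)\Rightarrow\neg(2)$ it uses $L^2$-density of $\mathcal R(p,l_p)$ to produce a wave with mass on the exceptional set rather than the pointwise nonvanishing Corollary~\ref{cor:nonvanishing_condition}; both variants rely on the same underlying ingredients.
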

\begin{proof} See \cite[Lemma 10]{source-to-solution} for the original proof.
\end{proof}

\begin{defn}
\label{def:cut_time}
    Let $(y,\xi)\in SN$. Let $\gamma_{y,\xi}$ be the unique geodesic on $(N,g)$ with initial conditions $\gamma(0) = y$ and $\dot{\gamma}(0) = \xi$. Then the respective cut time is defined as 
    % $\tau(y,\xi)$ is defined to be the largest time $T$ such that $\gamma|_{[0,T]}$ is a distance minimizing curve. In particular that
    $$
        \tau(y,\xi) = \sup\{ t> 0: d_g(y,\gamma(t)) = t\}.
    $$
    %It is possible that $\tau(y,\xi)=+\infty$.
\end{defn}
In the next lemma we show that $\tau$ can be described via inclusions of certain metric balls.
% in the next Lemma and then use the equivalent conditions in Lemma \ref{lem:main_connection} to connect $\tau$ to the augmented data. The next  lemma holds on any complete Riemannian manifold $(N,g)$.
\begin{lem}\label{lem:cut_time_lemma}
    Let $(N,g)$ be a complete Riemannian manifold. Let $x,y \in N$ and let $\gamma_{y,\xi}$ be a distance minimizing geodesic from $y$ to $x$ with initial velocity $\xi \in S_yN$. Let $s:= d_g(x,y)$ and let $r > 0$. If $\tau(y,\xi) < s + r$, then 
    \begin{align}
        \text{there exists $\eps > 0$ such that}\,\,B(x,r+\eps) \subset \overline{B(y,s+r)} \label{lem:single_ball_containment}
    \end{align}
    Further, if $(\ref{lem:single_ball_containment})$ is satisfied for some $r$ and $s$, then $\tau(y,\xi)\leq s+r$. Moreover, if $x,y\in N$, $\xi \in S_yN, s = d_g(x,y) > 0$ and $x = \gamma_{y,\xi}(s)$, then
    \begin{align}
        \tau(y,\xi) = \inf\{s + r\,:\, r > 0\, \text{and } B(x,r+\eps)\subset\overline{B(y,s+r)}\text{ for some $\eps > 0$}\}. \label{eq:tau_characterization}
    \end{align}
\end{lem}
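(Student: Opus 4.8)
\emph{Plan of proof.} The plan is to prove the two implications first and then deduce \eqref{eq:tau_characterization} formally from them. At the outset I record that, since $\gamma_{y,\xi}|_{[0,s]}$ is a minimizing geodesic terminating at $x=\gamma_{y,\xi}(s)$, every subsegment of it minimizes, so $d_g(y,\gamma_{y,\xi}(t))=t$ for $t\in[0,s]$ and hence $\tau(y,\xi)\ge s$. (The case $x=y$ is degenerate and is not needed below, so I take $s>0$.)

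\emph{Step 1: $\tau(y,\xi)<s+r$ implies $(\ref{lem:single_ball_containment})$.} I would argue by contradiction. If no admissible $\eps$ existed, then for each $n\in\N$ there would be $w_n$ with $d_g(x,w_n)<r+\tfrac1n$ but $d_g(y,w_n)>s+r$; the triangle inequality $d_g(y,w_n)\le s+d_g(x,w_n)$ then squeezes $d_g(y,w_n)\to s+r$ and $d_g(x,w_n)\to r$. The $w_n$ all lie in the closed ball $\overline{B(x,r+1)}$, which is compact by the Hopf--Rinow theorem, so along a subsequence $w_n\to w_*$ with $d_g(x,w_*)=r$ and $d_g(y,w_*)=s+r=d_g(y,x)+d_g(x,w_*)$. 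Now I would concatenate the \emph{given} minimizing geodesic $\gamma_{y,\xi}|_{[0,s]}$ with a minimizing geodesic from $x$ to $w_*$ of length $r$: the resulting curve joins $y$ to $w_*$ and has length exactly $d_g(y,w_*)$, hence is a minimizing geodesic and in particular smooth at $x$. By uniqueness of geodesics it coincides with $\gamma_{y,\xi}|_{[0,s+r]}$, so $d_g(y,\gamma_{y,\xi}(s+r))=s+r$ and $\tau(y,\xi)\ge s+r$ --- contradicting the hypothesis.

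\emph{Step 2: $(\ref{lem:single_ball_containment})$ implies $\tau(y,\xi)\le s+r$.} I would prove the contrapositive. If $\tau(y,\xi)>s+r$, then $\gamma_{y,\xi}|_{[0,s+r+\delta]}$ minimizes for all small $\delta>0$; in particular $d_g(x,\gamma_{y,\xi}(s+r))=r$. Given $\eps>0$, pick $\delta$ with $0<\delta<\min\{\eps,\ \tau(y,\xi)-(s+r)\}$ and set $p_\delta:=\gamma_{y,\xi}(s+r+\delta)$. Then $d_g(x,p_\delta)\le r+\delta<r+\eps$ while $d_g(y,p_\delta)=s+r+\delta>s+r$, so $p_\delta\in B(x,r+\eps)\setminus\overline{B(y,s+r)}$; as $\eps>0$ was arbitrary, $(\ref{lem:single_ball_containment})$ fails. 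Finally, \eqref{eq:tau_characterization} follows: by Step 1 every value $s+r$ with $\tau(y,\xi)<s+r$ (which is attainable since $\tau(y,\xi)\ge s$) lies in the set on the right-hand side, so its infimum is $\le\tau(y,\xi)$; by Step 2 every element $s+r$ of that set obeys $\tau(y,\xi)\le s+r$, so the infimum is $\ge\tau(y,\xi)$.

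The one genuinely delicate point is the contradiction in Step 1. It would not suffice to invoke an arbitrary minimizing geodesic from $y$ to $w_*$ passing through $x$: when $x$ is a cut point joined to $y$ by several minimizers, such a geodesic need not start in the direction $\xi$, and then one learns nothing about $\tau(y,\xi)$. Realizing the extending geodesic as a concatenation built from the \emph{prescribed} segment $\gamma_{y,\xi}|_{[0,s]}$, together with the fact that a length-minimizing concatenation has no corner, is exactly what forces it to be the prolongation of $\gamma_{y,\xi}$. The remaining ingredients --- Heine--Borel and geodesic completeness coming from completeness of $(N,g)$, ``minimizing curves are geodesics'', and uniqueness of geodesics --- are standard.
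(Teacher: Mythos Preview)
Your proof is correct and rests on the same two geometric ideas as the paper's argument (which follows \cite{source-to-solution}, Lemma~11): the concatenation-of-geodesics argument in Step~1 and the ``walk a bit further along $\gamma_{y,\xi}$'' construction in Step~2. The presentational differences are minor. In Step~1 the paper argues directly that $\overline{B(x,r)}\subset B(y,s+r)$ (the strict open ball) by applying the concatenation argument to every $z\in\partial B(x,r)$, and then extracts $\eps$ from the positive distance between the disjoint compact spheres $\partial B(x,r)$ and $\partial B(y,s+r)$; you instead negate the conclusion, extract a convergent sequence by Hopf--Rinow, and run the concatenation once at the limit point. Your derivation of \eqref{eq:tau_characterization} directly from the two implications is cleaner than the paper's, which treats the cases $\tau(y,\xi)=s$ and $\tau(y,\xi)>s$ separately. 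Your closing remark about why the concatenation must be built on the \emph{prescribed} segment $\gamma_{y,\xi}|_{[0,s]}$ is well observed and is exactly the subtlety that makes the argument work.
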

\begin{proof}
    See \cite[Lemma 11]{source-to-solution} for the original proof.
\end{proof}
Our focus now shifts to showing that if Hypothesis \ref{hyp:same_data} is valid, then the respective travel time data are equivalent in the sense of Theorem \ref{thm:travel_time_data}. We do this by finding explicit relationships between certain metric balls on $(N_1,g_1)$ and $(N_2,g_2)$, then we show how the cut times $\tau_1$ and $\tau_2$ are related via $\phi$.
\begin{lem}\label{lem:same_sets}
    Suppose that Hypothesis $(\ref{hyp:same_data}
    )$ holds.
%    $\phi:\cX_1\to\cX_2$ is a diffeomorphism such that 
%    \begin{align*}
%        \Lambda_{\cX_1, A_1,V_1}(f\circ\Tilde{\phi}) &= [\Lambda_{\cX_2, A_2,V_2}(f)]\circ\Tilde{\phi}, \quad \text{ for all } f\in C_0^\infty((0,\infty)\times \cX_2).
%    \end{align*}    
    If $x,y,z\in \cX_1$ and $l_x,l_y,l_z > 0$, then
    \begin{align*}
        B_1(x,l_x) &\subset \overline{B_1(y,l_y) \cup B_1(z,l_z)}
        \intertext{if and only if}
        B_2(\phi(x),l_x) &\subset \overline{B_2(\phi(y),l_y) \cup B_2(\phi(z),l_z)} 
    \end{align*}
    where $B_i(p,r) = \{ q\in N_i : d_{g_i}(p,q) < r\}$.
\end{lem}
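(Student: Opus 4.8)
The plan is to reduce the set–inclusion statements on both sides to the single PDE–theoretic criterion of Lemma~\ref{lem:main_connection}(3) and then transport that criterion across the diffeomorphism $\phi$ using Hypothesis~\ref{hyp:same_data}. First I would fix the auxiliary parameters: since the six points $x,y,z\in\cX_1$ and $\phi(x),\phi(y),\phi(z)\in\cX_2$ are interior to the open sets $\cX_1$ and $\cX_2$, I can choose $\eps>0$ so small that $B_1(x,\eps),B_1(y,\eps),B_1(z,\eps)\subset\cX_1$ and $B_2(\phi(x),\eps),B_2(\phi(y),\eps),B_2(\phi(z),\eps)\subset\cX_2$, and in addition $0<\eps<\min\{l_x,l_y,l_z\}$; I also fix $T>\max\{l_x,l_y,l_z\}$, so that the space–time cylinders $S_\eps(\cdot,l_\cdot)=(T-(l_\cdot-\eps),T)\times B(\cdot,\eps)$ appearing in Lemma~\ref{lem:main_connection} are all contained in $(0,\infty)\times\cX_i$.

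Next I would record that $\tilde\phi(t,w)=(t,\phi(w))$ induces a linear isomorphism between the relevant test–function spaces. By Lemma~\ref{lem:distance_function_data} we have $d_{g_1}(a,b)=d_{g_2}(\phi(a),\phi(b))$ for all $a,b\in\cX_1$; since, for the $\eps$ chosen above, each ball $B_1(\cdot,\eps)$ lies in $\cX_1$ and each ball $B_2(\phi(\cdot),\eps)$ lies in $\cX_2$, this forces $\phi(B_1(x,\eps))=B_2(\phi(x),\eps)$, and likewise for $y$ and $z$. Consequently $\tilde\phi$ restricts to a diffeomorphism of the open set $S_\eps^{(1)}(x,l_x)$ (on $(0,\infty)\times N_1$) onto $S_\eps^{(2)}(\phi(x),l_x)$, and of $S_\eps^{(1)}(y,l_y)\cup S_\eps^{(1)}(z,l_z)$ onto $S_\eps^{(2)}(\phi(y),l_y)\cup S_\eps^{(2)}(\phi(z),l_z)$, so that $\tilde\phi^{*}\colon F\mapsto F\circ\tilde\phi$ is a bijection between the corresponding spaces of compactly supported smooth functions.

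Then I would check the two compatibilities that make Lemma~\ref{lem:main_connection}(3) invariant under this pullback. First, Hypothesis~\ref{hyp:same_data} gives $\opnum{1}(\tilde\phi^{*}F)=\tilde\phi^{*}\big(\opnum{2}F\big)$; second, since $\phi\colon(\cX_1,g_1)\to(\cX_2,g_2)$ is a Riemannian isometry (Lemma~\ref{lem:distance_function_data}) it preserves the Riemannian densities, $\phi^{*}dV_{g_2}=dV_{g_1}$, so the change–of–variables formula yields $\langle\tilde\phi^{*}F,\tilde\phi^{*}G\rangle_{\cX_1}=\langle F,G\rangle_{\cX_2}$ for the time–space $L^2$ pairings used there; and the operator $J$ acts only in the time variable, hence commutes with $\tilde\phi^{*}$. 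Writing $f=\tilde\phi^{*}F$, $h=\tilde\phi^{*}H$ and using $f-h=\tilde\phi^{*}(F-H)$, these facts give
\[
\big\langle f-h,\,J\opnum{1}(f-h)\big\rangle-\big\langle \opnum{1}(f-h),\,J(f-h)\big\rangle
=\big\langle F-H,\,J\opnum{2}(F-H)\big\rangle-\big\langle \opnum{2}(F-H),\,J(F-H)\big\rangle .
\]
Taking infima over $h$ (resp.\ $H$) in the $C_0^\infty$ spaces that correspond bijectively under $\tilde\phi^{*}$, we conclude that the criterion of Lemma~\ref{lem:main_connection}(3) for the triple $(x,y,z)$ in $(N_1,g_1)$ holds if and only if it holds for $(\phi(x),\phi(y),\phi(z))$ in $(N_2,g_2)$.

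Finally I would invoke the equivalence $(1)\Leftrightarrow(3)$ of Lemma~\ref{lem:main_connection} on each manifold separately — legitimate because the inclusions $B_i(\cdot,\eps)\subset\cX_i$ were arranged in the first step — to obtain $B_1(x,l_x)\subset\overline{B_1(y,l_y)\cup B_1(z,l_z)}$ if and only if $B_2(\phi(x),l_x)\subset\overline{B_2(\phi(y),l_y)\cup B_2(\phi(z),l_z)}$. I expect the only step requiring genuine care, as opposed to routine bookkeeping, to be the identification $\phi(B_1(\cdot,\eps))=B_2(\phi(\cdot),\eps)$ for small $\eps$: this is precisely where the metric–isometry conclusion of Lemma~\ref{lem:distance_function_data} is used, and it is what guarantees that the two instances of Lemma~\ref{lem:main_connection}(3) are literally the same condition after pullback by $\tilde\phi$.
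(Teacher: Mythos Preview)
Your proposal is correct and follows essentially the same approach as the paper: both reduce to the criterion of Lemma~\ref{lem:main_connection}(3), identify the space--time cylinders via $\tilde\phi(S_\eps(\cdot,l_\cdot))=S_\eps(\phi(\cdot),l_\cdot)$ using Lemma~\ref{lem:distance_function_data}, and then transport the Blagovestchenskii-type quantity using Hypothesis~\ref{hyp:same_data}, the commutation of $J$ with $\tilde\phi^*$, and the preservation of the $L^2$ pairing by the Riemannian isometry $\phi$. Your write-up is slightly more explicit about choosing $T$ and about the bijection of test-function spaces being needed to carry the infimum across, but these are refinements of the same argument rather than a different route.
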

% \begin{rem}
%     The metric balls are computed with respect to the induced metric on $(N_i,g_i)$ rather than the distance metric induced on $(\cX_i,g_i|_{\cX_i})$ which is not a complete Riemannian manifold.
% \end{rem}
% \teemu{I do not understand this remark.s}
\begin{proof}
    Since $x,y,z\in\cX_1$, then $\phi(x),\phi(y),\phi(z)\in \cX_2$. Since $\cX_1$ is an open set, then we can find an $0 < \eps < l_x,l_y,l_z$ such that $B_1(x,\eps),B_1(y,\eps),B_1(z,\eps) \subset\cX_1$. By Lemma \ref{lem:distance_function_data} we have that
    $$
        \phi(B_1(x,\eps)) = B_2(\phi(x),\eps) \subset \cX_2. 
    $$
    Now let $T > l_x,l_y,l_z$, then under a slight abuse of notation we have that
    \begin{align*}
        \tilde{\phi}(S_\eps(x,l_x)) = (T-(l_x-\eps),T)\times \phi(B_1(x,\eps)) = S_\eps(\phi(x),l_x).
    \end{align*}
    Identical statements hold for the other two space-time cylinders. Our plan to complete proof is based on the equivalent statements in Lemma \ref{lem:main_connection}.\\ \newline    
    Since $\tilde{\phi}:S_\eps(x,l_x)\to S_\eps(\phi(x),l_x)$ is a diffeomorphism, then the pullback  $\tilde{\phi}^*:C_0^\infty(S_\eps(\phi(x),l_x)) \to C_0^\infty(S_\eps(x,l_x))$ defined by $\tilde{\phi}^*h := h\circ\tilde{\phi}$ is a bijection. If we can show that 
    \begin{align}
    \label{eq:transformation_of_Blago_id}
        \langle \tilde{\phi}^*f-\tilde{\phi}^*h,  J\Lambda_1(\tilde{\phi}^*f-\tilde{\phi}^*h)\rangle_1 &- \langle \Lambda_1(\tilde{\phi}^*f-\tilde{\phi}^*h),  J(\tilde{\phi}^*f-\tilde{\phi}^*h)\rangle_1
        \nonumber
        \\
        &= 
        \\
        \langle f-h,  J\Lambda_2(f-h)\rangle_2 &- \langle \Lambda_2(f-h),  J(f-h)\rangle_2     
        \nonumber
    \end{align}
    for all $f \in C_0^\infty(S_\eps(\phi(x),l_x))$ and for all $h \in C_0^\infty(S_\eps(\phi(y),l_y)\cup S_\eps(\phi(z),l_z))$ where 
    $$
        \langle u,v \rangle_i = \int_0^T \int_{\cX_i} u\overline{v} dV_{g_i}dt.
    $$
    By Lemma \ref{lem:main_connection} we will be able to conclude that
     \begin{align*}
        B_1(x,l_x) &\subset \overline{B_1(y,l_y) \cup B_1(z,l_z)}
        \intertext{if and only if}
        B_2(\phi(x),l_x) &\subset \overline{B_2(\phi(y),l_y) \cup B_2(\phi(z),l_z)} .
    \end{align*}
    By Lemma \ref{lem:distance_function_data} we know that $g_1|_{\cX_1} = \phi^*(g_2|_{\cX_2})$, so 
    $$
        \int_{\cX_1} (\rho\circ\phi) dV_{g_1} = \int_{\cX_2} \rho dV_{g_2},
    $$ 
    where 
    % $dV_{g_i}$ is the smooth density induced by $g_i$ and
    $\rho\in C_0^\infty(\cX_2)$. At the same time, by Hypothesis \ref{hyp:same_data} we have that $\tilde{\phi}^*\Lambda_2(f) = \Lambda_1(\tilde{\phi}^*f)$ for all $f \in C_0^\infty((0,\infty)\times\cX_2)$. Furthermore $\tilde{\phi}^* J (f) = J(\tilde{\phi}^* f)$ where $$J(f)(t,x) = \frac{1}{2}\int_t^{2T-t} f(s,x)ds.$$
    Combining these observations yields
    \begin{align*}
        \langle f-g,J\Lambda_2(f-g)\rangle_2 &=\langle \tilde{\phi}^*f-\tilde{\phi}^*g, J\Lambda_1(\tilde{\phi}^*f- \tilde{\phi}^*g)\rangle_1
        \intertext{and}
        \langle \Lambda_2(f-g),J(f-g)\rangle_2 &= \langle \Lambda_1(\tilde{\phi}^*f-\tilde{\phi}^*g), J(\tilde{\phi}^*f-\tilde{\phi}^*g)\rangle_1.    
    \end{align*}
These identities prove \eqref{eq:transformation_of_Blago_id}.
\end{proof}
Now we will show that the cut time functions will be equivalent under the diffeomorphsim $\phi$ if the local source-to-solution operators are equivalent in sense of the Hypothesis \ref{hyp:same_data}.
\begin{lem}\label{lem:same_cut}
    Suppose that Hypothesis $(\ref{hyp:same_data})$ holds.
%    $\phi:\cX_1\to\cX_2$ is a diffeomorphism such that 
%    \begin{align*}
%        \opnum{1}(f\circ\Tilde{\phi}) &= [\opnum{2}]\circ\Tilde{\phi}, \quad \text{ for all } f\in C_0^\infty((0,\infty)\times \cX_2).
%    \end{align*}
    Let $\tau_i$ be the cut time function on $(N_i,g_i)$. If $y\in\cX_1$ and $\xi\in S_yN_1$, then 
    $$
        \tau_1(y,\xi) = \tau_2(\phi(y),d\phi_y\xi).
    $$
\end{lem}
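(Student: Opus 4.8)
The plan is to reduce the claim to the ball-inclusion characterization of the cut time in Lemma \ref{lem:cut_time_lemma}, and then transport the relevant inclusions through $\phi$ using Lemma \ref{lem:same_sets}.

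First I would fix $y \in \cX_1$ and $\xi \in S_yN_1$. Since $\cX_1$ is open and $\gamma_{y,\xi}$ is continuous with $\gamma_{y,\xi}(0)=y$, there is $s>0$, which may be taken as small as we like, such that $\gamma_{y,\xi}([0,s]) \subset \cX_1$ and the restriction $\gamma_{y,\xi}|_{[0,s]}$ is a distance-minimizing geodesic of $(N_1,g_1)$; hence for $x:=\gamma_{y,\xi}(s) \in \cX_1$ we have $d_{g_1}(x,y)=s$. By Lemma \ref{lem:distance_function_data} the map $\phi$ is a Riemannian isometry of $(\cX_1,g_1)$ onto $(\cX_2,g_2)$, so it carries geodesic segments to geodesic segments: the curve $t \mapsto \phi(\gamma_{y,\xi}(t))$, $t\in[0,s]$, is a unit-speed geodesic of $(N_2,g_2)$ issuing from $\phi(y)$ with initial velocity $d\phi_y\xi \in S_{\phi(y)}N_2$, and by uniqueness of geodesics it agrees with $\gamma_{\phi(y),d\phi_y\xi}|_{[0,s]}$, so $\gamma_{\phi(y),d\phi_y\xi}(s)=\phi(x)$. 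Moreover, \eqref{eq:metric_isometry_on_X} gives $d_{g_2}(\phi(x),\phi(y)) = d_{g_1}(x,y) = s$.

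With these identifications, Lemma \ref{lem:cut_time_lemma}, formula \eqref{eq:tau_characterization}, applied on $(N_1,g_1)$ with the pair $(x,y)$ and on $(N_2,g_2)$ with the pair $(\phi(x),\phi(y))$, gives
\[
\tau_1(y,\xi) = \inf\{\, s+r \,:\, r>0,\ \exists\, \eps>0,\ B_1(x,r+\eps)\subset \overline{B_1(y,s+r)}\,\}
\]
and
\[
\tau_2(\phi(y),d\phi_y\xi) = \inf\{\, s+r \,:\, r>0,\ \exists\, \eps>0,\ B_2(\phi(x),r+\eps)\subset \overline{B_2(\phi(y),s+r)}\,\}.
\]
Now I would invoke Lemma \ref{lem:same_sets} with $z=y$, so that $\overline{B_i(y,s+r)\cup B_i(y,s+r)} = \overline{B_i(y,s+r)}$: for every $r,\eps>0$,
\[
B_1(x,r+\eps)\subset \overline{B_1(y,s+r)} \iff B_2(\phi(x),r+\eps)\subset \overline{B_2(\phi(y),s+r)}.
\]
Therefore the two sets over which the infima are taken coincide, and $\tau_1(y,\xi) = \tau_2(\phi(y),d\phi_y\xi)$.

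The only genuinely delicate point is the bookkeeping in the first paragraph: one must ensure $s$ is small enough that $\gamma_{y,\xi}|_{[0,s]}$ both remains in $\cX_1$ and is distance-minimizing in $(N_1,g_1)$ (both hold once $s$ is below the exit time of $\cX_1$ and below the injectivity radius at $y$), and that the pushforward geodesic together with the identity $d_{g_2}(\phi(x),\phi(y))=s$ are legitimate — the latter being exactly \eqref{eq:metric_isometry_on_X}, which needs nothing beyond Lemma \ref{lem:distance_function_data}. Everything else is a direct substitution into the two previously established lemmas, so no additional estimates are required.
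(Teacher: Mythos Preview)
Your proof is correct and follows essentially the same route as the paper: choose $s>0$ small so that $x=\gamma_{y,\xi}(s)\in\cX_1$ with $d_{g_1}(x,y)=s$, use Lemma~\ref{lem:distance_function_data} to push the geodesic segment through $\phi$ and obtain $d_{g_2}(\phi(x),\phi(y))=s$, then apply the characterization \eqref{eq:tau_characterization} on both manifolds and match the ball inclusions via Lemma~\ref{lem:same_sets} (with $z=y$). The paper's argument is identical in structure and in the lemmas invoked.
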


\begin{proof}
    Let $\gamma$ be the unique geodesic on $(N_1,g_1)$ with initial conditions $\gamma(0) = y$ and $\dot{\gamma}(0) = \xi$. Since $\cX_1$ is an open set, then we can find $s > 0$ such that $B_1(y,s) \subset \cX_1$ and $s < \tau_1(y,\xi)$. We denote $x = \gamma(s)$. By Lemma \ref{lem:cut_time_lemma} we have that
    $$
    \tau_1(y,\xi) = \inf\{s + r: r > 0\,\text{such that there exists $\eps > 0$ s.t. } B_1(x,r + \eps) \subset \overline{B_1(y,r+s)}\}.
    $$
    Since $\phi:\cX_1\to\cX_2$ is a metric and Riemannian isometry by Lemma \ref{lem:distance_function_data}, then $\tilde{\gamma}|_{[0,s]} = \phi\circ\gamma|_{[0,s]}$ is a distance minimizing geodesic with respect to $(\cX_2,g_2)$ with initial conditions $\tilde{\gamma}(0) = \phi(y)$ and $\dot{\tilde{\gamma}}(0) = d\phi_y\xi$. Hence $B_2(\phi(y),s) \subset \cX_2$ and $\tilde{\gamma}|_{[0,s]}$ is also a distance minimizing curve connecting $\phi(y)$ to $\phi(x)$. From Lemma \ref{lem:cut_time_lemma} it follows that
    \begin{align*}
        \tau_2(\phi(y),d\phi_y\xi) = \inf\{&s + \tilde{r} : \tilde{r} > 0 \,\text{and there exists $\eps > 0$ such that }
        \\
        &\hspace{12mm}B_2(\phi(x), \tilde{r} + \eps) \subset \overline{B_2(\phi(y),\tilde{r} + s)}\}.
    \end{align*}
    Moreover, by Lemma \ref{lem:same_sets} we have that
    $$
        B_1(x,r + \eps) \subset \overline{B_1(y,r+s)}\,\,\,\text{if and only if}\,\,\,B_2(\phi(x),r + \eps) \subset
         \overline{B_2(\phi(y),r+s)}. $$
    Hence, $\tau_1(y,\xi) = \tau_2(\phi(y),d\phi_y\xi)$.
\end{proof}

We are now in a position to prove Theorem \ref{thm:travel_time_data}.  
\begin{proof}[Proof of Theorem \ref{thm:travel_time_data}]
    For any $y\in\cX_1$ and $\xi\in S_yN_1$ by Lemma \ref{lem:same_cut}, we have that $\tau_1(y,\xi) = \tau_2(\phi(y),d\phi_y\xi)$. Let $\gamma = \gamma_{y,\xi}$ be a geodesic on $N_1$ with initial condition $y$ and initial velocity $\xi$. We recall that it was proven in \cite[Proposition 2]{source-to-solution} that if $r < \tau_1(y,\xi)$ and if $0 < s < r$ is small enough that $\gamma|_{[0,s]} \subset \cX_1$  and $z = \gamma(s)$ then for any $x\in \cX_1$ we have that
    \begin{align*}
        d_{g_1}(\gamma(r),x) &= \inf\{R > 0 : \text{there is $\eps > 0$ such that } B_1(z, r-s+\eps) \subset \overline{B_1(y,r)\cup B_1(x,R)}\}. 
    \end{align*}

    Now since $\tau_1(y,\xi) = \tau_2(\phi(y),d\phi_y\xi)$ and if we let $\tilde{\gamma}|_{[0,s]} = \phi\circ\gamma|_{[0,s]}$, then
    \begin{align*}
            d_{g_2}(\tilde{\gamma}(r),\phi(x)) = \inf\{&\hat{R} > 0 : \text{there is $\eps > 0$ such that }\\
        &B_2(\phi(z), r-s+\eps) \subset \overline{B_2(\phi(y),r)\cup B_2(\phi(x),\hat{R})}\}. 
    \end{align*}
    Thus, by Lemma \ref{lem:same_sets} we conclude that
    $$
        d_{g_1}(\gamma(r),x) = d_{g_2}(\tilde{\gamma}(r),\phi(x)).
    $$
By \cite[Lemma 12]{source-to-solution}, 
% Uncomment for dissertation
%Lemma \ref{lem:geodesic_reach}
for any $p \in N_1$ we can find $y\in \cX_1$ and $\xi \in S_yN_1$ such that $\gamma_{y,\xi}(r) = p$ and $r< \tau_1(y,\xi)$. Thus for $\tilde{p} = \tilde{\gamma}(r)$ and for all $x\in \cX_1$ we have that
    $$
        d_{g_1}(p,x) = d_{g_2}(\tilde{p},\phi(x)).
    $$
    Therefore,
    \[
    \{d_1(p,\cdot)|_{\cX_1} :\: p \in N_1\} \subset\{d_2(\tilde{p},\phi(\cdot))|_{\cX_1}: \: \tilde{p} \in N_2\}.
    \]
    The reverse inclusion is also true since $\phi$ is a diffeomorphism and the manifolds $N_1$ and $N_2$ are in a symmetric position.
\end{proof}

\subsection{Recovery of Topology} \label{sub:topology}
In this subsection we construct a homeomorphism from $N_1$ to $N_2$ using the augmented data $(\ref{def:augmented_data})$. Let $U_1\subset \cX_1$ be open and bounded such that $\overline{U}_1 \subset\cX_1$. We let $U_2 = \phi(U_1) \subset \cX_2$. Then $\overline{U}_i$ is compact and $C(\overline{U}_i)$ with the supremum topology is a Banach space. For each $U_i$ we define the respective \textit{travel time maps} $R_i:(N_i,g_i)\to (C(\overline{U}_i),\|\cdot\|_\infty)$ by
\begin{align}
    R_i(p) = d_{g_i}(p,\cdot)|_{\overline{U}_i} \label{def:R_i}.
\end{align}
The image of the travel time map is the \textit{travel time data} 
$$
R_i(N_i) = \{d_{g_i}(p,\cdot)|_{\overline{U}_i}:p\in N_i\} \subset C(\overline{U}_i).
$$

We give the travel time data the subspace topology that it inherits from the supremum topology on $C(\overline{U}_i)$. 
% By Proposition \ref{prop:2} we know that the map can be determined by the augmented data $(\cX,g|_\cX,d_g|_{\cX\times \cX},\operator{})$. 
The first part of this subsection is dedicated to proving that the travel time map is a homeomorphism between the manifold and its travel time data. Once we show the travel time map is a homeomorphism we construct a homeomorphism from $(N_1,g_1)$ to $(N_2,g_2)$ using the travel time maps and the diffeomorphism $\phi$ as in Hypothesis \ref{hyp:same_data}. Next result was originally given as \cite[Lemma 13]{source-to-solution} with an additional assumption of $U$ having a smooth boundary. Here we drop this assumption and provide a new proof.

\begin{lem}\label{lem:travel_continuous}
    If $(N,g)$ is a complete Riemannian manifold and $U \subset N$ is open and bounded then the respective travel time map $R:(N,g)\to (C(\overline{U}),\norm{\cdot}_\infty)$ is a continuous injection.
\end{lem}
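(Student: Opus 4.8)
First I would record that $\overline U$ is compact: being bounded and closed in the complete manifold $(N,g)$, this follows from the Hopf--Rinow theorem. Hence $(C(\overline U),\norm{\cdot}_\infty)$ is a Banach space, and for every $p\in N$ the function $d_g(p,\cdot)$, being continuous on $N$, restricts to an element of $C(\overline U)$, so $R$ is well defined. The triangle inequality gives $|d_g(p,x)-d_g(q,x)|\le d_g(p,q)$ for all $x\in\overline U$, whence $\norm{R(p)-R(q)}_\infty\le d_g(p,q)$; thus $R$ is $1$-Lipschitz, in particular continuous.

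\textbf{Injectivity.} Suppose $R(p)=R(q)$, i.e.\ $d_g(p,x)=d_g(q,x)$ for all $x\in\overline U$. Fix a point $x\in U$. If $p=x$ then $0=d_g(p,x)=d_g(q,x)$, forcing $q=p$, so assume $p\ne x$ and put $\ell:=d_g(p,x)>0$. By completeness there is a unit-speed minimizing geodesic $\sigma\colon[0,\ell]\to N$ with $\sigma(0)=p$ and $\sigma(\ell)=x$. Since $U$ is open and $\sigma(\ell)=x\in U$, continuity of $\sigma$ gives $\delta\in(0,\ell)$ with $\sigma(\ell-t)\in U$ for all $t\in[0,\delta]$; as a subarc of a minimizing geodesic is minimizing, $d_g(p,\sigma(\ell-t))=\ell-t$, and the hypothesis $R(p)=R(q)$ then yields $d_g(q,\sigma(\ell-t))=\ell-t$ for all such $t$. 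In particular $d_g(q,x)=\ell$ and $d_g(q,\sigma(\ell-\delta))=\ell-\delta$.

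The crucial step is then to concatenate a minimizing geodesic from $q$ to $\sigma(\ell-\delta)$ with the segment $\sigma|_{[\ell-\delta,\ell]}$: the resulting curve has length $(\ell-\delta)+\delta=\ell=d_g(q,x)$, so it is a minimizing curve from $q$ to $x$, hence a geodesic, hence smooth; after reparametrization by arclength it is a minimizing geodesic $\tau\colon[0,\ell]\to N$ from $q$ to $x$ that agrees with $\sigma$ near its endpoint, so in particular $\dot\tau(\ell)=\dot\sigma(\ell)$. Now the reversed curves $s\mapsto\sigma(\ell-s)$ and $s\mapsto\tau(\ell-s)$ are geodesics on $[0,\ell]$ starting at $x$ with the same initial velocity $-\dot\sigma(\ell)$, so by uniqueness of geodesics they coincide; evaluating at $s=\ell$ gives $p=\sigma(0)=\tau(0)=q$, a contradiction. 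Therefore $R$ is injective, and the lemma follows.

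\textbf{Main obstacle.} The delicate point is the extraction of a minimizing geodesic from $q$ to $x$ whose terminal velocity equals $\dot\sigma(\ell)$; this rests on the fact that every length-minimizing curve is a smooth geodesic, together with the openness of $U$, which is what makes room for the backward displacement $\sigma(\ell-t)\in U$. The smoothness of $\partial U$ assumed in \cite[Lemma 13]{source-to-solution} plays no role here. All remaining ingredients are Hopf--Rinow and routine manipulation of the triangle inequality.
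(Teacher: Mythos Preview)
Your proof is correct and follows essentially the same strategy as the paper's: the $1$-Lipschitz estimate for continuity, and for injectivity the concatenation of a minimizing segment from $q$ to an intermediate point inside $U$ with a tail of the original minimizing geodesic, followed by uniqueness of geodesics. The only cosmetic differences are that the paper parametrizes its geodesic from $x$ to $p$ rather than from $p$ to $x$, and that your final line ``a contradiction'' is a slip---you assumed $p\neq x$, not $p\neq q$, so $p=q$ is the desired conclusion, not a contradiction.
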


\begin{proof}
    Let $p,q\in N$ be given. By the reverse triangle inequality we have the estimate
    \begin{align*}
        \| R(p) - R(q)\|_\infty = \sup_{z\in \overline{U}}|d_g(p,z) - d_g(q,z)| \leq d_g(p,q).
    \end{align*}
    Thus the travel time map $R:N\to C(\overline{U})$ is $1-$Lipschitz.
    
    Suppose that for some $p,q \in N$ we have that $R(p)=R(q)$. Let $x\in U$ and $s := d_g(p,x)$, then there exists a distance minimizing geodesic $\gamma:[0,s]\to N$ such that $\gamma(0) =x$ and $\gamma(s) = p$. Since $U$ is open there exists $t\in (0,s)$ such that $\gamma([0,t]) \subset U$. We denote $\hat{x} = \gamma(t)$, and note that since $\gamma|_{[0,s]}$ is distance minimizing we have that
    \begin{equation}
        \label{eq:length_of_the_con_curve}
        d_g(q,x) = d_g(p,x) = d_g(p,\hat x) + d_g(\hat x,x) = d_g(q,\hat{x}) + d_g(\hat{x},x).
    \end{equation}
    Now let $\alpha:[t,s]\to N$ be a distance minimizing geodesic from $\hat{x}$ to $q$ and consider the concatenated curve $\hat{\gamma} = \alpha\circ \gamma|_{[0,t]}$ which, due to the equation \eqref{eq:length_of_the_con_curve}, is a distance minimizing curve from $q$ to $x$. Since distance minimizing curves are geodesics then $\gamma$ and $\hat{\gamma}$ are geodesics which coincide on $[0,t]$. However, geodesics on a complete manifold that are equal on an interval must agree everywhere. In particular $p = \gamma(s) = \hat{\gamma}(s) = q$. Hence $R:(N,g)\to (C(\overline{U}),\|\cdot\|_\infty)$ is continuous and injective. 
\end{proof}

\begin{comment}
To show that the inverse map is continuous we will need the following notions from topology.

\begin{defn}
    Let $X$ be a topological space. A sequence $\{x_n\}$ in $X$ is said to escape to infinity if for any compact set $K$ we have that $x_n\in K$ for only a finite number of $\{x_n\}$.
\end{defn}

\begin{defn}
    A map $f:(X,d_X)\to (Y,d_Y)$ between metric spaces is said to be proper if $f^{-1}(K) \subset X$ is compact for any compact set $K \subset Y$.
\end{defn}
For proofs of the following lemmas see \cite{willard2012general}.

\begin{lem} \label{lem:escape_infinity}
    Let $(X,d_X)$ and $(Y,d_Y)$ be metric spaces and let $f:X\to Y$ be continuous. Then $f$ is proper if and only if for every sequence $\{x_n\} \subset X$ that escapes to infinity the image sequence $\{f(x_n)\} \subset Y$ escapes to infinity.
\end{lem}

\begin{lem} \label{lem:closed_map}
    Let $(X,d_X)$ and $(Y,d_Y)$ be metric spaces. If $f:X\to Y$ is injective, continuous and proper, then $f$ is a closed map.
\end{lem}
\end{comment}
\begin{prop} \label{prop:travel_homeomorphism}
    The travel time map \eqref{def:R_i} is a topological embedding.
\end{prop}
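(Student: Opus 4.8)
The plan is to combine the previous lemma, which shows that $R\colon (N,g)\to (C(\overline U),\|\cdot\|_\infty)$ is a continuous injection, with an argument that $R$ is a closed map onto its image, or equivalently that $R^{-1}\colon R(N)\to N$ is continuous. Since a continuous injective map which is also closed onto its image is automatically a homeomorphism onto its image (i.e.\ a topological embedding), this is the only remaining point. The key structural fact we will exploit is that $(N,g)$ is complete, hence by Hopf--Rinow closed bounded sets are compact and $R$ is \emph{proper} in a suitable sense.

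First I would show that $R$ is a proper map in the following sense: whenever $\{p_n\}\subset N$ is a sequence with no convergent subsequence (a sequence ``escaping to infinity''), then $\{R(p_n)\}$ has no convergent subsequence in $C(\overline U)$. Fix a basepoint $x_0\in U$. If $\{p_n\}$ escapes to infinity then, by completeness and Hopf--Rinow, $d_g(p_n,x_0)\to\infty$ (otherwise $\{p_n\}$ would lie in a closed bounded, hence compact, set and have a convergent subsequence). But $R(p_n)(x_0)=d_g(p_n,x_0)\to\infty$, so $\{R(p_n)\}$ is unbounded in $C(\overline U)$ and therefore has no convergent subsequence. This establishes properness.

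Next I would use properness together with injectivity and continuity to conclude that $R$ is a closed map onto $N$'s image. Let $F\subset N$ be closed and let $\{q_n\}\subset F$ be a sequence with $R(q_n)\to h$ in $R(N)\subset C(\overline U)$. The sequence $\{R(q_n)\}$ is convergent, hence bounded, so in particular $\sup_n d_g(q_n,x_0)=\sup_n R(q_n)(x_0)<\infty$; thus $\{q_n\}$ lies in a closed bounded subset of $N$, which is compact by Hopf--Rinow. Passing to a subsequence, $q_n\to q\in F$ (using that $F$ is closed), and by continuity $R(q_n)\to R(q)$, so $h=R(q)\in R(F)$. Since $R(N)$ is metrizable, sequential closedness gives that $R(F)$ is closed in $R(N)$. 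Hence $R$ maps closed sets to closed sets, so $R^{-1}\colon R(N)\to N$ is continuous, and combined with \lemref{lem:travel_continuous} we conclude $R$ is a homeomorphism onto its image, i.e.\ a topological embedding.

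The main obstacle, such as it is, is being careful about the topology on $R(N)$: it carries the subspace topology from $C(\overline U)$, which is metrizable, so I can argue entirely with sequences; the crucial input at every step is Hopf--Rinow, which converts the metric boundedness coming from evaluation at a single point $x_0\in U$ into genuine compactness in $N$. One should also double-check that $\overline U$ is nonempty (so that ``evaluate at $x_0$'' makes sense), which is guaranteed since $U$ is assumed open and bounded; if $U=\emptyset$ the statement is vacuous. No new geometric estimates are needed beyond the $1$-Lipschitz bound and the geodesic-rigidity argument already established in \lemref{lem:travel_continuous}.
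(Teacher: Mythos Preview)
Your proof is correct and follows essentially the same approach as the paper's: use \lemref{lem:travel_continuous} for the continuous injection, then show $R$ is proper via Hopf--Rinow by evaluating at a fixed point of $U$, and conclude that $R$ is closed onto its image. The only difference is that you spell out the ``proper $+$ continuous $+$ injective $\Rightarrow$ closed'' step explicitly with a sequential argument, whereas the paper packages this into separate topological lemmas; the content is the same.
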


\begin{proof}
    See \cite[Proposition 3]{source-to-solution} for the original proof. 
\end{proof}

% Uncomment for dissertation
\begin{comment}
    By Lemma \ref{lem:travel_continuous} we know that $R:(N,g) \to (C(\overline{U}),\|\cdot\|_\infty)$ is a continuous injection. To finish the proof we show that $R$ is proper and thus a closed map. Since every continuous map that is injective and closed is a topological embedding, we will be done.\\ \newline
    If $N$ is bounded, then as a consequence of the Hopf-Rinow Theorem $N$ is compact. On a compact set there are no sequences that escape to infinity, hence Lemma \ref{lem:escape_infinity} and Lemma \ref{lem:closed_map} are trivially satisfied. Next assume that $N$ is unbounded and let $\{x_n\} \subset N$ be a sequence that escapes to infinity. 
%    Let $x_0\in \overline{U}$, define $X_j = \overline{B(x_0,j)}$ for every $j\in\N$ and define $Y_j = R(X_j)$. Since $N$ is connected, then $\bigcup_{j=1}^\infty X_j = N$ and $\lim_{j\to\infty} d_g(x_0,x_j) = \infty$ since $\{x_n\}$ escapes to infinity.\\ \newline
    Let $x_0\in U$, then $d_g(x_0,x_j) \to +\infty$ as $j\to +\infty$. Denote $r_0 = R(x_0)$ and $r_j = R(x_j)$, then since $x_0\in \overline{U}$ we have that
    \begin{align*}
        d_g(x_0,x_j) &= | d_g(x_0,x_0) - d_g(x_0,x_j)|\leq \sup_{z\in \overline{U}} |d_g(x_0,z) - d_g(x_j,z)| =d_\infty(r_0,r_j).
    \end{align*}
    It follows that $d_\infty(r_0,r_j) \to \infty$. Every compact set on a metric space is bounded, thus $\{r_j\}$ escapes to infinity. Therefore Lemma \ref{lem:escape_infinity} and Lemma \ref{lem:closed_map} are satisfied. Thus $R:(N,g)\to (R(N),\|\cdot\|_\infty)$ is a closed map.
\end{comment}

We are now in a position to show that $N_1$ and $N_2$ are homeomorphic and construct the homeomorphism provided that Hypothesis $(\ref{hyp:same_data})$ is satisfied.
\begin{prop} \label{prop:phi_homeomorphism}
    Suppose that Hypothesis $\ref{hyp:same_data}$ is satisfied with respect to a diffeomorphism $\phi\colon \cX_1 \to \cX_2$ and let $U_1 \subset\cX_1$ be an open and bounded set such that $\overline{U}_1\subset \cX_1$. Let $\Psi:C(\overline{U}_1) \to C(\phi(\overline{U}_1))$ be defined by $\Psi(w) = w\circ\phi^{-1}$. The map
    \begin{align}
         \Phi_{U_1} := R_2^{-1}\circ \Psi \circ R_1 :N_1 \to N_2   \label{eq:Phi_U}
    \end{align}
    is a homeomorphism. Furthermore, for all $(p,x)\in N_1\times\overline{U}_1$ we have that  
    \begin{align}
        d_{g_1}(p,x) &= d_{g_2}(\Phi_{U_1}(p),\phi(x)) \hspace{4mm}\text{and}\hspace{6mm} \Phi_{U_1}|_{U_1} = \phi|_{U_1}. \label{eq:quasi_isometry}
    \end{align}
\end{prop}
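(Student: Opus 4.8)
The plan is to show that $\Phi_{U_1} = R_2^{-1}\circ\Psi\circ R_1$ is well-defined, a bijection, continuous with continuous inverse, and satisfies the two stated identities. Recall that by Proposition \ref{prop:travel_homeomorphism} both travel time maps $R_i \colon N_i \to R_i(N_i) \subset C(\overline{U}_i)$ are topological embeddings, hence homeomorphisms onto their images, so $R_2^{-1}$ makes sense once we know $\Psi\circ R_1$ maps into $R_2(N_2)$. The map $\Psi\colon C(\overline{U}_1)\to C(\phi(\overline{U}_1))$, $w\mapsto w\circ\phi^{-1}$, is a linear isometric isomorphism of Banach spaces (since $\phi\colon U_1 \to U_2$ is a bijection, precomposition with $\phi^{-1}$ preserves the sup norm), hence a homeomorphism. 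So the only substantive point is that $\Psi$ carries the travel time data $R_1(N_1)$ bijectively onto $R_2(N_2)$ where $U_2 := \phi(U_1)$; this is precisely where Theorem \ref{thm:travel_time_data} enters.

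First I would verify the set-level identity $\Psi(R_1(N_1)) = R_2(N_2)$. By definition $R_1(N_1) = \{d_{g_1}(p,\cdot)|_{\overline{U}_1} : p\in N_1\}$ and applying $\Psi$ gives $\{d_{g_1}(p,\phi^{-1}(\cdot))|_{\phi(\overline{U}_1)} : p\in N_1\}$. Theorem \ref{thm:travel_time_data} states that $\{d_{g_1}(p,\cdot)|_{\cX_1} : p\in N_1\} = \{d_{g_2}(\tilde p,\phi(\cdot))|_{\cX_1} : \tilde p\in N_2\}$ as sets of functions on $\cX_1$; restricting these equal function-sets to $\overline{U}_1 \subset \cX_1$ and then reparametrizing by $\phi$ (i.e. applying $\Psi$) yields exactly $\{d_{g_2}(\tilde p,\cdot)|_{\phi(\overline{U}_1)} : \tilde p\in N_2\} = R_2(N_2)$. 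One must note that $\phi(\overline{U}_1) = \overline{U}_2$: since $\phi$ is a homeomorphism $\cX_1\to\cX_2$ and $\overline{U}_1$ is compact (as $\overline{U}_1\subset\cX_1$ is closed and bounded in the complete manifold, hence compact by Hopf--Rinow), $\phi(\overline{U}_1)$ is compact, and it equals the closure of $U_2=\phi(U_1)$. Hence $\Psi\circ R_1$ maps $N_1$ onto $R_2(N_2)$, and composing with the homeomorphism $R_2^{-1}\colon R_2(N_2)\to N_2$ shows $\Phi_{U_1}$ is a well-defined continuous bijection; its inverse $R_1^{-1}\circ\Psi^{-1}\circ R_2$ is continuous for the same reasons, so $\Phi_{U_1}$ is a homeomorphism.

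Next I would establish the distance identity \eqref{eq:quasi_isometry}. Fix $p\in N_1$ and set $\tilde p := \Phi_{U_1}(p) = R_2^{-1}(\Psi(R_1(p)))$, so by definition of $R_2$ we have $R_2(\tilde p) = \Psi(R_1(p))$, i.e. for every $z\in\overline{U}_2$, $d_{g_2}(\tilde p, z) = (R_1(p)\circ\phi^{-1})(z) = d_{g_1}(p,\phi^{-1}(z))$. Substituting $z = \phi(x)$ with $x\in\overline{U}_1$ gives $d_{g_2}(\Phi_{U_1}(p),\phi(x)) = d_{g_1}(p,x)$, which is the first claim. For the second identity $\Phi_{U_1}|_{U_1} = \phi|_{U_1}$, take $p\in U_1\subset\cX_1$. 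By Lemma \ref{lem:distance_function_data}, $\phi\colon(\cX_1,g_1)\to(\cX_2,g_2)$ is a Riemannian isometry, so for all $x\in\overline{U}_1$, $d_{g_1}(p,x) = d_{g_2}(\phi(p),\phi(x))$; wait --- this uses that distance-minimizing paths between points of $\overline{U}_1$ need not stay in $\cX_1$, so I would instead argue directly: the distance identity just proven gives $d_{g_2}(\Phi_{U_1}(p),\phi(x)) = d_{g_1}(p,x)$ for all $x\in\overline{U}_1$, and I claim $\phi(p)$ also satisfies $d_{g_2}(\phi(p),\phi(x)) = d_{g_1}(p,x)$ for all $x\in\overline{U}_1$ --- this is exactly the content of the travel-time relation restricted to the observation set, or can be extracted from the proof of Theorem \ref{thm:travel_time_data} (the point $\tilde p$ associated to $p\in\cX_1$ is $\phi(p)$ itself, taking $y$ near $p$ and short geodesics). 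Since $R_2$ is injective (Proposition \ref{prop:travel_homeomorphism}) and $\Phi_{U_1}(p)$ and $\phi(p)$ have the same image under $R_2$, we conclude $\Phi_{U_1}(p) = \phi(p)$.

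The main obstacle I anticipate is the clean bookkeeping around $\overline{U}_1$ versus $\cX_1$: Theorem \ref{thm:travel_time_data} and Lemma \ref{lem:distance_function_data} are stated on the open set $\cX_1$, and one has to be careful that restricting the travel-time functions from $\cX_1$ to the smaller compact set $\overline{U}_1\subset\cX_1$ does not lose the needed injectivity or surjectivity --- surjectivity of $\Psi\circ R_1$ onto $R_2(N_2)$ is immediate from restriction, and injectivity of $R_2$ on $\overline{U}_2$ is Proposition \ref{prop:travel_homeomorphism}, so in fact no information is lost. The other slightly delicate point is justifying $\Phi_{U_1}|_{U_1}=\phi|_{U_1}$ cleanly; the safest route is to observe that for $p\in U_1$ the function $\phi(p)\in N_2$ realizes the same element of the travel time data $R_2(N_2)$ that $\Psi(R_1(p))$ does (which follows by tracing the construction in the proof of Theorem \ref{thm:travel_time_data} with the seed point $p$ itself lying in the observation set), and then invoke injectivity of $R_2$.
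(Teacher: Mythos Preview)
Your proof is correct and follows essentially the same approach as the paper: use Theorem \ref{thm:travel_time_data} to see that $\Psi$ maps $R_1(N_1)$ onto $R_2(N_2)$, then compose homeomorphisms, and derive the distance identity by unwinding $R_2(\Phi_{U_1}(p)) = \Psi(R_1(p))$.

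The one place where you diverge is the proof of $\Phi_{U_1}|_{U_1} = \phi|_{U_1}$, and here you make it harder than necessary. First, your hesitation about Lemma \ref{lem:distance_function_data} is unfounded: that lemma asserts $d_{g_1}(x,y) = d_{g_2}(\phi(x),\phi(y))$ for all $x,y\in\cX_1$ as \emph{global} distances in $N_1$ and $N_2$, not as intrinsic distances within $\cX_i$, so there is no issue about geodesics leaving $\cX_1$. Second, and more importantly, the paper's argument is a one-liner that avoids all of this: simply take $p = x \in U_1$ in the already-established distance identity to get
\[
d_{g_2}(\Phi_{U_1}(x),\phi(x)) = d_{g_1}(x,x) = 0,
\]
hence $\Phi_{U_1}(x) = \phi(x)$. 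Your route via $R_2(\phi(p)) = \Psi(R_1(p))$ and injectivity of $R_2$ also works (and, once unpacked, is exactly Lemma \ref{lem:distance_function_data} again), but the paper's specialization trick is both shorter and self-contained.
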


\begin{proof}
    By Proposition \ref{prop:travel_homeomorphism}, $R_i:(N_i,g_i)\to (R_i(N_i),\|\cdot\|_\infty)$ is a homeomorphism for $i \in\{1,2\}$. From Theorem \ref{thm:travel_time_data} we have that $\Psi:R_1(N_1)\to R_2(N_2)$ is a homeomorphism since $\phi$ is a bijection and $\overline{U}_1$ is compact. Hence, $\Phi_{U_1}$ is a homeomorphism.

    Next we prove that \eqref{eq:quasi_isometry} holds. Fix $p\in N_1$, then by the definition of $R_i$ and $\Phi_{U_1}$ we have that
    \begin{align*}
        d_{g_2}(\Phi_{U_1}(p),\cdot)|_{\overline{U}_2} = R_2(\Phi_{U_1}(p)) = (\Psi\circ R_1)(p) = d_{g_1}(p,\phi^{-1}(\cdot))|_{\overline{U}_2}
    \end{align*}
    Hence, the first part of \eqref{eq:quasi_isometry} follows. The second part of \ref{eq:quasi_isometry} follows from the first since
    $$
        d_{g_2}(\Phi_{U_1}(x),\phi(x)) = d_{g_1}(x,x) = 0.
    $$
    for all $x\in\overline{U}$.
\end{proof}

% \begin{rem}
%     The set $U_1$ does not need to be connected.
% \end{rem}

  \subsection{Recovery of Smooth Structure} \label{sub:smooth}  
%In the augmented data (\ref{def:augmented_data}), we have knowledge of the topological and smooth structures of $\cX_i$ and thus know the smooth and topological information of $\overline{U}_i$. By knowledge of the smooth structure on $\overline{U}_i$ we mean that given a function $f\in C(\overline{U}_i)$ and a point $z\in \overline{U}_i$ we can determine if $f$ is smooth in an open neighborhood of $z$. Because of how the distance function $d_g$ is constructed, we can recover smooth atlases by studying when $d_g$ is and is not smooth. 
In this subsection we prove that $\Phi_{U_1}$ as defined in $(\ref{eq:Phi_U})$ is a diffeomorphism. We will need the following definitions from Riemannian geometry. First, the \textit{injectivity domain} at any point $p\in N$ is the open subset of $T_pN$ defined by
\begin{align*}
    ID(p) :&= \{ v\in T_pN:\,d_g(p,\text{exp}_p(tv)) = t\|v\|_g\text{ for all $t\in [0,1)$}\}\\
    &= \{v\in T_pN:\, \|v\|_g < \tau(p,v/\|v\|_g)\},
\end{align*}
c.f. \cite[Chapter 9 Proposition 20]{petersen2006riemannian}. Further, we define the \textit{cut locus} of the point $p$ to be the set
\begin{equation}
    \label{eq:Cut_locus}
    \text{Cut}(p) := \text{exp}_p(\p ID(p)) 
    =\exp_{p}(v \in T_pN: \: \|v\|_g=\tau(p,v/\|v\|_g)) \subset N.
\end{equation}
Therefore, the restriction of the exponential map at $p$ to the respective  injectivity domain, i.e. the map $\text{exp}_p:ID(p)\to N\backslash\text{Cut}(p)$ is a diffeomorphism, see \cite[Theorem 10.34]{lee_riemannian}. As the next lemma demonstrates, we can characterize the injectivity domain via smoothness of distance functions.

\begin{lem} \label{lem:dist_gradient}
    Let $p,z\in N$ and $\eta\in T_zN$. The following are equivalent
    \begin{enumerate}
        \item $\eta\in ID(z)\backslash\{0\}$ and $\text{exp}_z(\eta) = p$
        \item $d_g(p,\cdot)$ is smooth in an open neighborhood of $z$ and $\eta = - d_g(p,z)\nabla_gd_g(p,\cdot)|_z$.
    \end{enumerate}
\end{lem}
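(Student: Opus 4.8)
The plan is to prove the two implications of this classical equivalence separately, relying on the well-known structure of the cut locus and the interplay between the exponential map and the distance function.

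For the implication $(1)\Rightarrow(2)$: assume $\eta\in ID(z)\setminus\{0\}$ with $\exp_z(\eta)=p$. Set $r=d_g(p,z)$; since $\eta\in ID(z)$ the geodesic $t\mapsto \exp_z(t\eta)$ is minimizing on $[0,1]$, so $r=\|\eta\|_g>0$. The key point is that because $\eta$ lies in the \emph{open} injectivity domain, the point $z$ is not in $\mathrm{Cut}(p)$ either (by the symmetry of the cut locus relation, $z\in\mathrm{Cut}(p)$ would force $\eta\in\partial ID(z)$). Hence $z$ lies in the open set $N\setminus\mathrm{Cut}(p)$ on which $\exp_p$ restricted to $ID(p)$ is a diffeomorphism onto its image, and on this set $q\mapsto d_g(p,q)$ is smooth away from $p$ itself (a standard consequence, e.g.\ via \cite[Theorem 10.34]{lee_riemannian} and the fact that $r^2=d_g(p,\cdot)^2$ is smooth wherever the exponential chart is valid). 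Since $r>0$, smoothness holds in an open neighborhood of $z$. Finally I would compute the gradient: the unique minimizing geodesic from $z$ to $p$ is $c(t)=\exp_z(t\eta)$, and $d_g(p,c(t))=r(1-t)$, so $\frac{d}{dt}\big|_{t=0} d_g(p,c(t))=-r$; on the other hand this derivative equals $\langle \nabla_g d_g(p,\cdot)|_z, \dot c(0)\rangle_g=\langle \nabla_g d_g(p,\cdot)|_z,\eta\rangle_g$. Combined with the eikonal identity $\|\nabla_g d_g(p,\cdot)|_z\|_g=1$ and Cauchy--Schwarz (equality case), this forces $\nabla_g d_g(p,\cdot)|_z=-\eta/\|\eta\|_g=-\eta/r$, i.e.\ $\eta=-r\,\nabla_g d_g(p,\cdot)|_z=-d_g(p,z)\,\nabla_g d_g(p,\cdot)|_z$.

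For the implication $(2)\Rightarrow(1)$: assume $d_g(p,\cdot)$ is smooth near $z$ and $\eta=-d_g(p,z)\nabla_g d_g(p,\cdot)|_z$. Write $r=d_g(p,z)$; note $r>0$ since $d_g(p,\cdot)$ is not smooth at $p$, so $\eta\ne 0$. By completeness there is a minimizing unit-speed geodesic $\sigma\colon[0,r]\to N$ from $z$ to $p$, and along any such geodesic one has the standard first-variation identity $\nabla_g d_g(p,\cdot)|_z=-\dot\sigma(0)$ whenever the distance function is differentiable at $z$; this pins down $\dot\sigma(0)=\eta/r$, so $\sigma(t)=\exp_z(t\eta/r)$ and in particular $\exp_z(\eta)=\sigma(r)=p$. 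It remains to check $\eta\in ID(z)$, i.e.\ that this geodesic is the \emph{unique} minimizer and $z\notin\mathrm{Cut}(p)$ along this direction; this follows because differentiability of $d_g(p,\cdot)$ at $z$ precisely excludes $z$ from $\mathrm{Cut}(p)$ (at a cut point reached by a minimizing geodesic, $d_g(p,\cdot)$ fails to be differentiable — either two minimizers meet or a conjugate point occurs, and in the smooth case one invokes that $N\setminus\mathrm{Cut}(p)$ is exactly the smooth locus of $d_g(p,\cdot)$ minus $\{p\}$), whence $\|\eta\|_g=r<\tau(z,\eta/\|\eta\|_g)$ and $\eta\in ID(z)\setminus\{0\}$.

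The main obstacle is the careful handling of the cut locus: the precise statement that $\{q: d_g(p,\cdot)\text{ is smooth near }q\}\setminus\{p\}$ coincides with $N\setminus(\mathrm{Cut}(p)\cup\{p\})$, and the symmetry of the cut relation, are the facts doing all the work. I would either cite these directly (e.g.\ \cite[Chapter 10]{lee_riemannian} or \cite[Chapter 9]{petersen2006riemannian}) or, if a self-contained argument is wanted, derive the gradient formula from the first variation of arclength and the triangle inequality, which gives both $\|\nabla_g d_g(p,\cdot)\|_g\le 1$ wherever differentiable and the equality along minimizing geodesics, and then use that a minimizer hitting a conjugate/cut point breaks differentiability. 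The direction $(1)\Rightarrow(2)$ is essentially bookkeeping once the smoothness of $d_g$ off the cut locus is granted; the subtler direction is $(2)\Rightarrow(1)$, where one must rule out the possibility of the geodesic $\exp_z(t\eta/r)$ extending past a cut point before reaching $p$.
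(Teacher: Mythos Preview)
Your proposal is correct and follows essentially the same classical route as the paper (which simply cites \cite[Lemma~16]{source-to-solution}; the underlying argument there is the one you give): symmetry of the cut relation to pass between $z\notin\mathrm{Cut}(p)$ and $p\notin\mathrm{Cut}(z)$, smoothness of $d_g(p,\cdot)$ on $N\setminus(\mathrm{Cut}(p)\cup\{p\})$, and the gradient identification via differentiating $t\mapsto d_g(p,\exp_z(t\eta))$ together with the eikonal equation and the equality case of Cauchy--Schwarz. One small wording slip: in $(2)\Rightarrow(1)$ you write that $\eta\in ID(z)$ amounts to ``$z\notin\mathrm{Cut}(p)$ along this direction,'' but what you actually need is $p\notin\mathrm{Cut}(z)$; you then recover this correctly via the symmetry you invoke in your closing paragraph, so the argument is fine.
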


\begin{proof}
    This was proven in \cite[Lemma 16]{source-to-solution}.
\end{proof}

Let $z\in U$ be given and define
\begin{align}
    W_z = \{r \in R(N): \text{ $r$ is smooth in an open neighborhood of $z$ or $r(z)=0$}\} \subset C(\overline{U}) \label{def:W_z}
\end{align}
So that $W_z = R(N\backslash \text{Cut}(z))$ and $W_z$ is open since $R$ is a homeomorphism and $\text{Cut}(z)$ is closed. Then we define a function
$$
\rho_z:W_z \to ID(z), \quad    \rho_z(r) :=  -r(z)\nabla_gr(z).
$$
Here we interpret $\rho_z(R(z))=0$.
Due to the augmented data \eqref{def:augmented_data} we know $g|_U$ and can determine the gradient of a smooth function on the open set $U$. It follows that $W_z$ and $\rho_z$ are determined by our data. Using the map $\rho_z$, Lemma \ref{lem:dist_gradient} and $\text{exp}_z:ID(z)\to N\backslash \text{Cut}(z)$ being a diffeomorphism, we find that
\begin{align}
    \text{exp}_z\circ \rho_z \circ R:N\backslash(\text{Cut}(z))\to N\backslash(\text{Cut}(z)) \label{eq:identity}
\end{align}
is the identity map, and as a result the map is a bijection.
\begin{equation}
    \label{eq:rho_z_via_exp_and_E}
    \rho_z = \text{exp}_z^{-1}\circ R^{-1} : W_z \to \text{ID}(z).
\end{equation}

We are ready to recover the smooth structure.

\begin{prop} \label{prop:phi_diffeomorphism}
    Let $\cX_1,U_1,\Psi$ and $\Phi|_{U_1}$ be as in Proposition \ref{prop:phi_homeomorphism}. If Hypothesis $\ref{hyp:same_data}$ is satisfied, then $\Phi_{U_1}$ is a diffeomorphism. In particular, if $z\in U_1$ then $\Phi_{U_1}:N_1\backslash\text{Cut}(z) \to N_2\backslash\text{Cut}(\phi(z))$ is given by the composition
    \begin{align}
        \Phi_{U_1} = \exp_{\phi(z)}\circ d\phi_z\circ \exp_z^{-1}. \label{eq:phi_u_linear}
    \end{align}
\end{prop}
\begin{proof}
    If $z\in U_1$ then $\phi(z)\in U_2:=\phi(U_1)$. By the definition of $W_z$ in equation (\ref{def:W_z}) we have that $r\in W_z$ if and only if $r\circ \phi^{-1}\in W_{\phi(z)}$ and so $\Psi:W_z\to W_{\phi(z)}$ is a well-defined map. Then from \eqref{eq:rho_z_via_exp_and_E} we have that 
    \begin{align*}
        \rho_{\phi(z)} \circ \Psi\circ \rho_z^{-1}: ID(z) \to ID(\phi(z))
    \end{align*}
    is well-defined and
    \begin{align*}
        \rho_{\phi(z)}\circ \Psi \circ \rho_z^{-1} = \text{exp}_{\phi(z)}^{-1} \circ \Phi_{U_1}\circ \text{exp}_z.
    \end{align*}
    Thus, the previous local representation of $\Phi_{U_1}$ is smooth and the equation (\ref{eq:phi_u_linear}) follows if we can show that in $ID(z)$ it holds that
    \begin{equation}
    \label{eq:local_rep_of_Psi}
    \rho_{\phi(z)}\circ \Psi \circ\rho_z^{-1} = d\phi_z.
    \end{equation} 
    For any $\eta\in ID(z)\backslash\{0\}$ we have by equation \eqref{eq:rho_z_via_exp_and_E} that 
    $\rho_z^{-1}(\eta) = d_{g_1}(\text{exp}_z(\eta),\cdot)|_{\overline{U}_1}.$ Thus
    \begin{align*}
        (\rho_{\phi(z)}\circ \Psi \circ \rho_z^{-1})(\eta) &= (\rho_{\phi(z)}\circ\Psi)(d_{g_1}(\text{exp}_z(\eta),\cdot)|_{\overline{U}_1})\\
        &= \rho_{\phi(z)}(d_{g_1}(\text{exp}_z(\eta),\cdot)\circ \phi^{-1}|_{\overline{U}_2}) \\
        &= -d_{g_1}(\text{exp}_z(\eta),z) \nabla_{g_2}(d_{g_1}(\text{exp}_z(\eta),\cdot)\circ \phi^{-1})|_{\phi(z)}.
        \end{align*}
    Since $\phi:(\cX_1,g_1)\to (\cX_2,g_2)$ is a Riemannian isometry by Lemma \ref{lem:distance_function_data}, so for any function $f\colon \cX_1 \to \R$ which is smooth in an open neighborhood of $z$ we have that $d\phi_z(\nabla_{g_1} f|_z) = \nabla_{g_2}(f\circ\phi^{-1})|_{\phi(z)}$. Hence the former equation implies that
    \begin{align*}
        (\rho_{\phi(z)}\circ \Psi \circ \rho_z^{-1})(\eta) &= - d_{g_1}(\text{exp}_z(\eta),z) d\phi_z(\nabla_{g_1} d_{g_1}(\text{exp}_z(\eta),\cdot)|_z).
    \end{align*}
    As $\eta\in ID(z)\backslash\{0\}$, we have that $d_{g_1}(\text{exp}_z(\eta),z) = \norm{\eta}_{g_1}$. Furthermore, by Lemma \ref{lem:dist_gradient} it follows that $\nabla_{g_1}d_g(\text{exp}_z(\eta),\cdot)|_z = - \norm{\eta}_{g_1}^{-1}\eta$. Hence,
    \begin{align*}
        (\rho_{\phi(z)}\circ \Psi \circ \rho_z^{-1})(\eta) &= -\norm{\eta}_{g_1}d\phi_z(-\norm{\eta}_{g_1}^{-1}\eta) = d\phi_z\eta. 
    \end{align*}
    On the other hand, for $\eta=0\in ID(z)$ we have by equations \eqref{eq:quasi_isometry} and \eqref{eq:rho_z_via_exp_and_E} that 
    \begin{align*}
     (\rho_{\phi(z)}\circ \Psi \circ \rho_z^{-1})(0)
     =&
     % (\exp_{\phi(z)}^{-1}\circ R_2^{-1} \circ \Psi \circ R_1\circ \exp_z)(0)
     % =
     (\exp_{\phi(z)}^{-1}\circ \Phi_{U_1}\circ \exp_z)(0)
     =\exp_{\phi(z)}^{-1}(\Phi_{U_1}(z))=\exp_{\phi(z)}^{-1}(\phi(z))
    % \\
    % =&
    =0=d\phi_z0.
    \end{align*}
    Hence, equation \eqref{eq:local_rep_of_Psi} is verified in $ID(z)$.
    
    % This holds for all $\eta \in ID(z)\backslash\{0\}$, hence $\rho_{\phi(z)}\circ\Psi\circ\rho_z^{-1} = d\phi_z$ and we have that $\rho_{\phi(z)}\circ\Psi\circ\rho_z^{-1}$ is a diffeomorphism on its domain. 
    
    Finally, we note that since $U_1$ is open, then $\cup_{z\in U_1}(N_1\backslash\text{Cut}(z)) = N_1$ and it follows that $\Phi_{U_1}:N_1\to N_2$ is a diffeomorphism by \cite[Corollary 2.8]{lee_smooth} and Proposition \ref{prop:phi_homeomorphism}. 
\end{proof}

\subsection{Recovery of Riemannian Structure} \label{sub:geometry}
In this subsection we show that if two manifolds satisfy Hypothesis \ref{hyp:same_data} then they are Riemannian isometric. 

\begin{prop} \label{prop:phi_local_isometry}
       Let $\cX_1,U_1,\Psi$ and $\Phi|_{U_1}$ be as in Proposition \ref{prop:phi_homeomorphism}. If Hypothesis $\ref{hyp:same_data}$ is satisfied, then $\Phi_{U_1}$ is a Riemannian isometry.
\end{prop}

\begin{proof}
    For the rest of this proof we fix $x_0\in N_1$, and define 
    $$
    V_{x_0} := \{y\in U_1: d_{g_1}(x_0,\cdot)\,\text{is smooth in an open neighborhood of $y$}\} = U_1\backslash (\text{Cut}(x_0)\cup\{x_0\}).
    $$ 
    The set $V_{\Phi_{U_1}(x_0)} \subset U_2:=\phi(U_1)$ is defined analogously. 
    Since $\Phi_{U_1}:N_1\to N_2$ is a diffeomorphism 
    % and by $(\ref{eq:quasi_isometry})$ we can show that $\phi: V_{x_0}\to V_{\Phi_{U_1}(x_0)}$ is a bijection. In particular, 
    we get by \eqref{eq:quasi_isometry} and the chain rule that
    \begin{align}
        d(d_{g_1}(\cdot,y))|_{x_0} = d(d_{g_2}(\cdot,\phi(y))|_{\Phi_{U_1}(x_0)}) d\Phi_{U_1}|_{x_0}, \quad
        \text{ for all } y\in V_{x_0}.
        \label{eq:isom_differential}
    \end{align}
    We define 
    \begin{align*}
        Z(x_0) :&= \{d(d_{g_1}(\cdot,y))|_{x_0}: y\in V_{x_0}\} \subset T^\ast_{x_0}N_1
        \intertext{and}
        Z(\Phi_{U_1}(x_0)) :&= \{d(d_{g_2}(\cdot,\hat{y}))|_{\Phi_{U_1}(x_0)}:\hat{y}\in V_{\Phi_{U_1}(x_0)}\} \subset T^\ast_{\Phi_{U_1}(x_0)}N_2.
    \end{align*}
    Since $\Phi_{U_1}$ is a diffeomorphism, then $d\Phi_{U_1}|_{x_0}$ is invertible, which in combination with $(\ref{eq:isom_differential})$ implies that 
    $$
        d(\Phi_{U_1})_{x_0}^\ast: Z(\Phi(x_0)) \to Z(x_0)
    $$
    is a well-defined bijection, where $d(\Phi_{U_1})_{x_0}^*$ is the pullback of a covector field. 
    
    Next we show that $Z(x_0) \subset S_{x_0}^*N_1$ is an open set in the subspace topology on the unit cosphere with respect to $g_1$. We recall that the musical isomorphism $\sharp : T_{x_0}^*N_1 \to T_{x_0}N_1$, which acts on the differential as $(df)^\sharp = \nabla_{g_1}f$, is a vector space isomorphism between $T_{x_0}^*N_1$ and $T_{x_0}N_1$. Hence, the operator $\sharp$ maps $Z(x_0)$ bijectively into the set
    $$
        Z(x_0)^\sharp = \{\nabla_{g_1}d_{g_1}(\cdot,y)|_{x_0}: y\in V_{x_0}\} \subset T_{x_0}N_1.
    $$
    Due to the Lemma \ref{lem:dist_gradient} we know that 
    $$
    \nabla_{g_1} d_{g_1}(\cdot,y)|_{x_0} = -\frac{\text{exp}_{x_0}^{-1}(y)}{\|\text{exp}_{x_0}^{-1}(y)\|_{g_1}} \in S_{x_0}N_1.
    $$ 
    This shows that $Z(x_0)^\sharp \subset S_{x_0}N_1$. Since $\text{exp}_{x_0}$ is continuous then $\text{exp}_{x_0}^{-1}(V_{x_0}) \subset T_{x_0}N$ is an open set. At the same time $v \to \frac{v}{\|v\|_g}$ is an open map provided $v\not=0$. Hence $Z^\sharp(x_0)$ is open in the subspace topology and so $Z(x_0) \subset S_{x_0}^*N_1$ is open in the subspace topology as well. Similarly $Z(\Phi_{U_1}(x_0)) \subset S_{\Phi_{U_1}(x_0)}^* N_2$ is open in the subspace topology where the unit cosphere is taken with respect to $g_2$. In particular, we have that
    \begin{align}
        \norm{d(\Phi_{U_1})_{x_0}^*\omega}_{g_1}^2 &= 1 = \norm{\omega}_{g_2}^2 \hspace{4mm}\text{for all $\omega\in Z(\Phi_{U_1}(x_0))$}.\nonumber
        \intertext{Furthermore, if we consider the open cone $\R_+ Z(\Phi_{U_1}(x_0))$, and $\tilde{\omega}\in \R_+Z(\Phi_{U_1}(x_0))$ then there exists a unique $\alpha > 0$ and $\omega\in Z(\Phi_{U_1}(x_0))$ such that $\tilde{\omega} =\alpha\omega$. Then by linearity of $d(\Phi_{U_1})_{x_0}^*$ we have that}
        \norm{d(\Phi_{U_1})_{x_0}^* \tilde{\omega}}_{g_1}^2 &= \alpha^2 = \norm{\tilde{\omega}}_{g_2}^2\hspace{4mm}\text{for all $\tilde{\omega}$ in $\R_+Z(\Phi_{U_1}(x_0))$}.    \label{eq:covector_1}
    \end{align}
     If we were to let $\{E_j\}_{j=1}^n$ 
     and $\{\tilde{E}_j\}_{j=1}^n$ be local coordinates for the open cones $\R_+Z(x_0)$ and $\R_+Z(\Phi_{U_1}(x_0))$, then $(\ref{eq:covector_1})$ is equivalent to 
    \begin{align}
        g_{2}^{ij}(\Phi_{U_1}(x_0)) \tilde{\omega}_i\tilde{\omega}_j = g^{kl}_1(x_0) \frac{\partial \Phi^i_{U_1}(x_0)}{\partial E_k} \frac{\partial \Phi^j_{U_1}(r_0)}{\partial E_l} \tilde{\omega}_i\tilde{\omega}_j\hspace{4mm}\text{for all $\tilde{\omega}\in \R_+Z(\Phi(x_0))$}\label{eq:covector_2}
    \end{align}
    where $\{g^{ij}_1\}$ and $\{g_2^{ij}\}$ are the inverse matrices of $g_1$ and $g_2$. Since $\R_+Z(\Phi_{U_1}(x_0)) \subset T_{\Phi_{U_1}(x_0)}^*N_2$ is open, then by differentiating with respect to $\tilde{\omega}_i$ in \eqref{eq:covector_2} we get the equality
    $$
        g_{2}^{ij}(\Phi(x_0)) = g^{kl}_1(x_0) \frac{\partial \Phi^i_{U_1}(x_0)}{\partial E_k} \frac{\partial \Phi^j_{U_1}(x_0)}{\partial E_l}.
    $$    
     Since the equality above holds for all $x_0 \in N_1$ and is equivalent to $g_1 = \Phi^*_{U_1} g_2$, then $\Phi_{U_1}:(N_1,g_1)\to (N_2,g_2)$ is a Riemannian isometry. 
\end{proof}

We are ready to provide and prove the main result of this section.

\begin{thm}
\label{thm:isometry_between_manifolds}
   % Suppose that Hypothesis \ref{hyp:same_data} is satisfied with $\phi:\cX_1 \to \cX_2$, 
   If the conditions of the Theorem \ref{thm:main_thm} are valid
   then there exists a unique Riemannian isometry $\Phi:(N_1,g_1)\to (N_2,g_2)$ satisfying $\Phi|_{\cX_1} = \phi$.
\end{thm}

\begin{proof}
    Let $U_1 \subset \cX_1$ be open and bounded such that $\bar U_1 \subset \cX_1$, then by Proposition \ref{prop:phi_local_isometry} we know that $\Phi_{U_1}:(N_1,g_1)\to (N_2,g_2)$ is a Riemannian isometry and by \eqref{eq:quasi_isometry} we know that $\Phi_{U_1}|_{U_1} = \phi|_{U_1}$.

    We pause our main proof to recall the following well known fact from Riemannian geometry. If $X$ and $Y$ are two connected and complete Riemannian manifolds and $F_i\colon X \to Y$ for $i \in \{1,2\}$ are two Riemannian isometries that agree on some open set $\Omega \subset X$ then $F_1 = F_2$ (see \cite[Proposition 5.22]{lee_riemannian}).

    Since $\cX_1$ is an open subset, then there exists a sequence of open and bounded sets $\{U_j\}$ such that $U_j\subset U_{j+1}$ and $\bigcup_{j\in\mathbb{N}} U_j = \cX_1$. We have so far proved that for each $U_j$ there exists a Riemannian isometry $\Phi_{U_j}:(N_1,g_1)\to (N_2,g_2)$ such that $\Phi_{U_j}|_{U_j} = \phi|_{U_j}$. Since $U_j\subset U_{j+1}$, then $\Phi_{U_j}|_{U_j} = \phi|_{U_j}= \Phi_{U_{j+1}}|_{U_j}$. Since $U_j$ is open, then by the above rigidity result for isometries we conclude that $\Phi_{U_j} = \Phi_{U_{j+1}}$. We define $\Phi := \Phi_{U_1}$, then by induction it follows that $\Phi = \Phi_{U_j}$ for all $j$. As a consequence, for any $x\in \cX_1 = \bigcup_{j\in\N} U_j$ there exists $j$ such that $x\in U_j$. Hence $\Phi(x) = \Phi_{U_j}(x) = \phi(x)$, and so there exists a unique Riemannian isometry $\Phi:(N_1,g_1)\to (N_2,g_2)$ such that $\Phi|_{\cX_1} = \phi$.
\end{proof}

\section{Recovery of the Lower Order Terms} \label{sec:lower_order_terms}
In Section \ref{sec:geometry} we proved that if Hypothesis \ref{hyp:same_data} holds, then the complete manifolds $(N_1,g_1)$ and $(N_2,g_2)$ are Riemannian isometric. In this section we prove the existence and uniqueness of a gauge in the lower order terms as promised in Theorem \ref{thm:main_thm}. First in Subsection \ref{sub:reduction_manifold} we use the Riemannian isometry constructed in Section \ref{sec:geometry} to show that we can assume without loss of generality that our source-to-solution operators are defined on the same complete manifold and moving forward assume that $\opnum{1} = \opnum{2}$ with $g_1=g_2$ and $\cX_1=\cX_2$. The remainder of the section is dedicated to proving the converse of Proposition \ref{prop:existence_of_gauge}, namely if $\Lambda_{g,A_1,V_1} = \Lambda_{g,A_2,V_2}$ then $A_1 = A_2 + i\kappa^{-1}d\kappa$ and $V_1= V_2$ where $\kappa$ is a smooth unitary function such that $\kappa|_{\cX} = 1$. 
In Subsection \ref{sub:blago_extension} we extend the Blagovestchenskii Identity $(\ref{thm:blagovestchenskii})$ to show that if the source-to-solution operators coincide, then the inner product of solutions to the Cauchy problem will coincide on special controllable sets. Next in Subsection \ref{sub:controllable_sets} we show that the controllable sets can be constructed in such a way so that we can use Lebesgue Differentiation Theorem to conclude that if the source-to-solution operators coincide then also products of two waves agree pointwise. That is if $f,h$ are some source terms then the respective solutions of the Cauchy problem \eqref{eq:cauchy_problem_infinite_time} satisfy the property
$u_1^f\overline{u^h_1} = u_2^f\overline{u_2^h}$.
Finally, in Subsection \ref{sub:determine_gauge} we use the product property to show the existence of a smooth function $\kappa$ as in condition (2) of Lemma \ref{lem:gauge_equivalent}. Hence, Theorem \ref{thm:main_thm} follows from this lemma.
%Finally in Subsection \ref{sub:determine_gauge} we use the product property to construct the smooth function $\kappa$ that is needed in Subsection \ref{sub:gauge} in order to prove that we have the desired gauge. 

\subsection{Reduction to one Manifold} \label{sub:reduction_manifold}
Let $\magneticnum{1}$ and $\magneticnum{2}$ be Magnetic-Schr\"odinger operators on complete manifolds $(N_1,g_1)$ and $(N_2,g_2)$ such that the local source-to-solution operators $\opnum{1}$ and $\opnum{2}$ satisfy Hypothesis $(\ref{hyp:same_data})$ on open sets $\cX_i \subset N_i$ with diffeomorphism $\phi:\cX_1\to\cX_2$. From Section \ref{sec:geometry} we know that there exists a unique Riemannian isometry $\Phi:(N_1,g_1)\to (N_2,g_2)$ such that $\Phi|_{\cX_1} = \phi$. 

We define a metric tensor $\tilde{g}: = \Phi^*g_2$, a co-vector field $\tilde{A}: = \Phi^*A_2$, and a function $\tilde{V} = \Phi^*V_2$ on $N_1$ and let $\mathcal{L}_{\tilde{g},\tilde{A},\tilde{V}}$ be the associated Magnetic-Schr\"odinger operator, while $\tilde{\Lambda}$ is the associated local source-to-solution operator of the Cauchy Problem \ref{eq:cauchy_problem_infinite_time} on $C_0^\infty
((0,\infty)\times\cX_1)$. By Lemma \ref{lem:magnetic_isometry} we have for any $v \in C^\infty_0(N_2)$ that
$$
    \mathcal{L}_{\tilde{g},\tilde{A},\tilde{V}} (v\circ\Phi) = [\magneticnum{2}(v)]\circ\Phi.
$$

Let $f\in C_0^\infty((0,\infty)\times\cX_2)$ be arbitrary and let $u^f\in C^\infty([0,\infty)\times N_2)$ be the unique solution to the Cauchy Problem $(\ref{eq:cauchy_problem})$ with the source $f$ and $\magneticnum{}=\magneticnum{2}$. By a direct computation we see that $\tilde{u}^{\tilde{f}} = u^f\circ\tilde{\Phi}$ is the unique smooth solution to the Cauchy problem \eqref{eq:cauchy_problem}   with source $\tilde{f} = f\circ\tilde{\Phi} \in C^\infty([0,\infty)\times N_1)$ and $\magneticnum{}=\mathcal{L}_{\tilde{g},\tilde{A},\tilde{V}}$. In particular, since $\tilde{\Phi}|_{(0,\infty)\times\cX_1} = \tilde{\phi}$ we get that
$$
    \tilde{\Lambda}(f\circ\tilde{\Phi}) = \tilde{u}^{\tilde{f}}|_{(0,\infty)\times\cX_1} = u^f\circ\tilde{\Phi}|_{(0,\infty)\times\cX_1} = \Lambda_2(f)\circ\tilde{\Phi}|_{(0,\infty)\times \cX_1} = \Lambda_2(f)\circ\tilde{\phi} = \Lambda_1(f\circ\tilde{\phi}).
$$
Hence, $\tilde{\Lambda} = \Lambda_1$, and above we adopted the short hand notation $\Lambda_i := \Lambda_{g_i,A_i,V_i}$. 

Since $\Phi:(N_1,g_1)\to (N_2,g_2)$ is a Riemannian isometry, we have that $g_1 = \tilde{g} = \Phi^*g_2$, and due to the previous discussion we may with out loss of generality assume that $(N,g) = (N_1,g_1) = (N_2,g_2)$, $\cX= \cX_1 = \cX_2$, and $\Lambda_1=\Lambda_2$ as operators on $C_0^\infty((0,\infty
)\times\cX)$.

\subsection{Improved Blagovestchenskii Identity} \label{sub:blago_extension}
The Blagovestchenskii Identity $(\ref{thm:blagovestchenskii})$ can be used to compute the inner products of the functions $\{u^f(T): f\in C_0^\infty((0,2T)\times\cX)\}$. We will now extend the Blagovestchenskii identity to compute the inner products of functions of the form $\{1_Zu^f(T): f\in C_0^\infty((0,T)\times\cX)\}$ where $1_Z$ is the characteristic function of a set $Z\subset N$, chosen from a collection satisfying certain specific properties. 
% Later on we will use this freedom in our choice of sets $A$ in order to relate the solutions $u_1^f$ and $u_2^f$ pointwise by using the Lebesgue Differentiation Theorem. These results are largely based off of the results developed in the Boundary Control Method for lower order terms. In order to extend the Blagovestchenskii Identity we begin with the following Lemma. 
Many results presented in this subsection are adaptations of analogous results in \cite{nursultanov2023} that concerns the recovery of lower order terms from the Dirichlet-to-Neumann map of the initial boundary value problem for the wave equation with lower order terms.

For a given open set $\mathcal{Y}\subset N$ and $s>0$ we recall that the respective domain of influence $M(\mathcal{Y},s)$ was defined in the equation \eqref{def:domain_of_influence}.  
\begin{lem} \label{double-convergence}
    Let $\Lambda_1 = \Lambda_2$, $f\in C_0^\infty((0,T)\times\cX)$, $s\in (0,T]$, and  $\mathcal{Y}\subset \cX$ be open. 
    If $\{f_j\}\subset C_0^\infty((T-s,T)\times\mathcal{Y})$ are functions such that
    \begin{align*}
        u_1^{f_j}(T,\cdot) &\stackrel{j \to \infty}{\longrightarrow} 1_{M(\mathcal{Y},s)}u_1^f(T,\cdot) \hspace{4mm} \text{in $L^2(N)$},
        \intertext{then}
        u_2^{f_j}(T,\cdot) & \stackrel{j \to \infty}{\longrightarrow}  1_{M(\mathcal{Y},s)}u_2^f(T,\cdot) \hspace{4mm} \text{in $L^2(N)$}.    
    \end{align*}
\end{lem}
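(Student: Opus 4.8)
The plan is to transfer the convergence from $N_1$ to $N_2$ in two stages: first show that $\{u_2^{f_j}(T,\cdot)\}$ is Cauchy in $L^2(N)$, hence converges to some $w\in L^2(N)$, and then identify $w$ with $1_{M(\mathcal{Y},s)}u_2^f(T,\cdot)$ by testing against waves and invoking approximate controllability. Throughout one uses that $\mathcal{L}$ is time-independent, so the Cauchy problem \eqref{eq:cauchy_problem_infinite_time} is invariant under translation in $t$; consequently, if a source is supported in $(T-s,T)\times\mathcal{Y}$ then the corresponding solution vanishes for $t\le T-s$ and at time $T$ coincides with a wave that has evolved for time $s$ from $\mathcal{Y}$. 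In particular Corollary \ref{cor:finite_speed} gives $\operatorname{supp}(u_i^{g}(T,\cdot))\subset\operatorname{int}(M(\mathcal{Y},s))$ for every $g\in C_0^\infty((T-s,T)\times\mathcal{Y})$ and $i\in\{1,2\}$, and Theorem \ref{thm:approximate_controllability} (applied after the time-translation, exhausting $\mathcal{Y}$ by bounded open subsets if $\mathcal{Y}$ is unbounded) shows that $\{u_2^{h}(T,\cdot):h\in C_0^\infty((T-s,T)\times\mathcal{Y})\}$ is dense in $L^2(M(\mathcal{Y},s),g)$.

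For the first stage, apply the Blagovestchenskii identity (Theorem \ref{thm:blagovestchenskii}) with the source $f_j-f_k$, which belongs to $C_0^\infty((0,2T)\times\cX)$ (after shrinking $\cX$ to a bounded open set containing the support of $f_j-f_k$ if necessary):
\[
\|u_i^{f_j}(T,\cdot)-u_i^{f_k}(T,\cdot)\|_{L^2(N)}^2=\big(f_j-f_k,\,(J\Lambda_i-\Lambda_i^\ast J)(f_j-f_k)\big)_{L^2((0,T)\times\cX)},\qquad i\in\{1,2\}.
\]
Since $\Lambda_1=\Lambda_2$, the right-hand sides for $i=1$ and $i=2$ coincide, so $\|u_1^{f_j}(T,\cdot)-u_1^{f_k}(T,\cdot)\|_{L^2(N)}=\|u_2^{f_j}(T,\cdot)-u_2^{f_k}(T,\cdot)\|_{L^2(N)}$. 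As the left-hand sequence converges it is Cauchy, hence so is $\{u_2^{f_j}(T,\cdot)\}$, and therefore $u_2^{f_j}(T,\cdot)\to w$ in $L^2(N)$ for some $w$; moreover $w$ is supported in the closed set $M(\mathcal{Y},s)$, being the $L^2$-limit of functions supported there.

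For the second stage, fix $h\in C_0^\infty((T-s,T)\times\mathcal{Y})$. Then $u_i^h(T,\cdot)$ is supported in $M(\mathcal{Y},s)$, so the hypothesis $u_1^{f_j}(T,\cdot)\to 1_{M(\mathcal{Y},s)}u_1^f(T,\cdot)$ gives
\[
(u_1^{f_j}(T,\cdot),u_1^h(T,\cdot))_g\longrightarrow (1_{M(\mathcal{Y},s)}u_1^f(T,\cdot),u_1^h(T,\cdot))_g=(u_1^f(T,\cdot),u_1^h(T,\cdot))_g.
\]
Rewriting both $(u_1^{f_j}(T,\cdot),u_1^h(T,\cdot))_g$ and $(u_1^f(T,\cdot),u_1^h(T,\cdot))_g$ via Theorem \ref{thm:blagovestchenskii} shows $\big(f_j-f,(J\Lambda_1-\Lambda_1^\ast J)h\big)\to 0$; using $\Lambda_1=\Lambda_2$ and Theorem \ref{thm:blagovestchenskii} on $N_2$ this is exactly $(u_2^{f_j}(T,\cdot)-u_2^f(T,\cdot),u_2^h(T,\cdot))_g\to 0$. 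Since $u_2^{f_j}(T,\cdot)\to w$ while $u_2^f(T,\cdot)$ and $u_2^h(T,\cdot)$ are fixed, one concludes $(w-u_2^f(T,\cdot),u_2^h(T,\cdot))_g=0$ for every $h\in C_0^\infty((T-s,T)\times\mathcal{Y})$. Because $u_2^h(T,\cdot)$ is supported in $M(\mathcal{Y},s)$ and these functions are dense in $L^2(M(\mathcal{Y},s),g)$, this forces $1_{M(\mathcal{Y},s)}(w-u_2^f(T,\cdot))=0$; combined with $w=1_{M(\mathcal{Y},s)}w$ one obtains $w=1_{M(\mathcal{Y},s)}u_2^f(T,\cdot)$, proving the lemma.

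The argument is essentially a symmetry manipulation of the Blagovestchenskii bilinear form together with approximate controllability; there is no deep obstacle. The points requiring care — and where I expect the bookkeeping to concentrate — are the time-translations allowing Corollary \ref{cor:finite_speed} and Theorem \ref{thm:approximate_controllability} to be applied to sources supported in $(T-s,T)\times\mathcal{Y}$, the reduction to bounded observation sets when $\cX$ or $\mathcal{Y}$ fails to be bounded so that Theorem \ref{thm:blagovestchenskii} applies, and the final density step, including the observation that the $L^2$-limit $w$ inherits the support constraint of the approximating sequence.
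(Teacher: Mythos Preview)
Your proof is correct and uses the same two ingredients as the paper (Blagovestchenskii and approximate controllability), but organizes them differently. The paper's argument (following \cite{nursultanov2023}, Lemma~8) works entirely with norms via the Pythagorean decomposition
\[
\|u_k^{\tilde f}(T)-u_k^f(T)\|^2=\|u_k^{\tilde f}(T)-1_{M(\mathcal Y,s)}u_k^f(T)\|^2+\|1_{N\setminus M(\mathcal Y,s)}u_k^f(T)\|^2,
\]
valid because $\operatorname{supp}u_k^{\tilde f}(T)\subset M(\mathcal Y,s)$. Taking the infimum over $\tilde f\in C_0^\infty((T-s,T)\times\mathcal Y)$ and using approximate controllability identifies the infimum as $\|1_{N\setminus M(\mathcal Y,s)}u_k^f(T)\|^2$; since Blagovestchenskii and $\Lambda_1=\Lambda_2$ give $\|u_1^{\tilde f}(T)-u_1^f(T)\|=\|u_2^{\tilde f}(T)-u_2^f(T)\|$ for every $\tilde f$, these two complementary norms agree. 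Feeding $\tilde f=f_j$ into the decomposition for both $k=1,2$ then yields the conclusion directly, without ever naming a limit $w$ or testing against auxiliary waves $u_2^h(T)$. Your two-stage ``Cauchy, then identify the limit by pairing against dense waves'' argument reaches the same end; the paper's route is slightly shorter and uses Blagovestchenskii only for norm identities, while yours is perhaps more transparent about where density is actually invoked. Both share the same bookkeeping caveats you flagged (time-translation to apply finite speed and controllability on $(T-s,T)\times\mathcal Y$, and reduction to bounded $\mathcal Y$).
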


\begin{proof}
The proof is nearly identical to the proof of \cite[Lemma 8]{nursultanov2023} that concerns the case when $M$ is a compact manifold with boundary. We skip the proof here.
\end{proof}

We can now extend the Blagovestchenskii Identity to get the following result. 

\begin{lem}\label{lem:Blagovestchenskii-Extension} 
Let $\Lambda_1 = \Lambda_2$, $T> 0$, $f,h\in C_0^\infty((0,T)\times\cX)$, $\mathcal{Y}, \hat{\mathcal{Y}}\subset \cX$ be open, and $s,\hat{s} \in (0,T]$. Then
    \begin{align}
        \langle 1_{M(\mathcal{Y},s)} u^f_1(T),1_{M(\hat{\mathcal{Y}},\hat{s})}u_1^h(T)\rangle_{L^2(N)} &= \langle 1_{M(\mathcal{Y},s)} u^f_2(T),1_{M(\hat{\mathcal{Y}},\hat{s})}u_2^h(T)\rangle_{L^2(N)}.  \label{eq:blago_extension}
    \end{align}
\end{lem}
\begin{proof}
    % See \cite[Corollary 3]{nursultanov2023} for a proof for the case when the source term is on the boundary of a compact manifold with boundary. 
    Since $1_{M(\mathcal{Y},s)}u_1^f(T)\in L^2(M(\mathcal{Y},s))$ and $1_{M(\hat{\mathcal{Y}},\hat{s})}u_1^h(T)\in L^2(M(\hat{\mathcal{Y}},\hat{s}))$, we get from the  Approximate Controllability as described in Theorem \ref{thm:approximate_controllability}, that there are $\{f_j\}\subset C_0^\infty((T-s,T)\times\mathcal{Y})$ and $\{h_j\}\subset C_0^\infty((T-\hat{s},T)\times\hat{\mathcal{Y}})$ such that $u_1^{f_j}(T)\to 1_{M(\mathcal{Y},s)}u_1^f(T)$ and $u_1^{h_j}(T)\to 1_{M(\hat{\mathcal{Y}},\hat{s})}u_1^h(T)$ in $L^2$. 
    
    Then by Lemma $\ref{double-convergence}$ we also have that $u_2^{f_j}(T)\to 1_{M(\mathcal{Y},s)}u_2^f(T)$ and $u_2^{h_j}(T)\to 1_{M(\hat{\mathcal{Y}},\hat{s})} u_2^h(T)$ in $L^2$. Hence, the equation \eqref{eq:blago_extension} follows from the Blagovestchenskii Identity in Theorem \ref{thm:blagovestchenskii} and $\Lambda_1 = \Lambda_2$.
    \end{proof}
    We also have the following corollary. 
\begin{cor}
    Let $\Lambda_1 = \Lambda_2$, $T > 0$, $f,h\in C_0^\infty((0,T)\times\cX)$, $\mathcal{Y}, \hat{\mathcal{Y}}\subset \cX$ be open, and $s,\hat{s}\in (0,T]$. Then
    \begin{align}
        \langle 1_{M(\mathcal{Y},s)}u_1^f(T), u_1^h(T)\rangle_{L^2(N)} &= \langle 1_{M(\mathcal{Y},s)}u_2^f(T), u_2^h(T)\rangle_{L^2(N)},\nonumber
        \intertext{and}   
        \langle 1_{M(\mathcal{Y},s)\backslash M(\hat{\mathcal{Y}},\hat{s})} u^f_1(T), u_1^h(T)\rangle_{L^2(N)} &= \langle 1_{M(\mathcal{Y},s)\backslash M(\hat{\mathcal{Y}},\hat{s})} u^f_2(T), u_2^h(T)\rangle_{L^2(N)}.
        \label{eq:difference}  
    \end{align}
\end{cor}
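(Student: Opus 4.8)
The plan is to deduce this corollary directly from Lemma~\ref{lem:Blagovestchenskii-Extension} by exploiting the bilinearity of the $L^2$ inner product and a clever choice of the open set and time parameter appearing there. First I would prove the first identity: take $\hat{\mathcal{Y}}=\cX$ and $\hat{s}=T$ in Lemma~\ref{lem:Blagovestchenskii-Extension}. Then $M(\hat{\mathcal{Y}},\hat s)=M(\cX,T)$, and by Finite Speed of Wave Propagation (Corollary~\ref{cor:finite_speed}) we have $\operatorname{supp}(u_i^h(T,\cdot))\subset M(\cX,T)$ for $i\in\{1,2\}$, so that $1_{M(\cX,T)}u_i^h(T)=u_i^h(T)$. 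Substituting this into the conclusion of the lemma yields
\begin{align*}
\langle 1_{M(\mathcal{Y},s)}u_1^f(T), u_1^h(T)\rangle_{L^2(N)} &= \langle 1_{M(\mathcal{Y},s)}u_2^f(T), u_2^h(T)\rangle_{L^2(N)},
\end{align*}
which is the first claimed equality.

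For the second identity I would write the characteristic function of the set difference as $1_{M(\mathcal{Y},s)\setminus M(\hat{\mathcal{Y}},\hat s)} = 1_{M(\mathcal{Y},s)} - 1_{M(\mathcal{Y},s)\cap M(\hat{\mathcal{Y}},\hat s)}$ pointwise almost everywhere, and then use linearity of the inner product in the first slot. The first term is already handled by the identity just established. For the second term, I would observe that $1_{M(\mathcal{Y},s)\cap M(\hat{\mathcal{Y}},\hat s)}u_i^f(T) = 1_{M(\mathcal{Y},s)}\cdot\big(1_{M(\hat{\mathcal{Y}},\hat s)}u_i^f(T)\big)$, so that
\[
\langle 1_{M(\mathcal{Y},s)\cap M(\hat{\mathcal{Y}},\hat s)}u_i^f(T), u_i^h(T)\rangle_{L^2(N)} = \langle 1_{M(\hat{\mathcal{Y}},\hat s)}u_i^f(T), 1_{M(\mathcal{Y},s)}u_i^h(T)\rangle_{L^2(N)},
\]
using that characteristic functions are real and idempotent to move the factor $1_{M(\mathcal{Y},s)}$ onto the second argument, together with $1_{M(\cX,T)}u_i^h(T)=u_i^h(T)$ from finite speed of propagation. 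The right-hand side is precisely of the form appearing in Lemma~\ref{lem:Blagovestchenskii-Extension} (with the roles of the two domains of influence and the two source functions arranged appropriately), hence is the same for $i=1$ and $i=2$. Subtracting the two equal contributions gives \eqref{eq:difference}.

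I do not anticipate a genuine obstacle here; the corollary is essentially a bookkeeping consequence of Lemma~\ref{lem:Blagovestchenskii-Extension}. The only point requiring a little care is the bilinearity manipulation with characteristic functions: one must check that all the functions involved are genuinely in $L^2(N)$ so that the inner products are well defined and linearity applies. This follows because $\cX$ may be taken bounded without loss of generality in this setting (as in the reduction of Subsection~\ref{sub:reduction_manifold} and the proof of Corollary~\ref{cor:nonvanishing_condition}), so that all domains of influence $M(\mathcal{Y},s)$, $M(\hat{\mathcal{Y}},\hat s)$ are precompact and the solutions $u_i^f(T,\cdot)$, $u_i^h(T,\cdot)$ are smooth with compact support, hence square integrable together with all the indicated truncations.
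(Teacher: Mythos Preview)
Your proposal is correct and follows essentially the same route as the paper: for the first identity you take $\hat{\mathcal{Y}}=\cX$, $\hat s=T$ in Lemma~\ref{lem:Blagovestchenskii-Extension} together with finite speed of propagation, and for the second you use the decomposition $1_{M(\mathcal{Y},s)\setminus M(\hat{\mathcal{Y}},\hat s)}=1_{M(\mathcal{Y},s)}-1_{M(\mathcal{Y},s)}1_{M(\hat{\mathcal{Y}},\hat s)}$ and move one characteristic function across the inner product to land back in the form of the lemma. The only cosmetic difference is which of the two indicator factors you shift to the second slot, which is immaterial by the symmetry of Lemma~\ref{lem:Blagovestchenskii-Extension}.
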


\begin{proof}
    For the first equality we take $\hat{\mathcal{Y}} = \cX$ and $\hat{s} = T$ and apply Lemma \ref{lem:Blagovestchenskii-Extension}. Additionally, we note that $\text{supp}(u_k^h(T))\subset M(\cX,T)$ by Corollary \ref{cor:finite_speed}, and therefore $1_{M(\cX,T)}u_k^f(T) = u_k^f(T)$. This verifies the first equation. 
    
    For the second equation we note first that $1_{M(\mathcal{Y},s)\backslash M(\hat{\mathcal{Y}},\hat{s})}= 1_{M(\mathcal{Y},s)} - 1_{M(\mathcal{Y},s)}1_{M(\hat{\mathcal{Y}}, \hat{s})}$. Therefore, we obtain the equality $\eqref{eq:difference}$, by combining the results from Lemma \ref{lem:Blagovestchenskii-Extension} and the first equality of the current lemma.
\end{proof}

\subsection{Construction of Controllable Sets} \label{sub:controllable_sets}
The purpose of this section is to construct for each $x_0 \in N$ a sequence of sets $Z_j:=\overline{M(\mathcal{Y}_j,s_j)\backslash M(\hat{\mathcal{Y}}_j,\hat{s}_j)}$ where $\mathcal{Y}_j,\hat{\mathcal{Y}}_j\subset \cX$ so that $Z_j$ converges to $x_0$. In particular, we will take $\mathcal{Y}_j$ and $\hat{\mathcal{Y}}_j$ to be certain metric balls so that we have an explicit control on the convergence of $Z_j$. This will allow us to use the Lebesgue Differentiation Theorem for the equation \eqref{eq:difference}. 
%
% \begin{defn}
%     On our complete manifold $(N,g)$, for any measurable set $A\subset N$. We let
%     \begin{align*}
%         |A| &= \int_A dV_g
%     \end{align*}
% \end{defn}
% \begin{thm}\label{FTC}
%     Let $f\in C(N)$ and let $\{Z_k\}$ be a compact, contracting sequence of measurable sets such that $\bigcap_{k\in\N} Z_k = \{x_0\}$ and $|Z_k|\not=0$ for all $k\in \N$. Then
%     $$
%         \lim_{k\to\infty}\frac{1}{|Z_k|}\int_{Z_k} fdV_g = f(x_0)
%     $$
% \end{thm}
% \begin{proof}
%     Let $\eps > 0$. Since $f$ is continuous, there exists $\delta > 0$ such that $$f(B(x_0,\delta))\subset B(f(x_0),\eps)$$
%     Since $\{Z_k\}$ is a contracting sequence of sets and $\bigcap_{k\in \N}Z_k = \{x_0\}$, then there exists $K\in\N$ such that $k\geq K$ implies that $Z_k\subset B(x_0,\delta)$. Then by continuity of $f$ we have for $k\geq K$ that
%     \begin{align*}
%         \Big|\frac{1}{|Z_k|}\int_{Z_k} f(y)dV_g(y) - f(x_0) \Big| &= \Big|\frac{1}{|Z_k|}\int_{Z_k}f(y)dV_g(y) - \frac{1}{|Z_k|}\int_{Z_k}f(x_0)dV_g(y)\Big|\\
%         &= \Big|\frac{1}{|Z_k|}\int_{Z_k} f(y) - f(x_0)dV_g(y)\Big|\\
%         &\leq \frac{1}{|Z_k|}\int_{Z_k}|f(y)-f(x_0)|dV_g\\
%         &\leq \frac{1}{|Z_k|}\int_{Z_k} \eps dV_g = \eps
%     \end{align*}
% \end{proof}
%

We recall that according to the equation \eqref{eq:Cut_locus} for each $x_0 \in N$ its cut locus is the set
\[
\text{Cut}(x_0):=\{\gamma_{x_0,v}(\tau(x_0,\xi)) \in N: \: \xi \in S_{x_0}N\},
\]
where $\tau$ is the cut time function of Definition \ref{def:cut_time}. Suppose that $x_0\in N$. Since $\text{Cut}(x_0)$ is a closed set of measure $0$ and since $\cX$ is an open set, we may find $y\in \cX\backslash \text{Cut}(x_0)$. Hence we have also that $x_0\not\in \text{Cut}(y)$. We denote $s := d_g(x_0,y) > 0$. Thus, there is $\varepsilon>0$ and a unique distance minimizing geodesic $\gamma\colon[-\eps,s+\eps]\to N$ such that $\gamma(0)=y$, $\gamma(s)=x_0$ and for all $\delta\in (0,\eps]$ we have that
    \begin{enumerate}
        \item $\gamma(-\delta)\not\in \text{Cut}(x_0)$
        \item $\gamma(s + \delta)\not\in \text{Cut}(y)$
        \item $\overline{B(\gamma(-\delta),\delta)} \subset \cX$.
    \end{enumerate}

We are finally in a position to define our desired sets. 

\begin{defn}\label{def:set_definition}
    Let $x_0\in N$. Choose
$y\in \cX\backslash \text{Cut}(x_0)$, $s:=d_g(x_0,y)$, $\gamma$, $\eps>0$ as above. 
% Set $s := d_g(x_0,y) > 0$. As $x_0\not\in \text{Cut}(y)$, then we have a unique geodesic $\gamma:\R\to N$ which satisfies $\gamma(0) = y$ and $\gamma(s) = x_0$. Now since $\cX$ is open and the sets $\text{Cut}(x_0)$ and $\text{Cut}(y)$ are closed, we may find $\eps> 0$ such that for all $\delta\in (0,\eps]$ we have that
%     \begin{enumerate}
%         \item $\gamma(-\delta)\not\in \text{Cut}(x_0)$
%         \item $\gamma(s + \delta)\not\in \text{Cut}(y)$
%         \item $\overline{B(\gamma(-\delta),\delta)} \subset \cX$
%     \end{enumerate}
    We define for $\delta\in(0,\eps]$ the open sets $\mathcal{Y}_\delta = B(\gamma(-\delta),\delta)$. Finally, we define 
    \begin{align}\label{def:Z}
        Z_\delta &:= \overline{M(\mathcal{Y}_\delta,s+\delta)\backslash M(\mathcal{Y}_\eps, s)}
    \end{align}
    The visualization of the set $Z_\delta$ is given in Figure \ref{fig:Z_delta}.
\end{defn}

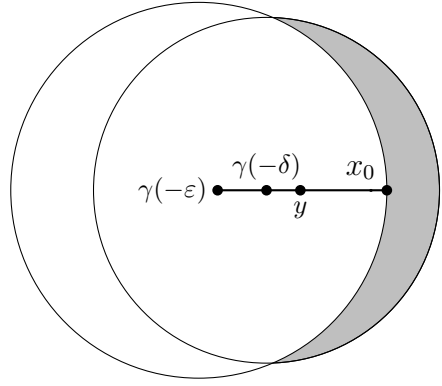
\begin{figure}
\begin{tikzpicture}[scale=0.5]
\filldraw[fill=lightgray](-0.2,0) circle [radius=3 + 1.6];
\filldraw[fill=white](-2,0) circle [radius=3 + 2];
\draw[thin](-0.2,0) circle [radius=3+1.6];
\draw[thick, draw=black] (-1.5,0)--(3,0) node [left] {.};
\node[anchor=south east] at (3,0) {$x_0$};
\fill (3,0) circle[radius=4pt];
\node[anchor=south] at (-0.2,0) {\footnotesize $\gamma(-\delta)$};
\fill (-0.2,0) circle[radius=4pt];
\node[anchor=east] at (-1.5,0) {\footnotesize $\gamma(-\eps)$};
\fill (-1.5,0) circle[radius=4pt];
\node[anchor=north] at (0.7,0) {\footnotesize $y$};
\fill (0.7,0) circle[radius=4pt];
\end{tikzpicture}
\caption{The shaded in region and its boundary is the set $Z_\delta$, the left circle represents $M(\mathcal{Y}_\eps,s)$ and the right circle is the set $M(\mathcal{Y}_\delta,s+\delta)$}
\label{fig:Z_delta}
\end{figure}

% It is important to highlight that despite $\gamma$ being a distance minimizing curve, then metric balls in Definition \ref{set_definition} do not need need to be geodesic balls and in fact do not need to have a smooth boundary.

The remainder of the subsection is focused on proving the following result, that implies that the sets $\{Z_\delta\}$ satisfy the conditions for the Lebesgue Differentiation Theorem. We prove the main proposition via a series of auxiliary lemmas. In particular, if $Z \subset N$ is measurable then we let $|Z| := \int_Z dV_g$ stand for the measure of $Z$.

\begin{prop} \label{prop:summary}
    For $0 < \delta\leq \eps$ the sets $\{Z_\delta\}_{\delta\in (0,\eps]}$ satisfy the following conditions
    \begin{enumerate}
        \item $Z_\delta\subset Z_{\delta'}$ if $\delta \leq \delta'$
        \item $\bigcap_{0 < \delta\leq \eps} Z_\delta = \{x_0\}$
        \item $|Z_\delta|\not= 0$
        \item There exists $\tilde{\delta}\in (0,\eps)$ so that $|Z_\delta| = |M(\mathcal{Y}_\delta,s+\delta)\backslash M(\mathcal{Y}_\eps, s)| $ for any $\delta\in (0,\tilde{\delta}]$    
    \end{enumerate}
\end{prop}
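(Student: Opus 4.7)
The plan is to first replace every domain of influence in the definition of $Z_\delta$ by a closed metric ball via Lemma~\ref{lem:domain=ball}. Explicitly,
\begin{equation*}
M(\mathcal{Y}_\delta,s+\delta)=\overline{B(\gamma(-\delta),s+2\delta)},\qquad M(\mathcal{Y}_\eps,s)=\overline{B(\gamma(-\eps),s+\eps)},
\end{equation*}
so $Z_\delta$ is the closure of a closed ball minus a (larger-centered) closed ball. From this representation the four claims can be handled by fairly direct metric geometry, once we exploit the hypothesis that $\gamma|_{[-\eps,s+\eps]}$ is the \emph{unique} distance-minimizing geodesic from $\gamma(-\eps)$ to $\gamma(s+\eps)$.

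For monotonicity (1), if $\delta\le\delta'$ and $x\in\overline{B(\gamma(-\delta),s+2\delta)}$, then by the triangle inequality and the fact that $d_g(\gamma(-\delta),\gamma(-\delta'))\le \delta'-\delta$,
\begin{equation*}
d_g(x,\gamma(-\delta'))\le (s+2\delta)+(\delta'-\delta)=s+\delta+\delta'\le s+2\delta',
\end{equation*}
so $M(\mathcal{Y}_\delta,s+\delta)\subset M(\mathcal{Y}_{\delta'},s+\delta')$; the $M(\mathcal{Y}_\eps,s)$ term is the same in both cases, and taking closures preserves the inclusion. For positive measure (3), the point $\gamma(s+t)$ for $t\in(0,\delta)$ strictly satisfies $d_g(\gamma(s+t),\gamma(-\delta))=s+\delta+t<s+2\delta$ and $d_g(\gamma(s+t),\gamma(-\eps))=s+\eps+t>s+\eps$; hence a whole open neighborhood of $\gamma(s+t)$ lies in $M(\mathcal{Y}_\delta,s+\delta)\setminus M(\mathcal{Y}_\eps,s)\subset Z_\delta$, which has positive Riemannian volume.

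For (4), I would observe that the extra points added when passing from $M(\mathcal{Y}_\delta,s+\delta)\setminus M(\mathcal{Y}_\eps,s)$ to its closure $Z_\delta$ must lie in $\partial M(\mathcal{Y}_\eps,s)\cap M(\mathcal{Y}_\delta,s+\delta)$; by Lemma~\ref{lem:boundary_properties}(2) this is contained in the metric sphere $\{x:d_g(x,\gamma(-\eps))=s+\eps\}$. The distance function $d_g(\cdot,\gamma(-\eps))$ is $1$-Lipschitz with $|\nabla d_g(\cdot,\gamma(-\eps))|_g=1$ at every point where it is differentiable off the cut locus, so the metric sphere is a measure-zero set (e.g.\ via the coarea formula, or a normal-coordinate argument combined with the fact that the cut locus itself has measure zero). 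For all sufficiently small $\delta$ this argument gives $|Z_\delta|=|M(\mathcal{Y}_\delta,s+\delta)\setminus M(\mathcal{Y}_\eps,s)|$.

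The main obstacle is (2), where the nontrivial direction is that $\bigcap_{\delta}Z_\delta\subset\{x_0\}$. First, $x_0\in Z_\delta$ because $x_0=\lim_{t\downarrow 0}\gamma(s+t)$ and the approximating points lie in $M(\mathcal{Y}_\delta,s+\delta)\setminus M(\mathcal{Y}_\eps,s)$ by the computation in (3). Conversely, suppose $x\in Z_\delta$ for every $\delta\in(0,\eps]$. Since $Z_\delta\subset\overline{B(\gamma(-\delta),s+2\delta)}$, taking $\delta\to 0$ gives $d_g(x,y)\le s$. On the other hand, each $x$ is a limit of points with $d_g(\cdot,\gamma(-\eps))\ge s+\eps$, hence $d_g(x,\gamma(-\eps))\ge s+\eps$. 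Combining with the triangle inequality $d_g(x,\gamma(-\eps))\le d_g(x,y)+d_g(y,\gamma(-\eps))\le s+\eps$ forces equality everywhere, so $d_g(x,y)=s$ and the concatenation of $\gamma|_{[-\eps,0]}$ with any minimizing curve from $y$ to $x$ is a distance minimizer of length $s+\eps$ from $\gamma(-\eps)$ to $x$. Since distance minimizers in a complete Riemannian manifold are smooth geodesics, this concatenation is a geodesic; hence it is the unique geodesic extension of $\gamma|_{[-\eps,0]}$, which by the setup coincides with $\gamma|_{[-\eps,s+\eps]}$ on the entire interval up to the cut locus assumptions. Therefore $x=\gamma(s)=x_0$, completing the proof.
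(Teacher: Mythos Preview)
Your argument is correct and mirrors the paper's own proof, which is split into Lemmas \ref{lem:subset}--\ref{lem:equal_up_to_null}: parts (1)--(3) match essentially line for line (the same triangle-inequality and concatenation-of-minimizers reasoning). The only real difference is in (4): you invoke the global fact that a metric sphere $\{d_g(\cdot,\gamma(-\eps))=s+\eps\}$ is Lebesgue null (via coarea plus the measure-zero cut locus), whereas the paper first uses a nested-compact-sets argument to show that for small $\delta$ the set $W_\delta=\overline{B(\gamma(-\delta),s+2\delta)}\cap\partial B(\gamma(-\eps),s+\eps)$ lies in a single normal-coordinate chart about $\gamma(-\eps)$, where it is contained in a Euclidean sphere. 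Your route is slightly slicker and in fact yields (4) for every $\delta\in(0,\eps]$, not only small ones; the paper's route is more self-contained in that it avoids citing that the cut locus has measure zero. Either way the proposition follows, and your phrase ``up to the cut locus assumptions'' at the end of (2) is unnecessary---the conclusion $x=\gamma(s)=x_0$ comes purely from the fact that two geodesics agreeing on a nondegenerate interval agree everywhere.
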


For the remainder of the subsection, $x_0,y\in N$, $s > 0$, $\eps>0$, $\gamma:\R\to N$, shall all remain fixed. On our path towards showing that the sets $\{Z_\delta\}_{\delta\in (0,\eps)}$ satisfy the claim of Proposition \ref{prop:summary}, begins from the following inclusion property for $Z_\delta$. 

\begin{lem} \label{lem:Z_part2} 
If $\delta \in(0,\eps]$, then
\begin{equation}
\begin{aligned}\label{eq:Z_delta_equal} Z_\delta &\subset
        \Big(M(\mathcal{Y}_\delta, s+\delta)\backslash M(\mathcal{Y}_\eps,s)\Big)
        % \\&
        % \hspace{10mm} 
        \cup \Big(\overline{B(\gamma(-\delta), s + 2\delta)}\cap \partial B(\gamma(-\eps),s+\eps)\Big)
        % &= \overline{B(\gamma(-\delta), s + 2\delta)}\cap\Big(\Big(N\backslash\overline{B(\gamma(-\eps),s+\eps)}\Big) \nonumber\\
        % &\hspace{10mm}\cup \partial B(\gamma(-\eps),s+\eps)\Big) \nonumber\\
        % &= \Big(\overline{B(\gamma(-\delta), s + 2\delta)}\backslash \overline{B(\gamma(-\eps),s+\eps)}\Big) \nonumber\\
        % &\hspace{10mm} \cup \Big(\overline{B(\gamma(-\delta), s + 2\delta)}\cap \partial B(\gamma(-\eps),s+\eps)\Big)\nonumber\\
        \\
        &= \overline{B(\gamma(-\delta), s + 2\delta)}\cap \Big(N\backslash B(\gamma(-\eps),s+\eps)\Big)
    \end{aligned}
\end{equation}
    % \begin{align}
    % \label{eq:Z_delta_inclusion}
    %     Z_\delta &\subset \Big(M(\mathcal{Y}_\delta, s+\delta)\backslash M(\mathcal{Y}_\eps,s)\Big) \cup\Big(\overline{B(\gamma(-\delta),s + 2\delta)}\cap \partial B(\gamma(-\eps),s+\eps) \Big).
    % \end{align}
    % Furthermore, 
    % \[
    % \overline{B(\gamma(-\delta),s + 2\delta)}\cap \partial B(\gamma(-\eps),s+\eps)
    % =
    % \overline{B(\gamma(-\delta), s + 2\delta)}\cap \Big(N\backslash B(\gamma(-\eps),s+\eps)\Big).
    % \]
    % \teemu{I don't see where this follows. Based on \eqref{eq:Z_delta_equal} the RHS seems to be a bigger set.}
\end{lem}

\begin{proof}
    The second equality in \eqref{eq:Z_delta_equal} follows from Lemma \ref{lem:boundary_properties}.
    % \[
    % M(\mathcal{Y}_\delta,s+\delta) = \overline{B(\gamma(-\delta), s+2\delta)}
    % \text{ and } M(\mathcal{Y}_\eps,s)=\overline{B(\gamma(-\eps),s+\eps)}.
    % \]
    % These yield 
    % This shows the equivalence of the last two equalities and it follows that $$\Big(M(\mathcal{Y}_\delta, s+\delta))\backslash M(\mathcal{Y}_\eps,s)\Big) \cup\Big(\overline{B(\gamma(-\delta),s + 2\delta)}\cap \partial B(\gamma(-\eps),s+\eps) \Big)$$
    Hence, the first inclusion in \eqref{eq:Z_delta_equal} holds as the last expression 
    describes a closed set. 
    % Thus \eqref{eq:Z_delta_inclusion} holds due to the definition of $Z_\delta$.
    % So by minimality of the closure we have that 
    % \begin{align}
    %     Z_\delta &\subset \Big(M(\mathcal{Y}_\delta, s+\delta)\backslash M(\mathcal{Y}_\eps,s)\Big) \cup\Big(\overline{B(\gamma(-\delta),s + 2\delta)}\cap \partial B(\gamma(-\eps),s+\eps) \Big)
    % \end{align}
\end{proof}

\begin{comment}
        \intertext{To achieve the equality we only need to show that set containment}
        &\overline{B(\gamma(-\delta),s + 2\delta)}\cap \partial B(\gamma(-\eps),s+\eps) \subset Z_\delta. \label{eq:Z_delta_goal}

    First consider the case that $N\backslash\overline{B(\gamma(-\eps),s+\eps)}=\emptyset$, by $(\ref{eq:Z_delta_equal})$ it follows that $\overline{B(\gamma(-\delta),s + 2\delta)}\cap \partial B(\gamma(-\eps),s+\eps) = \emptyset$ and by $(\ref{eq:Z_delta_equal})$ and $(\ref{eq:Z_delta_sub})$ it follows that $Z_\delta = \emptyset$ and so $\ref{eq:Z_delta_goal}$ holds when $N\backslash \overline{B(\gamma(-\eps),s+\eps)} = \emptyset$.

    So suppose that $N\backslash\overline{B(\gamma(-\eps),s+\eps)}\not=\emptyset$, the set is also open. If $z\in \overline{B(\gamma(-\delta),s + 2\delta)}\cap \partial B(\gamma(-\eps),s+\eps)$, then we can find $\{z_n\}\subset \overline{B(\gamma(-\delta),s + 2\delta)}\cap \overline{B(\gamma(-\eps),s+\eps)}^c = M(\mathcal{Y}_\delta, s+\delta))\backslash M(\mathcal{Y}_\eps,s)$ such that $z_n\to z$. But by the definition of the closure, this means that $z\in Z_\delta$ as desired.
\end{comment}

We are now in a position to show that $\{Z_\delta\}$ have the desired properties to employ the Lebesgue differentiation theorem. Our first step is to show the nested property, a visualization is provided in Figure \ref{fig:nested_sets}.

\begin{lem} \label{lem:subset}
    If $0 < \delta < \delta' \leq \eps$, then $Z_\delta \subset Z_{\delta'}$.
\end{lem}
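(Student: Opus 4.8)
The plan is to reduce the statement to an elementary inclusion of closed balls with the help of Lemma \ref{lem:domain=ball}. First I would rewrite the two domains of influence occurring in the definition of $Z_\delta$ as closed metric balls. Since $\mathcal{Y}_\delta = B(\gamma(-\delta),\delta)$, Lemma \ref{lem:domain=ball} gives
\[
M(\mathcal{Y}_\delta,s+\delta) = \overline{B(\gamma(-\delta),s+2\delta)}
\quad\text{and}\quad
M(\mathcal{Y}_\eps,s) = \overline{B(\gamma(-\eps),s+\eps)},
\]
where the second set does not depend on $\delta$. Thus $Z_\delta = \overline{\,\overline{B(\gamma(-\delta),s+2\delta)}\backslash \overline{B(\gamma(-\eps),s+\eps)}\,}$, and the same description holds for $Z_{\delta'}$ with $\delta$ replaced by $\delta'$.

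The key claim is then that $\overline{B(\gamma(-\delta),s+2\delta)} \subset \overline{B(\gamma(-\delta'),s+2\delta')}$ whenever $0 < \delta < \delta' \le \eps$. To see this, recall that $\gamma$ is a distance minimizing geodesic on $[-\eps,s+\eps]$, so its restriction to $[-\delta',-\delta]$ is a curve of length $\delta'-\delta$ joining $\gamma(-\delta')$ to $\gamma(-\delta)$; hence $d_g(\gamma(-\delta),\gamma(-\delta')) \le \delta'-\delta$. Now take any $x$ with $d_g(x,\gamma(-\delta)) \le s+2\delta$. By the triangle inequality,
\[
d_g(x,\gamma(-\delta')) \le d_g(x,\gamma(-\delta)) + d_g(\gamma(-\delta),\gamma(-\delta')) \le s+2\delta + (\delta'-\delta) = s+\delta+\delta' \le s+2\delta',
\]
where the last step uses $\delta \le \delta'$. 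So $x \in \overline{B(\gamma(-\delta'),s+2\delta')}$, which proves the claim.

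Finally, I would combine this with the fact that intersecting with the complement of a fixed set and taking closures both preserve inclusions: from $\overline{B(\gamma(-\delta),s+2\delta)} \subset \overline{B(\gamma(-\delta'),s+2\delta')}$ we get
\[
M(\mathcal{Y}_\delta,s+\delta)\backslash M(\mathcal{Y}_\eps,s) \subset M(\mathcal{Y}_{\delta'},s+\delta')\backslash M(\mathcal{Y}_\eps,s),
\]
and passing to closures yields $Z_\delta \subset Z_{\delta'}$. There is no real obstacle in this argument; the only points deserving a line of care are the identification of $M(\mathcal{Y}_\delta,s+\delta)$ and $M(\mathcal{Y}_\eps,s)$ with closed balls (exactly Lemma \ref{lem:domain=ball}) and the remark that consecutive points on the unit-speed distance minimizing geodesic $\gamma$ are at distance at most the difference of their parameters.
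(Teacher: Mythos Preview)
Your proof is correct and follows essentially the same approach as the paper: both reduce to the inclusion $\overline{B(\gamma(-\delta),s+2\delta)} \subset \overline{B(\gamma(-\delta'),s+2\delta')}$ via Lemma~\ref{lem:domain=ball} and verify it with the triangle inequality using $d_g(\gamma(-\delta),\gamma(-\delta')) \le \delta'-\delta$. The only cosmetic difference is that the paper records this distance as an equality (since $\gamma$ is minimizing on all of $[-\eps,s+\eps]$), but your inequality is all that is needed.
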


\begin{proof}
    By Definition \ref{def:set_definition} of the set $Z_\delta$ and Lemma \ref{lem:boundary_properties} 
% $M(\mathcal{Y}_\delta,s+\delta) = \overline{B(\gamma(-\delta),s+2\delta)}$ by Proposition \ref{prop:domain=ball}, 
it suffices to prove that 
    $$
    \overline{B(\gamma(-\delta),s + 2\delta)}\subset \overline{B(\gamma(-\delta'),s+2\delta')}.
    $$   
    % since the closure of a set preserves set containment. 
    Since $\gamma:[-\eps,s+\eps]\to N$ is a minimizing geodesic, then 
    $$
    d_g(\gamma(-\delta),\gamma(-\delta')) = \delta'-\delta.
    $$
    Then for any $x\in \overline{B(\gamma(-\delta),s + 2\delta)}$ we have that
    $$
        d_g(x,\gamma(-\delta')) \leq d_g(x,\gamma(-\delta)) + d_g(\gamma(-\delta),\gamma(-\delta'))\leq s + 2\delta + \delta'-\delta < s + 2\delta'.
    $$
    Hence, the desired inclusion holds. 
\end{proof}

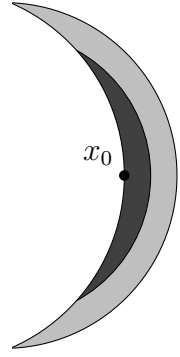
\begin{figure}
    \centering
    \begin{tikzpicture}[scale=0.5]
\filldraw[fill=lightgray](-0.2,0) circle [radius=3 + 1.6];
\filldraw[fill=darkgray](-0.1,0) circle [radius=3+0.8];
\filldraw[fill=white](-2,0) circle [radius=3 + 2];
\fill[fill=white] (-8,-6) rectangle ++(8,12);
\node[anchor=south east] at (3,0) {$x_0$};
\fill (3,0) circle [radius=4pt];
\end{tikzpicture}
    \caption{The dark gray set is $Z_\delta$ and the light gray set is $Z_{\delta'}$ where $0 < \delta < \delta'$.}
    \label{fig:nested_sets}
\end{figure}

\begin{lem} \label{lem:intersection}
    $\bigcap_{0<\delta \leq \eps} Z_\delta = \{x_0\}$
\end{lem}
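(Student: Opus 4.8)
The plan is to prove the two inclusions $\{x_0\}\subset\bigcap_{0<\delta\leq\eps}Z_\delta$ and $\bigcap_{0<\delta\leq\eps}Z_\delta\subset\{x_0\}$ separately, both times trading the set-theoretic statement for metric estimates via Lemma \ref{lem:domain=ball}, Lemma \ref{lem:boundary_properties}, and Lemma \ref{lem:Z_part2}.

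For the inclusion $x_0\in Z_\delta$, I would fix $\delta\in(0,\eps]$ and look at the points $\gamma(s+t)$ for small $t>0$, which lie in the domain of $\gamma$ since $s+t<s+\delta\leq s+\eps$. Because $\gamma\colon[-\eps,s+\eps]\to N$ is distance minimizing, every one of its subsegments is minimizing, so $d_g(\gamma(-\delta),\gamma(s+t))=s+t+\delta$ and $d_g(\gamma(-\eps),\gamma(s+t))=s+t+\eps$. For $0<t<\delta$ the first quantity is strictly less than $s+2\delta$ and the second is strictly greater than $s+\eps$, so by Lemma \ref{lem:domain=ball} and Lemma \ref{lem:boundary_properties} the point $\gamma(s+t)$ belongs to $M(\mathcal{Y}_\delta,s+\delta)\setminus M(\mathcal{Y}_\eps,s)$. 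Letting $t\to 0^+$ and using continuity of $\gamma$, the limit point $x_0=\gamma(s)$ lies in the closure $Z_\delta$. Since $\delta$ was arbitrary, $x_0\in\bigcap_{0<\delta\leq\eps}Z_\delta$.

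For the reverse inclusion, I would take $z\in\bigcap_{0<\delta\leq\eps}Z_\delta$ and extract metric information about $z$. From $z\in Z_\delta$ and Lemma \ref{lem:Z_part2}, combined with the ball identities of Lemma \ref{lem:domain=ball} and the characterizations in Lemma \ref{lem:boundary_properties}, one reads off that $d_g(z,\gamma(-\delta))\leq s+2\delta$ for every $\delta\in(0,\eps]$ and that $d_g(z,\gamma(-\eps))\geq s+\eps$. Sending $\delta\to 0^+$ in the first estimate and using continuity of $\gamma$ and of $d_g$ gives $d_g(z,y)\leq s$; since $\gamma|_{[-\eps,0]}$ is minimizing we also have $d_g(\gamma(-\eps),y)=\eps$, so the triangle inequality sandwiches $d_g(z,\gamma(-\eps))$ between $s+\eps$ and $d_g(z,y)+\eps\leq s+\eps$, forcing $d_g(z,y)=s$ and $d_g(z,\gamma(-\eps))=s+\eps$.

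Finally I would concatenate the minimizing geodesic $\gamma|_{[-\eps,0]}$ from $\gamma(-\eps)$ to $y$ with a unit speed minimizing geodesic $\alpha\colon[0,s]\to N$ from $y$ to $z$ (which exists by Hopf--Rinow). The resulting piecewise smooth unit speed curve connects $\gamma(-\eps)$ to $z$ and has length $\eps+s=d_g(\gamma(-\eps),z)$, hence it is a minimizing curve, and therefore a smooth geodesic; being differentiable at the junction $t=0$, it forces $\dot\alpha(0)=\dot\gamma(0)$. Uniqueness of geodesics then gives $\alpha\equiv\gamma$ on $[0,s]$, so $z=\alpha(s)=\gamma(s)=x_0$, which proves $\bigcap_{0<\delta\leq\eps}Z_\delta\subset\{x_0\}$. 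The step most likely to require care is this last one: justifying rigorously that a minimizing curve obtained by concatenating two geodesic arcs is itself a genuine smooth geodesic (so that the two pieces fit together smoothly and force $z=x_0$). This is a standard fact in Riemannian geometry, but it is exactly the point where the argument uses that $\gamma$ is a geodesic and not merely a length-minimizing curve.
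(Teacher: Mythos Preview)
Your proof is correct and follows essentially the same approach as the paper's. Both arguments show $x_0\in Z_\delta$ via the points $\gamma(s+t)$ for small $t>0$, and both reduce the reverse inclusion to $d_g(z,y)=s$ and $d_g(z,\gamma(-\eps))=s+\eps$, then use the same concatenation argument (a length-minimizing piecewise geodesic is a smooth geodesic, hence coincides with $\gamma$); the only cosmetic difference is that the paper first packages the $\delta\to 0$ limit as the set identity $\bigcap_{0<\delta\leq\eps}\overline{B(\gamma(-\delta),s+2\delta)}=\overline{B(y,s)}$ before extracting the metric constraints on $z$, whereas you work directly with the distances.
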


\begin{proof}
    Recall that $y = \gamma(0)$. First we verify the auxiliary equation: 
    \begin{equation}
\label{eq:intersection_of_delta_balls}
    \bigcap_{0<\delta\leq \eps} \overline{B(\gamma(-\delta),s+2\delta)}= \overline{B(y,s)}.
    \end{equation}

    Suppose that $x\in \bigcap_{0< \delta \leq \eps}\overline{B(\gamma(-\delta),s+2\delta)}$. Then $d_g(x,\gamma(-\delta))\leq s+ 2\delta$  for all $\delta > 0$. Letting $\delta\to 0$ and using the continuity of $\gamma$, we find that 
    $$
    d_g(x,y) = d_g(x,\gamma(0)) = \lim_{\delta\to 0}d_g(x,\gamma(-\delta))\leq \lim_{\delta\to 0} s+2\delta = s.
    $$ 
    
    For the reverse direction we let $x\in \overline{B(y,s)}$ so that $d_g(x,y)\leq s$. For any $0<\delta \leq \eps$ we have that $\gamma:[-\delta,0]\to N$ is a minimizing curve so that $d_g(y,\gamma(-\delta)) = \delta$. Which gives us $$d_g(x,\gamma(-\delta))\leq d_g(x,y) + d_g(y,\gamma(-\delta))\leq s + \delta.$$
    So $x\in \overline{B(\gamma(-\delta),s+2\delta)}$ for all $0 < \delta\leq \eps$, hence $x\in \bigcap_{0<\delta\leq \eps}\overline{B(\gamma(-\delta),s+2\delta)}$.
    
    Now equation \eqref{eq:intersection_of_delta_balls} in combination with Lemma \ref{lem:Z_part2} and equation \eqref{eq:Z_delta_equal} yields
    \begin{align}
        \bigcap_{0<\delta\leq \eps} Z_\delta \subset \overline{B(y,s)}\cap \Big(N\backslash B(\gamma(-\eps),s+\eps)\Big). 
        \label{eq:intersections}    
    \end{align}
    Since $d_g(y,\gamma(-\eps)) = \eps$ we get from the triangle inequality that $B(y,s) \subset B(\gamma(-\eps),s + \eps)$. Hence,
    $$
        \overline{B(y,s)}\cap \Big(N\backslash B(\gamma(-\eps),s+\eps)\Big) = \partial B(y,s)\cap \Big(N\backslash B(\gamma(-\eps),s+\eps)\Big).
    $$
    We also have that $\overline{B(y,s)}\subset \overline{B(\gamma(-\eps),s+\eps)}$ so
    $$
    \overline{B(y,s)}\cap \Big(N\backslash B(\gamma(-\eps),s + \eps)\Big) \subset \overline{B(\gamma(-\eps),s+\eps)}\cap \Big(N\backslash B(\gamma(-\eps),s+\eps)\Big) = \partial B(\gamma(-\eps),s+\eps)
    $$
    It then follows from $(\ref{eq:intersections})$ that
    \begin{align} \label{subsets}
        \bigcap_{0<\delta\leq \eps} Z_\delta \subset \partial B(y,s)\cap \partial B(\gamma(-\eps),s+\eps).
    \end{align}    
    Now let $x\in \partial B(y,s)\cap \partial B(\gamma(-\eps),s+\eps)$, then $d_g(x,y) = s$. Since $N$ is a complete manifold, then there exists a minimizing geodesic $\hat{\gamma}:[0,s]\to N$ such that $\hat{\gamma}(0) = y$ and $\hat{\gamma}(s)=x$. Since $N$ is a complete manifold, then $\hat{\gamma}$ is defined for all time. Define a new curve
    \begin{align*}
        \Tilde{\gamma}(t) &= \begin{cases}
            \gamma(t)\hspace{6mm} t\leq 0\\
            \hat{\gamma}(t)\hspace{6mm} t\geq 0
        \end{cases}
    \end{align*}
    Since $\gamma(0) = \hat{\gamma}(0) = y$ then $\tilde{\gamma}$ is a rectifiable curve that connects $\gamma(-\eps)$ to $x$. If we let $L_g$ be the length functional on $(N,g)$, then we have that 
    \begin{align*}
        L_g(\tilde{\gamma}|_{[-\eps,s]}) &= L_g(\gamma|_{[-\eps,0]}) + L_g(\hat{\gamma}|_{[0,s]}) = \eps +s = d_g(\gamma(-\eps),x)
    \end{align*}
    since $x\in \partial B(\gamma(-\eps),s+\eps)$. This shows that $\tilde{\gamma}:[-\eps,s]\to N$ is a distance minimizing curve and thus is a geodesic segment. Now $\tilde{\gamma}|_{(-\eps,0]} = \gamma|_{(-\eps,0]}$ and since geodesics which agree on a part of their domain will agree on their entire domain, then we have that $\tilde{\gamma} = \gamma$. As a consequence we have that
    $x = \tilde{\gamma}(s) = \gamma(s) = x_0$.\\ \newline
    We conclude that $\partial B(y,s)\cap\partial B(\gamma(-\eps),s +\eps)\subset \{x_0\}$ and in combination with (\ref{subsets}) find that $\bigcap_{0<\delta\leq \varepsilon} Z_\delta \subset \{x_0\}$.
    
    For the reverse direction let $\delta\in (0,\varepsilon]$ and consider the points $x_\eta = \gamma(s+\eta)$ for $\eta \in (0,\delta]$. Since $\gamma:[-\varepsilon,s+\varepsilon]$ is a distance minimizing curve then
    \begin{align*}
        d_g(x_\eta,\gamma(-\delta)) &= s+\eta - (-\delta) = s+\eta + \delta < s+2\delta,
    \end{align*}
    and
    \begin{align*}
        d_g(x_\eta,\gamma(-\varepsilon)) &= s+\eta - (-\varepsilon) = s+\eta + \varepsilon > s+\varepsilon.
    \end{align*}
    Hence 
    \[
    \{x_\eta\}_{\eta \in (0,\delta)} \subset \overline{B(\gamma(-\delta),s+2\delta)}\backslash\overline{B(\gamma(-\varepsilon),s+\varepsilon)} = M(\mathcal{Y}_\delta,s+\delta)\backslash M(\mathcal{Y}_\varepsilon,s).
    \]
    Since $x_\eta \to x_0$ as $\eta\to 0$, we have that $x_0\in Z_\delta$. Thus, $x_0 \in \bigcap_{0<\delta\varepsilon}Z_\delta$ which gives us our desired conclusion. 
\end{proof}

\begin{lem} \label{lem:not_null}
    For each $0 < \delta \leq \eps$ we have that $|Z_\delta|\not=0$.
\end{lem}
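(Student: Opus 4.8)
The plan is to show that $Z_\delta$ contains a nonempty open subset of $N$; since every nonempty open set in a Riemannian manifold has strictly positive Riemannian measure, this immediately gives $|Z_\delta|\neq 0$. Note that $Z_\delta$ is closed, hence Borel measurable, so the quantity $|Z_\delta|$ is well defined.

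First I would use Lemma \ref{lem:domain=ball} to rewrite the two domains of influence as closed metric balls, namely $M(\mathcal{Y}_\delta,s+\delta)=\overline{B(\gamma(-\delta),s+2\delta)}$ and $M(\mathcal{Y}_\eps,s)=\overline{B(\gamma(-\eps),s+\eps)}$. Consequently the set
\[
W:=B(\gamma(-\delta),s+2\delta)\setminus \overline{B(\gamma(-\eps),s+\eps)}
\]
is contained in $M(\mathcal{Y}_\delta,s+\delta)\setminus M(\mathcal{Y}_\eps,s)$, and therefore in $Z_\delta$. Moreover $W$ is open, being the intersection of the open ball $B(\gamma(-\delta),s+2\delta)$ with the open set $N\setminus\overline{B(\gamma(-\eps),s+\eps)}$.

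Next I would check that $W\neq\emptyset$ by exhibiting an explicit point, reusing the elementary computation already carried out in the proof of Lemma \ref{lem:intersection}. For $\eta\in(0,\delta)$ set $x_\eta:=\gamma(s+\eta)$, which is well defined since $s+\eta<s+\delta\le s+\eps$ lies in the domain $[-\eps,s+\eps]$ of the minimizing geodesic $\gamma$ fixed in Definition \ref{set_definition}. Because $\gamma|_{[-\eps,s+\eps]}$ is distance minimizing, one has $d_g(x_\eta,\gamma(-\delta))=s+\eta+\delta<s+2\delta$ and $d_g(x_\eta,\gamma(-\eps))=s+\eta+\eps>s+\eps$, so $x_\eta\in W$.

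Combining these, $W$ is a nonempty open subset of $Z_\delta$, so $|Z_\delta|\ge |W|>0$. There is no genuine obstacle in this argument; the only point deserving care is that the distance-minimizing property of $\gamma$ is needed at parameters up to $s+\eps$, which is exactly what the construction preceding Definition \ref{set_definition} supplies, together with the identification of domains of influence with closed balls from Lemma \ref{lem:domain=ball}.
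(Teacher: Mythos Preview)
Your proof is correct and follows essentially the same strategy as the paper: identify the open set $B(\gamma(-\delta),s+2\delta)\setminus \overline{B(\gamma(-\eps),s+\eps)}\subset Z_\delta$ via Lemma \ref{lem:domain=ball} and show it is nonempty. The only difference is that the paper shows nonemptiness by a short contradiction argument (if the open set were empty one would contradict $x_0\in Z_\delta$), whereas you exhibit an explicit point $x_\eta=\gamma(s+\eta)$, reusing the computation from Lemma \ref{lem:intersection}; your route is slightly more direct but otherwise equivalent.
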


\begin{proof}
    Let $0 < \delta \leq \eps$.
    Since open sets have positive measure, it suffices to show that the set $Z_\delta$ has nonempty interior. We have already shown that $x_0\in Z_\delta$. 
    % $ Z_\delta  =\overline{M(\mathcal{Y}_\delta,s+\delta)\backslash M(\mathcal{Y}_\eps,s)}
    % \not=\emptyset$ 
    This implies that $M(\mathcal{Y}_\delta,s+\delta)\backslash M(\mathcal{Y}_\eps,s)\not=\emptyset$. 
    Hence, Lemma \ref{lem:boundary_properties} yields
    % Now as 
    % $$M(\mathcal{Y}_\delta,s+\delta) = \overline{B(\gamma(-\delta),s+2\delta)} = B(\gamma(-\delta),s+2\delta)\cup\partial B(\gamma(-\delta),s+2\delta)$$
    % Taking the set difference yields
    $$ 
        \Big(B(\gamma(-\delta),s+2\delta)\backslash M(\mathcal{Y}_\eps,s)\Big)\cup\Big(\partial B(\gamma(-\delta),s+2\delta)\backslash M(\mathcal{Y}_\eps,s)\Big) \not=\emptyset.
    $$
    
    To finish the proof it suffices to show that the open set $B(\gamma(-\delta),s+2\delta)\backslash M(\mathcal{Y}_\eps,s)$ is not empty. For the sake of contradiction we assume that this is not the case. Then $B(\gamma(-\delta),s+2\delta) \subset M(\mathcal{Y}_\eps,s)$. But since $M(\mathcal{Y}_\eps,s)$ is a closed set, we would conclude that
    $$
        M(\mathcal{Y}_\delta,s+\delta)  = \overline{B(\gamma(-\delta),s+\delta)} \subset M(\mathcal{Y}_\eps,s).
    $$
    This contradicts the fact that $M(\mathcal{Y}_\delta,s+\delta)\backslash M(\mathcal{Y}_\eps,s)\not=\emptyset$. 
\begin{comment}
    Hence $B(\gamma(-\delta),s+2\delta)\backslash M(\mathcal{Y}_\eps,s) \not= \emptyset$. Now $B(\gamma(-\delta),s+2\delta)\backslash M(\mathcal{Y}_\eps,s) \not= \emptyset$ is an open set which satisfies 
    $$
        B(\gamma(-\delta),s+2\delta)\backslash M(\mathcal{Y}_\eps,s) \subset M(\mathcal{Y}_\delta,s+\delta)\backslash M(\mathcal{Y}_\eps,s) \subset Z_\delta    
    $$
    This shows that $Z_\delta$ has a nonempty interior.     
\end{comment}    
\end{proof}

\begin{lem} \label{lem:equal_up_to_null}
    There exists $\Tilde{\delta}\in (0,\eps]$ such that if $0 < \delta \leq\tilde{\delta}$, then 
    $$
        |Z_\delta| = |M(\mathcal{Y}_\delta,s+\delta)\backslash M(\mathcal{Y}_\eps, s)| 
    $$
    In other words $Z_\delta$ and $M(\mathcal{Y}_\delta,s+\delta)\backslash M(\mathcal{Y}_\eps, s)$ differ by a null set for all sufficiently small $\delta$. 
\end{lem}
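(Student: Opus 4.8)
The plan is to show that the set $Z_\delta$ overshoots $M(\mathcal{Y}_\delta,s+\delta)\setminus M(\mathcal{Y}_\eps,s)$ only by a piece of the metric sphere $\partial B(\gamma(-\eps),s+\eps)$, that this piece shrinks down to the single point $x_0$ as $\delta\to 0$, and that near $x_0$ this sphere is a smooth hypersurface and hence locally of measure zero. First I would use Lemma \ref{lem:domain=ball} to write $M(\mathcal{Y}_\delta,s+\delta)=\overline{B(\gamma(-\delta),s+2\delta)}$ and $M(\mathcal{Y}_\eps,s)=\overline{B(\gamma(-\eps),s+\eps)}$, both closed. Then $M(\mathcal{Y}_\delta,s+\delta)\setminus M(\mathcal{Y}_\eps,s)\subset Z_\delta$ by definition, while the only points the closure can add lie on $\partial B(\gamma(-\eps),s+\eps)$; concretely, setting $E_\delta:=\overline{B(\gamma(-\delta),s+2\delta)}\cap\partial B(\gamma(-\eps),s+\eps)$, the inclusion \eqref{eq:Z_delta_inclusion} of Lemma \ref{lem:Z_part2} gives $Z_\delta\setminus\big(M(\mathcal{Y}_\delta,s+\delta)\setminus M(\mathcal{Y}_\eps,s)\big)\subset E_\delta$. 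Hence it suffices to find $\tilde\delta\in(0,\eps)$ with $|E_\delta|=0$ for every $\delta\le\tilde\delta$; and since the balls $\overline{B(\gamma(-\delta),s+2\delta)}$ are nested (the computation in the proof of Lemma \ref{lem:subset}), the family $\{E_\delta\}_{\delta\in(0,\eps]}$ is decreasing in $\delta$, so it is enough to produce a single such $\tilde\delta$ with $|E_{\tilde\delta}|=0$.

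Next I would identify the limiting set. By \eqref{eq:intersection_of_delta_balls} we have $\bigcap_{0<\delta\le\eps}\overline{B(\gamma(-\delta),s+2\delta)}=\overline{B(y,s)}$, and the concatenation-of-geodesics argument carried out in the proof of Lemma \ref{lem:intersection} shows $\overline{B(y,s)}\cap\partial B(\gamma(-\eps),s+\eps)=\partial B(y,s)\cap\partial B(\gamma(-\eps),s+\eps)=\{x_0\}$; therefore $\bigcap_{0<\delta\le\eps}E_\delta=\{x_0\}$. Now the decisive structural observation: the curve $\gamma\colon[-\eps,s+\eps]\to N$ in Definition \ref{set_definition} is the unique minimizing geodesic through $\gamma(-\eps),y,x_0$, and $x_0=\gamma(s)$ is an \emph{interior} point of this minimizing segment, so $x_0\notin\text{Cut}(\gamma(-\eps))$. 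Consequently $\exp_{\gamma(-\eps)}$ is a diffeomorphism near $\exp_{\gamma(-\eps)}^{-1}(x_0)$, the function $\rho:=d_g(\gamma(-\eps),\cdot)$ is smooth on an open neighborhood $W$ of $x_0$ with $\|\nabla_g\rho\|_g\equiv 1$, and hence $\partial B(\gamma(-\eps),s+\eps)\cap W=\rho^{-1}(s+\eps)\cap W$ is a smooth embedded hypersurface of $N$. In particular $|\partial B(\gamma(-\eps),s+\eps)\cap W|=0$.

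Finally I would close the argument with a compactness step. The set $E_\eps$ is closed and bounded, hence compact by the Hopf--Rinow theorem, and $\{E_\delta\}$ are nested compacta inside it with $\bigcap_{0<\delta\le\eps}E_\delta=\{x_0\}\subset W$. If no $E_\delta$ were contained in $W$, I could choose $x_k\in E_{1/k}\setminus W$ for all large $k$, extract a subsequence converging to some $x_\ast\in E_\eps$; for each fixed $m$ one has $x_{k}\in E_{1/m}$ once $1/k<1/m$, and $E_{1/m}$ is closed, so $x_\ast\in\bigcap_m E_{1/m}=\{x_0\}$, i.e. $x_\ast=x_0\in W$ — contradicting that $N\setminus W$ is closed and contains all $x_k$. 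So there is $\tilde\delta\in(0,\eps)$ with $E_{\tilde\delta}\subset W$, whence $E_\delta\subset E_{\tilde\delta}\subset W$ and $|E_\delta|\le|\partial B(\gamma(-\eps),s+\eps)\cap W|=0$ for all $\delta\le\tilde\delta$. Then
\[
|Z_\delta|=|M(\mathcal{Y}_\delta,s+\delta)\setminus M(\mathcal{Y}_\eps,s)|+\big|Z_\delta\setminus(M(\mathcal{Y}_\delta,s+\delta)\setminus M(\mathcal{Y}_\eps,s))\big|=|M(\mathcal{Y}_\delta,s+\delta)\setminus M(\mathcal{Y}_\eps,s)|,
\]
which is the claim. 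The main obstacle is precisely the reduction in the second paragraph: one cannot expect a metric sphere to be null for the arbitrary radius $s+\eps$, so the proof must exploit that the specific geodesic $\gamma$ of Definition \ref{set_definition} extends minimally past $x_0$, giving $x_0\notin\text{Cut}(\gamma(-\eps))$ and hence the local hypersurface structure around the shrinking limit point $x_0$.
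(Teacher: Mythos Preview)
Your proposal is correct and follows essentially the same route as the paper: reduce to showing that $E_\delta=\overline{B(\gamma(-\delta),s+2\delta)}\cap\partial B(\gamma(-\eps),s+\eps)$ is null for small $\delta$, use the nested-compacta argument together with $\bigcap_\delta E_\delta=\{x_0\}$ to localize $E_\delta$ near $x_0$, and then invoke that the metric sphere is locally a null set there. The only cosmetic difference is in this last step: the paper takes geodesic normal coordinates $(U,\varphi)$ centered at $\gamma(-\eps)$ containing the whole segment $\gamma([-\eps,s+\eps])$ and observes that $\varphi(E_\delta)$ lands in the Euclidean sphere of radius $s+\eps$, whereas you argue directly that $x_0\notin\text{Cut}(\gamma(-\eps))$ (as an interior point of the minimizing segment), so $d_g(\gamma(-\eps),\cdot)$ is smooth with unit gradient near $x_0$ and the level set is an embedded hypersurface---these are two phrasings of the same fact.
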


\begin{proof} 
    From the definition of the set $Z_\delta$ and in light of Lemma \ref{lem:Z_part2} we know that 
    $$
    Z_\delta\backslash\Big(M(\mathcal{Y}_\delta,s + \delta) \backslash M(\mathcal{Y}_\eps,s)\Big)\subset \Big(\overline{B(\gamma(-\delta),s + 2\delta)}\cap \partial B(\gamma(-\eps),s+\eps)\Big)=:W_\delta.
    $$
    So it suffices to show that the set $W_\delta$
    % $$
    %     \overline{B(\gamma(-\delta),s + 2\delta)}\cap \partial B(\gamma(-\eps),s+\eps)
    % $$
    is null for sufficiently small $\delta$. 
    
    % We shall let $W_\delta := \overline{B(\gamma(-\delta),s + 2\delta)}\cap \partial B(\gamma(-\eps),s+\eps)$. 
    By our construction of $\eps > 0$, we know that $\gamma:[-\eps, s+\eps]\to N$ is the unique distance minimizing geodesic between $\gamma(-\eps)$ and $\gamma(s + \eps)$. So we can find geodesic normal coordinates $(U,\varphi)$ centered at $\gamma(-\eps)$ such that $\gamma([-\eps,s+\eps])\subset U$. It follows that $x_0 = \gamma(s)\in U$ and by the proof of Lemma \ref{lem:intersection}  that $\bigcap_{0 < \delta\leq \eps} W_\delta  = \{x_0\}\subset U$. 
    
    For now we consider the compact sets $\{W_\delta\backslash U\}$, and aim to show that for small enough $\delta$ these sets are empty. Suppose that this is not the case. Then by Lemma \ref{lem:subset} the inequality $\delta < \tilde{\delta}$ implies that $W_\delta\subset W_{\tilde{\delta}}$. Hence, 
 $W_\delta\backslash U \subset W_{\tilde{\delta}}\backslash U$. At the same time $\bigcap_{0  <\delta \leq \eps} W_\delta\backslash U = \{x_0\}\backslash U = \emptyset$. Thus, we arrive in a contradiction since a nested sequence of nonempty compact sets has a nontrivial intersection. In particular, there is $\tilde{\delta} \in (0,\eps)$ such that $W_\delta \subset U$ for all  $\delta \in (0,\tilde{\delta}]$. 
 
 % Then we have that 
 %    $$
 %    \overline{B(\gamma(-\tilde{\delta}),s + 2\tilde{\delta})}\cap \partial B(\gamma(-\eps),s+\eps)\subset U.
 %    $$
    Let $\delta \in (0,\tilde{\delta}]$. Since $(U,\varphi)$ are geodesic normal coordinates centered at $\gamma(-\eps)$ and in these coordinates all geodesics emanating from $\gamma(-\eps)$ are straight lines starting at the origin, then it follows that
    $$
        \varphi(W_\delta
        % \overline{B(\gamma(-\tilde{\delta}),s + 2\tilde{\delta})}\cap \partial B(\gamma(-\eps),s+\eps)
        ) \subset \{v\in \R^n: \norm{v} = s +\eps\} \cap \varphi(U).
    $$
    Clearly $\{v\in \R^n:\norm{v} = s+\varepsilon\}\subset \R^n$ is a null set, and since $\varphi:U \to \varphi(U)\subset \R^n$ is a diffeomorphism it follows that 
    % $\overline{B(\gamma(-\tilde{\delta}),s + 2\tilde{\delta})}\cap \partial B(\gamma(-\eps),s+\eps)$ 
    $W_\delta$
    is a null set 
    % for sufficiently small $\delta$ 
    by \cite[Proposition 6.5]{lee_smooth}. 
\end{proof}

We are in position to prove Proposition \ref{prop:summary}.
\begin{proof}[Proof of Proposition \ref{prop:summary}]
The claims of Proposition \ref{prop:summary} follow from lemmas \ref{lem:subset}
-- 
% \ref{lem:intersection},  \ref{lem:not_null} and 
\ref{lem:equal_up_to_null}.    
\end{proof}

\subsection{Determination of the Multiplicative Gauge} \label{sub:determine_gauge}

\begin{lem} \label{lem:product_equal}
    If $\Lambda_1 = \Lambda_2$, $T > 0$ and $f,h\in C_0^\infty((0,T)\times\cX)$, then
    \begin{align} \label{product}
        u_1^f(T,x_0)\overline{u_1^h(T,x_0)} = u_2^f(T,x_0)\overline{u_2^h(T,x_0)} \hspace{4mm}\text{for all $x_0\in N$.}
    \end{align}
\end{lem}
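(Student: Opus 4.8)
\textbf{Proof strategy for Lemma \ref{lem:product_equal}.}

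The plan is to reduce the pointwise identity \eqref{product} to the family of integral identities \eqref{eq:difference} already established, using the sets $Z_\delta$ from Definition \ref{set_definition} and the Lebesgue Differentiation Theorem. First I would dispose of the easy region: if $x_0 \in \overline{\cX}$, then for $x$ near $x_0$ the equality $u_1^f = u_2^f$ and $u_1^h = u_2^h$ holds automatically. Indeed, on $\cX$ the data $\Lambda_1 = \Lambda_2$ gives $u_1^f|_{(0,\infty)\times\cX} = \Lambda_1(f) = \Lambda_2(f) = u_2^f|_{(0,\infty)\times\cX}$, and the same for $h$; hence \eqref{product} holds on $\overline{\cX}$ by continuity, and in particular at every $x_0 \in \overline{\cX}$. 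So it remains to treat $x_0 \in N\backslash\overline{\cX}$.

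Fix such an $x_0$ and construct the sets $Z_\delta = \overline{M(\mathcal{Y}_\delta,s+\delta)\backslash M(\mathcal{Y}_\eps,s)}$ as in Definition \ref{set_definition}, with $\mathcal{Y}_\delta = B(\gamma(-\delta),\delta) \subset \cX$ and $\mathcal{Y}_\eps = B(\gamma(-\eps),\eps) \subset \cX$. By Proposition \ref{prop:summary} these sets are nested decreasing, shrink to $\{x_0\}$, have positive measure, and (for $\delta$ small) differ from $M(\mathcal{Y}_\delta,s+\delta)\backslash M(\mathcal{Y}_\eps,s)$ only by a null set. The equality \eqref{eq:difference}, applied with $\mathcal{Y}=\mathcal{Y}_\delta$, $s\rightsquigarrow s+\delta$, $\hat{\mathcal{Y}}=\mathcal{Y}_\eps$, $\hat{s}=s$, gives for all $\delta \in (0,\tilde\delta]$
\begin{align}
    \int_{Z_\delta} u_1^f(T,x)\overline{u_1^h(T,x)}\,dV_g(x) = \int_{Z_\delta} u_2^f(T,x)\overline{u_2^h(T,x)}\,dV_g(x). \label{eq:prod_integrals_equal}
\end{align}
Here I use that $1_{M(\mathcal{Y}_\delta,s+\delta)\backslash M(\mathcal{Y}_\eps,s)} = 1_{Z_\delta}$ almost everywhere by part (4) of Proposition \ref{prop:summary}, and that the inner products in \eqref{eq:difference} are exactly the $L^2$-pairings $\int_{Z_\delta} u_k^f\overline{u_k^h}\,dV_g$.

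Now dividing \eqref{eq:prod_integrals_equal} by $|Z_\delta| > 0$ and letting $\delta \to 0^+$, the Lebesgue Differentiation Theorem applies: the sets $\{Z_\delta\}$ form a nested family shrinking to $x_0$ with positive measure, so they shrink nicely (one may compare them, via Lemma \ref{lem:domain=ball} and Lemma \ref{lem:subset}, to the balls $\overline{B(\gamma(-\delta),s+2\delta)}$, which do satisfy the regularity hypothesis of the differentiation theorem, $Z_\delta \subset \overline{B(\gamma(-\delta),s+2\delta)}$ with comparable measures). Since $u_k^f(T,\cdot)\overline{u_k^h(T,\cdot)}$ is smooth, hence continuous, hence locally $L^1$, we obtain
\[
\lim_{\delta\to 0^+}\frac{1}{|Z_\delta|}\int_{Z_\delta} u_k^f(T,x)\overline{u_k^h(T,x)}\,dV_g(x) = u_k^f(T,x_0)\overline{u_k^h(T,x_0)}, \quad k\in\{1,2\},
\]
and \eqref{eq:prod_integrals_equal} forces the two limits to agree, giving \eqref{product} at $x_0$. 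Since $x_0 \in N\backslash\overline{\cX}$ was arbitrary, combined with the earlier case this proves \eqref{product} for all $x \in N$.

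\textbf{The main obstacle.} The delicate point is justifying the application of the Lebesgue Differentiation Theorem: the sets $Z_\delta$ are not balls, and the classical theorem requires the averaging sets to ``shrink nicely'' (be contained in balls of comparable measure about the point). The crux is therefore to verify that $x_0 \in Z_\delta$, that $Z_\delta \subset \overline{B(\gamma(-\delta), s+2\delta)}$, and that $|Z_\delta|$ is bounded below by a fixed fraction of $|\overline{B(\gamma(-\delta),s+2\delta)}|$ — or, more simply, to note that since the integrand is \emph{continuous}, one needs only that $\operatorname{diam}(Z_\delta)\to 0$ and $|Z_\delta|>0$, which already follow from parts (2) and (3) of Proposition \ref{prop:summary}. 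With continuity in hand this reduces to an elementary estimate $|\frac{1}{|Z_\delta|}\int_{Z_\delta}(w(x)-w(x_0))\,dV_g| \le \sup_{Z_\delta}|w - w(x_0)| \to 0$, so the apparent obstacle largely evaporates once one commits to using continuity of the product of waves rather than a measure-theoretic density argument.
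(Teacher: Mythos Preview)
Your approach is essentially identical to the paper's: reduce to the integral identity \eqref{eq:difference} over the sets $Z_\delta$, then pass to the pointwise identity at $x_0$ via Lebesgue differentiation, using continuity of the integrand so that only $\operatorname{diam}(Z_\delta)\to 0$ and $|Z_\delta|>0$ are needed. Two small omissions compared to the paper: you must ensure $s+\delta\le T$ when invoking \eqref{eq:difference} (so choose $y\in\cX\setminus\mathrm{Cut}(x_0)$ with $s=d_g(x_0,y)<T$, which forces $\mathrm{dist}_g(x_0,\cX)<T$), and you must treat the remaining region $N\setminus M(\cX,T)$ separately, which is trivial since there both $u_k^f(T,\cdot)$ vanish by finite speed of propagation.
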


\begin{proof}
    First note that for all $t > 0$ and all $x_0\in \cX$ we have that
    \begin{align*}
        u_1^f(t,x_0) = \Lambda_1(f)(t,x_0) &= \Lambda_2(f)(t,x_0) = u_2^f(t,x_0).
    \end{align*}
    So $(\ref{product})$ clearly holds for $x\in\cX$ when $t = T$. Since $u_1^f(T,\cdot)$ and $u_2^f(T,\cdot)$ are both continuous, then the pointwise equality extends to $\overline{\cX}$ as well. 
    
    Now let $x_0 \in \text{int}(M(\cX,T))$ so that $d_g(\cX,x_0) < T$, then we can find $y\in \cX$ such that there is a unique geodesic $\gamma$ connecting $y$ to $x$, which is parameterized so that $\gamma(0) = y, \gamma(s) = x_0$ and $d_g(y,x_0)= s < T$. Note that neither $x$ nor $y$ is in the cut locus of the other. We can find $\eps > 0$ to construct sets $\mathcal{Y}_\delta$ for $\delta\in (0,\eps]$ as given before Definition \ref{def:set_definition}. Furthermore, we can choose $\eps > 0$ sufficiently small so that $s + \delta\in (0,T]$ for $\delta\in (0,\eps]$. Finally, we take $Z_\delta$ as in Definition $\ref{def:set_definition}$. 
    
    % To summarize, we have $\eps > 0$ small enough so that $\mathcal{Y}_\delta \subset \cX$ and $s, s + \delta \in (0,T]$ for all $\delta\in (0,\eps]$. 
    Hence, due to the equation \eqref{eq:difference} we have for all $\delta\in (0,\eps]$ that
    \begin{align}
        \langle 1_{M(\mathcal{Y}_\delta,s + \delta)\backslash M(\mathcal{Y}_\eps,s)} u^f_1(T), u_1^h(T)\rangle_{L^2(N)} &= \langle 1_{M(\mathcal{Y}_\delta,s + \delta)\backslash M(\mathcal{Y}_\eps,s)} u^f_2(T), u_2^h(T)\rangle_{L^2(N)}. \label{eq:inner_product_crecent}
    \end{align}
    Now by Proposition \ref{prop:summary}, the sets $\{Z_\delta\}$ satisfy the conditions for the Lebesgue Differentiation Theorem since the functions being integrated are continuous. Additionally for sufficiently small $\delta$ we have that $1_{M(\mathcal{Y}_\delta,s + \delta)\backslash M(\mathcal{Y}_\eps,s)}$ and $1_{Z_\delta}$ are equal almost everywhere. Therefore, for $k\in \{1,2\}$ we have that
    \begin{align*}
        \lim_{\delta\to 0}  \frac{\langle 1_{M(\mathcal{Y}_\delta,s + \delta)\backslash M(\mathcal{Y}_\eps,s)} u^f_k(T), u_k^h(T)\rangle_{L^2(N)}}{|M(\mathcal{Y}_\delta,s + \delta)\backslash M(\mathcal{Y}_\eps,s)|} 
        % &= \lim_{\delta\to 0} \frac{\langle 1_{Z_\delta} u^f_k(T), u_k^h(T)\rangle_{L^2(N)}}{|Z_\delta|}\\
        % &= \lim_{\delta\to 0}\frac{1}{|Z_\delta|}\int_{Z_\delta} u_k^f(T,x)\overline{u_k^h(T,x)} dV_g(x)\\
        &= u_k^f(T,x_0)\overline{u_k^h(T,x_0)}.
    \end{align*}
    Hence on $\text{int}(M(\cX, T))$, the equality \eqref{product}  follows from $(\ref{eq:inner_product_crecent})$. Using the continuity again, we can extend the equality \eqref{product} on $M(\cX,T)$. 
    
    Finally, the equality  \eqref{product} holds of $N\backslash M(\cX,T)$ due to Finite Speed of Wave Propagation (i.e. Corollary \ref{cor:finite_speed}).
    % that $\text{supp}(u_k^f(T,\cdot)) \subset M(\cX,T)$.
    % Thus $u_k^f(T,x) =0$ for all $x\in N\backslash M(\cX,T)$ and desired equality holds trivially.
    \end{proof}
\begin{comment}
    \begin{cor}
        Assume that $\Lambda_1 = \Lambda_2$, let $T > 0$ and let $f,h\in C_0^\infty((0,T)\times\cX)$. Then for all $x\in N$ we have
        \begin{align*}
        u_1^f(T,x)\overline{u_1^h(T,x)} &= u_2^f(T,x)\overline{u_2^h(T,x)}
        \end{align*}    
    \end{cor}

    \begin{proof}
        As $C_0^\infty((0,T)\times\cX)\subset C_0^\infty(\cX\times (0,T))$, the result immediately holds for all $x\in M(\cX,T)$. Since $\text{supp}(u_k^f(\cdot, T))\subset M(\cX,T)$ holds by Proposition \ref{finite-speed}, we have the $u_k^f(T,x) = 0$ for $x\not\in M(\cX, T)$ so the equality trivially holds. 
    \end{proof}

    \begin{cor}\label{same-magnitude}
        Assume that $\Lambda_1 = \Lambda_2$, let $T > 0$ and let $f\in C_0^\infty((0,T)\times\cX)$. Then $|u_1^f(T,x)| = |u_2^f(T,x)|$. 
    \end{cor}

    \begin{proof}
        Take $f = h$ and then take the square root. 
    \end{proof}
\end{comment}
    \begin{lem}\label{lem:big_result}
        If $\Lambda_1 = \Lambda_2$, $T > 0$, $\Omega\subset M(\cX,T)$ is open and bounded, then there exists a unique smooth unitary function $\kappa:\Omega\to \mathbb{C}$ such that $\kappa|_{\Omega\cap\cX} = 1$ and
        \begin{align}
            u_1^f(T,x) &= \kappa(x)u_2^f(T,x) \hspace{4mm}\text{for all $x\in \Omega$ and for all $f\in C_0^\infty((0,T)\times\cX)$.} \label{gauge-equation}
        \end{align}
    \end{lem}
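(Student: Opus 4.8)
<br>

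The plan is to build $\kappa$ pointwise using the product identity from Lemma \ref{lem:product_equal} together with the nonvanishing waves from Corollary \ref{cor:nonvanishing_condition}, and then check smoothness, the normalization, and uniqueness. First I would fix $x \in \mathcal{C}$. If $d_g(x,\cX) \geq T$, then by Finite Speed of Wave Propagation (Corollary \ref{cor:finite_speed}) every $u_k^f(T,x) = 0$, and I can simply set $\kappa(x) = 1$; note this already forces $\kappa|_{\mathcal{C}\cap\cX}$ to behave correctly away from the interior of the domain of influence, and on $\mathcal{C}\cap\cX$ itself we have $u_1^f(T,x) = \Lambda_1(f)(T,x) = \Lambda_2(f)(T,x) = u_2^f(T,x)$ directly, so $\kappa(x)=1$ there. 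The interesting case is $x \in \mathrm{int}(M(\cX,T))\cap\mathcal{C}$, i.e. $d_g(x,\cX) < T$. By Corollary \ref{cor:nonvanishing_condition} there is $f_0 \in C_0^\infty((0,T)\times\cX)$ with $u_1^{f_0}(T,x)\neq 0$, and by Lemma \ref{lem:product_equal} applied with $f=h=f_0$ we get $|u_1^{f_0}(T,x)|^2 = |u_2^{f_0}(T,x)|^2 \neq 0$, so $u_2^{f_0}(T,x)\neq 0$ too. I then \emph{define}
\[
\kappa(x) := \frac{u_1^{f_0}(T,x)}{u_2^{f_0}(T,x)}.
\]

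Next I would show this is independent of the choice of $f_0$ and that it is a unitary multiplier. For any $f \in C_0^\infty((0,T)\times\cX)$, Lemma \ref{lem:product_equal} with $h = f_0$ gives $u_1^f(T,x)\overline{u_1^{f_0}(T,x)} = u_2^f(T,x)\overline{u_2^{f_0}(T,x)}$; dividing by $\overline{u_2^{f_0}(T,x)}\,u_2^{f_0}(T,x) \neq 0$ and using that $u_1^{f_0}\overline{u_1^{f_0}} = u_2^{f_0}\overline{u_2^{f_0}}$ at $(T,x)$, I obtain $u_1^f(T,x) = \kappa(x)\,u_2^f(T,x)$, which is exactly \eqref{gauge-equation} pointwise and simultaneously shows $\kappa(x)$ does not depend on $f_0$ (apply the identity with any other nonvanishing source). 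Taking $f = f_0$ in the product identity once more, $|u_1^{f_0}(T,x)| = |u_2^{f_0}(T,x)|$ gives $|\kappa(x)| = 1$. For $x\in \mathcal{C}\cap\cX$ the pointwise identity $u_1^f(T,x)=u_2^f(T,x)$ (from $\Lambda_1 = \Lambda_2$ and continuity up to $\overline{\cX}$) together with the nonvanishing $f_0$ forces $\kappa(x) = 1$.

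The main remaining point — and I expect this to be where the real work is — is smoothness of $\kappa$ on $\mathcal{C}$. Locally near a fixed $x_1 \in \mathrm{int}(M(\cX,T))$ I would pick $f_0$ with $u_1^{f_0}(T,x_1)\neq 0$; by continuity of $u_1^{f_0}(T,\cdot)$ and $u_2^{f_0}(T,\cdot)$ (and $|u_1^{f_0}(T,\cdot)| = |u_2^{f_0}(T,\cdot)|$, which follows from Lemma \ref{lem:product_equal} as a pointwise identity of continuous functions) there is a neighborhood $W \ni x_1$ on which both $u_1^{f_0}(T,\cdot)$ and $u_2^{f_0}(T,\cdot)$ are nonvanishing and smooth (solutions of the Cauchy problem are smooth by Theorem \ref{thm:uniqueness_of_smooth_solutions}). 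Hence $\kappa = u_1^{f_0}(T,\cdot)/u_2^{f_0}(T,\cdot)$ is smooth on $W$. For a boundary point of $\mathrm{int}(M(\cX,T))$ inside $\mathcal{C}$, one uses that $\kappa \equiv 1$ on an open neighborhood intersected with $\mathcal{C}\setminus \mathrm{int}(M(\cX,T))$ and glues; the subtle issue is continuity/smoothness across $\partial M(\cX,T)$, which I would handle by noting that near such a point every wave $u_k^{f}(T,\cdot)$ vanishes to all orders on one side (by the finite-speed support condition, $u_k^f(T,\cdot)$ is supported in $M(\cX,T)$), so $\kappa \equiv 1$ is the unique smooth extension consistent with \eqref{gauge-equation} there. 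Finally, uniqueness of $\kappa$ on all of $\mathcal{C}$: if $\kappa'$ also satisfies \eqref{gauge-equation}, then at any $x$ with some $u_2^f(T,x)\neq 0$ we get $\kappa(x)=\kappa'(x)$, and at points where all $u_2^f(T,x)$ vanish both must equal the common value forced by the normalization $\kappa|_{\mathcal{C}\cap\cX}=1$ and continuity (or are outside any constraint but then $\mathcal{C}$ connected plus smoothness pins them down; if $\mathcal{C}$ is not assumed connected, the normalization on $\mathcal{C}\cap\cX$ and the finite-speed vanishing are what remove the ambiguity). Assembling the local smooth pieces via a partition of unity (they agree on overlaps by the uniqueness just shown) yields the global smooth unitary $\kappa$ on $\mathcal{C}$.
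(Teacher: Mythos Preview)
Your argument is essentially the paper's: define $\kappa$ locally as a ratio of a nonvanishing wave pair (the paper writes $\overline{u_2^h}/\overline{u_1^h}$, you write $u_1^{f_0}/u_2^{f_0}$; these agree since $|u_1^{f_0}|=|u_2^{f_0}|$), verify well-definedness, unitarity, and the normalization on $\cX$ from Lemma~\ref{lem:product_equal} and $\Lambda_1=\Lambda_2$, and glue the local pieces with a partition of unity. Your boundary discussion is unnecessary and slightly off: since $\mathcal{C}$ is open in $N$ and $\mathcal{C}\subset M(\cX,T)$, automatically $\mathcal{C}\subset\mathrm{int}(M(\cX,T))$, so there is no transition region to patch, and Corollary~\ref{cor:finite_speed} does not force $u_k^f(T,x)=0$ at points with $d_g(x,\cX)=T$ (it only says the support lies in $\mathrm{int}(M(\cX,T))$, which may contain such points).
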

    \begin{proof}
        We begin with the existence proof. Fix $x_0\in \Omega$, then by Corollary \ref{cor:nonvanishing_condition} there exists $h\in C_0^\infty(\cX\times (0,T))$ such that $u_1^h(T,x_0)\not=0$. By Lemma \ref{lem:product_equal} it follows that $|u_2^h(T,x)| = |u_1^h(T,x)|\not=0$. By continuity of the solution we can then find an open neighborhood $U_{x_0}$ of $x_0$ such that $u_1^h(T,x)\not=0$ for all $x\in U_{x_0}$. Thus, the smooth function 
        \begin{align*}
            \kappa_{x_0}\colon U_{x_0}\to \C, \quad  \kappa(x):=\frac{\overline{u_2^h(T,x)}}{\overline{u_1^h(T,x)}}
        \end{align*}
        is well defined.
        Hence, by Lemma \ref{lem:product_equal} we have on $U_{x_0}$, that 
        $$
            u_1^f(T,x) = \kappa_{x_0}(x)u_2^f(T,x) \,\,\text{for all $f\in C_0^\infty((0,T)\times\cX)$.}
        $$
        
        Let now $\{U_{x_0}\}_{x_0\in \Omega}$ be an open cover for $\Omega$. We can choose a countable subcover $\{U_n\}_{n\in\N}\subset \{U_{x_0}\}_{x\in\Omega}$ and a smooth partition of unity $\{\psi_n\}_{n\in\N}$ subordinate to $\{U_n\}$. Therefore, the function
        \begin{align*}
            \kappa(x) :&= \sum_{n\in\N}\psi_n(x)\kappa_n(x)
        \end{align*}
        is a smooth in $\Omega$. Moreover for any $x\in \Omega$, there are finitely many $\psi_n$ which we shall call $\psi_{n_1},\ldots,\psi_{n_l}$ which are nonzero in an open neighborhood around $x$. By construction of the $\kappa_n$, we have that
        \begin{align*}
            \kappa(x)u_2^f(T,x) &= \sum_{m=1}^l \psi_{n_m}(x)\kappa_{n_m}(x_0)u_2^f(T,x) = \sum_{m=1}^l \psi_{n_m}(x)u_1^f(T,x) = u_1^f(T,x).
        \end{align*}
        This yields \eqref{gauge-equation}. 
        
        Let us now show that $|\kappa(x)| = 1$ for all $x\in\Omega$. We have already shown that $u_1^f(T,x) = \kappa(x)u_2^f(T,x)$ for all $f\in C_0^\infty((0,T)\times\cX)$ and all $x\in\Omega$. For any $x_0\in\Omega$ we have that $d_g(x_0,\cX) < T$, so by Corollary \ref{cor:nonvanishing_condition} we may find $f\in C_0^\infty((0,T)\times\cX)$ such that $u_2^f(T,x_0)\not=0$. Thus, by Lemma \ref{lem:product_equal} we also get that $|u_1^f(T,x_0)| = |u_2^f(T,x_0)|\not=0$. Then 
        \begin{align*}
            |\kappa(x_0)| &= \Big|\frac{u_1^f(T,x_0)}{u_2^f(T,x_0)}\Big| = 1.
        \end{align*}
        This shows that $\kappa\in C^\infty(\Omega,\mathbb{S}^1)$. Now let $x_0\in\cX$, then we find $f\in C_0^\infty((0,T)\times\cX))$ so that $u_1^f(T,x_0) \not= 0$. Since $x_0\in \cX$, then 
        $$
        u_1^f(T,x_0) = \Lambda_1(f)(T,x_0) = \Lambda_2(f)(T,x_0) = u_2^f(T,x_0).
        $$
        This and \eqref{gauge-equation} prove that $\kappa(x_0) = 1$ and we have proven the existence of desired $\kappa$. 
        
        To show uniqueness, let $\kappa_1$ and $\kappa_2$ be two such functions which satisfy for all $x\in\Omega$ and all $f\in C_0^\infty((0,T)\times\cX)$
        \begin{align*}
            u_1^f(T,x) &= \kappa_1(x)u_2^f(T,x)= \kappa_2(x)u_2^f(T,x)
        \end{align*}
        It follows that $(\kappa_1(x)-\kappa_2(x))u_2^f(T,x) = 0$ for all $x\in\Omega$ and $f\in C_0^\infty((0,T)\times\cX)$. Again using Corollary \ref{cor:nonvanishing_condition} we can show that there exists $f\in C_0^\infty((0,T)\times \cX)$ such that $u_1^f(T,x_0)\not=0$ for any $x_0\in \Omega$. It follows that $\kappa_1|_\Omega = \kappa_2|_{\Omega}$.
\end{proof}

To complete the proof of Theorem \ref{thm:main_thm} we give the following Proposition.
% Recall that Lemma \ref{lem:gauge_equivalent} has the equivalent condition if we can show that $u_1^f(t,x) = \kappa(x)u_2^f(t,x)$ for all $(t,x)$. However, Lemma \ref{lem:big_result} is stated only at the time $t = T$. The following lemma shows that the equality at $t = T$ is sufficient to prove that $\mathcal{L}_{g,A_1,V_1} = \mathcal{L}_{g,A_2 + i\kappa^{-1}d\kappa,V_2}$ which is equivalent to proving our gauge for the lower order terms.

\begin{thm}\label{thm:gauge}
    If $\Lambda_1 = \Lambda_2$, then there exists a unique smooth unitary function $\kappa:N\to\mathbb{C}$ such that $\kappa\equiv 1$ on $\cX$ and 
    \begin{align*}
        A_1 &= A_2 + i\kappa^{-1}d\kappa \hspace{4mm}\text{ and } \hspace{4mm}V_1 = V_2
    \end{align*}
\end{thm}
\begin{proof}
    This was originally proven in \cite[Lemma 7]{kian2019unique} on the case of a manifold with boundary. We provide the full proof here for the convenience of the reader.

    Let $T > 0$ and let $\Omega \subset M(\cX,T)$ be open and bounded, then by Lemma \ref{lem:big_result} there exists a unique smooth unitary function $\kappa:\Omega\to \mathbb{C}$ such that $\kappa|_{\Omega\cap \cX} = 1$ and $(\ref{gauge-equation})$ holds. Let $f\in C_0^\infty((0,T)\times\cX)$, then there exists $\eps > 0$ such that $\text{supp}(f)\subset \cX\times[\eps,T)$. For $s\in[0,\eps)$ we can define functions $f_s(t,x)= f(x,t+s)$ and will have that $\text{supp}(f_s)\subset [\eps- s,T-s) \times \cX\subset (0,T)\times\cX$. So for $s\in [0,\eps)$ we have that $f_s\in C_0^\infty((0,T)\times\cX)$. Since our Magnetic-Schr\"odinger operators are time-independent, then our solutions have the following time invariance property for $s\in [0,\eps)$
    \begin{align*}
        u_i^{f_s}(t,x) &= u_i^f(x,t+s), \quad \text{ for both } i \in \{1,2\}\,\,\text{and $(t,x)\in (0,\infty)\times N$}.
    \end{align*}
    Since $f_s\in C_0^\infty((0,T)\times\cX)$ for $s\in [0,\eps)$ we have in combination with $(\ref{gauge-equation})$ that 
    \begin{align}
        u_1^f(T+s,x) &= \kappa(x) u_2^f(T+s,x) \hspace{12mm}\text{for all $x\in N$}. \label{T+s}
    \end{align}
    By differentiating $(\ref{T+s})$ twice with respect to $s$ and evaluating  the second derivative at $s=0$ yields the following equation
    \begin{align*}
        \partial_t^2u_1^f(T,x) &= \kappa(x)\partial_t^2u_2^f(T,x), \quad 
        \text{ for all } x \in \Omega.
    \end{align*}
    Since the functions $u_1^f$ and $u_2^f$ satisfy the Cauchy Problem \eqref{eq:cauchy_problem} we have that
        \begin{align*}
        -\mathcal{L}_1(u_1^f)(T,x) + f(T,x) &= \kappa(x)[-\mathcal{L}_2(u_2^f)(T,x) + f(T,x)] \quad 
        \text{ for all } x \in \Omega.
       \end{align*}
    Here we denoted $\mathcal{L}_i=\mathcal{L}_{g,A_i,V_i}$.
    Since $f\in C_0^\infty((0,T)\times\cX)$, we arrive at
    \begin{align*}
        \mathcal{L}_1(u_1^f)(T,x) &= \kappa(x)\mathcal{L}_2(u_2^f)(T,x) = \kappa \mathcal{L}_2(\kappa^{-1}u^f_1)(T,x)\quad 
        \text{ for all } x \in \Omega.
    \end{align*}
    
    Now let $v\in C_0^\infty(\Omega)$. Since $\Omega$ is bounded, then by Approximate Controllability (i.e. Theorem \ref{thm:approximate_controllability}) we may find waves $\{u_1^{f_j}(T)\}$ such that $u_1^{f_j}(T)\to v$ in $C^2(\overline{\Omega})$. Since the convergence is in $C^2$ and the Magnetic-Schr\"odinger operators are of second order, then $\mathcal{L}_1 = \kappa \mathcal{L}_2(\kappa^{-1}v)$ for all $v\in C_0^\infty(\Omega)$. Similar to the proof of Lemma \ref{lem:gauge_equivalent}, it follows that $\mathcal{L}_1(v) = \kappa \mathcal{L}_2(\kappa^{-1}v)$ for all $v\in C_0^\infty(\Omega)$ is equivalent to $A_1 = A_2 + i\kappa^{-1}d\kappa$ and $V_1 = V_2$ on $\Omega$. Thus for any $T > 0$ and any bounded open set $\Omega\subset M(\cX,T)$ there is a unique smooth unitary function $\kappa_\Omega $ such that 
    \begin{align*}
        A_1 = A_2 + i\kappa^{-1}_\Omega d\kappa_\Omega \,\,\,\,\text{and}\,\,\,\, V_1=V_2 \hspace{4mm}\text{ on $\Omega$.}
    \end{align*}
    
    Thus for $j\in\N$ and any bounded open set $\Omega_1,\Omega_2 \subset M(\cX,j)$ with functions $\kappa|_{\Omega_1}$ and $\kappa_{\Omega_2}$ as above we have by the uniqueness that $\kappa_{\Omega_1}|_{\Omega_1\cap\Omega_2} = \kappa_{\Omega_2}|_{\Omega_1\cap\Omega_2}$. Since the sets $\{M(\cX,j)\}_{j=1}^\infty$ exhaust $N$ it follows that there exists a unique smooth unitary function $\kappa:N\to\mathbb{C}$ with $\kappa|_{\cX} = 1$ and  
    \begin{align*}
        A_1 = A_2 + i\kappa^{-1}d\kappa \,\,\,\,\text{and}\,\,\,\, V_1=V_2 \hspace{4mm}\text{ on $N$.}
    \end{align*}\end{proof}

\begin{proof}[Proof of Theorem \ref{thm:main_thm}]
The claims follow from Theorem \ref{thm:isometry_between_manifolds} and Theorem \ref{thm:gauge}.
\end{proof}

\printbibliography

\end{document}